\pgfplotsset{compat=1.10}
\definecolor{darkgreen}{rgb}{0.0,0.5,0.0}
\definecolor{darkblue}{rgb}{0.0,0.0,0.3}
\definecolor{nicosred}{rgb}{0.65,0.1,0.1}
\definecolor{light-gray}{gray}{0.6}
\definecolor{really-light-gray}{gray}{0.8}
\def\H{\mathcal{H}}
\def\N{\mathbb N}
\def\R{\mathbb R}
\def\e{\varepsilon}
\def\om{\omega}
\def\vphi{\varphi}
\def\pa{\partial}
\def\pae{\partial^{{\rm e}}}
\newcommand{\supp}{\mathrm{supp}} 
\newcommand{\diver}{\mathrm{div}} 
\newcommand{\mres}{\mathbin{\vrule height 1.6ex depth 0pt width  
0.13ex\vrule height 0.13ex depth 0pt width 1.3ex}}    
\DeclareMathOperator*{\aplim}{ap\,lim}
\def\big{\bigskip}
\newtheorem{theorem}{Theorem}[section]
\newtheorem{remark}[theorem]{Remark}
\newtheorem{proposition}[theorem]{Proposition}
\newtheorem{lemma}[theorem]{Lemma}
\newtheorem{corollary}[theorem]{Corollary}
\numberwithin{equation}{section}
\numberwithin{figure}{section}
\begin{document}

\title{Rigidity for perimeter inequality \\
under spherical symmetrisation}

\author{F. Cagnetti}
\address{Department of Mathematics, University of Sussex, Pevensey 2, BN1 9QH, Brighton, UK}%
\email{f.cagnetti@sussex.ac.uk}

\author{M. Perugini}
\address{Department of Mathematics, University of Sussex, Pevensey 2, BN1 9QH, Brighton, UK}%
\email{m.perugini@sussex.ac.uk}

\author{D. St\"oger}
\address{Technische Universit\"at M\"unchen, Zentrum Mathematik - M15, 
Boltzmannstrasse 3, 85747 Garching, Germany}
\email{dominik.stoeger@ma.tum.de}

\maketitle

\begin{abstract}
{\rm Necessary and sufficient conditions for
rigidity of the perimeter inequality under spherical symmetrisation are given.
That is, a characterisation for the uniqueness (up to orthogonal transformations) 
of the extremals is provided.
This is obtained through a careful analysis of the equality cases, 
and studying fine properties of the circular symmetrisation, 
which was firstly introduced by P\'olya in 1950.}
\end{abstract}

\section{Introduction}
In this paper we study the perimeter inequality under spherical symmetrisation, 
giving necessary and sufficient conditions for the uniqueness, up to orthogonal transformations, of the extremals. 
Perimeter inequalities under symmetrisation have been studied by many authors, 
see for instance \cite{kawohl_book_85, kawohl86} and the references therein.
In general, we say that rigidity holds true for one of these inequalities if the set of extremals is trivial.
The study of rigidity can have important applications to show that minimisers
of  variational problems (or solutions of  PDEs) are symmetric.

For instance, a crucial step in the proof of the Isoperimetric Inequality given by Ennio De Giorgi
consists in showing rigidity of Steiner's inequality (see, for instance, \cite[Theorem~14.4]{maggiBOOK})
for convex sets (see the proof of Theorem I in Section~4 in 
\cite{DeGiorgi58ISOP, DeGiorgiSelected}).
After De Giorgi, an important contribution in the understanding of rigidity for Steiner's 
inequality was given by Chleb{\'{\i}}k, Cianchi, and Fusco.
In the seminal paper \cite{ChlebikCianchiFuscoAnnals05}, 
the authors give sufficient conditions for rigidity which are much more general than convexity.
After that, this result was extended to the case of higher codimensions
in \cite{barchiesicagnettifusco}, where a quantitative version of Steiner's inequality  was  also given.
 
Then, necessary and sufficient conditions for rigidity (in codimension $1$)
were given in \cite{CagnettiColomboDePhilippisMaggiSteiner},
in the case where the distribution function is a Special Function of Bounded Variation
with locally finite jump set \cite[Theorem~1.29]{CagnettiColomboDePhilippisMaggiSteiner}.
The anisotropic case has recently been considered in \cite{Perugini}, 
where rigidity for Steiner's inequality in the isotropic and anisotropic
setting are shown to be equivalent, under suitable conditions.
In the Gaussian setting, where the  role of Steiner's inequality
is played  by Ehrhard's inequality (see \cite[Section~4.1]{cianchifuscomaggipratelliGAUSS}), 
necessary and sufficient conditions for rigidity are given in \cite{ccdpmGAUSS}, by making use of the notion of essential connectedness \cite[Theorem~1.3]{ccdpmGAUSS}.
 Finally, in the smooth case, sufficient conditions for rigidity are given in 
\cite[Proposition~5]{MorganHoweHarman2011},
for a general class of symmetrisations in warped products.

\medskip

The main motivation for the study of the spherical symmetrisation 
is that it can be used to understand the symmetry properties 
of the solutions of  PDEs and variational problems, 
when the radial symmetry has been ruled out.
Moreover, some well established methods 
(as for instance the moving plane method, see \cite{Serrin71, GNN79})  
rely on convexity properties of the domain which fail, for instance, 
when one deals with annuli. 

In particular, in many applications minimisers of variational problems 
and solutions of PDEs turn out to be \textit{foliated Schwarz symmetric}.
Roughly speaking, a function $u: \R^n \to \R$ is foliated Schwarz symmetric
if one can find a direction $p \in \mathbb{S}^{n-1}$
such that $u$ only depends on $|x|$ and on the polar angle
$\alpha = \arccos (\hat{x} \cdot p)$, and $u$ is non increasing with respect to $\alpha$
(here $\hat{x}:= x/|x|$, and $| \cdot |$ denotes the Euclidean norm in $\R^n$). 
We direct the interested reader to \cite{BWW05, Baernstein95, BirindelliLeoniPacella17, SmetsWillem03} 
and the references therein for more information.

%
%
%
%
%
%
%
%
%

\subsection{Spherical Symmetrisation}
To the best of our knowledge, the spherical symmetrisation was first introduced 
by P\'olya in \cite{polya50}, in the case  $n=2$ and in the smooth setting. 
Let $n \in \N$ with $n \geq 2$. For each $r > 0$ and $x \in \R^n$, 
we denote by  $B (x, r )$  the open ball of $\R^n$ of radius $r$ centred at
$x$, by $\omega_n$ the ($n$-dimensional) volume of the unit ball,
and we write $B (r)$ for $B (0, r )$. 
Moreover, $e_1, \ldots, e_n$ stand for the vectors of the canonical basis of $\R^n$.
Given a set $E \subset \R^n$ and $r > 0$, we define the \textit{spherical slice} $E_r$ 
of $E$ with respect to $\partial B (r)$ as
$$
E_r := E \cap \partial B (r) = \{ x \in E : |x| = r \}.
$$
%
Let $v : (0, \infty) \to [0, \infty)$ be a measurable function.
We say that $E$ is \textit{spherically $v$-distributed} if 
\begin{equation} \label{def v}
v (r) = \mathcal{H}^{n-1} (E_r), \qquad \text{ for } \mathcal{H}^1\text{-a.e. } r \in (0, \infty),
\end{equation}
where $\mathcal{H}^{k}$ denotes the $k$-dimensional 
Hausdorff measure of $\R^n$, $1 \leq k \leq n$. 
Note that, in order $v$ to be an admissible distribution, one needs 
\begin{equation} \label{bound on v}
v (r) \leq \mathcal{H}^{n-1} (\partial B (r)) = n \omega_n r^{n-1} 
\qquad \text{ for $\mathcal{H}^1$-a.e. } r \in (0, \infty).
\end{equation}
In the following, as usual, we set $\mathbb{S}^{n-1} = \partial B (1)$.
For every $x, y \in \mathbb{S}^{n-1}$, the \textit{geodesic distance} 
between $x$ and $y$ is given by 
$$
\text{dist}_{\mathbb{S}^{n-1}} (x, y) := \arccos (x \cdot y).
$$
Let $r > 0$, $p \in \mathbb{S}^{n-1}$, and $\beta \in [0, \pi]$ be fixed. 
The \textit{open geodesic ball} (or \textit{spherical cap}) 
of centre $r p$ and radius $\beta$ is the set 
$$
\mathbf{B}_{\beta} (r p) := \{ x \in \partial B (r) : \text{dist}_{\mathbb{S}^{n-1}} (\hat{x}, p) < \beta \}.
$$
The $(n-1)$-dimensional Hausdorff measure of $\mathbf{B}_{\beta} (r p)$
can be explicitly calculated, and is given by
$$
\mathcal{H}^{n-1} (\mathbf{B}_{\beta} (r p)) = (n-1) \omega_{n-1} r^{n-1}
\int_0^{\beta} (\sin \tau)^{n-2} \, d \tau.
$$
The expression above shows that the function 
$\beta \mapsto \mathcal{H}^{n-1} (\mathbf{B}_{\beta} (r p))$
is strictly increasing from $[0, \pi]$ to $[0, n \omega_n r^{n-1}]$.
Therefore, if $v : (0, \infty) \to [0, \infty)$ is a measurable
function satisfying \eqref{bound on v}, and $E \subset \R^n$ is a spherically $v$-distributed set, 
there exists only one (defined up to a subset of zero $\mathcal{H}^1$-measure) measurable function $\alpha_v: (0,\infty) \to [0, \pi]$
satisfying
\begin{equation} \label{this is the def of alphav}
v (r) = \mathcal{H}^{n-1} (\mathbf{B}_{\alpha_v (r)} (r e_1)) 
\qquad \text{ for $\mathcal{H}^1$-a.e. } r \in (0, \infty).
\end{equation}
Among all the spherically $v$-distributed sets of $\R^n$, we denote by $F_v$ the one whose spherical slices
are open geodesic balls centred at the positive $e_1$ axis., i.e.
\begin{equation*} 
F_v := \{ x \in \R^n \setminus \{ 0 \} : \text{dist}_{\mathbb{S}^{n-1}} (\hat{x}, e_1) < \alpha_v (|x|) \},
\end{equation*}
see Figure~\ref{figure 3D}.
\begin{figure}[!htb]
\centering 
\def
\svgwidth{13cm} 
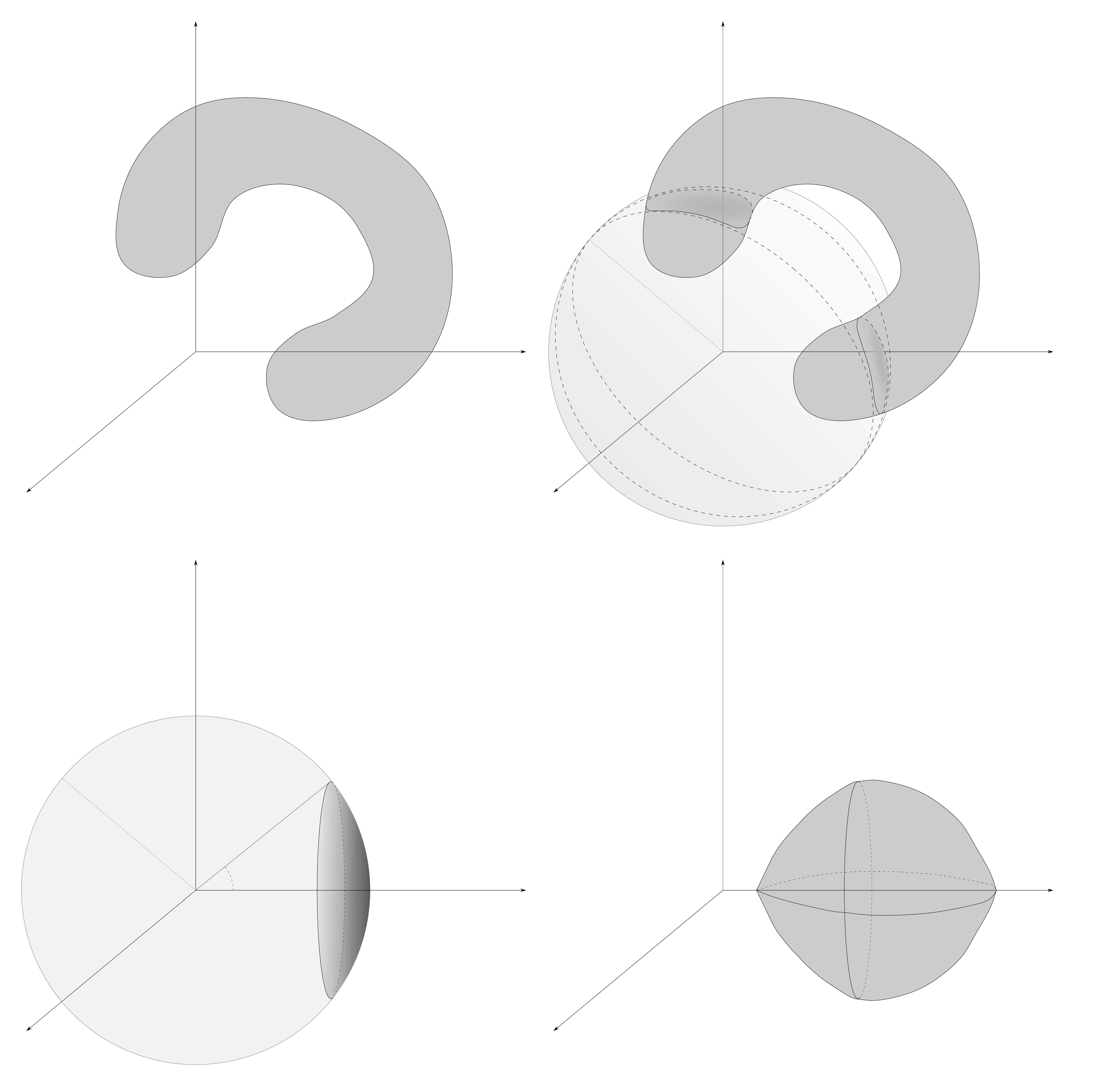 
\caption{A pictorial idea of the spherical symmetral $F_v$ of a $v$-distributed set $E$, 
in the case $n =3$.} 
\label{figure 3D}
\end{figure}
Before stating our results, it will be convenient to recall some basic notions
about sets of finite perimeter.

\subsection{Basic notions on sets of finite perimeter}

Let $E \subset \R^n$ be a measurable set, and let $t \in [0, 1]$.
We denote by $E^{(t)}$ the set of points of density $t$ of $E$, given by 
$$
E^{(t)} : = \left\{ x \in \R^n : \lim_{\rho \to 0^+ } 
\frac{\mathcal{H}^n (E \cap B (x, \rho))}{\omega_n \rho^n} = t \right\}.
$$
The essential boundary of $E$ is then defined as 
$$
\partial^{\textnormal{e}} E:= E \setminus (E^{(1)} \cup E^{(0)}).
$$
Moreover, if $A \subset \R^n$ is any Borel set, we define the perimeter of $E$
relative to $A$ as the extended real number given by
$$
P (E; A) := \mathcal{H}^{n-1} (\partial^{\textnormal{e}} E \cap A), 
$$
and we set $P(E):= P (E; \R^n)$.
When $E$ is a set with smooth boundary, it turns out that 
$\partial^{\textnormal{e}} E = \partial E$, and the perimeter of $E$
agrees with the usual notion of $(n-1)$-dimensional surface measure of $\partial E$. 

If $P(E) < \infty$, it is possible to define the reduced boundary $\partial^*E$ of $E$. 
This has  the  property that $\partial^*E \subset \partial^{\textnormal{e}} E$, 
$\mathcal{H}^{n-1} (\partial^{\textnormal{e}} E \setminus \partial^*E) = 0$, 
and is such that for every $x \in \partial^* E$ there exists the \textit{measure theoretic outer unit normal} 
$\nu^E (x)$ of $\partial^* E$ at $x$, see Section~\ref{preliminaries}.
If $x \in \partial^* E$, it will be convenient to decompose $\nu^{E} (x)$ as 
$$
\nu^{E} (x) = \nu^{E}_{\perp} (x) + \nu^{E}_{\parallel} (x),
$$
where $\nu^{E}_{\perp} (x) := (\nu^{E} (x) \cdot \hat{x} ) \hat{x}$ 
and $\nu^{E}_{\parallel} (x)$ are the radial and tangential component 
of $\nu^{E} (x)$ along $\partial B (|x|)$, respectively. 
In the following, we will use the diffeomorphism
$\Phi : (0, \infty) \times \mathbb{S}^{n-1} \to \mathbb{R}^n \setminus \{ 0 \}$ 
defined as
$$
\Phi (r, \omega) := r \omega \qquad \text{ for every } (r, \omega) \in (0, \infty) \times \mathbb{S}^{n-1}.
$$

\subsection{Perimeter Inequality under spherical symmetrisation}

Our first result shows that the spherical symmetrisation does not increase the perimeter, 
and gives some necessary conditions for equality cases.
In our analysis we require the set $F_v$ (or, equivalently, any spherically $v$-distributed set)
to have finite volume. This is not restrictive.
Indeed, if $F_v$ has finite perimeter but infinite volume, 
we can consider the complement $\R^n \setminus F_v$
which, by the relative isoperimetric inequality, has finite volume.
This change corresponds to considering the complementary distribution 
function $r \mapsto  n \omega_n r^{n-1}  - v (r)$, and the spherical symmetrisation 
with respect to the axis $-e_1$. 

\begin{theorem} \label{fv locally finite perimeter}
Let $v: (0, \infty) \to [0, \infty)$ be a measurable function
satisfying \eqref{bound on v}, and let $E \subset \R^n$ be a 
spherically $v$-distributed set of finite perimeter and finite volume.
Then, $v \in BV (0, \infty)$. 
Moreover, $F_v$ is a set of finite perimeter and 
\begin{equation} \label{per ineq}
P (F_v; \Phi (B \times \mathbb{S}^{n-1})) \leq P (E; \Phi (B \times \mathbb{S}^{n-1})),
\end{equation}
for every Borel set $B \subset (0, \infty)$. 

Finally, if $P (E) = P(F_v)$, 
then for $\mathcal{H}^1$-a.e. $r \in \{ 0 < \alpha_v <  \pi \}$:
\begin{itemize}

\item[(a)] $E_r$ is $\mathcal{H}^{n-1}$-equivalent 
to a spherical cap and $\mathcal{H}^{n-2} (\partial^* (E_r) \Delta (\partial^* E)_r) = 0$;

\vspace{.1cm}

\item[(b)] the functions $x \mapsto \nu^E (x) \cdot \hat{x}$ and $x \mapsto | \nu^E_{\parallel}| (x)$
are constant $\mathcal{H}^{n-2}$-a.e. in $(\partial^* E )_r$.

\end{itemize}
\end{theorem}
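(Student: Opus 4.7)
\noindent\emph{Proof plan.} The strategy is to adapt the classical approach to Steiner's inequality (cf.\ \cite[Chapter~14]{maggiBOOK} and \cite{ChlebikCianchiFuscoAnnals05}) to the spherical setting: via the diffeomorphism $\Phi$, I would pass to polar coordinates, use the sharp spherical isoperimetric inequality on each sphere $\partial B(r)$ in place of the one-dimensional inequality exploited by Steiner, and split the perimeter into a tangential (slice) contribution and a radial (variation) contribution.

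For the $BV$ regularity of $v$, I would test its distributional derivative against $\psi\in C^1_c((0,\infty))$ via Gauss--Green with suitable radial vector fields. The divergence-free field $\varphi(x):=\psi(|x|)\,x/|x|^n$ satisfies $\diver\varphi(x) = \psi'(|x|)/|x|^{n-1}$ on $\R^n\setminus\{0\}$, so coarea in polar coordinates yields
\[
\left|\int_0^\infty \psi'(r)\,\frac{v(r)}{r^{n-1}}\,dr\right|
\;\leq\; \int_{\partial^*E}\frac{|\psi(|x|)|}{|x|^{n-1}}\, d\mathcal{H}^{n-1}.
\]
This shows $v(r)/r^{n-1}$ is locally $BV$ on $(0,\infty)$, and hence so is $v$. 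A global estimate then follows by combining \eqref{bound on v}, $\int_0^\infty v\,dr = \mathcal{H}^n(E)<\infty$, and a further Gauss--Green identity to control the variation of $v$ at both endpoints.

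For the perimeter inequality I would establish a Vol'pert-type slicing theorem for $E$ (proved by standard arguments via local charts on $\partial B(r)$): for $\mathcal{H}^1$-a.e.\ $r$, $E_r$ is a set of finite perimeter relative to $\partial B(r)$, and $(\partial^*E)_r = \partial^*(E_r)$ up to $\mathcal{H}^{n-2}$-null sets. The coarea formula on $\partial^*E$ applied to the radial function $x\mapsto |x|$, whose tangential gradient has norm $|\nu^E_\parallel|$, gives
\[
\int_{\partial^*E\cap\Phi(B\times\mathbb{S}^{n-1})} |\nu^E_\parallel|\, d\mathcal{H}^{n-1}
\;=\; \int_B \mathcal{H}^{n-2}((\partial^*E)_r)\, dr.
\]
The sharp spherical isoperimetric inequality on $\partial B(r)$ yields $\mathcal{H}^{n-2}(\partial^*(E_r))\ge \mathcal{H}^{n-2}(\partial^*((F_v)_r))$, with equality iff $E_r$ is $\mathcal{H}^{n-1}$-equivalent to a geodesic cap. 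Combining this slicewise estimate with the Gauss--Green identity above to handle the radial contribution (which for $F_v$ is controlled exactly by $|Dv|$) produces \eqref{per ineq}; crucially, $F_v$ achieves equality in each of these bounds thanks to the simple geometry of its spherical cap slices.

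For the rigidity in (a)--(b), assuming $P(E)=P(F_v)$ forces equality in each of the three steps above for $\mathcal{H}^1$-a.e.\ $r$. The rigidity of the spherical isoperimetric inequality implies that $E_r$ is $\mathcal{H}^{n-1}$-equivalent to a spherical cap whenever $0<\alpha_v(r)<\pi$; together with the Vol'pert identification this yields (a). Equality in the tangential coarea estimate forces $|\nu^E_\parallel|$ to be constant along $(\partial^*E)_r$, and equality in the radial Gauss--Green estimate forces $\hat x\cdot\nu^E$ to be constant there as well, which is (b). The main obstacle I foresee is organising the radial contribution to $P(E)$ cleanly enough that the absolutely continuous, Cantor and jump parts of $Dv$ can each be matched with the corresponding pieces of $\partial^*E$, both in the inequality and in the analysis of equality; the jump part is particularly delicate, since it encodes the portions of $\partial^*E$ that lie tangentially on some sphere $\partial B(r)$, and in the equality case these must coincide $\mathcal{H}^{n-1}$-a.e.\ with the corresponding pieces of $\partial^*F_v$.
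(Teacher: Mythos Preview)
Your overall architecture---polar coordinates via $\Phi$, Vol'pert slicing, coarea along $r\mapsto|x|$, and the spherical isoperimetric inequality on each $\partial B(r)$---matches the paper. But the core mechanism that actually produces \eqref{per ineq} and part (b) is missing from your sketch, and the way you describe (b) is not correct.

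The perimeter of $E$ is not an additive combination of a ``tangential'' piece $\int|\nu^E_\parallel|$ and a ``radial'' piece controlled by $|Dv|$; the integrand is $1=\sqrt{|\nu^E_\parallel|^2+(\hat x\cdot\nu^E)^2}$, and separate bounds on the two directions do not sum to give \eqref{per ineq}. What the paper does (Step~2a) is rewrite, via coarea, the part of $P(E)$ where $\nu^E_\parallel\neq 0$ as
\[
\int_B dr\int_{(\partial^*E)_r}\frac{1}{|\nu^E_\parallel|}\,d\mathcal H^{n-2}
=\int_B dr\int_{(\partial^*E)_r}\sqrt{1+\Big(\tfrac{\hat x\cdot\nu^E}{|\nu^E_\parallel|}\Big)^2}\,d\mathcal H^{n-2},
\]
and then applies Jensen's inequality for the strictly convex function $t\mapsto\sqrt{1+t^2}$ on each slice to obtain $\geq\sqrt{p_E(r)^2+g(r)^2}$. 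A separate duality/partition argument (Step~2b, using $\sqrt{1+t^2}=\sup_h\{w_ht+\sqrt{1-w_h^2}\}$) is then needed to replace $g(r)$ by $r^{n-1}\xi_v'(r)$ while correctly absorbing the contribution from $\{\nu^E_\parallel=0\}$; as the paper emphasises, for $n>2$ this set can carry a nontrivial \emph{absolutely continuous} part of the measure $\lambda_E$, so it does not sit only in the jump/Cantor part of $Dv$ as your last paragraph anticipates. Only after this does the spherical isoperimetric bound $p_E\geq p_{F_v}$ close the argument.

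This Jensen step is also exactly what yields (b). Your coarea identity $\int|\nu^E_\parallel|=\int_B\mathcal H^{n-2}((\partial^*E)_r)\,dr$ is always an equality, so ``equality in the tangential coarea estimate'' imposes nothing on $|\nu^E_\parallel|$; likewise equality in the Gauss--Green identity for the radial part does not by itself force $\hat x\cdot\nu^E$ to be slicewise constant. In the paper, equality in Jensen for the strictly convex $\sqrt{1+t^2}$ forces the ratio $\hat x\cdot\nu^E/|\nu^E_\parallel|$ to be $\mathcal H^{n-2}$-a.e.\ constant on $(\partial^*E)_r$, and then $|\nu^E_\parallel|^2+(\hat x\cdot\nu^E)^2=1$ gives the constancy of both functions. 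Your plan needs this convexity ingredient to go through.
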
 
The result above shows that the perimeter inequality holds on a local level, 
provided one considers sets of the type $\Phi (B \times \mathbb{S}^{n-1})$, 
with $B \subset (0,\infty)$ Borel.
Inequality \eqref{per ineq} is very well known in the literature.
In the special case $n=2$,
a short proof was given by P\'olya in \cite{polya50}.
In the general $n$-dimensional case with $B = (0,\infty)$ the result is stated
 in  \cite[Theorem~6.2]{MorganPratelli2013}), but the proof is only sketched
 (see also \cite{McGillivray18} and \cite[Proposition~3 and Remark~4]{MorganHoweHarman2011}). 
As mentioned by Morgan and Pratelli in \cite{MorganPratelli2013}, 
certain parts of the proof of \eqref{per ineq} follow the general lines of analogous results in the context of Steiner
symmetrisation (see, for instance, \cite[Lemma~3.4]{ChlebikCianchiFuscoAnnals05} and
\cite[Theorem~1.1]{barchiesicagnettifusco}).
There are, however, non trivial technical difficulties that arise when one deals 
with the spherical symmetrisation.
For this reason, we give a detailed proof of Theorem~\ref{fv locally finite perimeter}.

\medskip

We start by introducing radial and tangential components 
of a Radon measure, see Section~\ref{subsection tangential}. 
These turn out to be useful tools which allow to prove several preliminary results.
Moreover, since we are dealing with a symmetrisation of codimension $n-1$, 
we need to pay attention to some delicate effects that are not usually observed 
when the codimension is $1$ (as, for instance, in \cite{ChlebikCianchiFuscoAnnals05}). 
Indeed, a crucial role is played by the measure $\lambda_E$ given by:
\begin{equation} \label{measure mu abs cont}
\lambda_E (B) :=\int_{\partial^* E \cap \Phi(B\times \mathbb{S}^{n-1})\cap\{\nu^{E}_{\|}=0 \}}
\hat{x} \cdot \nu^E(x)  \, d\mathcal{H}^{n-1}(x), 
\end{equation}
for every Borel set $B \subset (0, \infty)$.
When $n = 2$, it turns out that $\lambda_E$ is singular with respect to the Lebesgue measure
in $(0,\infty)$. However, for $n > 2$ it may happen that $\lambda_E$ contains 
a non trivial absolutely continuous part, see Remark~\ref{remark gmt}. 
This requires some extra care while proving inequality \eqref{per ineq}. 
A similar phenomenon has already been observed 
in \cite{barchiesicagnettifusco}, in the study of the Steiner symmetrisation 
of codimension higher than $1$.
Higher codimension effects play an important role 
also in the study of rigidity, as explained below.

\subsection{Rigidity of the Perimeter Inequality}
Given $v: (0,\infty) \to [0, \infty)$ measurable, 
satisfying \eqref{bound on v}, and such that $F_v$ 
is a set of finite perimeter and finite volume, 
we define $\mathcal{N} (v)$ as the class of extremals of \eqref{per ineq}:
$$
\mathcal{N} (v):= \{ E \subset \R^n :  E \text{ is spherically $v$-distributed and }
P(E) = P(F_v)  \}. 
$$
Note that, by definition of $F_v$, and by the invariance of the perimeter under
rigid transformations, every time we apply an orthogonal transformation to $F_v$
we obtain a set that belongs to $\mathcal{N} (v)$, i.e.:
\begin{equation*}
 \mathcal{N} (v) \supset \{ E \subset \R^n :  
\mathcal{H}^n ( E \Delta (R \, F_v) ) =  0 \text{ for some } R \in O (n) \},
\end{equation*}
where $\Delta$ denotes the symmetric difference of sets and
$O (n)$ is the set of orthogonal transformations in $\R^n$. 
We would like to understand when also the opposite inclusion is satisfied, 
that is, when the class of extremals of \eqref{per ineq}
is just given by rotated copies of $F_v$.
We will say that \textit{rigidity} holds true for inequality \eqref{per ineq} if 
\begin{equation}
\mathcal{N} (v) = \{ E \subset \R^n :  
\mathcal{H}^n ( E \Delta (R \, F_v) ) =  0 \text{ for some } R \in O (n) \}.
\tag{$\mathcal{R}$}
\end{equation}
In order to explain which conditions we should expect 
in order ($\mathcal{R}$) to be true, let us first give some examples.

\medskip

Figure~\ref{case iii counterexample true} shows a set $E \in \mathcal{N}(v)$
that cannot be obtained by applying a single orthogonal transformation to $F_v$.
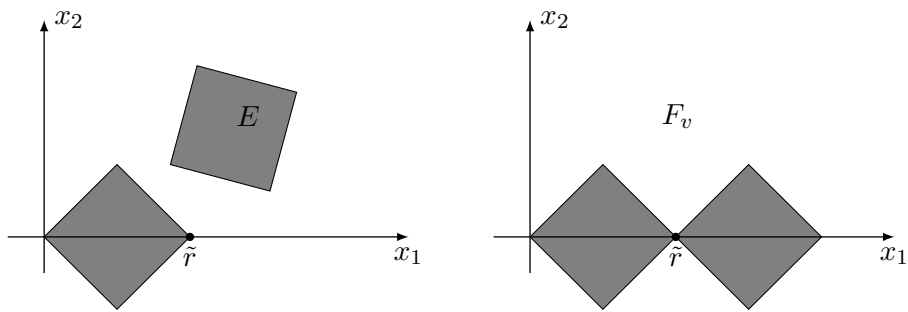
\begin{figure}[!htb]
    \centering
\begin{tikzpicture}[>=latex, scale=1.6]

 \draw[fill=gray] (0,0)--(.6,.6)--(1.2,0)--(.6,-.6)--(0,0);
  
     \draw(1.2,0)node[below]{$\tilde{r}$};
\draw[fill=black] (1.2,0) circle (.03cm);
 
 \begin{scope}[rotate around={30:(0,0)}]
 \draw[fill=gray] (0+1.2,0)--(.6+1.2,.6)--(1.2+1.2,0)--(.6+1.2,-.6)--(0+1.2,0);
\end{scope}
   
     \draw[->, line width = .5pt] (-.3,0)--(3,0); 
  \draw(3,0)node[below]{$x_1$};
 \draw[->, line width = .5pt] (0,-.3)--(0,1.8); 
  \draw(0,1.8)node[right]{$x_2$};
  
 \draw(1.5,1)node[right]{$E$};

  \draw[fill=gray] (0+4,0)--(.6+4,.6)--(1.2+4,0)--(.6+4,-.6)--(0+4,0);
  \draw[fill=gray] (0+4+1.2,0)--(.6+4+1.2,.6)--(1.2+4+1.2,0)--(.6+4+1.2,-.6)--(0+4+1.2,0);
 
     \draw[->, line width = .5pt] (-.3+4,0)--(3+4,0); 
  \draw(3+4,0)node[below]{$x_1$};
 \draw[->, line width = .5pt] (0+4,-.3)--(0+4,1.8); 
  \draw(0+4,1.8)node[right]{$x_2$}; 
   
      \draw(4+1.2,0)node[below]{$\tilde{r}$};
\draw[fill=black] (4+1.2,0) circle (.03cm);

 \draw(5,1)node[right]{$F_v$};

\end{tikzpicture}
   \caption{Rigidity ($\mathcal{R}$) fails, since the set $\{ 0 < \alpha_v < \pi \}$ is disconnected
   by a point $\tilde{r} \in (0, \infty)$ such that $\alpha_v (\tilde{r}) = 0$.}
	\label{case iii counterexample true}
\end{figure}
This is due to the fact that the set $\{ 0 < \alpha_v < \pi \}$ 
is disconnected by a point $\tilde{r}$ satisfying $\alpha_v (\tilde{r}) = 0$.
A similar situation happens when $\{ 0 < \alpha_v < \pi \}$ 
is disconnected by points belonging to the set $\{ \alpha_v = \pi \}$, 
see Figure~\ref{case alpha = pi}.


\begin{figure}[!htb]
    \centering
\begin{tikzpicture}[>=latex, scale=1.6]
 
  \begin{scope}[rotate around={25:(0,0)}]
 \draw[fill=gray, line width = .5pt] (1,0) ellipse (1.7 and 1.1);
  \end{scope}

  \draw[dashed, line width = .5pt] (0,0) circle (.7);

 \draw[fill=white, line width = .5pt] (-.35,0) ellipse (.35 and .2);

\draw[fill=black] (.7,0) circle (.03cm);
 \draw(.8,0)node[below]{$\hat{r}$};

%
%
%
     \draw[->, line width = .5pt] (-1,0)--(3,0); 
  \draw(3,0)node[below]{$x_1$};
 \draw[->, line width = .5pt] (0,-.3)--(0,1.8); 
  \draw(0,1.8)node[right]{$x_2$};
 \draw(1.5,.6)node[right]{$E$};

 \draw[fill=gray, line width = .5pt] (1+4.5,0) ellipse (1.7 and 1.1);

   \draw[dashed, line width = .5pt] (+4.5,0) circle (.7);

 \draw[fill=white, line width = .5pt] (-.35+4.5,0) ellipse (.35 and .2);

\draw[fill=black] (.7+4.5,0) circle (.03cm);
 \draw(.8+4.5,0)node[below]{$\hat{r}$}; 
 
      \draw[->, line width = .5pt] (-1+4.5,0)--(3+4.5,0); 
  \draw(3+4.5,0)node[below]{$x_1$};
 \draw[->, line width = .5pt] (0+4.5,-.3)--(0+4.5,1.8); 
  \draw(0+4.5,1.8)node[right]{$x_2$};
 \draw(1.5+4.5,.6)node[right]{$F_v$};

\end{tikzpicture}
   \caption{The set $E$  above
   cannot be obtained by applying an orthogonal transformation around the origin to the set $F_v$ shown in the right,
   therefore rigidity ($\mathcal{R}$) fails.
   This happens because the set $\{ 0 < \alpha_v < \pi \}$ is disconnected
   by a point $\hat{r} \in (0, \infty)$ such that $\alpha_v (\hat{r}) = \pi$.}
	\label{case alpha = pi}
\end{figure}
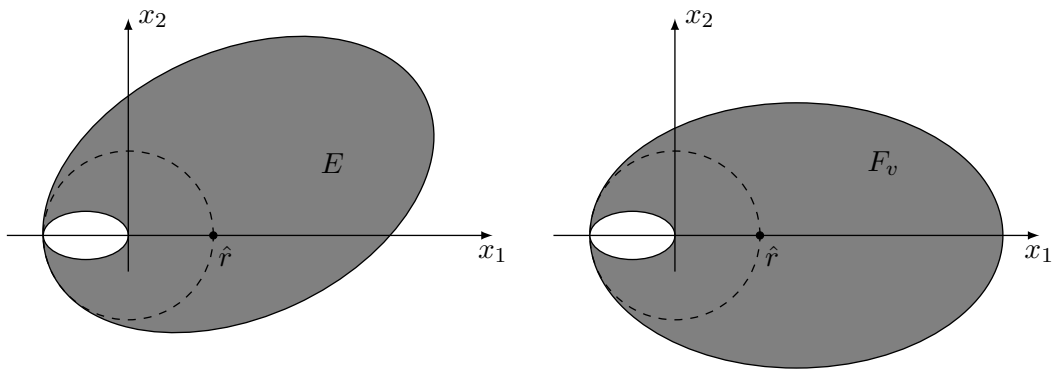

One possibility to avoid such a situation could be 
to request the set $\{ 0 < \alpha_v < \pi \}$ to be an interval.
However, this condition depends on the representative chosen for $\alpha_v$, 
while the perimeters of the sets $E$ and $F_v$ don't.
Indeed, in Figure~\ref{case iii counterexample true} one could modify $\alpha_v$
just at the point $\tilde{r}$, in such a way that 
$\{ 0 < \alpha_v < \pi \}$ becomes an interval.
Nevertheless, rigidity still fails, see Figure~\ref{case iii bis counterexample true}. 

\medskip

To formulate a condition which is independent 
on the chosen representative, we consider the approximate liminf 
and the approximate limsup of $\alpha_v$, which we denote by $\alpha_v^{\wedge}$ and $\alpha_v^{\vee}$,
respectively (see Section~\ref{preliminaries}).
These two functions 
are defined \textit{at every point} $r \in (0,\infty)$ and satisfy $\alpha_v^{\wedge} \leq \alpha_v^{\vee}$.
In addition, they do not depend on the representative chosen for $\alpha_v$, 
and $\alpha_v^{\wedge} = \alpha_v^{\vee} = \alpha_v$
$\mathcal{H}^1$-a.e. in $(0, \infty)$.
The condition that we will impose is then the following:
\begin{equation} \label{condition 1}
\textnormal{$\{ 0 < \alpha^{\wedge}_v \leq \alpha^{\vee}_v < \pi \}$ is a (possibly unbounded) interval.}
\end{equation}
One can check that, in the example given in Figure~\ref{case iii bis counterexample true}
this condition fails, since $\alpha^{\wedge}_v (\tilde{r}) = \alpha^{\vee}_v (\tilde{r}) = 0$.
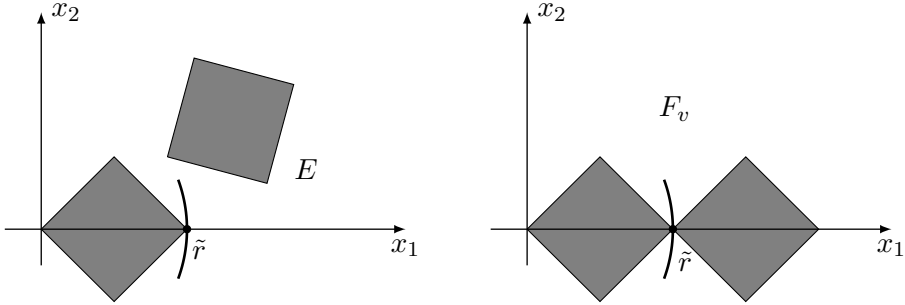
\begin{figure}[!htb]
    \centering
\begin{tikzpicture}[>=latex, scale=1.6]
 
 \draw [line width = 1pt,domain=-20:20] plot ({1.2*cos(\x)}, {1.2*sin(\x)});
 
 \draw[fill=gray] (0,0)--(.6,.6)--(1.2,0)--(.6,-.6)--(0,0);
  
     \draw(1.3,0)node[below]{$\tilde{r}$};
\draw[fill=black] (1.2,0) circle (.03cm);
 
 \begin{scope}[rotate around={30:(0,0)}]
 \draw[fill=gray] (0+1.2,0)--(.6+1.2,.6)--(1.2+1.2,0)--(.6+1.2,-.6)--(0+1.2,0);
\end{scope}
   
     \draw[->, line width = .5pt] (-.3,0)--(3,0); 
  \draw(3,0)node[below]{$x_1$};
 \draw[->, line width = .5pt] (0,-.3)--(0,1.8); 
  \draw(0,1.8)node[right]{$x_2$};
  
 \draw(2,.5)node[right]{$E$};

  \draw[fill=gray] (0+4,0)--(.6+4,.6)--(1.2+4,0)--(.6+4,-.6)--(0+4,0);
  \draw[fill=gray] (0+4+1.2,0)--(.6+4+1.2,.6)--(1.2+4+1.2,0)--(.6+4+1.2,-.6)--(0+4+1.2,0);
 
     \draw[->, line width = .5pt] (-.3+4,0)--(3+4,0); 
  \draw(3+4,0)node[below]{$x_1$};
 \draw[->, line width = .5pt] (0+4,-.3)--(0+4,1.8); 
  \draw(0+4,1.8)node[right]{$x_2$}; 
  
   \draw [line width = 1pt,domain=-20:20] plot ({4+1.2*cos(\x)}, {1.2*sin(\x)});
   
      \draw(4+1.3,-.1)node[below]{$\tilde{r}$};
\draw[fill=black] (4+1.2,0) circle (.03cm);

 \draw(5,1)node[right]{$F_v$};
 
\end{tikzpicture}
   \caption{Modifying the function $\alpha_v$ 
   given in Figure~\ref{case iii counterexample true} at the point $\tilde{r}$, 
we can make sure that $\{ 0 < \alpha_v < \pi\}$ is an open connected interval. However, rigitidy still fails.}
	\label{case iii bis counterexample true}
\end{figure}

\medskip

Let us show that, even imposing \eqref{condition 1}, rigidity can still be violated.
In the example given in Figure~\ref{case ii counterexample}, there is some radius 
$\overline{r} \in \{ 0 < \alpha^{\wedge}_v \leq \alpha^{\vee}_v < \pi \}$
such that the boundary of $F_v$ contains a non trivial subset of $\partial B (\overline{r})$.
In this way, it is possible to rotate a proper subset of $F_v$ around the origin, 
without affecting the perimeter.
Note that at each point of the set $\partial^* F_v \cap \partial B (\overline{r} )$
the exterior normal $\nu^{F_v}$ is parallel to the radial direction.
To rule out the situation described in Figure~\ref{case ii counterexample}, we will impose the following condition:
\begin{equation} \label{condition 2}
\mathcal{H}^{n-1} ( \{ x \in \partial^* F_v : \nu^{F_v}_{\parallel} (x) =  0 \text{ and } 
|x| \in \{ 0 < \alpha^{\wedge}_v \leq \alpha^{\vee}_v  < \pi \}) = 0.
\end{equation}
Note that, from Theorem~\ref{fv locally finite perimeter}
and identity \eqref{this is the def of alphav}, it follows that in general   
we only have $\alpha_v \in BV_{\text{loc}} (0,\infty)$.
However, it turns out that \eqref{condition 2} is equivalent to ask
that $\alpha_v$ is $W^{1,1}_{\text{loc}}$ in the interior of $\{ 0 < \alpha^{\wedge}_v \leq \alpha^{\vee}_v  < \pi \}$, see Proposition~\ref{prop:5.3dominik}.
\begin{figure}[!htb]
    \centering
  \begin{tikzpicture}[>=latex, scale=1.6]


    \draw[fill=gray]
(0,0) -- +(90:1) arc(90:0:1) -- cycle;
    
 \draw[fill=gray]
(0,0) -- +(60:1.3) arc(60:0:1.3) -- cycle;

\draw[color=gray, line width = 1.5pt] (0,0) -- (60:1);

    \draw(1,1)node[right]{$E$};
  \draw[->, line width = .5pt] (-1,0)--(1.7,0); 
  \draw(1.7,0)node[below]{$x_1$};
    \draw(1,0)node[below]{$\overline{r}$};
\draw[fill=black] (1,0) circle (.03cm);

 \draw[->, line width = .5pt] (0,-1)--(0,1.8); 
  \draw(0,1.8)node[right]{$x_2$};

 \draw[->, line width = .5pt] (3.5-1,0)--(3.5+1.7,0); 
  \draw(3.5+1.7,0)node[below]{$x_1$};
 \draw[->, line width = .5pt] (3.5,-1)--(3.5,1.8); 
  \draw(3.5,1.8)node[right]{$x_2$};
  

  \draw[fill=gray]
(3.5,0) -- +(45:1) arc(45:-45:1) -- cycle;
    
 \draw[fill=gray]
(3.5,0) -- +(30:1.3) arc(30:-30:1.3) -- cycle;    

\draw[color=gray, line width = 1.5pt] (3.5,0) -- (3.5+.85, .5);
\draw[color=gray, line width = 1.5pt] (3.5,0) -- (3.5+.85, -.5);

 \draw(3.5+1,1)node[right]{$F_v$};

 \draw[->, line width = .5pt] (3.5-1,0)--(3.5+1.7,0); 
  
 \draw[->, line width = .5pt] (3.5,-1)--(3.5,1.8); 
  
        \draw(3.5+1,0)node[below]{$\overline{r}$};
\draw[fill=black] (3.5+1,0) circle (.03cm);

\end{tikzpicture}
   \caption{An example in which rigidity fails. 
   In this case, the tangential part of $\partial^* F_v$ gives a non trivial contribution to $P(F_v)$.  
   This allows to slide a proper subset of $F_v$ around the origin, 
   without modifying the perimeter.}
	\label{case ii counterexample}
\end{figure}
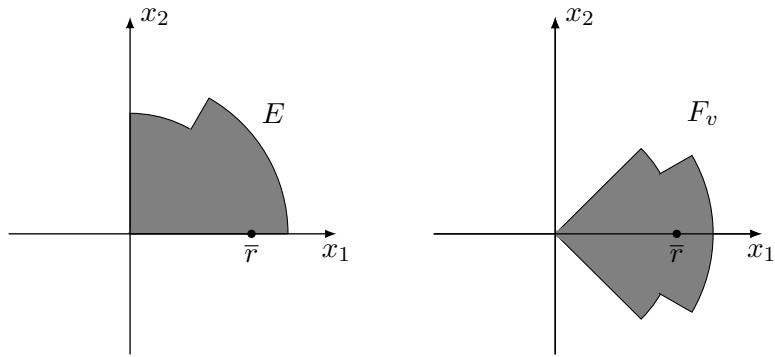
%
%
%
%
%
%
%
%
%
%
%
%
%
%

\medskip

Our main result shows that the two conditions above give a complete 
characterisation of rigidity for inequality~\eqref{per ineq}
(below, $\mathcal{\mathring I}$ stands for the interior of the set $\mathcal{I}$).
\begin{theorem} \label{rigidity theorem}
Let $v: (0,\infty) \to [0, \infty)$ be a measurable function 
satisfying \eqref{bound on v} such that $F_v$ 
is a set of finite perimeter and finite volume, 
and let $\alpha_v$ be defined by \eqref{this is the def of alphav}.
Then, the following two statements are equivalent:

\begin{itemize}

\item[(i)] $(\mathcal{R})$ holds true;

\vspace{.1cm}

\item[(ii)] $\{ 0 < \alpha^{\wedge}_v \leq \alpha^{\vee}_v < \pi \}$ is a (possibly unbounded) interval 
$\mathcal{I}$, 
and $\alpha_v \in W^{1, 1}_{\textnormal{loc}} (\mathcal{\mathring I})$. 

%
 
\end{itemize}
\end{theorem}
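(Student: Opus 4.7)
The plan is to prove the two implications separately. For $(i) \Rightarrow (ii)$, I would argue by contrapositive: if either of the two conditions in $(ii)$ fails, then an explicit construction yields an extremal of \eqref{per ineq} that is not an orthogonal image of $F_v$. For $(ii) \Rightarrow (i)$, I would start from an arbitrary extremal $E \in \mathcal{N}(v)$, exploit the structural information on spherical slices coming from Theorem~\ref{fv locally finite perimeter}, and use the two conditions to show that all the cap centres $p(r)$ describing the slices $E_r$ lie along a single direction $p \in \mathbb{S}^{n-1}$.

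For the first direction, suppose $\{0 < \alpha_v^\wedge \leq \alpha_v^\vee < \pi\}$ is not an interval. Then there is a separating point $\tilde r \in (0,\infty)$ lying either in $\{\alpha_v^\vee = 0\}$ or in $\{\alpha_v^\wedge = \pi\}$. In the first case the slice $\partial^* F_v \cap \partial B(\tilde r)$ is $\mathcal{H}^{n-1}$-negligible and I can apply an independent rotation $R \in O(n)$ to $F_v \cap \{|x| > \tilde r\}$, obtaining an extremal not globally congruent to $F_v$ (cf.\ Figures~\ref{case iii counterexample true}--\ref{case iii bis counterexample true}); the second case is analogous after replacing the empty separator by a full sphere (Figure~\ref{case alpha = pi}). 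If instead the interval condition holds but $\alpha_v \notin W^{1,1}_{\textnormal{loc}}(\mathcal{\mathring I})$, Proposition~\ref{prop:5.3dominik} produces a positive $\mathcal{H}^{n-1}$-measure subset of $\partial^* F_v$ where $\nu^{F_v}_{\parallel} = 0$ and $|x| \in \mathcal{\mathring I}$. Then there is a radius $\overline r \in \mathcal{\mathring I}$ such that $\partial^* F_v$ contains a proper $\mathcal{H}^{n-1}$-nontrivial subset of $\partial B(\overline r)$, along which I can slide independently the two halves $F_v \cap \{|x| < \overline r\}$ and $F_v \cap \{|x| > \overline r\}$, obtaining an extremal not congruent to $F_v$ (Figure~\ref{case ii counterexample}).

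For $(ii) \Rightarrow (i)$, let $E \in \mathcal{N}(v)$. By Theorem~\ref{fv locally finite perimeter}, for $\mathcal{H}^1$-a.e.\ $r \in \{0 < \alpha_v < \pi\}$ the slice $E_r$ is $\mathcal{H}^{n-1}$-equivalent to a spherical cap $\mathbf{B}_{\alpha_v(r)}(r\,p(r))$ for some $p(r) \in \mathbb{S}^{n-1}$, and both $\nu^E \cdot \hat x$ and $|\nu^E_\parallel|$ are constant $\mathcal{H}^{n-2}$-a.e.\ on $(\partial^* E)_r$. The task is then reduced to showing that the measurable selection $r \mapsto p(r)$ is essentially constant on $\mathcal{I}$. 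Using the diffeomorphism $\Phi$ and the radial/tangential decomposition $\nu^E = \nu^E_\perp + \nu^E_\parallel$, I would parametrise the lateral part of $\partial^* E$ over $\mathcal{\mathring I}$ by an angle function $\alpha_E$ and a unit direction $p$, with $\alpha_E = \alpha_v$ a.e. by the volume constraint. The $W^{1,1}_{\textnormal{loc}}(\mathcal{\mathring I})$ regularity then forces the Cantor and jump parts of the polar description to vanish; combining this with the equality of the tangential normal measures of $E$ and $F_v$ (ensured by $P(E)=P(F_v)$ together with the rigidity of the scalar inequality on each slice) produces a distributional identity of the form $\alpha_v(r)\, p'(r) = 0$ a.e.\ in $\mathcal{\mathring I}$. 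Since $\alpha_v > 0$ on $\mathcal{\mathring I}$ and $\mathcal{I}$ is connected, $p$ is a single vector $p^* \in \mathbb{S}^{n-1}$; on the complement of $\mathcal{I}$ the slice is either empty or the full sphere, so $E$ coincides with $R F_v$ for the rotation $R$ sending $e_1$ to $p^*$.

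The main obstacle is the rigorous derivation of the constraint $\alpha_v \, p' = 0$. A priori $E$ is only a set of finite perimeter, $\partial^* E$ is rectifiable, and the higher-codimension phenomenon pointed out around \eqref{measure mu abs cont} allows the measure $\lambda_E$ to carry an absolutely continuous part when $n > 2$, so the slicing argument must carefully separate the radial contribution (controlled by $\lambda_E$) from the tangential one (controlled by $\alpha_v'$ and $p'$). The $W^{1,1}_{\textnormal{loc}}$ regularity is precisely what removes the singular behaviour of $\alpha_v$ that would otherwise allow discontinuous jumps of $p$, and this is where the equivalence between condition \eqref{condition 2} and the $W^{1,1}$ regularity of $\alpha_v$ from Proposition~\ref{prop:5.3dominik} enters decisively.
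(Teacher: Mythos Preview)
Your overall strategy matches the paper's, but there are two concrete gaps.

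\textbf{The Cantor case in $(i)\Rightarrow(ii)$.} When the interval condition holds but $\alpha_v\notin W^{1,1}_{\mathrm{loc}}(\mathcal{\mathring I})$, you claim that the positive $\mathcal H^{n-1}$-measure set $\{\nu^{F_v}_\parallel=0\}\cap\Phi(\mathcal{\mathring I}\times\mathbb S^{n-1})$ yields a single radius $\overline r$ with $\mathcal H^{n-1}(\partial^*F_v\cap\partial B(\overline r))>0$. That inference is false in general: by Corollary~\ref{corollary perimeter Fv} one has $\mathcal H^{n-1}(\partial^*F_v\cap\partial B(\overline r))=\overline r^{\,n-1}(\xi_v^\vee(\overline r)-\xi_v^\wedge(\overline r))$, which vanishes at every $\overline r$ precisely when $D^j\alpha_v=0$. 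If $\alpha_v$ fails to be $W^{1,1}_{\mathrm{loc}}$ only because $D^c\alpha_v\neq 0$, every single sphere carries zero boundary mass, and your ``slide at $\overline r$'' construction does not apply. The paper treats this separately in Proposition~\ref{no Cantor}: one approximates $\xi_v$ by step functions $\xi_v^k$ matching the pointwise variation, applies the jump construction (Proposition~\ref{jump set implies no rigidity}) finitely many times at each level $k$, and passes to the limit using lower semicontinuity of perimeter and the convergence $P(F_{v^k})\to P(F_v)$. This limiting argument is the genuinely new idea, and your outline omits it.

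\textbf{Transferring ``no flat boundary'' from $F_v$ to $E$ in $(ii)\Rightarrow(i)$.} You correctly identify the obstacle: to derive a distributional identity forcing $p'=0$ one must control the set $\{\nu^E_\parallel=0\}$ inside $\Phi(\mathcal{\mathring I}\times\mathbb S^{n-1})$, and for $n>2$ the measure $\lambda_E$ of \eqref{measure mu abs cont} can have an absolutely continuous part. Your proposed resolution (``$W^{1,1}_{\mathrm{loc}}$ regularity removes the singular behaviour of $\alpha_v$'') only gives $\mathcal H^{n-1}(\{\nu^{F_v}_\parallel=0\})=0$ for the symmetric set, via Proposition~\ref{prop:5.3dominik}; it says nothing directly about $E$. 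The paper closes this gap with Lemma~\ref{lem:5.4dominik}, whose proof is not elementary: it passes through the codimension-one circular symmetrisation (Theorem~\ref{theorem ineq cylindrical} and Lemma~\ref{lem:5.4dominik cyl}) and iterates over the coordinate planes. Once $\mathcal H^{n-1}(\{\nu^E_\parallel=0\}\cap\Phi(I\times\mathbb S^{n-1}))=0$ is established, the paper shows $b_E\in W^{1,1}_{\mathrm{loc}}(I;\R^n)$ with an explicit derivative, and a direct computation gives $d_E'=0$ (your $p'=0$). Without invoking Lemma~\ref{lem:5.4dominik} or an equivalent device, the step from the $F_v$-regularity to the $E$-regularity is missing.
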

Let us point out that, although similar results in the context of Steiner and Ehrhard's inequalities
already appeared in \cite{CagnettiColomboDePhilippisMaggiSteiner, ccdpmGAUSS}, 
the proof of Theorem~\ref{rigidity theorem} cannot simply use previous ideas, 
especially in the implication (i) $\Longrightarrow$ (ii).
We cannot rely, as in \cite{CagnettiColomboDePhilippisMaggiSteiner}, 
on a general formula for the perimeter of sets $E$ satisfying equality in \eqref{per ineq}.
Instead, we exhibit explicit counterexamples to rigidity, whenever 
one of the assumptions in (ii) fails. This requires a careful 
analysis of the transformations that one can apply to the set $F_v$, 
without modifying its perimeter. 
This turns out to be non trivial, especially if one assumes $\alpha_v$
to have a non zero Cantor part (see Proposition~\ref{no Cantor}).

Also the proof of the implication (ii) $\Longrightarrow$ (i)
presents some difficulties.
In the context of Steiner symmetrisation,  
 this  has been proved in 
\cite[Theorem~1.3]{ChlebikCianchiFuscoAnnals05}
and \cite[Theorem~1.2]{barchiesicagnettifusco}, for
codimension $1$ and  for every  codimension, respectively.
 In the smooth case, 
a proof is given in \cite[Proposition~5]{MorganHoweHarman2011},
for the general class of symmetrisations in warped products. 
For the spherical setting  without any smoothness assumption,  this implication has already been stated in \cite[Theorem~6.2]{MorganPratelli2013},  but the proof is only sketched.
A rigorous proof of this fact turns out to be more delicate than one would expect, 
and relies on the following result. 
\begin{lemma} \label{lem:5.4dominik}
Let $v: (0,\infty) \to [0, \infty)$ be a measurable function 
satisfying \eqref{bound on v} such that $F_v$ 
is a set of finite perimeter and finite volume.
Let $E \subset \R^n$ be a spherically $v$-distributed set, 
and let $I\subset (0,+\infty)$ be a Borel set. Assume that
\begin{equation}\label{eq:5.6dominik}
\mathcal{H}^{n-1}\left(\left\{x\in \partial^* E\cap \Phi(I\times \mathbb{S}^{n-1}): \nu^{E}_{\|}(x)=0  \right\}  \right)=0.
\end{equation}
Then,
\begin{equation}\label{eq:5.7dominik}
\mathcal{H}^{n-1}\left(\left\{x\in \partial^* F_v \cap \Phi(I\times \mathbb{S}^{n-1}): \nu^{F_v}_{\|}(x)=0  \right\}  \right)=0.
\end{equation}
Viceversa, let \eqref{eq:5.7dominik} be satisfied, and suppose that
$P(E;\Phi(I\times \mathbb{S}^{n-1}))=P(F_v;\Phi(I\times \mathbb{S}^{n-1}))$.
Then, \eqref{eq:5.6dominik} holds true.
\end{lemma}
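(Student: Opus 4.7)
The plan is to derive the lemma from a refinement of the perimeter inequality \eqref{per ineq}, splitting the perimeter of $E$ and of $F_v$ over $\Phi(I\times\mathbb{S}^{n-1})$ according to whether the outer normal is purely radial or not. For any spherically $v$-distributed set $E$ of finite perimeter, define the tangential part
$$
P^{t}(E;I):=\mathcal{H}^{n-1}\bigl(\{x\in\partial^{*}E\cap\Phi(I\times\mathbb{S}^{n-1}):\nu^{E}_{\|}(x)\neq 0\}\bigr),
$$
so that one has the decomposition
$$
P(E;\Phi(I\times\mathbb{S}^{n-1}))=P^{t}(E;I)+|\lambda_{E}|(I),
$$
where the radial contribution $|\lambda_{E}|(I)$ is exactly the $\mathcal{H}^{n-1}$-measure appearing in \eqref{eq:5.6dominik}. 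Hypothesis \eqref{eq:5.6dominik} thus reads $|\lambda_{E}|(I)=0$, and the conclusion \eqref{eq:5.7dominik} reads $|\lambda_{F_{v}}|(I)=0$.

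The key step will be to extract from the proof of Theorem~\ref{fv locally finite perimeter} the two \emph{separate} inequalities
\begin{equation}\label{plan:split}
P^{t}(F_{v};I)\leq P^{t}(E;I)\qquad\text{and}\qquad |\lambda_{F_{v}}|(I)\leq |\lambda_{E}|(I).
\end{equation}
Once \eqref{plan:split} is available, both directions of the lemma follow by a direct algebraic manipulation. For the forward direction, $|\lambda_{E}|(I)=0$ immediately gives $|\lambda_{F_{v}}|(I)\leq 0$, hence \eqref{eq:5.7dominik}. For the converse, combining $|\lambda_{F_{v}}|(I)=0$ with the perimeter equality on $\Phi(I\times\mathbb{S}^{n-1})$ yields
$$
P^{t}(E;I)+|\lambda_{E}|(I)=P^{t}(F_{v};I)\leq P^{t}(E;I),
$$
so that $|\lambda_{E}|(I)=0$, which is \eqref{eq:5.6dominik}.

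The tangential inequality in \eqref{plan:split} should follow from a slicewise spherical isoperimetric argument: applying the coarea formula with $x\mapsto|x|$ (whose tangential gradient on $\partial^{*}E$ has magnitude $|\nu^{E}_{\|}|$) reduces matters to comparing, on $\partial B(r)$, the $(n-2)$-dimensional perimeter of $E_{r}$ with that of the spherical cap $(F_{v})_{r}$ of equal $(n-1)$-measure, for which the cap is optimal. The main obstacle is the radial inequality $|\lambda_{F_{v}}|(I)\leq|\lambda_{E}|(I)$: I would identify $\lambda_{F_{v}}|_I$ with the singular part of the derivative of $v$ (equivalently, of $\alpha_v$), show that $|\lambda_{F_{v}}|$ matches this singular part exactly because $F_v$ is extremal, and then argue that any spherically $v$-distributed $E$ must absorb at least this much singular mass on $I$. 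When $n>2$, the measure $\lambda_{E}$ can carry a nontrivial absolutely continuous component (see Remark~\ref{remark gmt}), but this only helps since $|\lambda_{F_{v}}|$ has no absolutely continuous part at all. Both estimates in \eqref{plan:split} are implicit in the fine BV-slicing and blow-up machinery developed for Theorem~\ref{fv locally finite perimeter}, so the present lemma essentially repackages that decomposition as a monotonicity statement for the radial measure $\lambda$.
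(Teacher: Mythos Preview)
Your forward direction is essentially correct and matches the paper: the identity $\lambda_E^s = r^{n-1}D^s\xi_v = \lambda_{F_v}$ (which follows from \eqref{D xi} and Remark~\ref{rem 2 after volpert}) gives $|\lambda_{F_v}|(I)\le |\lambda_E|(I)$, so $|\lambda_E|(I)=0$ forces $|\lambda_{F_v}|(I)=0$.

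The converse, however, has a genuine gap: the tangential inequality $P^{t}(F_v;I)\le P^{t}(E;I)$ is not established by your coarea-plus-isoperimetry sketch. Applying Proposition~\ref{coarea} to $P^{t}(E;I)$ does \emph{not} reduce to the bare $(n-2)$-perimeter of the slices; it gives
\[
P^{t}(E;I)=\int_I\int_{(\partial^*E)_r}\frac{1}{|\nu^E_\parallel(x)|}\,d\mathcal{H}^{n-2}(x)\,dr,
\]
which is weighted by $1/|\nu^E_\parallel|$, while $P^{t}(F_v;I)=\int_I\sqrt{p_{F_v}^2+(r^{n-1}\xi'_v)^2}\,dr$ involves $\xi'_v$. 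The estimates in the proof of Theorem~\ref{fv locally finite perimeter} (namely \eqref{eq:cagnetti 3.13}--\eqref{eq:cagnetti 3.14}) only yield
\[
P^{t}(F_v;I)\le P^{t}(E;I)+|\lambda_E|(I),
\]
because the absolutely continuous density $h$ of $\lambda_E$ enters through $r^{n-1}\xi'_v=g+h$. Plugging this into your algebraic manipulation gives $|\lambda_E|(I)\le |\lambda_E|(I)$, which is vacuous. In fact, under the hypotheses of the converse one has $P^{t}(F_v;I)=P(F_v;\Phi(I\times\mathbb{S}^{n-1}))=P(E;\Phi(I\times\mathbb{S}^{n-1}))=P^{t}(E;I)+|\lambda_E|(I)$, so whenever $|\lambda_E|(I)>0$ your claimed inequality would be \emph{reversed}; establishing it a priori is therefore equivalent to the lemma itself.

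This is exactly the obstruction the paper flags (the possible absolutely continuous part of $\lambda_E$ when $n>2$), and it is why the paper does \emph{not} argue directly. Instead, it passes to the codimension-one circular symmetrisation, where the analogue of $\lambda_E$ is always purely singular (Remark~\ref{remark gmt circular}), proves the corresponding statement there (Lemma~\ref{lem:5.4dominik cyl}), and then recovers the spherical case by iterating circular symmetrisations in the planes $\langle e_1,e_j\rangle$ for $j=2,\dots,n$.
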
 
A direct proof of Lemma~\ref{lem:5.4dominik} does not seem to be obvious, 
due to the fact that, as pointed out above, 
the measure $\lambda_E$ defined in \eqref{measure mu abs cont}
can have an absolutely continuous part when $n > 2$.
In the context of Steiner symmetrisation of higher codimension,
 a result playing the role of Lemma~\ref{lem:5.4dominik} (see \cite[Proposition~3.6]{barchiesicagnettifusco})
is proved using the fact that the statement  
holds true in codimension $1$, see \cite[Proposition~4.2]{ChlebikCianchiFuscoAnnals05}.
For this reason, we are led to consider 
the \textit{circular symmetrisation}, 
which is the codimension $1$ version of the spherical symmetrisation,
and  was originally introduced by P\'olya in the case $n = 3$ (see \cite{polya50}). 
Note that, when $n=2$, spherical and circular symmetrisation coincide.

\subsection{Circular Symmetrisation}
In order to introduce the circular symmetrisation, let us first observe 
how the spherical symmetrisation operates on a given set $E$, in the special case $n=2$.
In this situation, for each $r > 0$ one intersects $E$ with the circle $\partial B (r)$ of radius $r$
centred at the origin. 
Then, the symmetric set $F_v$ is obtained by centring, for each $r > 0$, 
an open circumference arc of length $\mathcal{H}^1 (E \cap \partial B (r))$ at the point $r e_1$.
When $n > 2$ one can proceed in a similar way, by first slicing the set $E$ with parallel planes, 
and then by symmetrising it (in each plane) with the procedure just described.
Note that, in this case, one needs to specify both the direction along which the open arcs are centred, 
and the direction along which the slicing through planes is performed.

Let us then choose an ordered pair of orthogonal directions in $\R^n$, 
which we will assume to be $(e_1, e_2)$
(we will be centring open circumference arcs along $e_1$, 
while we will be slicing the set $E$ with parallel planes that are orthogonal to $e_2$).
In the following, for each $x = (x_1, \ldots, x_n) \in \R^n$, we will write 
$x = (x_{12}, x')$, where $x_{12} = (x_1, x_2) \in \R^2$
and $x' = (x_3, \ldots, x_n) \in \R^{n-2}$.
When $x_{12} \neq 0$, we set $\hat{x}_{12} : = x_{12}/|x_{12}|$. 
For each given $z' \in \R^{n-2}$, we denote by $\Pi_{z'}$
the two-dimensional plane defined by
$$
\Pi_{z'} : = \{ x = (x_{12}, x' )\in \R^2 \times \R^{n-2} : x' = z' \}.
$$
Given a set $E \subset \R^n$ and $(r, z') \in (0, \infty) \times \R^{n-2}$, 
we define the  \textit{circular slice} $E_{(r, z')}$ of $E$ 
with respect to $\partial B ( (0,z') , r) \cap \Pi_{z'}$ as
$$
E_{(r, z')} := E \cap \partial B ( (0,z') , r) \cap \Pi_{z'}  = \{ x = (x_{12}, x' ) \in E : x' = z' \text{ and } |x_{12}| = r \}.
$$
%
Let $\ell  : (0, \infty) \times \R^{n-2} \to [0, \infty)$ be a measurable function.
We say that $E$ is \textit{circularly $\ell$-distributed} if 
\begin{equation*} 
\ell (r, x') = \mathcal{H}^{1} (E_{(r, x')}), 
\qquad \text{ for } \mathcal{H}^{n-1}\text{-a.e. } (r, x') \in (0, \infty) \times \R^{n-2}.
\end{equation*}
If $\ell$ is a circular distribution, then we have  
\begin{equation} \label{bound on ell}
\ell (r, x') \leq \mathcal{H}^{1} (\partial B ((0, x'), r) \cap \Pi_{x'}) = 2 \pi r 
\qquad \text{ for } \mathcal{H}^{n-1}\text{-a.e. } (r, x') \in (0, \infty) \times \R^{n-2}.
\end{equation}
%
%
%
Among all the sets in $\R^n$ that are circularly $\ell$-distributed, 
we denote by $F^{\ell}$ the one whose circular slices
are open circumference arcs centred at the positive $e_1$ axis.
That is, we set
\begin{equation*} 
F^{\ell} := \left\{ (x_{12}, x') \in 
\R^n \setminus \{ x_{12} = 0 \} 
 : \text{dist}_{\mathbb{S}^{1}} (\hat{x}_{12}, e_1) < \frac{1}{2 r} \ell (r, x') \right\}.
\end{equation*}
In the following, we introduce the diffeomorphism 
$\Phi_{12} : (0, \infty) \times \R^{n-2} \times \mathbb{S}^1
\to \R^n \setminus \{ \hat{x}_{12} = 0 \}$ given by
$$
\Phi_{12} (r, x', \omega) := (r \omega, x' ) \qquad \text{ for every }
(r, x', \omega) \in (0, \infty) \times \R^{n-2} \times \mathbb{S}^1.
$$
Moreover, for every $x \in \partial^* E$ 
we write $\nu^E (x) = ( \nu^E_{12} (x), \nu^E_{x'} (x))$,
where $\nu^E_{12} (x) = (\nu^E_1 (x), \nu^E_2 (x))$ and 
$\nu^E_{x'} (x) = (\nu^E_3 (x), \ldots, \nu^E_n (x))$.
Then, we further decompose $\nu^E_{12} (x)$ as 
$$
\nu^E_{12} (x) = \nu^E_{12 \perp} (x) + \nu^E_{12 \parallel} (x),
$$
where $\nu^E_{12 \perp} (x):= ( \nu^E (x) \cdot \hat{x}_{12} ) \hat{x}_{12}$
and $\nu^E_{12 \parallel} (x) := \nu^E_{12} (x) - \nu^E_{12 \perp} (x)$.
We can now state  a result that plays the role of 
Theorem~\ref{fv locally finite perimeter} for the circular symmetrisation. 
\begin{theorem} \label{theorem ineq cylindrical}
Let $\ell  : (0, \infty) \times \R^{n-2} \to [0, \infty)$ be a measurable function
satisfying \eqref{bound on ell}, and let $E \subset \R^n$ be a 
circularly $\ell$-distributed set of finite perimeter and finite volume.
Then, $\ell \in BV_{\textnormal{loc}} ((0, \infty) \times \R^{n-2})$. 
Moreover, $F^{\ell}$ is a set of finite perimeter and 
\begin{equation} \label{per ineq cyl}
P (F^{\ell}; \Phi_{12} (B \times \mathbb{S}^{1})) \leq P (E; \Phi_{12}  (B \times \mathbb{S}^{1})),
\end{equation}
for every Borel set $B \subset (0, \infty) \times \R^{n-2}$. 

Finally, if $P (E) = P(F^{\ell})$, 
then for $\mathcal{H}^{n-1}$-a.e. $(r, x') \in (0, \infty) \times \R^{n-2}$:
\begin{itemize}

\item[(a)] $E_{(r, x')}$ is $\mathcal{H}^1$-equivalent 
to a circular arc and $\partial^* (E_{(r, x')}) = (\partial^* E)_{(r, x')} $;

\vspace{.1cm}

\item[(b)] the three functions 
$$
x \longmapsto \nu^E (x) \cdot \hat{x}_{12}, \qquad 
x \longmapsto | \nu^E_{12 \parallel}| (x), \qquad 
x \longmapsto \nu^E_{x'} (x),
$$
are constant in $(\partial^* E )_{(r, x')}$.

\end{itemize}
\end{theorem}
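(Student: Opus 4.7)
The plan is to reduce Theorem~\ref{theorem ineq cylindrical} to the two-dimensional case of Theorem~\ref{fv locally finite perimeter} by slicing with the family of planes $\Pi_{z'}$, $z' \in \mathbb{R}^{n-2}$. The key observation is that the circular symmetrisation in $\mathbb{R}^n$ acts within each $\Pi_{z'}\cong\mathbb{R}^2$ exactly as the two-dimensional spherical symmetrisation (which, in dimension two, coincides with the two-dimensional circular symmetrisation). Since this is a codimension-one operation within each slice, each slice can be treated independently, and the main work is then to integrate the slice-wise information over $z'$.

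First I would fix $z'\in\mathbb{R}^{n-2}$ and consider the slice $E_{z'}:=E\cap\Pi_{z'}$, identified with a subset of $\mathbb{R}^2$. Standard BV-slicing theorems give that $E_{z'}$ is a set of finite perimeter in $\Pi_{z'}$ for $\mathcal{H}^{n-2}$-a.e.\ $z'$, and that $\partial^*(E_{z'})$ coincides with $(\partial^*E)\cap\Pi_{z'}$ up to $\mathcal{H}^1$-null sets. Since $E_{z'}$ is spherically $v_{z'}$-distributed in $\mathbb{R}^2$ with $v_{z'}(r):=\ell(r,z')$, and since $F^\ell\cap\Pi_{z'}$ is its two-dimensional spherical symmetrisation, applying Theorem~\ref{fv locally finite perimeter} with $n=2$ to each slice yields $v_{z'}\in BV(0,\infty)$, the slice-wise perimeter inequality $P_2(F^\ell\cap\Pi_{z'};A\cap\Pi_{z'})\le P_2(E_{z'};A\cap\Pi_{z'})$ for $A=\Phi_{12}(B\times\mathbb{S}^1)$, and the corresponding slice-wise equality characterisations.

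Next I would lift the slice-wise statements to $\mathbb{R}^n$ by decomposing the perimeter measures according to $\nu^E=(\nu^E_{12},\nu^E_{x'})$ and similarly for $F^\ell$. The \emph{tangential} contribution $|D_{12}\chi_E|$, i.e.\ the measure with density $|\nu^E_{12}|$ against $\mathcal{H}^{n-1}\llcorner\partial^*E$, is controlled by integrating the 2D slice-wise inequality and invoking the BV-slicing identity $|D_{12}\chi_E|(A)=\int_{\mathbb{R}^{n-2}} P_2(E_{z'};A\cap\Pi_{z'})\,dz'$ (and its analogue for $F^\ell$), giving $|D_{12}\chi_{F^\ell}|(A)\le|D_{12}\chi_E|(A)$. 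The \emph{transversal} contribution $|\nu_{x'}|\,d\mathcal{H}^{n-1}\llcorner\partial^*$ is handled by parametrising $\partial^*F^\ell$ as the graph of $\alpha_\ell:=\ell/(2r)$ in the angular variable and applying the area formula, and then bounding the result by slicing $E$ along 1-lines parallel to $e_j$ for $j\ge 3$, combined with a monotone-rearrangement comparison. Combining the two bounds gives $P(F^\ell;A)\le P(E;A)$; joint BV regularity of $\ell$ in $(r,x')$ then follows from its slice-wise BV regularity in $r$ together with the observation that $x'\mapsto\int_0^R\ell(r,x')\,dr=\mathcal{H}^2(E_{x'}\cap B_2(0,R))$ is BV in $x'$ because $E$ has finite perimeter.

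For the equality cases, I would trace back through each step: slice-wise equality for $\mathcal{H}^{n-2}$-a.e.\ $z'$, forced by the tangential equality, yields via the 2D equality cases both property (a) and the constancy of $x\mapsto\nu^E(x)\cdot\hat{x}_{12}$ and $x\mapsto|\nu^E_{12\|}|(x)$ on each circular slice $(\partial^*E)_{(r,x')}$; the transversal equality then forces $x\mapsto\nu^E_{x'}(x)$ to match the constant transversal gradient of $\alpha_\ell$ on $F^\ell$, yielding the third constancy. The main technical obstacle is precisely the transversal step: $\alpha_\ell$ may have a non-trivial jump or Cantor part in the $x'$ variables, so the area-formula argument must be carried out using the full BV decomposition of $\alpha_\ell$, and the comparison with $E$ must carefully handle the interaction between the two-dimensional symmetrisation in each $\Pi_{z'}$ and the $(n-2)$-dimensional rearrangement transversal to it. This is analogous to the higher-codimension Steiner machinery developed in \cite{barchiesicagnettifusco}.
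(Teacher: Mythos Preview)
Your slicing-by-$\Pi_{z'}$ strategy is conceptually appealing but contains a genuine gap at the step ``combining the two bounds gives $P(F^\ell;A)\le P(E;A)$''. The quantities you control separately are
\[
|D_{12}\chi_E|(A)=\int_{\partial^*E\cap A}|\nu^E_{12}|\,d\mathcal{H}^{n-1}
\qquad\text{and}\qquad
|D_{x'}\chi_E|(A)=\int_{\partial^*E\cap A}|\nu^E_{x'}|\,d\mathcal{H}^{n-1},
\]
and analogously for $F^\ell$. Even if you establish $|D_{12}\chi_{F^\ell}|(A)\le|D_{12}\chi_E|(A)$ and $|D_{x'}\chi_{F^\ell}|(A)\le|D_{x'}\chi_E|(A)$, their sum bounds $\int(|\nu_{12}|+|\nu_{x'}|)\,d\mathcal{H}^{n-1}$, not the perimeter $\int\sqrt{|\nu_{12}|^2+|\nu_{x'}|^2}\,d\mathcal{H}^{n-1}=\mathcal{H}^{n-1}(\partial^*\cdot\cap A)$. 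The two components are coupled through the Euclidean norm, and this coupling is precisely what is lost when you treat the in-plane symmetrisation and the transversal variation independently. Your slice-wise application of the two-dimensional Theorem~\ref{fv locally finite perimeter} sees only the geometry inside each $\Pi_{z'}$; it carries no information about how the slices $E_{z'}$ vary with $z'$, and no amount of ``monotone-rearrangement comparison'' along lines parallel to $e_j$, $j\ge 3$, will recover the missing cross-term. (There is also a secondary issue: slice-wise $BV$ regularity of $\ell(\cdot,z')$ in $r$ together with $BV$ regularity of $z'\mapsto\int_0^R\ell(r,z')\,dr$ does \emph{not} imply $\ell\in BV_{\mathrm{loc}}$ jointly.)

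The paper's approach is different in exactly this respect: rather than slicing transverse to the symmetrisation (fixing $z'$), it slices \emph{along} the circular fibres by applying the coarea formula with $f(x)=(|x_{12}|,x')$, obtaining circles $(\partial^*E)_{(r,x')}$ as slices. One then writes, for $(r,x')\in G^\ell_E$,
\[
\int_{(\partial^*E)_{(r,x')}}\frac{1}{|\nu^E_{12\parallel}|}\,d\mathcal{H}^0
=\int_{(\partial^*E)_{(r,x')}}\sqrt{1+\Big(\frac{\hat{x}_{12}\cdot\nu^E}{|\nu^E_{12\parallel}|}\Big)^2+\Big|\frac{\nu^E_{x'}}{|\nu^E_{12\parallel}|}\Big|^2}\,d\mathcal{H}^0,
\]
and applies Jensen's inequality to the strictly convex function $t\mapsto\sqrt{1+|t|^2}$ on $\mathbb{R}^{n-1}$. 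This keeps the radial, tangential, and transversal components inside a single square root throughout, yielding both the perimeter inequality~\eqref{per ineq cyl} and, in the equality case, the simultaneous constancy of all three functions in~(b). The preliminary lemmas of Section~\ref{cylindrical section} (the circular analogues of Lemmas~\ref{lem:4.12Dominik}, \ref{lem:4.14dominik} and Proposition~\ref{lem:4.17dominik}) are set up precisely to make this argument go through, mirroring the proof of Theorem~\ref{fv locally finite perimeter} step by step.
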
 
In the smooth setting and in the case $n=3$,
inequality \eqref{per ineq cyl} was proved by P\'olya.  
The following result is the counterpart of Lemma~\ref{lem:5.4dominik}
in the context of circular symmetrisation. 
\begin{lemma} \label{lem:5.4dominik cyl}
Let $\ell  : (0, \infty) \times \R^{n-2} \to [0, \infty)$
be a measurable function 
satisfying  \eqref{bound on ell} such that $F^{\ell}$ 
is a set of finite perimeter and finite volume.
Let $E \subset \R^n$ be a circularly $\ell$-distributed set, 
and let $I \subset (0, \infty) \times \R^{n-2}$ be a Borel set. 
Assume that
\begin{equation} \label{eq:5.6dominik cyl}
\mathcal{H}^{n-1}\left(\left\{x\in \partial^* E \cap \Phi(I\times \mathbb{S}^{1}): 
\nu^E_{12 \parallel} (x)=0  \right\}  \right)=0.
\end{equation}
Then,
\begin{equation}\label{eq:5.7dominik cyl}
\mathcal{H}^{n-1}\left(\left\{x\in \partial^* F^{\ell} \cap \Phi(I\times \mathbb{S}^{1}): 
\nu^{F^{\ell}}_{12 \parallel} (x)=0  \right\}  \right)=0.
\end{equation}
Viceversa, let \eqref{eq:5.7dominik cyl} be satisfied, and suppose that
$P(E;\Phi(I\times \mathbb{S}^{1}))=P(F^{\ell};\Phi(I\times \mathbb{S}^{1}))$.
Then, \eqref{eq:5.6dominik cyl} holds true.
\end{lemma}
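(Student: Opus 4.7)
The plan is to prove Lemma~\ref{lem:5.4dominik cyl} by slicing along the $x'$-variable and reducing to the two-dimensional case, where circular and spherical symmetrisation coincide so that Lemma~\ref{lem:5.4dominik} with $n=2$ becomes applicable. First, I would invoke the standard slicing theory for sets of finite perimeter: for $\mathcal{H}^{n-2}$-a.e.\ $x' \in \R^{n-2}$ the planar slice $E^{x'} := \{x_{12} \in \R^2 : (x_{12}, x') \in E\}$ is a set of finite perimeter in $\R^2$ which is spherically $\ell(\cdot, x')$-distributed, and its planar spherical symmetral about the $e_1$-axis coincides, up to $\mathcal{H}^2$-null sets, with $(F^{\ell})^{x'}$. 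Furthermore, the De Giorgi slicing theorem gives $\partial^*(E^{x'}) = (\partial^* E)^{x'}$ up to $\mathcal{H}^1$-null sets, and $\nu^{E^{x'}}(x_{12}) = \nu^E_{12}(x_{12},x')/|\nu^E_{12}(x_{12},x')|$ at every slice point where $\nu^E_{12} \neq 0$.

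Next I would translate the hypothesis \eqref{eq:5.6dominik cyl} into a family of two-dimensional hypotheses. At a reduced boundary point where $\nu^E_{12} \neq 0$, the condition $\nu^E_{12\parallel}(x) = 0$ is equivalent to its planar analogue $\nu^{E^{x'}}_{\parallel}(x_{12}) = 0$. The \emph{horizontal stratum} $N := \{x \in \partial^* E : \nu^E_{12}(x) = 0\}$ is contained in the set appearing in \eqref{eq:5.6dominik cyl}, and hence $\mathcal{H}^{n-1}(N \cap \Phi(I\times \mathbb{S}^1)) = 0$ by assumption. The coarea formula for the projection $(x_{12}, x') \mapsto x'$, whose Jacobian is $|\nu^E_{12}|$, then yields, for $\mathcal{H}^{n-2}$-a.e.\ $x'$, the planar hypothesis of Lemma~\ref{lem:5.4dominik} with $n=2$ for $E^{x'}$ on the slice $I_{x'} := \{r : (r,x') \in I\}$. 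Applying that lemma delivers the corresponding vanishing on each symmetrised slice $(F^{\ell})^{x'}$. Integrating over $x'$ via coarea, and then showing separately that the horizontal stratum $\{\nu^{F^{\ell}}_{12} = 0\}$ also has $\mathcal{H}^{n-1}$-measure zero in $\Phi(I\times \mathbb{S}^1)$ --- using the explicit product structure of $F^{\ell}$ in the $x'$-variable together with Theorem~\ref{theorem ineq cylindrical}(b) --- gives \eqref{eq:5.7dominik cyl}.

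For the converse direction I would combine the equality of perimeters with the local perimeter inequality \eqref{per ineq cyl} applied slice-by-slice and with the slicing decomposition, which factors $P(E; \Phi(I\times \mathbb{S}^1))$ as an integral of slice perimeters over $x'$ plus a horizontal contribution. Standard arguments show that equality of the global perimeter forces equality of slice perimeters for $\mathcal{H}^{n-2}$-a.e.\ $x'$ and forces the horizontal contribution of $E$ to match that of $F^{\ell}$; the latter vanishes by \eqref{eq:5.7dominik cyl}. The converse part of Lemma~\ref{lem:5.4dominik} applied on each slice, followed by Fubini, then yields \eqref{eq:5.6dominik cyl}.

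The principal obstacle is controlling the horizontal stratum $\{\nu^E_{12} = 0\}$ (and the corresponding stratum for $F^{\ell}$), which is invisible to the coarea formula in the $x'$-variable since $|\nu^E_{12}|$ appears as the Jacobian, yet may carry positive $\mathcal{H}^{n-1}$-measure. The hypotheses \eqref{eq:5.6dominik cyl}--\eqref{eq:5.7dominik cyl} are designed precisely to exclude such pieces; transferring this exclusion between $E$ and $F^{\ell}$ requires a careful use of the explicit form of $F^{\ell}$ and of the equality-case information furnished by Theorem~\ref{theorem ineq cylindrical}, and is the most delicate technical point in the argument.
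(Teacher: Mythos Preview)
Your proposal has a circularity problem. You plan to prove Lemma~\ref{lem:5.4dominik cyl} by invoking Lemma~\ref{lem:5.4dominik} with $n=2$ on each $x'$-slice. But in the paper's logical structure the dependence runs the other way: Lemma~\ref{lem:5.4dominik cyl} is established \emph{first}, and only afterwards is Lemma~\ref{lem:5.4dominik} deduced from it (by a sequence of circular symmetrisations, see Step~2b of the proof of Lemma~\ref{lem:5.4dominik}). Since for $n=2$ the circular and spherical statements coincide, what you are really invoking is the $n=2$ case of the very lemma you are trying to prove, and you give no independent argument for that base case. So as written the proposal begs the question.

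The paper's approach is genuinely different and avoids any dimensional reduction. It proves Lemma~\ref{lem:5.4dominik cyl} directly, for every $n$, by exploiting the fact that the circular symmetrisation has codimension~$1$. The crucial ingredient is the auxiliary lemma (stated just before the circular analogue of Proposition~\ref{prop:5.3dominik}) asserting that, for \emph{any} circularly $\ell$-distributed set $E$, the condition $\mathcal{H}^{n-1}(\{x\in\partial^*E:\nu^E_{12\parallel}(x)=0\}\cap\Phi_{12}(A\times\mathbb{S}^1))=0$ is equivalent to $P(E;\Phi_{12}(B\times\mathbb{S}^1))=0$ for every $\mathcal{H}^{n-1}$-null Borel $B\subset A$. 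This equivalence holds precisely because in codimension~$1$ Vol'pert's theorem (Theorem~\ref{thm:volpert circular}) gives information at \emph{every} point of the slice, not merely almost every point (see Remark~\ref{remark gmt circular}); the analogous statement fails for a general spherically $v$-distributed $E$ when $n>2$. With this lemma and the circular analogue of Proposition~\ref{prop:5.3dominik} in hand, the argument of \cite[Proposition~4.2]{ChlebikCianchiFuscoAnnals05} carries over, and that is how the paper proceeds.

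A secondary difficulty in your slicing plan, even granting the $n=2$ case, is the ``factoring'' of $P(E;\Phi_{12}(I\times\mathbb{S}^1))$ as an integral of two-dimensional slice perimeters plus a horizontal piece. The coarea formula for the projection $(x_{12},x')\mapsto x'$ has Jacobian $|\nu^E_{x'}|$, not $1$, so $\int_{\R^{n-2}}P(E^{x'})\,dx'$ equals $\int_{\partial^*E}|\nu^E_{x'}|\,d\mathcal{H}^{n-1}$ rather than the full perimeter; the discrepancy involves the $x'$-component of the normal and does not reduce to the horizontal stratum $\{\nu^E_{12}=0\}$ you isolate. Transferring the equality $P(E)=P(F^\ell)$ to slice-by-slice equalities therefore needs more than what you outline.
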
 
Once Lemma~\ref{lem:5.4dominik cyl} is established, 
we can show Lemma~\ref{lem:5.4dominik} through a slicing argument.
Finally, the proof of (ii) $\Longrightarrow$ (i) is concluded by showing 
that, if $E$ satisfies equality in \eqref{per ineq}, 
the function associating to every $r \in (0, \infty)$
the center of $E_r$ (see \eqref{def dE}) is $W^{1, 1}_{\textnormal{loc}}$
and, ultimately, constant (see Section~\ref{section ii implies i}).

The paper is divided as follows.
Section~\ref{preliminaries} contains basic results of Geometric Measure Theory that are extensively 
used in the following.
In Section~\ref{preliminary spherical} we give the setting 
of the problem and introduce useful tools to deal with the spherical framework. 
Section~\ref{section properties v and xi} is devoted to the study 
of the properties of the functions $v$ and $\xi_v$,   
while Theorem~\ref{fv locally finite perimeter} is proven in 
Section~\ref{section spherical proof perimeter inequality}.  
Important properties of the circular symmetrisation are discussed 
in Section~\ref{cylindrical section}, where we also give the proof of Lemma~\ref{lem:5.4dominik}.
 The implications (ii) $\Longrightarrow$ (i) and (i) $\Longrightarrow$ (ii)
of Theorem~\ref{rigidity theorem} are proven in Section~\ref{section ii implies i}
and Section~\ref{section i implies ii}, respectively.

\section{Basic notions of Geometric Measure Theory} \label{preliminaries}


In this section we introduce some tools from Geometric Measure Theory.
The interested reader can find more details in the monographs 
\cite{AFP, GMSbook1,maggiBOOK, Simon83}.
For $n \in \mathbb{N}$, we denote with $\mathbb{S}^{n-1}$
the unit sphere of $\mathbb{R}^n$, i.e. 
$$
\mathbb{S}^{n-1} = \{ x \in \R^n :  |x| = 1  \},
$$
where $| \cdot |$ stands for the Euclidean norm,
and we set $\R^n_0:= \R^n \setminus \{ 0 \}$.
For every $x \in \R^n_0$, we write $\hat{x} : = x/ |x|$ for the radial versor of $x$. 
We denote by $e_1, \ldots, e_n$ the canonical basis in $\R^n$, and 
for every $x, y \in \R^n$, $x \cdot y$ stands for the standard scalar product in $\R^n$ between $x$ and $y$.
For every $r > 0$ and $x \in \R^n$, we denote by $B (x, r)$ the open ball of $\R^n$
with radius $r$ centred at $x$.
In the special case $x = 0$, we set $B(r):= B(0, r)$.
In the following, we will often make use of the diffeomorphism 
$\Phi : (0, \infty) \times \mathbb{S}^{n-1} \to \mathbb{R}^n_0$ 
defined as
$$
\Phi (r, \omega) := r \omega \qquad \text{ for every } (r, \omega) \in (0, \infty) \times \mathbb{S}^{n-1}.
$$
For $x\in\R^n$ and $\nu\in \mathbb{S}^{n-1}$, we will denote by $H_{x,\nu}^+$ and $H_{x,\nu}^-$ the closed half-spaces
whose boundaries are orthogonal to $\nu$:
\begin{eqnarray}\label{Hxnu+}
  H_{x,\nu}^+&:=&\Big\{y\in\R^n:(y-x)\cdot\nu\ge 0\Big\}\,,
  \\\nonumber
  H_{x,\nu}^-&:=&\Big\{y\in\R^n:(y-x)\cdot\nu\le 0\Big\}\,.
\end{eqnarray}
If $1 \leq k \leq n$, we denote by $\mathcal{H}^k$ the $k$-dimensional Hausdorff measure in $\R^n$.
If $\{E_h\}_{h\in\N}$ is a sequence of Lebesgue measurable sets in $\R^n$
with finite volume, and $E \subset \R^n$ is also measurable with finite volume, 
we say that $\{E_h\}_{h\in\N}$ converges to $E$ as $h\to\infty$, and write $E_h\to E$, 
if $\H^n(E_h\Delta E)\to 0$ as $h\to\infty$. In the following, we will denote by $\chi_E$
the characteristic function of a measurable set $E \subset \R^n$.

\subsection{Density points} 
Let $E \subset \R^n$ be a Lebesgue measurable set and let $x\in\R^n$. 
The upper and lower $n$-dimensional densities of $E$ at $x$ are defined as
\begin{eqnarray*}
  \theta^*(E,x) :=\limsup_{r\to 0^+}\frac{\H^n(E\cap B(x,r))}{\om_n\,r^n}\,,
  \qquad
  \theta_*(E,x) :=\liminf_{r\to 0^+}\frac{\H^n(E\cap B(x,r))}{\om_n\,r^n}\,,
\end{eqnarray*}
respectively. 
It turns out that $x \mapsto  \theta^*(E,x)$ and $x \mapsto  \theta_*(E,x)$
are Borel functions that agree $\mathcal{H}^n$-a.e. on $\R^n$. 
Therefore, the $n$-dimensional density of $E$ at $x$
\[
\theta(E,x) := \lim_{r\to 0^+}\frac{\H^n(E\cap B(x,r))}{\om_n\,r^n}\,,
\]
is defined for $\H^n$-a.e. $x\in\R^n$, and $x \mapsto  \theta (E,x)$
is a Borel function on $\R^n$.
Given $t \in [0,1]$, we set
$$
E^{(t)} :=\{x\in\R^n:\theta(E,x)=t\}.
$$
By the Lebesgue differentiation theorem, the pair $\{E^{(0)},E^{(1)}\}$ 
is a partition of $\R^n$, up to a $\H^n$-negligible set. 
The set $\pae E :=\R^n\setminus(E^{(0)}\cup E^{(1)})$ is called the \textit{essential boundary} of $E$. 

\medskip

\subsection{Rectifiable sets}\label{section sofp} 
Let $1\le k\le n$, $k\in\N$. 
If $A, B \subset \R^n$ are Borel sets we say that 
$A \subset_{\H^k} B$ if $\H^k (B \setminus A) = 0$, and $A =_{\H^k} B$ if $\H^k (A \Delta B) = 0$,
where $\Delta$ denotes the symmetric difference of sets.
Let $M\subset\R^n$ be a Borel set.
We say that $M$ is {\it countably $\H^k$-rectifiable} if there exist 
Lipschitz functions $f_h:\R^k\to\R^n$ ($h\in\N$) such that $M\subset_{\H^k}\bigcup_{h\in\N}f_h(\R^k)$.
Moreover, we say that $M$ is {\it locally $\H^k$-rectifiable} if $\H^k(M\cap K)<\infty$ for every compact set $K\subset\R^n$, or, equivalently, if $\H^k\llcorner M$ is a Radon measure on $\R^n$. 

A Lebesgue measurable set $E\subset\R^n$ is said of {\it locally finite perimeter} in $\R^n$ if there exists a $\R^n$-valued Radon measure $\mu_E$, called the {\it Gauss--Green measure} of $E$, such that
\[
\int_E\nabla\vphi(x)\,dx=\int_{\R^n}\vphi(x)\,d\mu_E(x)\,,\qquad\forall \vphi\in C^1_c(\R^n)\,,
\]
where $C^1_c (\R^n)$ denotes the class of $C^1$ functions in $\R^n$
with compact support. 
The relative perimeter of $E$ in $A\subset\R^n$ is then defined by setting $P(E;A):=|\mu_E|(A)$
for any Borel set $A \subset \R^n$.
The perimeter of $E$ is then defined as $P(E):=P(E;\R^n)$.
If $P(E) < \infty$, we say that $E$ is a set of {\it finite perimeter} in $\R^n$.
The {\it reduced boundary} of $E$ is the set $\pa^*E$ of those $x\in\R^n$ such that
\[
\nu^E(x)=\lim_{r\to 0^+}\,\frac{\mu_E(B(x,r))}{|\mu_E|(B(x,r))}\qquad\mbox{exists and belongs to $\mathbb{S}^{n-1}$}\,.
\]
The Borel function $\nu^E:\pa^*E\to \mathbb{S}^{n-1}$ 
is called the {\it measure-theoretic outer unit normal} to $E$. 
If $E$ is a set of locally finite perimeter, it is possible to show
that $\pa^*E$ is a locally $\H^{n-1}$-rectifiable set in  $\R^n$ 
\cite[Corollary 16.1]{maggiBOOK}, with $\mu_E=\nu^E\,\H^{n-1}\mres\pa^*E$, and
\[
\int_E\nabla\vphi(x)\,dx=\int_{\pa^*E}\vphi(x)\,\nu^E(x)\,d\H^{n-1}(x)\,,\qquad\forall \vphi\in C^1_c(\R^n)\,.
\]
Thus, $P(E;A)=\H^{n-1}(A\cap\pa^*E)$ for every Borel set $A\subset\R^n$. 
If $E$ is a set of locally finite perimeter, it turns out that 
\begin{equation*}
  \label{inclusioni frontiere}
  \pa^*E \subset \subset E^{(1/2)} \subset \pae E\,.
\end{equation*}
Moreover, {\it Federer's theorem} holds true (see \cite[Theorem 3.61]{AFP} and \cite[Theorem 16.2]{maggiBOOK}):
\[
\H^{n-1}(\pae E\setminus\pa^*E)=0\,,
\]
thus implying that the essential boundary $\pae E$ of $E$ is locally $\H^{n-1}$-rectifiable 
in  $\R^n$.  

\subsection{General facts about measurable functions}
Let $f:\R^n\to\R$ be a Lebesgue measurable function.
We define the {\it approximate upper limit} $f^\vee(x)$ and the {\it approximate lower limit} 
$f^\wedge(x)$ of $f$ at $x\in\R^n$ as 
\begin{eqnarray}
  \label{def fvee}
  f^\vee(x)=\inf\Big\{t\in\R: x\in \{f>t\}^{(0)}\Big\}\,,
  \\
  \label{def fwedge}
  f^\wedge(x)=\sup\Big\{t\in\R: x\in \{f<t\}^{(0)}\Big\}\,.
\end{eqnarray}
We observe that $f^\vee$ and $f^\wedge$ are Borel functions that are defined at 
{\it every} point of $\R^n$, with values in $\R\cup\{\pm\infty\}$.
Moreover, if $f_1: \R^n \to \R$ and $f_2: \R^n \to \R$
are measurable functions satisfying $f_1=f_2$  $\H^n$-a.e.  on $\R^n$, 
then $f_1^\vee=f_2^\vee$ and $f_1^\wedge=f_2^\wedge$ {\it everywhere} on $\R^n$. 
We define the {\it approximate discontinuity} set $S_f$ of $f$ as
$$
S_f: =\{f^\wedge<f^\vee\}.
$$
Note that, by the above considerations, it follows that  $\H^n (S_f)=0$. 
Although $f^\wedge$ and $f^\vee$ may take infinite values on $S_f$, 
the difference $f^\vee(x)-f^\wedge(x)$ is well defined in $\R\cup\{\pm\infty\}$ for every $x\in S_f$.
Then, we can define the {\it approximate jump} $[f]$ of $f$ as the Borel function $[f]:\R^n\to[0,\infty]$ given by
  \begin{eqnarray*}
    [f](x):=\left\{\begin{array}{l l}
      f^\vee(x)-f^\wedge(x)\,,&\mbox{if $x\in S_f$}\,,
      \vspace{.2cm} \\
      0\,,&\mbox{if $x\in \R^n\setminus S_f$}\,.
    \end{array}
    \right .
  \end{eqnarray*}
Let $A\subset\R^n$ be a Lebesgue measurable set. 
We say that $t\in\R\cup\{\pm\infty\}$ is the approximate limit of $f$ at $x$ with respect to $A$, 
and write $t=\aplim (f,A,x)$, if
\begin{eqnarray*}
  &&\theta\Big(\{|f-t|>\e\}\cap A;x\Big)=0\,,\qquad\forall\e>0\,,\hspace{0.3cm}\qquad (t\in\R)\,,
  \\
  &&\theta\Big(\{f<M\}\cap A;x\Big)=0\,,\qquad\hspace{0.6cm}\forall M>0\,,\qquad (t=+\infty)\,,
  \\
  &&\theta\Big(\{f>-M\}\cap A;x\Big)=0\,,\qquad\hspace{0.3cm}\forall M>0\,,\qquad (t=-\infty)\,.
\end{eqnarray*}
We say that $x\in S_f$ is a \textit{jump point} of $f$ if there exists $\nu\in \mathbb{S}^{n-1}$ such that
\[
f^\vee(x)=\aplim(f,H_{x,\nu}^+,x)\,,\qquad f^\wedge(x)=\aplim(f,H_{x,\nu}^-,x)\,.
\]
If this is the case, we say that $\nu_f(x):= \nu$ is the approximate jump direction of $f$ at $x$.
If we denote by $J_f$ the set of approximate jump points of $f$, we have that $J_f\subset S_f$ 
and $\nu_f:J_f\to \mathbb{S}^{n-1}$ is a Borel function. 


\subsection{Functions of bounded variation} 
Let $f:\R^n\to\R$ be a Lebesgue measurable function, 
and let $\Omega\subset\R^n$ be open. 
We define the {\it total variation of $f$ in $\Omega$} as
\[
|Df|(\Omega)=\sup\Big\{\int_\Omega\,f(x)\,\diver\,T(x)\,dx:T\in C^1_c(\Omega;\R^n)\,,|T|\le 1\Big\}\,,
\]
where $C^1_c(\Omega;\R^n)$ is the set of $C^1$ functions from 
$\Omega$ to $\R^n$ with compact support.
We also denote by $C_c (\Omega ; \R^n)$ the class of all
continuous functions from $\Omega$ to $\R^n$.
Analogously, for any $k \in \mathbb{N}$, the class of $k$ times continuously differentiable functions from $\Omega$ to $\R^n$ is denoted by $C^k_c (\Omega ; \R^n)$. We say that $f$ belongs to the space of functions of bounded variations, $f \in BV(\Omega)$, if $|Df|(\Omega)<\infty$ and $f\in L^1(\Omega)$. Moreover, we say that $f\in BV_{\textnormal{loc}}(\Omega)$ if $f\in BV(\Omega')$ for every open set $\Omega'$ compactly contained in $\Omega$. 
Therefore, if $f\in BV_{\textnormal{loc}}(\R^n)$ the distributional derivative $Df$ of $f$ is an $\R^n$-valued Radon measure. 
In particular, $E$ is a set of locally finite perimeter if and only if $\chi_E\in BV_{\textnormal{loc}}(\R^{n})$. 
If $f\in BV_{loc}(\R^n)$, one can write the  Radon--Nykodim decomposition of $Df$ with respect to $\mathcal{H}^n$
as $Df=D^af+D^sf$, where $D^sf$ and $\mathcal{H}^n$ are mutually singular, and where $D^af\ll\mathcal{H}^n$. 
We denote the density of $D^af$ with respect to $\mathcal{H}^n$ by $\nabla f$, 
so that $\nabla\,f\in L^1(\Omega;\R^n)$ with $D^af=\nabla f\,d\mathcal{H}^n$. Moreover, for $\mathcal{H}^n$-a.e. $x\in\R^n$, $\nabla f(x)$ is the approximate differential of $f$ at $x$. 
If $f\in BV_{\textnormal{loc}}(\R^n)$, then $S_f$ is countably $\mathcal{H}^{n-1}$-rectifiable.
Moreover,  we have  $\mathcal{H}^{n-1}(S_f\setminus J_f)=0$, $[f]\in L^1_{loc}(\mathcal{H}^{n-1}\llcorner J_f)$, 
and the $\R^n$-valued Radon measure $D^jf$ defined as
\[
D^jf=[f]\,\nu_f\,d\mathcal{H}^{n-1}\llcorner J_f\,,
\]
is called the {\it jump part of $Df$}. 
If we set $D^cf=D^sf-D^jf$, we have that 
$D f = D^af+D^jf+D^cf$.
The $\R^n$-valued Radon measure $D^cf$ is called the {\it Cantorian part} of $Df$, 
and it is such that $|D^cf|(M)=0$ for every $M \subset \R^n$ 
which is $\sigma$-finite with respect to $\mathcal{H}^{n-1}$.

In the special case $n = 1$, if $(a, b) \subset \R$ is an open (possibly unbounded) interval, 
 \textit{every} $f \in BV ((a, b))$ can be written as 
\begin{equation} \label{1D decomposition BV}
 f = f^a + f^j + f^c,
\end{equation} 
where $f \in W^{1, 1} ((a, b))$, $f^j$ is a jump function (i.e. $D f = D^j f$)
and $f^c$ is a Cantor function (i.e. $Df = D^c f$), 
see \cite[Corollary~3.33]{AFP}. 
Moreover, if $f^j = 0$ (or, more in general, if $f$ is a \textit{good representative}, 
see \cite[Theorem~3.28]{AFP}), the total variation of $D f$ can be obtained as   
\begin{equation} \label{formula total variations in 1D}
| D f | (a, b) = 
\sup \left\{ \sum_{i=1}^N | f (x_{i+1})  -   f (x_i) | :
a < x_1 < x_2 < \ldots < x_N < b  \right\},
\end{equation}
where the supremum runs over all $N \in \mathbb{N}$, 
and over all the possible partitions of $(a, b)$ with $a<  x_1 < x_2 < \ldots < x_N < b$.
When $n=1$, we will often write $f'$ instead of $\nabla f$.
\section{Setting of the problem and preliminary results}
\label{preliminary spherical}
In this section we give the notation for the chapter, and we introduce some 
 results that will be extensively used later. 
For every $x, y \in \mathbb{S}^{n-1}$, the \textit{geodesic distance} between $x$ and $y$ is given by 
$$
\text{dist}_{\mathbb{S}^{n-1}} (x, y) := \arccos (x \cdot y).
$$
We recall that the geodesic distance satisfies the triangle inequality:
$$
\text{dist}_{\mathbb{S}^{n-1}} (x, y) \leq \text{dist}_{\mathbb{S}^{n-1}} (x, z) 
+ \text{dist}_{\mathbb{S}^{n-1}} (z, y)  \qquad \text{ for every } x, y, z \in \mathbb{S}^{n-1}.
$$
 Let $r > 0$, $p \in \mathbb{S}^{n-1}$ and $\beta \in [0, \pi]$ be fixed. 
The \textit{open geodesic ball} (or \textit{spherical cap}) of centre $r p$ and radius $\beta$ is the set 
$$
\mathbf{B}_{\beta} (r p) := \{ x \in \partial B (r) : \text{dist}_{\mathbb{S}^{n-1}} (\hat{x}, p) < \beta \}.
$$
Note in the extreme cases $\beta = 0$ and $\beta = \pi$
we have $\mathbf{B}_{0} (r p) = \emptyset$ and $\mathbf{B}_{\pi} (r p) = \partial B (r) \setminus \{- r p \}$, respectively.
Accordingly, the \textit{geodesic sphere} of centre $r p$ and radius $\beta$
is the boundary of $\mathbf{B}_{\beta} (r p)$, which is given by
$$
\mathbf{S}_{\beta} (r p) := \{ x \in \partial B (r) : \text{dist}_{\mathbb{S}^{n-1}} (\hat{x}, p) = \beta \}.
$$ 
The $(n-1)$-dimensional Hausdorff measure of a geodesic ball 
and the  $(n-2)$-dimensional Hausdorff measure
of a geodesic sphere are given by
\begin{align}
\mathcal{H}^{n-1} (\mathbf{B}_{\beta} (r p)) &= (n-1) \omega_{n-1} r^{n-1}
\int_0^{\beta} (\sin \tau)^{n-2} \, d \tau, \label{measure geodesic ball}\\
\mathcal{H}^{n-2} (\mathbf{S}_{\beta} (r p)) &= (n-1) \omega_{n-1} r^{n-2}
(\sin \beta)^{n-2}. \label{measure geodesic sphere}
\end{align}
Let $E \subset \R^n$ be a measurable set. For every $r > 0$, 
we define the \textit{spherical slice of radius $r$ of $E$} as the set
$$
E_r := E \cap \partial B (r) = \{ x \in \partial B (r) : x \in E \}.
$$
Let $v : (0, \infty) \to [0, \infty)$ be a Lebesgue measurable function, and let 
$E \subset \R^n$ be a measurable set in $\R^n$. We say that $E$ is spherically $v$-distributed if 
$$
v (r) = \mathcal{H}^{n-1} (E_r), \qquad \text{ for } \mathcal{H}^1\text{-a.e. } r \in (0, \infty). 
$$
If $E$ is spherically $v$-distributed, we can define the function
\begin{equation} \label{xi def}
\xi_v (r) := \frac{v (r)}{r^{n-1}} = \frac{\mathcal{H}^{n-1} (E_r)}{r^{n-1}}, \qquad \text{ for every } r \in (0, \infty). 
\end{equation}
Note that $\mathcal{H}^{n-1} (\mathbf{B}_{\pi}) = \mathcal{H}^{n-1} (\mathbb{S}^{n-1}) = n \omega_n$,
so that
\begin{equation} \label{xi}
0 \leq \xi_v (r) \leq n \omega_n, \qquad \text{ for every } r \in (0, \infty).
\end{equation}
From \eqref{measure geodesic ball}, it follows that the function 
$\mathcal{F}: [0, \pi] \to [0, n \omega_n]$ given by 
\begin{equation} \label{def mathcal F}
\mathcal{F} (\beta) := \mathcal{H}^{n-1} (\mathbf{B}_{\beta} (e_1))
\text{ is strictly increasing and smoothly invertible in } (0, n \omega_n).
\end{equation}
Therefore, if $v : (0, \infty) \to [0, \infty)$ is measurable, 
thanks to \eqref{xi}, there exists a unique function $\alpha_v: (0, \infty) \to [0, \pi]$ such that 
\begin{equation} \label{alpha}
\xi_v (r) = \mathcal{H}^{n-1} (\mathbf{B}_{\alpha_v (r)}(e_1) ) \qquad \text{ for every } r \in (0, \infty).
\end{equation}
Among all the spherically $v$-distributed sets of $\R^n$, we denote by $F_v$ the one whose spherical slices
are  open geodesic  balls centred at the positive $e_1$ axis., i.e.
\begin{equation} \label{def F}
F_v := \{ x \in \R^n_0 : \text{dist}_{\mathbb{S}^{n-1}} (\hat{x}, e_1) < \alpha_v (|x|) \}, 
\end{equation}
where $\alpha_v$ is defined by \eqref{xi def} and \eqref{alpha}.
The next result (see \cite[Lemma~2.35]{AFP}) will be used 
in the proof of Theorem~\ref{fv locally finite perimeter}. 
\begin{lemma}\label{lem:cagnetti 2.6}
Let $B\subset \R^n$ be a Borel set and let $\varphi_h,\varphi:B\rightarrow \R,$ $h\in \mathbb{N}$ be summable Borel functions such that $|\varphi_h|\leq |\varphi|$ for every $h$. Then
$$ \int_B\sup_{h}\varphi_h dx =\sup_{H}\left\{\sum_{h\in H}\int_{A_h}\varphi_h dx \right\}, $$
where the supremum ranges over all finite sets $H\subset \mathbb{N}$ and all finite partitions ${A_h},\, h\in H$ of $B$ in Borel sets.
\end{lemma}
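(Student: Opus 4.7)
The plan is to prove the two inequalities
$$
\int_B\sup_h\varphi_h\,dx\;\ge\;\sup_H\Big\{\sum_{h\in H}\int_{A_h}\varphi_h\,dx\Big\}
\qquad\text{and}\qquad
\int_B\sup_h\varphi_h\,dx\;\le\;\sup_H\Big\{\sum_{h\in H}\int_{A_h}\varphi_h\,dx\Big\}
$$
separately. First I would note that, since $|\varphi_h|\le|\varphi|$ for every $h$, the pointwise supremum $\Psi:=\sup_h\varphi_h$ is a Borel function with $|\Psi|\le|\varphi|$, hence summable on $B$; this justifies all the integrals appearing below and provides a dominating function for later use.

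The inequality $\ge$ is immediate: if $H\subset\mathbb{N}$ is finite and $\{A_h\}_{h\in H}$ is a Borel partition of $B$, then $\varphi_h(x)\le\Psi(x)$ for every $x\in A_h$, so
$$
\sum_{h\in H}\int_{A_h}\varphi_h\,dx\;\le\;\sum_{h\in H}\int_{A_h}\Psi\,dx\;=\;\int_B\Psi\,dx.
$$
Taking the supremum over $H$ yields the desired bound.

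For the reverse inequality I would approximate $\Psi$ from below by finite maxima. For each $N\in\mathbb{N}$ set $\psi_N(x):=\max_{1\le h\le N}\varphi_h(x)$, which is Borel, satisfies $|\psi_N|\le|\varphi|$, and increases pointwise to $\Psi$. Given $N$, define the Borel sets
$$
A_1^N:=\{x\in B:\psi_N(x)=\varphi_1(x)\},\qquad
A_h^N:=\{x\in B:\psi_N(x)=\varphi_h(x)\}\setminus\bigcup_{j<h}A_j^N\quad(2\le h\le N),
$$
so $\{A_h^N\}_{h=1}^N$ is a Borel partition of $B$ with $\varphi_h=\psi_N$ on $A_h^N$. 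Consequently
$$
\int_B\psi_N\,dx\;=\;\sum_{h=1}^N\int_{A_h^N}\varphi_h\,dx\;\le\;\sup_H\Big\{\sum_{h\in H}\int_{A_h}\varphi_h\,dx\Big\}.
$$
Applying the dominated convergence theorem with dominating function $|\varphi|$ (using $\psi_N\to\Psi$ a.e.\ and $|\psi_N|\le|\varphi|$) gives $\int_B\psi_N\,dx\to\int_B\Psi\,dx$, which completes the proof.

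The only subtle point is the construction of the partition and the verification that it is Borel measurable; this is why I sequentially remove the previously covered pieces so that $\{A_h^N\}$ is automatically disjoint and ties are broken by the smallest index. Everything else reduces to the standard dominated convergence argument, so I do not anticipate a real obstacle; the lemma is purely an exercise in approximating $\Psi$ by step-type functions and reading off the corresponding partition.
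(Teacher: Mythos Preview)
Your proof is correct and follows the standard approach: the $\ge$ direction is trivial, and for $\le$ you approximate $\sup_h\varphi_h$ by finite maxima $\psi_N=\max_{h\le N}\varphi_h$, build the obvious Borel partition on which $\psi_N$ agrees with some $\varphi_h$, and pass to the limit by dominated convergence. The paper itself does not give a proof of this lemma; it simply cites \cite[Lemma~2.35]{AFP}, and your argument is essentially the one found there.
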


\subsection{Normal and tangential components of functions and measures} \label{subsection tangential}
For every $\varphi \in  C_c (\R^n_0 ; \R^n)$, we decompose $\varphi$ as 
$\varphi = \varphi_{\perp} + \varphi_{\parallel}$, where 
$$
\varphi_{\perp} (x):= \left( \varphi (x) \cdot \hat{x} \right) \hat{x}
\qquad  \quad \text{ and } \quad \qquad \varphi_{\parallel} (x) := \varphi (x) - \varphi_{\perp} (x)
$$
are the radial and tangential components of $\varphi$, respectively.
%
If $\varphi \in  C^1_c (\R^n_0 ; \R^n)$, $\diver_{\parallel} \varphi (x)$ stands for the tangential divergence of $\varphi$ 
at $x$ along the sphere $\partial B(|x|)$:
\begin{equation} \label{defin div parallel}
\diver_{\parallel} \varphi (x):= \diver \varphi (x) - \big( \nabla \varphi (x) \hat{x} \big) \cdot  \hat{x}.
\end{equation}
The following lemma gives some  useful identities that will be needed later.
\begin{lemma}
Let $\varphi \in  C^1_c (\R^n_0 ; \R^n)$. Then, for every $x \in \R^n_0$ one has
\begin{align} 
\diver \varphi_{\perp} (x) &= \big( \nabla \varphi (x) \hat{x} \big) \cdot  \hat{x} 
+ \left( \varphi (x) \cdot \hat{x} \right) \frac{n -1}{|x|}, \label{n-1 term} \\ 
\diver \varphi_{\parallel} (x) &= \diver_{\parallel} \varphi_{\parallel} (x).  \label{identity 1}
\end{align}
\end{lemma}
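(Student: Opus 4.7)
The plan is to prove the two identities by a direct pointwise calculation, treating the first one as the main computation and then deducing the second one from it as a short corollary. Throughout, fix $x \in \R^n_0$ and set $r = |x|$, $\omega = \hat x$.

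For the first identity, I would write $\varphi_\perp(x) = f(x)\,\hat x$ with $f(x):=\varphi(x)\cdot\hat x$ and apply Leibniz: $\diver\varphi_\perp = \nabla f\cdot\hat x + f\,\diver\hat x$. A direct computation gives
\[
\diver\hat x = \sum_{i=1}^n \partial_i\!\left(\frac{x_i}{|x|}\right) = \sum_{i=1}^n\!\left(\frac{1}{|x|}-\frac{x_i^2}{|x|^3}\right) = \frac{n-1}{|x|}.
\]
Next I would show that $\nabla f\cdot\hat x = (\nabla\varphi(x)\hat x)\cdot\hat x$. Interpreting $\nabla\varphi(x)\hat x$ as the radial directional derivative of $\varphi$ at $x$ and similarly for $f$, the equality follows because $\hat x$ is constant along the ray through $x$, so differentiating $f(x) = \varphi(x)\cdot\hat x$ radially only acts on $\varphi$. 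Concretely, writing $\partial_i f = (\partial_i\varphi_j)\,x_j/|x| + \varphi_j(\delta_{ij}/|x| - x_ix_j/|x|^3)$ and contracting with $\hat x$, the two correction terms are $\sum_j \varphi_j x_j/|x|^2$ and $\sum_j \varphi_j x_j/|x|^2$, which cancel. Substituting both computations back gives \eqref{n-1 term}.

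For the second identity, I would combine \eqref{defin div parallel} applied to $\varphi_\parallel$,
\[
\diver_\parallel \varphi_\parallel(x) = \diver\varphi_\parallel(x) - \bigl(\nabla\varphi_\parallel(x)\hat x\bigr)\cdot\hat x,
\]
with the claim that $(\nabla\varphi_\parallel(x)\hat x)\cdot\hat x = 0$. To prove the claim, I would use that $\varphi_\parallel$ is tangential by construction, namely $\varphi_\parallel(y)\cdot\hat y = 0$ for all $y\in\R^n_0$. Restricting to the ray $y = t\omega$ with $\omega = \hat x$ and $t>0$, this becomes $\varphi_\parallel(t\omega)\cdot\omega \equiv 0$. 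Differentiating in $t$ at $t = r$ yields $(\nabla\varphi_\parallel(x)\omega)\cdot\omega = 0$, i.e.\ $(\nabla\varphi_\parallel(x)\hat x)\cdot\hat x = 0$, which is exactly what is needed.

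The argument is essentially routine, but the one spot that requires a little care is the cancellation in the radial derivative of $f=\varphi\cdot\hat x$; the rest is bookkeeping. An alternative, slightly cleaner presentation would work entirely in polar coordinates $(r,\omega)$ from the beginning, observing that the matrix $\nabla\varphi(x)\hat x$ is the $r$-derivative of $\varphi$ viewed as a function of $(r,\omega)$ and that $\hat x$ is $r$-independent, so that both $\nabla f\cdot\hat x = (\nabla\varphi\hat x)\cdot\hat x$ and the vanishing $(\nabla\varphi_\parallel\hat x)\cdot\hat x = 0$ become immediate; I would likely use this coordinate viewpoint in the actual write-up to shorten the computation.
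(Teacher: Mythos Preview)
Your proof is correct. For \eqref{n-1 term} your argument is essentially the same as the paper's: both write $\varphi_\perp = f\,\hat x$ with $f=\varphi\cdot\hat x$ and expand by Leibniz, and the cancellation you flag is exactly the content of the paper's intermediate formula $\nabla f = (\nabla\varphi)^T\hat x + |x|^{-1}\varphi_\parallel$ contracted with $\hat x$. For \eqref{identity 1} you take a slightly different and more direct route: you observe that $(\nabla\varphi_\parallel\,\hat x)\cdot\hat x = 0$ by differentiating the tangentiality condition $\varphi_\parallel(t\omega)\cdot\omega\equiv 0$ along a ray, which immediately gives $\diver_\parallel\varphi_\parallel=\diver\varphi_\parallel$ from the definition. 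The paper instead compares the two decompositions $\diver\varphi = \diver_\parallel\varphi + (\nabla\varphi\,\hat x)\cdot\hat x$ and $\diver\varphi = \diver\varphi_\parallel + \diver\varphi_\perp$, which yields the general identity $\diver_\parallel\varphi = \diver\varphi_\parallel + (\varphi\cdot\hat x)(n-1)/|x|$ and then specialises it to $\varphi_\parallel$. Your approach is marginally shorter; the paper's buys the auxiliary general identity, though the latter is not used again elsewhere.
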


\begin{remark} \label{rem divergence}
Let $\varphi \in  C^1_c (\R^n_0 ; \R^n)$. Recalling that $\varphi = \varphi_{\perp} + \varphi_{\parallel}$, combining \eqref{n-1 term} and \eqref{identity 1} it follows that 
$$
\diver \varphi (x) = \big( \nabla \varphi (x) \hat{x} \big) \cdot  \hat{x} 
+ \left( \varphi (x) \cdot \hat{x} \right) \frac{n -1}{|x|} + \diver_{\parallel} \varphi_{\parallel} (x) 
\qquad \forall \,  x \in \R^n_0.
$$
\end{remark}
\begin{proof}
First of all, note that 
\begin{equation} \label{nabla phi}
\nabla \left( \varphi (x) \cdot \hat{x} \right) = ( \nabla \varphi (x) )^T \hat{x}  + \frac{1}{| x | } \varphi_{\parallel} (x).
\end{equation}
Indeed, 
\begin{align*}
\nabla \left( \varphi (x) \cdot \hat{x} \right)
& = ( \nabla \varphi (x) )^T \hat{x} 
+ \frac{I - \hat{x} \otimes \hat{x}}{|x|} \varphi (x) 
= ( \nabla \varphi (x) )^T \hat{x} + \frac{1}{| x | } \varphi_{\parallel} (x),
\end{align*}
where $I$ represents the identity map in $\R^n$, and $\hat{x} \otimes \hat{x}$
is the usual tensor product of $\hat{x}$ with itself (so that $I - \hat{x} \otimes \hat{x}$
is the orthogonal projection on the tangent plane to $\mathbb{S}^{n-1}$ at $\hat{x}$).
Thanks to \eqref{nabla phi}, we have
\begin{align*}
\diver \varphi_{\perp} (x) 
&= \diver \left( ( \varphi (x) \cdot \hat{x} )  \hat{x} \right) 
= \nabla \left( \varphi (x) \cdot \hat{x} \right) \cdot \hat{x} +  \left( \varphi (x) \cdot \hat{x} \right) \diver \hat{x} \\
&= \left[ ( \nabla \varphi (x) )^T \hat{x} 
+ \frac{1}{|x|} \varphi_{\parallel} (x) \right] \cdot \hat{x} + \left( \varphi (x) \cdot \hat{x} \right) \frac{n -1}{|x|} \\
&= \big( \nabla \varphi (x) \hat{x} \big) \cdot  \hat{x} + \left( \varphi (x) \cdot \hat{x} \right) \frac{n -1}{|x|}, 
\end{align*}
which proves \eqref{n-1 term}.
Note now that, by definition \eqref{defin div parallel}, it follows that 
\begin{align} \label{intermediate}
\diver \varphi (x) = \diver_{\parallel} \varphi (x) + \big( \nabla \varphi (x) \hat{x} \big) \cdot  \hat{x}.
\end{align}
On the other hand, from \eqref{n-1 term} 
\begin{align*}
\diver \varphi (x) &= \diver \varphi_{\parallel} (x)  + \diver \varphi_{\perp} (x) \\
&= \diver \varphi_{\parallel} (x)  + \big( \nabla \varphi (x) \hat{x} \big) \cdot  \hat{x} + \left( \varphi (x) \cdot \hat{x} \right) \frac{n -1}{|x|}.
\end{align*}
Comparing last identity with \eqref{intermediate} we obtain that for every $\varphi \in C^1_c (\R^n_0 ; \R^n)$
$$
\diver_{\parallel} \varphi (x)  = \diver \varphi_{\parallel} (x) + \left( \varphi (x) \cdot \hat{x} \right) \frac{n -1}{|x|}.
$$
Applying the last identity to the function $\varphi_{\parallel}$ we obtain \eqref{identity 1}. 
\end{proof}

If $\mu$ is an $\R^n$-valued Radon measure on $\R^n_0$, 
we will write $\mu = \mu_{\perp} + \mu_{\parallel}$, where $\mu_{\perp}$ and $\mu_{\parallel}$
are the $\R^n$-valued Radon measures on $\R^n_0$ such that
\begin{align*}
\int_{\mathbb{R}^n_0} \varphi  \cdot  d \mu_{\perp} 
= \int_{\mathbb{R}^n_0} \varphi_{\perp}  \cdot d \mu, 
\qquad \text{ and } \qquad 
\int_{\mathbb{R}^n_0} \varphi  \cdot d \mu_{\parallel} 
= \int_{\mathbb{R}^n_0} \varphi_{\parallel}  \cdot d \mu,
\end{align*}
for every $\varphi \in C_c (\mathbb{R}^n_0; \R^n)$.
Note that $\mu_{\perp}$ and $\mu_{\parallel}$ are well defined by Riesz~Theorem (see, for instance, \cite[Theorem 1.54]{AFP}).
In the special case $\mu = D f$, with $f \in BV_{\text{loc}} (\mathbb{R}^n_0)$, we will shorten the notation writing
$D_{\parallel} f$ and $D_{\perp} f$ in place of $(D f)_{\parallel}$ and $(D f)_{\perp} $, respectively.
In particular, if $f = \chi_E$ and $E \subset \R^n$ is a set of finite perimeter, by De Giorgi structure theorem 
we have 
\begin{equation} \label{de giorgi structure and decomposition}
D_{\perp} \chi_E = \nu_{\perp}^E d \mathcal{H}^{n-1} \mres \partial^* E \qquad \text{ and } \qquad
D_{\parallel} \chi_E = \nu_{\parallel}^E d \mathcal{H}^{n-1} \mres \partial^* E.
\end{equation}

Next lemma gives some useful identities concerning the radial and tangential components
of the gradient of a $BV_{\text{loc}}$ function.
\begin{lemma} \label{lem:4.2domink}
Let $f \in BV_{\textnormal{loc}} (\R^n_0)$. Then, 
\begin{align}
\int_{\R^n_0} \varphi (x) \cdot d D_{\parallel} f 
&= - \int_{\R^n_0}  f(x) \, \diver_{\parallel} \varphi_{\parallel}  (x)   \, d x, \label{eq:4.3dominik} \\
\int_{\R^n_0} \varphi (x) \cdot d D_{\perp} f 
&= - \int_{\R^n_0}  f(x) \left( \nabla \varphi (x) \, \hat{x} \right) \cdot \hat{x} \, dx
- \int_{\R^n_0} f(x)  \frac{n-1}{|x|} \left( \varphi (x) \cdot \hat{x} \right) \, d x, \label{eq:4.4dominik}
\end{align}
for every $\varphi \in C^1_c (\R^n_0 ; \R^n)$.
\end{lemma}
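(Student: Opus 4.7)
The plan is to reduce both identities to the standard integration-by-parts formula for $BV_{\textnormal{loc}}$ functions, combined with the two algebraic identities \eqref{n-1 term} and \eqref{identity 1} already established in the preceding lemma.

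\textbf{Step 1: Admissibility of the decomposed test functions.} Fix $\varphi\in C^1_c(\R^n_0;\R^n)$. Since the map $x\mapsto \hat{x}=x/|x|$ is smooth on $\R^n_0$, both
$\varphi_{\perp}(x)=(\varphi(x)\cdot\hat{x})\hat{x}$ and $\varphi_{\parallel}(x)=\varphi(x)-\varphi_{\perp}(x)$ are of class $C^1$ on $\R^n_0$, and they share the compact support of $\varphi$ (which is contained in $\R^n_0$). Therefore $\varphi_{\parallel},\varphi_{\perp}\in C^1_c(\R^n_0;\R^n)$ are admissible test functions against $Df$.

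\textbf{Step 2: Tangential identity.} By the definition of $D_{\parallel}f$ recalled just before the statement, together with the standard integration by parts for $f\in BV_{\textnormal{loc}}(\R^n_0)$ applied on a bounded open set containing $\spt\varphi$, one has
$$
\int_{\R^n_0}\varphi(x)\cdot dD_{\parallel}f
=\int_{\R^n_0}\varphi_{\parallel}(x)\cdot dDf
=-\int_{\R^n_0} f(x)\,\diver\varphi_{\parallel}(x)\,dx.
$$
Applying identity \eqref{identity 1} pointwise, $\diver\varphi_{\parallel}(x)=\diver_{\parallel}\varphi_{\parallel}(x)$, yields \eqref{eq:4.3dominik}.

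\textbf{Step 3: Radial identity.} Arguing in exactly the same way with $D_{\perp}f$ in place of $D_{\parallel}f$,
$$
\int_{\R^n_0}\varphi(x)\cdot dD_{\perp}f
=\int_{\R^n_0}\varphi_{\perp}(x)\cdot dDf
=-\int_{\R^n_0} f(x)\,\diver\varphi_{\perp}(x)\,dx.
$$
Now I invoke identity \eqref{n-1 term}, which gives
$$
\diver\varphi_{\perp}(x)=\bigl(\nabla\varphi(x)\hat{x}\bigr)\cdot\hat{x}+(\varphi(x)\cdot\hat{x})\,\frac{n-1}{|x|},
$$
and substituting this expression into the previous display produces \eqref{eq:4.4dominik}.

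The proof is essentially routine once Step 1 is observed, so there is no real obstacle; the only point that requires a moment of care is the verification that splitting $\varphi$ into its radial and tangential components preserves the $C^1_c(\R^n_0;\R^n)$ regularity, which is precisely why the statement is confined to $\R^n_0$ rather than $\R^n$ (the map $\hat{x}$ being singular at the origin).
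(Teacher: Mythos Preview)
Your proof is correct and follows essentially the same approach as the paper: apply the definition of $D_{\parallel}f$ and $D_{\perp}f$, integrate by parts against $\varphi_{\parallel}$ and $\varphi_{\perp}$ respectively, and then invoke the identities \eqref{identity 1} and \eqref{n-1 term}. Your Step~1 makes explicit the (tacit) observation that $\varphi_{\parallel},\varphi_{\perp}\in C^1_c(\R^n_0;\R^n)$, which is the only point that could conceivably require comment.
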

\begin{proof}
Let $\varphi \in C^1_c (\R^n_0 ; \R^n)$.
By definition of $D_{\parallel} f$ and thanks to \eqref{identity 1} we have 
\begin{align*}
&\int_{\R^n_0} \varphi (x) \cdot d D_{\parallel} f 
= \int_{\R^n_0} \varphi_{\parallel} (x) \cdot d D f \\
&\hspace{.2cm}= - \int_{\R^n_0} \diver \varphi_{\parallel}  (x) f (x) \, d x 
=  - \int_{\R^n_0} \diver_{\parallel} \varphi_{\parallel}  (x) f (x) \, d x, 
\end{align*}
and this shows \eqref{eq:4.3dominik}.
Similarly, by definition of $D_{\perp} f$
\begin{align*}
&\int_{\R^n_0} \varphi (x) \cdot d D_{\perp} f 
= \int_{\R^n_0} \varphi_{\perp} (x) \cdot d D f = - \int_{\R^n_0} \diver \varphi_{\perp}  (x) f (x) \, d x.
\end{align*}
Thanks to \eqref{n-1 term}, identity \eqref{eq:4.4dominik} follows.
\end{proof}
An immediate consequence of identity \eqref{eq:4.3dominik} is the following.
\begin{corollary} \label{parallel total variation}
Let $f \in BV _{\textnormal{loc}} (\R^n_0)$ and let $\Omega \subset \subset \R^n_0$ be open and bounded.
Then,
\begin{equation*}
\left| D_{\|}f \right|(\Omega)= \sup \left\{ \int_{\R^n}f(x) \, \diver_{\parallel}\varphi_{\parallel}(x)dx:
\; \varphi  \in C^1_{c}(\Omega;\R^n),\, \|\varphi \|_{L^\infty (\Omega ; \R^n)} \leq 1\right\}.
\end{equation*}
\end{corollary}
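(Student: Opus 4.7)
The plan is to deduce this directly from Lemma~\ref{lem:4.2dominik} combined with the standard duality representation of the total variation of a vector-valued Radon measure. My starting point will be
\[
|D_\|f|(\Omega) = \sup\left\{\int_\Omega \varphi \cdot dD_\|f : \varphi \in C_c(\Omega;\R^n),\ \|\varphi\|_{L^\infty(\Omega;\R^n)} \le 1\right\},
\]
which is just the definition of the total variation of the $\R^n$-valued Radon measure $D_\|f$ on the open set $\Omega$. A routine mollification argument, carried out inside a slightly enlarged compactly contained subset of $\Omega$, shows that $C_c^1$ test functions suffice, so the same supremum is obtained restricting to $\varphi \in C_c^1(\Omega;\R^n)$ with $\|\varphi\|_{L^\infty} \le 1$.

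Since $\Omega \subset\subset \R^n_0$, each such $\varphi$ extends by zero to an element of $C_c^1(\R^n_0;\R^n)$, and identity \eqref{eq:4.3dominik} of Lemma~\ref{lem:4.2dominik} rewrites the pairing as
\[
\int_\Omega \varphi \cdot dD_\|f = -\int_{\R^n_0} f(x)\,\diver_\|\varphi_\|(x)\,dx = -\int_{\R^n} f(x)\,\diver_\|\varphi_\|(x)\,dx,
\]
where in the last equality I use that $\varphi_\|$ is supported in $\Omega \subset \R^n_0$ together with $f \in L^1_{\loc}(\R^n_0)$ to pass freely between integration over $\Omega$, $\R^n_0$, and $\R^n$. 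Substituting this into the total-variation representation and observing that the admissible class is symmetric under $\varphi \mapsto -\varphi$ (so the sign in front of the integral is immaterial for the supremum) yields the stated identity.

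I do not anticipate any genuine obstacle: once Lemma~\ref{lem:4.2dominik} is in hand, the corollary is essentially a bookkeeping exercise in duality. The only step worth stating carefully is the density reduction from $C_c$ to $C_c^1$ test functions, which is classical and relies only on $\Omega$ being open and bounded, together with $\Omega \subset\subset \R^n_0$ so that all test fields are uniformly bounded away from the singular point $0$.
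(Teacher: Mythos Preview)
Your proof is correct and follows exactly the approach the paper indicates: the paper simply declares the corollary to be ``an immediate consequence of identity~\eqref{eq:4.3dominik}'' without further detail, and your argument spells out precisely the standard duality-plus-density steps that make this immediate. There is nothing to add.
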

We conclude this subsection with an important proposition, 
that is a special case of the Coarea Formula  (see \cite[Theorem~2.93]{AFP}).
\begin{proposition} \label{coarea}
Let E be a set of finite perimeter in $\R^n$ and let $g:\R^n \rightarrow [0,\infty]$ be a Borel function. 
Then, 
$$ 
\int_{\partial^* E} g(x) |\nu^{E}_{\parallel}(x)| d\mathcal{H}^{n-1}(x) 
= \int_{0}^{\infty}dr\int_{(\partial^* E)_r }g(x) \, d\mathcal{H}^{n-2}(x).
$$
\end{proposition}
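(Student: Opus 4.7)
The plan is to recognise the identity as the general coarea formula on rectifiable sets (\cite[Theorem~2.93]{AFP}) applied to the $1$-Lipschitz function $u(x):=|x|$ along the countably $\mathcal{H}^{n-1}$-rectifiable set $\partial^* E$. Recall that rectifiability of $\partial^* E$ follows from the finite perimeter assumption together with Federer's theorem, and that $\{u=r\}\cap\partial^* E=(\partial^* E)_r$ for every $r>0$, while the origin is $\mathcal{H}^{n-1}$-negligible so it can be harmlessly ignored. With these observations in hand, \cite[Theorem~2.93]{AFP} yields
$$
\int_{\partial^* E} g(x)\,|\nabla^{\partial^* E}u(x)|\,d\mathcal{H}^{n-1}(x)
=\int_0^\infty dr\int_{(\partial^* E)_r} g(x)\,d\mathcal{H}^{n-2}(x),
$$
where $\nabla^{\partial^* E} u(x)$ denotes the projection of $\nabla u(x)$ onto the approximate tangent space $T_x\partial^* E$.

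It then remains to compute the tangential gradient and to identify its modulus with $|\nu^E_\parallel|$. For $\mathcal{H}^{n-1}$-a.e.\ $x\in\partial^* E$ one has $T_x\partial^* E=\nu^E(x)^\perp$ and $\nabla u(x)=\hat x$, so that
$$
\nabla^{\partial^* E} u(x) = \hat x - (\hat x\cdot\nu^E(x))\,\nu^E(x).
$$
Using $|\nu^E(x)|=1$, a direct expansion gives $|\nabla^{\partial^* E} u(x)|^2 = 1-(\hat x\cdot\nu^E(x))^2$. On the other hand, the orthogonal decomposition $\nu^E=\nu^E_\perp+\nu^E_\parallel$ with $\nu^E_\perp=(\nu^E\cdot\hat x)\hat x$ immediately yields $|\nu^E_\parallel(x)|^2 = |\nu^E(x)|^2 - |\nu^E_\perp(x)|^2 = 1-(\hat x\cdot\nu^E(x))^2$. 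Therefore $|\nabla^{\partial^* E} u(x)|=|\nu^E_\parallel(x)|$, and substituting this into the coarea identity above gives the claim.

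There is no serious obstacle here: the proof is essentially a direct invocation of the general coarea formula, and the only step requiring genuine (though elementary) verification is the pointwise computation of the tangential Jacobian. The hypotheses of \cite[Theorem~2.93]{AFP} are automatically met, since $\partial^* E$ is locally $\mathcal{H}^{n-1}$-rectifiable and $g$ is a non-negative Borel function.
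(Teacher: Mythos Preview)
Your proof is correct and follows essentially the same approach as the paper: the paper simply cites \cite[Remark~2.94]{AFP} with $f(x)=|x|$, which is the coarea formula on rectifiable sets (Theorem~2.93) together with the tangential Jacobian computation that you carry out explicitly. Your additional verification that $|\nabla^{\partial^* E}u|=|\nu^E_\parallel|$ is exactly the content absorbed in the paper's one-line citation.
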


\begin{proof}
The result follows by applying \cite[Remark~2.94]{AFP} with $N = n-1$, 
$M = n$, $k = 1$, and $f (x) = |x|$. 
\end{proof}
In the next subsection we show how 
the notion of set of finite perimeter can be given 
in a natural way also for subsets of the sphere $\mathbb{S}^{n-1}$
(and, more in general, of $\partial B (r)$, for any $r > 0$).

\subsection{Sets of finite perimeter on $\mathbb{S}^{n-1}$}
We now give a very brief introduction to sets of finite perimeter on $\mathbb{S}^{n-1}$,
by using the notion of integer multiplicity rectifiable currents, 
see \cite[Chapter~6]{Simon83} for more details (see also \cite{bogelainduzaarfusco2017}). 
 Let $k \in \mathbb{N}$ with $1 \leq k \leq n-1$.
We denote by $\Lambda_k (\R^n)$
and $\Lambda^k (\R^n)$ the linear spaces of $k$-vectors and  
$k$-covectors in $\R^n$, respectively,
while $\mathcal{D}^k (\R^n)$ stands for
the set of smooth $k$-forms with compact support in $\R^n$.

A \textit{$k$-dimensional current} in $\R^n$ 
is a continuous linear functional on $\mathcal{D}^k (\R^n)$.
The family of $k$-dimensional currents in $\R^n$
is denoted by $\mathcal{D}_k (\R^n)$.
We say that $T \in \mathcal{D}_k (\R^n)$
is an \textit{integer multiplicity rectifiable $k$-current} 
if it can be represented as 
\[
T (\omega) = \int_{M} \langle \omega (x) , \eta (x) \rangle \, \theta (x) \, d \mathcal{H}^{k} (x)
\quad \text{ for every } \omega \in \mathcal{D}^k (\R^n),
\]
where $M$ is an $\mathcal{H}^k$-measurable countably 
$k$-rectifiable subset of $\R^n$, 
$\theta$ is an $\mathcal{H}^k$-measurable positive integer-valued 
function, and $\eta: M \to \Lambda_k (\R^n)$ is an $\mathcal{H}^k$-measurable 
function such that for $\mathcal{H}^k$-a.e. $x \in M$
one has $\eta (x) = \tau_1 (x) \wedge \ldots \wedge \tau_k (x)$, 
with $\tau_1 (x), \ldots, \tau_k (x)$
an orthonormal basis for the approximate tangent space of $M$ at $x$,
and $\langle \cdot , \cdot \rangle$ denotes the usual pairing between  
$\Lambda^k (\R^n)$ and $\Lambda_k (\R^n)$.
In the special case when 
\[
T (\omega) = \int_{M} \langle \omega (x) , \eta (x) \rangle \, d \mathcal{H}^{k} (x)
\quad \text{ for every } \omega \in \mathcal{D}^k (\R^n),
\]
we write $T = [\![ M ]\!]$.
The boundary $\partial T$ of $T$ is then defined as  
the element of $\mathcal{D}_{k-1} (\R^n)$ such that
\[
\partial T ( \omega ) 
= T (d \omega) \quad \text{ for every } \omega \in \mathcal{D}^k (\R^n),
\]
while the mass $\mathbf{M} (T)$ of $T$ is given by
\[
\mathbf{M} (T) := \sup \left\{ T (\omega) :  \omega \in \mathcal{D}^k (\R^n), \, | \omega | \leq 1  \right\}. 
\]
More in general, for any open set $U \subset \R^n$, we set
\[
\mathbf{M}_U (T) := \sup \left\{ T (\omega) :  \omega \in \mathcal{D}^k (\R^n), \,
 | \omega | \leq 1, \, \text{supp} \, \omega \in U  \right\}. 
\]

Let $A \subset \mathbb{S}^{n-1}$ be 
an $\mathcal{H}^{n-1}$-measurable set.
We will say that $A$ is a set of finite perimeter 
 on $\mathbb{S}^{n-1}$ 
if there exists $Q \in  \mathcal{D}_{n-2}   (\R^n)$ 
with $\text{supp} \,  Q   \subset \mathbb{S}^{n-1}$ and 
$$
 Q   = \partial   [\![ A ]\!],
$$
with the property that $\mathbf{M}_U ( Q )  < \infty$
for every $U \subset \subset \R^n$. 
 By the Riesz representation theorem it follows that there exists a 
Radon measure $\mu_Q$ and a 
 $\mu_Q$-measurable  function $\nu: \mathbb{S}^{n-1} \to T_x \mathbb{S}^{n-1}$
such that $| \nu (x)| = 1$ for $\mu_T$-a.e. $x$ and 
$$
\int_A \diver_{\parallel} \varphi (x) \, d \mathcal{H}^{n-1} (x)
= \int_{\mathbb{S}^{n-1}} \varphi (x) \cdot \nu (x) \, d  \mu_Q  (x),
$$
for every smooth vector field with $\varphi = \varphi_{\parallel}$.
If $A \subset \mathbb{S}^{n-1}$ is a set of finite perimeter on the sphere, 
the reduced boundary $\partial^* A$ is the set of points $x \in \mathbb{S}^{n-1}$
such that the limit
$$
\nu^A (x) := \lim_{\rho \to 0} \frac{1}{ \mu_Q (B (x, \rho))} \int_{B (x, \rho)} \nu (y) \, d  \mu_Q (y)
$$
exists, $\nu^A (x) \in T_x \mathbb{S}^{n-1}$, and  $\nu^A (x) = 1$.
The De Giorgi structure theorem holds true also for sets of finite perimeter on the sphere.
In particular, $\partial^* A$ is countably $(n-2)$-rectifiable,
$ \mu_Q  = \mathcal{H}^{n-2} \mres \partial^* A$, and 
\begin{equation} \label{div theorem hypersurfaces}
\int_A \diver_{\parallel} \varphi (x) \, d \mathcal{H}^{n-1} (x)
= \int_{\partial^* A} \varphi (x) \cdot \nu^A (x) \, d \mathcal{H}^{n-2} (x),
\end{equation}
for every smooth vector field with $\varphi = \varphi_{\parallel}$.
 The isoperimetric inequality on the sphere states that, 
if $\beta \in (0, \pi)$ and $A \subset \mathbb{S}^{n-1}$ is a set of finite perimeter on $\mathbb{S}^{n-1}$
with $\mathcal{H}^{n-1} (A) = \mathcal{H}^{n-1} (\mathbf{B}_{\beta} (e_1))$, 
then (see \cite{schmidt})
\begin{equation} \label{isop ineq}
\mathcal{H}^{n-2} (\partial^* \mathbf{B}_{\beta} (e_1)) \leq 
\mathcal{H}^{n-2} (\partial^* A).
\end{equation}
 The next theorem is a version of a result by Vol'pert (see \cite{Volpert}).
\begin{theorem} \label{thm:volpert}
Let $v: (0, \infty) \to [0, \infty)$ be a measurable function
satisfying \eqref{bound on v}, and let $E \subset \R^n$ be a spherically
$v$-distributed set of finite perimeter and finite volume.
Then, there exists a Borel set $G_E \subset \{ \alpha_v > 0 \}$
with $\mathcal{H}^1 (\{ \alpha_v > 0 \} \setminus G_E) = 0$,
such that 

\begin{itemize}

\item[(i)] for every $r \in G_E$:

\vspace{.1cm}

\begin{itemize}

\item[(ia)] $E_r$ is a set of finite perimeter in $\partial B(r)$;

\vspace{.1cm}

\item [(ib)] $\mathcal{H}^{n-2}(\partial^{*}(E_r)\Delta (\partial^* E)_r)=0$;

\end{itemize}

\vspace{.1cm}

\item[(ii)] for every $r \in G_E \cap \{ 0 < \alpha_v  < \pi \}$:
 
 \vspace{.1cm}

 \begin{itemize}
\item[(iia)] $| \nu^{E}_{\|}( r \omega)|  > 0$,

\vspace{.1cm}

\item[(iib)]$\nu^{E}_{\|}( r \omega )= \nu^{E_r}( r \omega )|\nu^{E}_{\|}( r \omega )|$,   

\vspace{.1cm}

\end{itemize}
for $\mathcal{H}^{n-2}$-a.e. $\omega \in \mathbb{S}^{n-1}$ such that $r \omega \in \partial^{*}(E_r) \cap (\partial^* E)_r$.
\vspace{.1cm}

\end{itemize}
%
\end{theorem}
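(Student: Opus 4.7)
The plan is to reduce the theorem to a slicing argument for the integer multiplicity rectifiable current $[\![E]\!]$ along the Lipschitz function $x \mapsto |x|$, combined with the coarea formula of Proposition~\ref{coarea}, in the spirit of Vol'pert's classical argument for Steiner symmetrisation.

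For parts (ia) and (ib), I would first apply Proposition~\ref{coarea} with $g = \chi_K$ for compact $K \subset \R^n_0$ to deduce that $(\partial^*E)_r$ has locally finite $\mathcal{H}^{n-2}$-measure for $\mathcal{H}^1$-a.e. $r$. The slicing theorem for integer rectifiable currents applied to $[\![E]\!]$ and the Lipschitz function $|x|$ then produces, for $\mathcal{H}^1$-a.e. $r$, an integer rectifiable $(n-1)$-current supported on $\partial B(r)$ whose underlying set agrees $\mathcal{H}^{n-1}$-almost everywhere with $E_r$, and whose boundary has finite mass and coincides, up to $\mathcal{H}^{n-2}$-null sets, with the $r$-slice of $\partial [\![E]\!]$. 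Via the framework introduced in the previous subsection, this realises $E_r$ as a set of finite perimeter on $\partial B(r)$ with $\partial^*(E_r) =_{\mathcal{H}^{n-2}} (\partial^*E)_r$. Defining $G_E$ as the intersection of all these full-$\mathcal{H}^1$-measure sets with $\{\alpha_v > 0\}$ proves (ia) and (ib).

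For part (iia), I would apply Proposition~\ref{coarea} with $g = \chi_{\{\nu^E_{\|}=0\}}$. Since the integrand $g(x) |\nu^E_{\|}(x)|$ vanishes identically, one obtains
\[
\int_0^\infty \mathcal{H}^{n-2}\bigl( \{x \in (\partial^* E)_r : \nu^E_{\|}(x) = 0\} \bigr) \, dr = 0,
\]
so the integrand is zero for $\mathcal{H}^1$-a.e. $r$. Refining $G_E$ by removing this null set gives (iia); note that on $\{0 < \alpha_v < \pi\}$ the set $(\partial^*E)_r$ has positive $\mathcal{H}^{n-2}$-measure, since by (ia)--(ib) the slice $E_r$ is $\mathcal{H}^{n-1}$-equivalent to neither the empty set nor all of $\partial B(r)$, so the spherical isoperimetric inequality~\eqref{isop ineq} forces $\mathcal{H}^{n-2}((\partial^*E)_r)>0$.

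For part (iib), observe that by construction $\nu^E_{\|}(x)$ is tangential to $\partial B(|x|)$ at every $x \in \partial^*E$; at any point $x = r\omega \in \partial^*(E_r) \cap (\partial^*E)_r$ where (iia) holds, the slicing theorem, combined with the structure theorem for the reduced boundary on the sphere, identifies the unit tangent vector $\nu^E_{\|}(x)/|\nu^E_{\|}(x)|$ with the measure-theoretic outer unit normal to $E_r$ in $\partial B(r)$, which by uniqueness equals $\nu^{E_r}(x)$. The main obstacle is justifying the slicing of $[\![E]\!]$ by $|x|$ in a manner compatible with the intrinsic geometry of $\partial B(r)$, and verifying that the Riesz representation of the boundary of each slice yields precisely the measure $\nu^{E_r}\,\mathcal{H}^{n-2}\mres \partial^*(E_r)$ of the sphere framework. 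This is technically delicate but can be streamlined by pulling back through the diffeomorphism $\Phi$ to $(0,\infty)\times\mathbb{S}^{n-1}$, where the problem reduces to classical Vol'pert-type slicing along the first coordinate.
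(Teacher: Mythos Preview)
Your proposal is correct and takes essentially the same approach as the paper: the paper's proof consists of a single sentence citing \cite[Theorem~28.5]{Simon83} (the slicing theorem for integer rectifiable currents) applied with $f(x)=|x|$, together with \cite[Definition~28.4]{Simon83}. Your write-up is a more detailed unpacking of exactly this argument, with the pleasant addition of the direct coarea computation for (iia) via $g=\chi_{\{\nu^E_\parallel=0\}}$, which the paper does not isolate explicitly but which is implicit in the slicing machinery.
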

\begin{proof}
The result follows applying \cite[Theorem~28.5]{Simon83} with $f (x) = |x|$, 
and recalling the definition of slicing of a current (see \cite[Definition~28.4]{Simon83}).
\end{proof}
We now make some important remarks about Theorem~\ref{thm:volpert}.
\begin{remark}
Thanks to property (ib), we have 
$$
\partial^{*}(E_r) =_{\mathcal{H}^{n-2}} (\partial^* E)_r \qquad \text{ for every } r \in G_{E}.
$$
Therefore, whenever $r \in G_{E}$ we will often write
$\partial^* E_r$ instead of $\partial^{*}(E_r)$ or $(\partial^* E)_r$, without any risk of ambiguity.
Moreover, for every $r \in G_{E}$ we will also use the notation
$$
p_E (r) : = \mathcal{H}^{n-2} (\partial^* E_r). 
$$
\end{remark}


\begin{remark} \label{remark gmt}
In dimension $n = 2$, the theorem above 
implies that, if $r \in G_{E} \cap \{ 0 < \theta < \pi \}$, then $\partial^* (E_r) = (\partial^* E)_r$ and 
\begin{equation} \label{lucky n=2}
| \nu^{E}_{\parallel}( r \omega )| > 0 \quad \textnormal{ \textbf{for every} 
$\omega \in \mathbb{S}^{1}$ such that } r \omega \in (\partial^* E)_r.
\end{equation}
Let now $\lambda_E$ be the measure defined in \eqref{measure mu abs cont}:
$$
\lambda_E (B) =\int_{\partial^* E \cap \Phi(B\times \mathbb{S}^{1})\cap\{\nu^{E}_{\|}=0 \}}
\hat{x} \cdot \nu^E(x)  \, d\mathcal{H}^{1}(x) \quad \text{ for every Borel set } B \subset (0, \infty).
$$
If $B \subset G_E$, then by \eqref{lucky n=2}
$$
| \lambda_E (B) | 
\leq \mathcal{H}^1 (\partial^* E \cap \Phi(G_E \times \mathbb{S}^{1})\cap\{\nu^{E}_{\|}=0 \}) = 0,
$$
so that $\lambda_E (B) = 0$.
As a consequence, $\lambda_E$ is singular with respect 
to the Lebesgue measure in $(0,\infty)$.
If $n > 2$ this conclusion is in general false (unless one chooses $E = F_v$, 
see Remark~\ref{rem 2 after volpert} below), 
and it may happen that $\lambda_E$ 
has a non trivial absolutely continuous part.
\end{remark}

\begin{remark} \label{rem 2 after volpert}
If $n \geq 2$, but we consider the special case $E = F_v$, Theorem~\ref{thm:volpert}
gives much more information than the one we can obtain for a generic set of finite perimeter. 
Indeed, let $R \in O(n)$ be any orthogonal transformation that keeps fixed the $e_1$ axis.
By definition of $F_v$, and thanks to \cite[Exercise 15.10]{maggiBOOK}, 
we have that if $x \in \partial^* F_v$, then $R x \in \partial^* F_v$ and 
$$
\nu^{F_v}_{\parallel}( R x ) = R \, \nu^{F_v}_{\parallel}( x ) \qquad \text{ and }
 \qquad  \nu^{F_v}_{\perp}( R x ) = R \, \nu^{F_v}_{\perp}( x ).
$$
Therefore, applying Theorem~\ref{thm:volpert} to $F_v$ we infer that 

\begin{itemize}

\item[(j)] for every $r \in G_{F_v}$:

\vspace{.1cm}

\begin{itemize}

\item[(ja)] $(F_v)_r$ is a spherical cap;

\vspace{.1cm}

\item [(jb)] $\partial^{*} (F_v)_r = (\partial^* F_v)_r$;

\end{itemize}

\vspace{.1cm}

\item[(jj)] for every $r \in G_{F_v} \cap \{ 0 < \alpha_v < \pi \}$:
 
 \vspace{.1cm}

 \begin{itemize}
\item[(jja)] $| \nu^{F_v}_{\|}( r \omega)|  > 0$,

\vspace{.1cm}

\item[(jjb)]$\nu^{F_v}_{\|}( r \omega )= \nu^{(F_v)_r}( r \omega )|\nu^{F_v}_{\|}( r \omega )|$,   

\vspace{.1cm}

\end{itemize}
\textnormal{ \textbf{for every} } 
$\omega \in \mathbb{S}^{n-1}$ such that $r \omega \in (\partial^* F_v)_r \cap \partial^{*} ( F_v)_r$.
\vspace{.1cm}

\end{itemize}
Therefore,
\begin{equation} \label{measure of B0 is 0}
\mathcal{H}^1(B_0)=0, 
\end{equation}
where 
$$
B_0:=\left\{r\in (0,+\infty): \exists  \, \omega \in \mathbb{S}^{n-1} 
\textnormal{ such that } r \omega \in \partial^{*}F_v \textnormal{ and } \nu^{F_v}_{\|}(r \omega)=0 \right\}.
$$
Moreover, repeating the argument used in Remark~\ref{remark gmt} one obtains that
$$
\mathcal{H}^{n-1} (\partial^* F_v \cap \Phi(G_{F_v} \times \mathbb{S}^{n-1})\cap\{\nu^{F_v}_{\|}=0 \}) = 0.
$$
Thus, the measure $\lambda_{F_v}$ defined in \eqref{measure mu abs cont}
is purely singular with respect to the Lebesgue measure in $(0, \infty)$.
\end{remark}

\section{Properties of $v$ and $\xi_v$}\label{section properties v and xi}

In this section we discuss several properties of the functions $v$ and $\xi_v$.
These are the natural counterpart in the spherical setting 
of analogous results proven in \cite{ChlebikCianchiFuscoAnnals05} and  \cite{barchiesicagnettifusco}.
We start by showing that, if $E \subset \R^n$ is a set of finite perimeter and volume, 
then $v \in BV(0, \infty)$.
\begin{lemma}  \label{lem:4.12Dominik}
Let $v$ be as in Theorem~\ref{fv locally finite perimeter}, 
and let $E \subset \R^n$ be a spherically $v$-distributed set of finite perimeter and finite volume.
Then, $v \in BV (0, \infty)$. 
Moreover,  $\xi_v \in BV_{\textnormal{loc}} (0, \infty)$ and 
\begin{equation} \label{formula for D xi}
\int_0^{\infty}  \psi (r) r^{n-1} d D \xi_v (r)
= \int_{\R^n_0} \psi (|x|) \, \hat{x} \cdot d D_{\perp} \chi_E (x), 
\end{equation}
for every bounded Borel function $\psi: (0, \infty) \to \R$.
As a consequence, 
\begin{equation} \label{bound for D xi}
| r^{n-1}  D \xi_v | (B) \leq | D_{\perp} \chi_E | (\Phi (B \times \mathbb{S}^{n-1})), 
\end{equation}
for every Borel set $B \subset (0, \infty)$. 
In particular, $r^{n-1} D \xi_v$ is a bounded Radon measure on $(0, \infty)$.
\end{lemma}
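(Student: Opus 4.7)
The plan is to derive everything from a single integration by parts: apply identity \eqref{eq:4.4dominik} to $f=\chi_E$ against a purely radial test vector field $\varphi(x)=\psi(|x|)\hat x$, and then use the coarea formula to reduce the resulting right-hand side to a one-dimensional integral against $\xi_v$. From this single computation I will read off local BV of $\xi_v$, the formula \eqref{formula for D xi}, the bound \eqref{bound for D xi}, and finally the global BV of $v$.

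First I would record that $v \in L^1(0,\infty)$, since by the coarea formula applied to $|x|$ one has $\int_0^\infty v(r)\,dr=|E|<\infty$. Then, for $\psi \in C^1_c(0,\infty)$, the vector field $\varphi(x)=\psi(|x|)\hat x$ is radial, and a direct computation gives $\varphi(x)\cdot\hat x=\psi(|x|)$ together with $\bigl(\nabla\varphi(x)\hat x\bigr)\cdot\hat x=\psi'(|x|)$. Plugging this into \eqref{eq:4.4dominik} with $f=\chi_E$ and then applying the coarea formula (with the function $x \mapsto |x|$) to rewrite volume integrals over $E$ as integrals against $v(r)=r^{n-1}\xi_v(r)$ yields
\[
\int_{\R^n_0}\psi(|x|)\,\hat x\cdot dD_\perp\chi_E \;=\; -\int_0^\infty\bigl(\psi(r)\,r^{n-1}\bigr)'\,\xi_v(r)\,dr.
\]
Setting $\eta(r):=\psi(r)r^{n-1}$, this identity provides, for every compact $K\subset (0,\infty)$, the estimate $\bigl|\int_0^\infty \eta'(r)\xi_v(r)\,dr\bigr|\le C_K\,\|\eta\|_\infty$ for all $\eta\in C^1_c(0,\infty)$ with $\mathrm{supp}\,\eta\subset K$, and hence $\xi_v\in BV_\text{loc}(0,\infty)$ by the standard duality definition of BV. Integration by parts in one dimension then turns the previous identity into
\[
\int_0^\infty\psi(r)\,r^{n-1}\,dD\xi_v(r) \;=\; \int_{\R^n_0}\psi(|x|)\,\hat x\cdot dD_\perp\chi_E,
\]
first for $\psi\in C^1_c(0,\infty)$, and then, by a standard approximation (continuous compactly supported $\psi$ first, then bounded Borel $\psi$ via dominated convergence) extended to every bounded Borel $\psi$, which is exactly \eqref{formula for D xi}.

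The bound \eqref{bound for D xi} then follows by choosing $\psi(r)=h(r)\,\chi_B(r)$, where $h$ is the $\{\pm 1\}$-valued Radon--Nikodym density of $D\xi_v$ with respect to $|D\xi_v|$: the left-hand side becomes exactly $|r^{n-1}D\xi_v|(B)$, while the right-hand side is bounded by $|D_\perp\chi_E|(\Phi(B\times\mathbb{S}^{n-1}))$ because $|\psi|\le 1$ and $|\hat x|=1$. Taking $B=(0,\infty)$ and using $|D_\perp\chi_E|(\R^n_0)\le P(E)<\infty$ shows that $r^{n-1}D\xi_v$ is a bounded Radon measure. Finally, to upgrade $\xi_v\in BV_\text{loc}$ to $v\in BV(0,\infty)$ I would apply the Leibniz rule to $v=r^{n-1}\xi_v$, which gives $Dv=(n-1)r^{n-2}\xi_v(r)\,dr+r^{n-1}\,dD\xi_v$, hence
\[
|Dv|(0,\infty) \;\le\; (n-1)\int_0^\infty\frac{v(r)}{r}\,dr + |r^{n-1}D\xi_v|(0,\infty).
\]
The second term has just been bounded by $P(E)$, while the first is finite because the admissibility bound \eqref{bound on v} gives $v(r)/r\le n\omega_n r^{n-2}$, which is integrable near $0$ (using $n\ge 2$), and $v(r)/r\le v(r)$ is integrable for large $r$ since $v\in L^1$; combined with $v\in L^1(0,\infty)$ this yields $v\in BV(0,\infty)$.

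The delicate step I expect to spend the most care on is the approximation extending the identity from $\psi\in C^1_c(0,\infty)$ to bounded Borel $\psi$: one must first know that $r^{n-1}D\xi_v$ really is a (locally) bounded Radon measure, so that both sides are well-defined bounded linear functionals of $\psi$, and only then can Lemma~\ref{lem:cagnetti 2.6} (or a direct monotone-class/dominated-convergence argument) be invoked to conclude. Everything else — the tangential/radial decomposition in \eqref{eq:4.4dominik}, the coarea computation, the Leibniz rule, and the integrability of $v(r)/r$ — is by now a mechanical assembly of the tools collected in Sections~\ref{preliminaries}--\ref{preliminary spherical}.
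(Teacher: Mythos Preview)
Your proof is correct and uses the same core computation as the paper: both test $D_\perp\chi_E$ against the radial field $\psi(|x|)\hat x$ (you invoke \eqref{eq:4.4dominik}, the paper computes the divergence directly via \eqref{n-1 term}), reduce via coarea, and relate $Dv$ and $D\xi_v$ by the Leibniz rule. The only difference is the order of conclusions --- the paper first bounds $|Dv|(0,\infty)$ directly to get $v\in BV$, then deduces $\xi_v\in BV_{\mathrm{loc}}$ by multiplying by the smooth function $r^{1-n}$, whereas you first read off $\xi_v\in BV_{\mathrm{loc}}$ from the integration-by-parts identity and only at the end upgrade to $v\in BV$ via the Leibniz rule and the integrability of $v(r)/r$; both orderings are equally valid.
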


\begin{proof}
We divide the proof into steps.

\vspace{.2cm}

\noindent
\textbf{Step 1:} We show that $v \in BV (0, \infty)$.
First of all, note that $v \in L^1 (0, \infty)$, since
$$
\| v \|_{L^1 (0, \infty)} = \int_{0}^{\infty} v (r) \, dr
= \int_{0}^{\infty} \, dr \int_{\partial B (r)} \chi_E (x) \, d \mathcal{H}^{n-1} (x)
= \mathcal{H}^n ( E) < \infty. 
$$
Let now $\psi \in C^1_c (0, \infty)$ with $|\psi | \leq 1$.
Applying formula \eqref{n-1 term} to the radial function $\psi (|x|) \hat{x}$,
we obtain that for every $x \in \R^n_0$ 
\begin{align}
&\diver \left( \psi (|x|) \hat{x}  \right)
= \left[ \nabla \left(  \psi (|x|) \hat{x} \right) \hat{x} \right] \cdot \hat{x} 
+  \left[ \psi (|x|) \hat{x} \cdot \hat{x} \right] \frac{n-1}{|x|} \nonumber \\
&= \left[  \left( \psi' (|x|) \hat{x} \otimes \hat{x} + \psi (|x|) \frac{I - \hat{x} \otimes \hat{x}}{|x|} \right) \hat{x} \right] \cdot \hat{x}  
+ \psi (|x|) \frac{n-1}{|x|} \nonumber \\
&= \psi' (|x|) + \psi (|x|) \frac{n-1}{|x|}. \label{formula div}
\end{align}
Thus, 
\begin{align*}
&\int_{\R^n} \left[   \psi' (|x|) + \psi (|x|) \frac{n-1}{|x|} \right] \chi_E (x) \, dx
= \int_{\R^n} \diver \left( \psi (|x|) \, \hat{x}  \right) \chi_E (x) \, dx \\
&= - \int_{\R^n} \psi (|x|) \, \hat{x} \cdot d D \chi_E (x) 
= - \int_{\R^n} \psi (|x|) \, \hat{x} \cdot d D_{\perp} \chi_E (x),
\end{align*}
so that 
\begin{align}
&\int_{\R^n} \psi' (|x|) \chi_E (x) \, dx \label{integral one} \\
&= - \int_{\R^n} \psi (|x|) \frac{n-1}{|x|} \chi_E (x) \, dx
-\int_{\R^n} \psi (|x|) \, \hat{x} \cdot d D_{\perp} \chi_E (x). \nonumber
\end{align}
By Coarea formula, the integral in the left hand side can be written as
\begin{align}
&\int_{\R^n} \psi' (|x|) \chi_E (x) \, dx  
= \int_{0}^{\infty} \, dr \, \psi' (r) \int_{\partial B (r)} \chi_E (x) \, d \mathcal{H}^{n-1} (x) 
= \int_{0}^{\infty}  \psi' (r) v (r) \, dr.  \label{integral two}
\end{align}
Combining \eqref{integral one} and \eqref{integral two} we find that
%
\begin{align} 
&\int_{0}^{\infty}  \psi (r) \, d D v (r)   \nonumber \\
&= \int_{\R^n} \psi (|x|) \frac{n-1}{|x|} \chi_E (x) \, dx
+ \int_{\R^n}\psi (|x|) \, \hat{x}  \cdot  d D_{\perp} \chi_E (x). 
 \label{distr derivative} \\
&\leq \int_{B (1)} \psi (|x|) \frac{n-1}{|x|} \chi_E (x) \, dx
+ \int_{\R^n \setminus B (1)} \psi (|x|) \frac{n-1}{|x|} \chi_E (x) \, dx + P (E) \nonumber \\
& \leq n (n-1) \omega_n \int_{0}^1 \rho^{n-2} \, d \rho
+ (n-1) | E | + P (E)  \nonumber \\
&= n  \omega_n + (n-1) | E | + P (E) < \infty. \nonumber
\end{align}
Taking the supremum over $\psi$ we obtain that 
$$
| D v| (0, \infty) < \infty,
$$
so that $v \in BV (0, \infty)$.

\vspace{.2cm}

\noindent
\textbf{Step 2:} We conclude the proof.
Since the function $r \mapsto 1/(r^{n-1})$ is smooth and locally bounded in $(0, \infty)$, 
we also have that $\xi_v (r) \in BV_{\text{loc}} (0, \infty)$.
Moreover, recalling that $v (r) = r^{n-1} \xi_v (r)$, by the chain rule in $BV$ (see \cite[Example~3.97]{AFP})
\begin{equation} \label{Dv}
D v  = (n-1) r^{n-2} \xi_v (r) \, dr + r^{n-1} D \xi_v
= (n-1) \frac{v (r)}{r} dr + r^{n-1} D \xi_v.
\end{equation} 
Let now $\psi \in C^1_c (0, \infty)$.
From the previous identity it follows that 
\begin{align}
&\int_0^{\infty}  \psi (r) \, d D v (r)
= \int_0^{\infty}  \psi (r)  \frac{n-1}{r} \, v (r)  \, dr + \int_0^{\infty}  \psi (r) r^{n-1} d D \xi_v (r) \nonumber \\
&\hspace{.2cm}= \int_0^{\infty}  \psi (r) \frac{n-1}{r} \mathcal{H}^{n-1} (\partial B(r) \cap E) \, dr 
+ \int_0^{\infty}  \psi (r) r^{n-1} d D \xi_v (r) \nonumber \\
&= \int_{\R^n} \psi (|x|) \frac{n-1}{|x|} \chi_E (x) \, dx
+ \int_0^{\infty}  \psi (r) r^{n-1} d D \xi_v (r). \nonumber 
\end{align}
Combining the previous identity and \eqref{distr derivative},
\begin{align*}
\int_0^{\infty}  \psi (r) r^{n-1} d D \xi_v (r)
= \int_{\R^n} \psi (|x|) \, \hat{x} \cdot d D_{\perp} \chi_E, \quad \text{ for every } \psi \in C^1_c (0\, \infty).
\end{align*}
By approximation, the identity above is true also when $\psi$ is a bounded Borel function, 
and this gives \eqref{formula for D xi}. 

If $B \subset (0, \infty)$ is open, thanks to \eqref{formula for D xi} we have that for every $\psi \in C_c (B)$
with $| \psi | \leq 1$ 
\begin{align*}
\int_{B}  \psi (r) r^{n-1} d D \xi_v (r)
= \int_{\Phi (B \times \mathbb{S}^{n-1})} \psi (|x|) \, \hat{x} \cdot d D_{\perp} \chi_E 
\leq |D_{\perp} \chi_E | (\Phi (B \times \mathbb{S}^{n-1})).
\end{align*}
Taking the supremum over all such $\psi$ gives
$$
| r^{n-1}  D \xi_v| (B) \leq |D_{\perp} \chi_E | (\Phi (B \times \mathbb{S}^{n-1})) \quad \text{ for every open set } 
B \subset (0, \infty).
$$
By approximation, the inequality above holds true for every Borel set, and this shows inequality \eqref{bound for D xi}.
\end{proof}
The next lemma gives an important property of the measure $r^{n-1}D\xi_v$.
\begin{lemma}\label{lem:4.14dominik}
Let $v$ be as in Theorem~\ref{fv locally finite perimeter}, 
and let $E \subset \R^n$ be a spherically $v$-distributed set of finite perimeter and finite volume.
Then
\begin{align}
( r^{n-1}D\xi_v )(B) &=\int_{\partial^* E \cap \Phi(B\times \mathbb{S}^{n-1})\cap\{\nu^{E}_{\|}=0 \}}
\hat{x} \cdot \nu^E(x)  \, d\mathcal{H}^{n-1}(x) \label{D xi} \\
&\hspace{.4cm}+\int_{B}dr \int_{(\partial^* E)_r \cap \{\nu^{E}_{\|} \neq 0 \} } 
\frac{\hat{x} \cdot \nu^E (x)}{|\nu_{\|}^E(x)|}d\mathcal{H}^{n-2}(x). \nonumber
\end{align}
for every Borel set $B \subset (0,+\infty)$. 

Moreover, $r^{n-1}D\xi_v \mres G_{F_v} = r^{n-1} \xi_v' dr$
and for $\mathcal{H}^1$-a.e. $r \in G_{F_v} \cap \{ 0 < \alpha_v < \pi \}$  
$$
r^{n-1}\xi'_v(r) = \mathcal{H}^{n-2} (\mathbf{S}_{\alpha_v (r)} (r e_1))  \frac{\hat{x} \cdot \nu^{F_v}(x)}{|\nu_{\|}^{F_v}(x)|},
\qquad \qquad \text{ for every } x \in \mathbf{S}_{\alpha_v (r)} (r e_1). 
$$
\end{lemma}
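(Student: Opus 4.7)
The plan is to rewrite the identity
\[
\int_0^{\infty} \psi(r)\, r^{n-1}\, d D \xi_v(r) = \int_{\R^n_0} \psi(|x|)\, \hat{x} \cdot d D_{\perp}\chi_E(x)
\]
from \eqref{formula for D xi} by first using the De Giorgi structure formula \eqref{de giorgi structure and decomposition} to express the right hand side as a surface integral on $\partial^* E$, then splitting this integral according to whether $\nu^E_{\|}$ vanishes or not, and finally applying the Coarea formula of Proposition~\ref{coarea} to the piece where $\nu^E_{\|}\neq 0$. Taking $\psi$ equal to (an approximation of) $\chi_B$ for a generic Borel set $B$ will then yield \eqref{D xi}.

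Concretely, I would first note that $\hat{x}\cdot\nu^E_{\perp}(x) = \hat{x}\cdot\nu^E(x)$, so that the right hand side above equals $\int_{\partial^* E} \psi(|x|)\,\hat{x}\cdot \nu^E(x)\,d\mathcal{H}^{n-1}(x)$. I would then split this integral across the two disjoint pieces $\partial^* E \cap \{\nu^E_{\|}=0\}$ and $\partial^* E \cap \{\nu^E_{\|}\neq 0\}$, and on the latter piece rewrite the integrand as $\psi(|x|)\,\bigl(\hat{x}\cdot\nu^E(x)/|\nu^E_{\|}(x)|\bigr)\, |\nu^E_{\|}(x)|$, so that Proposition~\ref{coarea} (applied separately to the positive and negative parts of $\hat{x}\cdot\nu^E/|\nu^E_{\|}|$) converts it into an iterated integral over spherical slices. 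This delivers \eqref{D xi} for test functions $\psi \in C_c^1(0,\infty)$; extending to $\psi = \chi_B$ for Borel $B$ is standard, since both sides define signed Radon measures on $(0,\infty)$.

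For the second statement, I would specialise to $E = F_v$ and invoke Remark~\ref{rem 2 after volpert}, according to which $\mathcal{H}^{n-1}(\partial^* F_v \cap \Phi(G_{F_v}\times\mathbb{S}^{n-1})\cap \{\nu^{F_v}_{\|}=0\}) = 0$. Applying \eqref{D xi} to $B \cap G_{F_v}$ then makes the singular contribution vanish on $G_{F_v}$, so that $r^{n-1} D\xi_v \mres G_{F_v}$ is absolutely continuous with Lebesgue density
\[
r^{n-1}\xi_v'(r) = \int_{(\partial^* F_v)_r \cap \{\nu^{F_v}_{\|}\neq 0\}} \frac{\hat{x}\cdot\nu^{F_v}(x)}{|\nu^{F_v}_{\|}(x)|}\, d\mathcal{H}^{n-2}(x).
\]
For $r \in G_{F_v}\cap\{0<\alpha_v<\pi\}$, items (ja)--(jjb) of Remark~\ref{rem 2 after volpert} identify $(\partial^* F_v)_r$ with $\mathbf{S}_{\alpha_v(r)}(re_1)$ and guarantee $|\nu^{F_v}_{\|}| > 0$ everywhere on it; the rotational invariance of $F_v$ around the $e_1$-axis then forces the integrand to be constant along that geodesic sphere, and combining with \eqref{measure geodesic sphere} yields the desired explicit formula.

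The main obstacle I anticipate is the bookkeeping in the first step: Proposition~\ref{coarea} is stated only for nonnegative Borel integrands, so one must carefully decompose the signed quantity $\hat{x}\cdot\nu^E/|\nu^E_{\|}|$ into its positive and negative parts, verify that each is $\mathcal{H}^{n-1}\mres(\partial^* E\cap\{\nu^E_{\|}\neq 0\})$-integrable (which ultimately follows from $P(E)<\infty$ together with $|\hat{x}\cdot\nu^E|\leq 1$), and then recombine to recover the signed Coarea identity invoked above.
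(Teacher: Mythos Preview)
Your approach is essentially the same as the paper's: use \eqref{formula for D xi} together with \eqref{de giorgi structure and decomposition} to write $(r^{n-1}D\xi_v)(B)$ as a surface integral over $\partial^* E$, split according to $\{\nu^E_\|=0\}$ versus $\{\nu^E_\|\neq 0\}$, and apply Proposition~\ref{coarea} to the latter; then specialise to $F_v$ via Remark~\ref{rem 2 after volpert}. One minor simplification: since Lemma~\ref{lem:4.12Dominik} already asserts \eqref{formula for D xi} for \emph{every} bounded Borel function $\psi$, you may take $\psi=\chi_B$ directly and skip the $C^1_c$-to-$\chi_B$ extension step.
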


\begin{proof}
Let $B\subset (0,+\infty)$ be a Borel set.
Then, choosing $\psi=\chi_{B}$ in \eqref{formula for D xi}, and 
recalling \eqref{de giorgi structure and decomposition},
\begin{align*}
&( r^{n-1}D\xi_v )(B)
= \int_{0}^{+\infty} \chi_{B} (r) r^{n-1}d D \xi_v(r) \\
&= \int_{\Phi (B \times \mathbb{S}^{n-1})} \hat{x} \cdot d D_{\perp} \chi_E (x) 
= \int_{\partial^* E\cap \Phi(B\times \mathbb{S}^{n-1})} \hat{x} \cdot \nu^E(x) \, d\mathcal{H}^{n-1}(x)\\ 
&= \int_{\partial^* E\cap \Phi(B\times \mathbb{S}^{n-1})\cap\{\nu^{E}_{\|}=0 \}}
\hat{x} \cdot \nu^E(x) \, d\mathcal{H}^{n-1}(x)
+ \int_{\partial^* E\cap \Phi(B\times \mathbb{S}^{n-1})\cap\{\nu^{E}_{\|}\neq 0 \}}
\hat{x} \cdot \nu^E(x) \, d\mathcal{H}^{n-1}(x)\\
&=\int_{\partial^* E\cap \Phi(B\times \mathbb{S}^{n-1})\cap\{\nu^{E}_{\|}=0 \}}
\hat{x} \cdot \nu^E(x) \, d\mathcal{H}^{n-1}(x)
+\int_{B}dr \int_{(\partial^* E)_r \cap \{\nu^{E}_{\|} \neq 0 \} } 
\frac{\hat{x} \cdot \nu^E(x)}{|\nu_{\|}^E(x)|}d\mathcal{H}^{n-2}(x),
\end{align*}
where in the last equality we have used the Coarea formula. 

Let us now prove the second part of the statement.
If one chooses $E = F_v$, thanks to Remark~\ref{rem 2 after volpert} we have 
\begin{align*}
r^{n-1}D \xi_v \mres G_{F_v} 
&= \left( \int_{(\partial^* F_v)_r \cap \{\nu^{F_v}_{\|} \neq 0 \} } 
\frac{\hat{x} \cdot \nu^{F_v}(x)}{|\nu_{\|}^{F_v}(x)|}d\mathcal{H}^{n-2}(x) \right) \, dr \mres G_{F_v} \\
&=  \mathcal{H}^{n-2} (\mathbf{S}_{\alpha_v (r)} (r e_1))  \frac{\hat{x} \cdot \nu^{F_v}(x)}{|\nu_{\|}^{F_v}(x)|}.
\end{align*}
In particular,
$$
r^{n-1}D \xi_v \mres G_{F_v}  =r^{n-1} \xi'_v (r) \, dr\mres G_{F_v}.
$$
Moreover, since $\xi'_v (r) = 0$ $\mathcal{H}^1$-a.e. in $\{ \alpha = 0 \} \cup \{ \alpha = \pi \}$, 
we obtain that for $\mathcal{H}^1$-a.e. $r \in (0, \infty)$
$$
r^{n-1}\xi'(r) = \mathcal{H}^{n-2} (\mathbf{S}_{\alpha_v (r)} (r e_1))  \frac{\hat{x} \cdot \nu^{F_v}(x)}{|\nu_{\|}^{F_v}(x)|},
\qquad \qquad \text{ for every } x \in \mathbf{S}_{\alpha_v (r)} (r e_1). 
$$
\end{proof}
We now prove an auxiliary inequality that will be useful later.

\begin{proposition} \label{lem:4.17dominik}
Let $v$ be as in Theorem~\ref{fv locally finite perimeter}, 
and suppose that there exists a spherically $v$-distributed set $E \subset \R^n$ of finite perimeter and finite volume.
Then, $F_v$ is a set of finite perimeter in $\R^n$. 
Moreover, for every Borel set $B\subset (0,+\infty)$ 
\begin{equation} \label{smhts}
P(F_v;\Phi(B\times \mathbb{S}^{n-1})) \leq \left| r^{n-1} D\xi_v \right|(B) 
+ \left|D_{\|}\chi_{F_v}  \right|(\Phi(B\times \mathbb{S}^{n-1})).
\end{equation}
\end{proposition}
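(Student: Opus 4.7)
The strategy is to establish \eqref{smhts} first for every open $B \subset (0, \infty)$ by separately controlling the radial and tangential components of the divergence of a test field, then extend to Borel $B$ by outer regularity. The finiteness of the perimeter of $F_v$ is then obtained by independently bounding $|D_\perp \chi_{F_v}|$ and $|D_{\|}\chi_{F_v}|$ in $\R^n_0$ and cutting off near the origin.

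Fix open $B$ and $\varphi \in C^1_c(\Phi(B\times \mathbb{S}^{n-1}); \R^n)$ with $\|\varphi\|_\infty \leq 1$, and decompose $\varphi = \varphi_\perp + \varphi_\parallel$. By \eqref{identity 1} we have $\diver \varphi_\parallel = \diver_\parallel \varphi_\parallel$, and Corollary \ref{parallel total variation} (read as a supremum characterization, which is trivially valid if $\chi_{F_v}\notin BV_{\textnormal{loc}}$) yields
$$
\int_{F_v} \diver \varphi_\parallel \, dx = \int_{F_v} \diver_\parallel \varphi_\parallel \, dx \leq |D_{\|}\chi_{F_v}|\bigl(\Phi(B\times \mathbb{S}^{n-1})\bigr).
$$
For the radial term write $\varphi_\perp = g\hat{x}$ with $g:=\varphi\cdot\hat{x}$, $|g|\leq 1$. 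Identity \eqref{n-1 term} in spherical coordinates produces the telescoping
$$
r^{n-1} \diver \varphi_\perp(r\omega) = \frac{d}{dr}\bigl(r^{n-1} g(r\omega)\bigr),
$$
so Fubini gives
$$
\int_{F_v} \diver \varphi_\perp \, dx = \int_{\mathbb{S}^{n-1}} d\mathcal{H}^{n-1}(\omega) \int_0^\infty \chi_{\{\alpha_v > \beta(\omega)\}}(r)\, \frac{d}{dr}\bigl(r^{n-1} g(r\omega)\bigr) dr,
$$
where $\beta(\omega) := \text{dist}_{\mathbb{S}^{n-1}}(\omega, e_1)$. Since $\alpha_v \in BV_{\textnormal{loc}}(0, \infty)$ by Lemma \ref{lem:4.12Dominik}, the 1D $BV$ coarea formula ensures $\chi_{\{\alpha_v > \beta(\omega)\}} \in BV_{\textnormal{loc}}(0, \infty)$ for $\mathcal{H}^{n-1}$-a.e. $\omega$; integration by parts in $r$ together with $|g|\leq 1$ gives
$$
\Bigl|\int_{F_v} \diver \varphi_\perp \, dx\Bigr| \leq \int_{\mathbb{S}^{n-1}} d\omega \int_B r^{n-1} \, d|D\chi_{\{\alpha_v > \beta(\omega)\}}|(r).
$$
Disintegrating the sphere along $\omega \mapsto \beta(\omega)$ (with Jacobian $(n-1)\omega_{n-1}(\sin\beta)^{n-2}$) and applying the $BV$ chain rule to $\xi_v = \mathcal{F}\circ\alpha_v$ (with $\mathcal{F}'(t) = (n-1)\omega_{n-1}(\sin t)^{n-2}$, jumps contributing $\mathcal{F}(\alpha_v^\vee) - \mathcal{F}(\alpha_v^\wedge)$) identifies the right-hand side exactly with $|r^{n-1}D\xi_v|(B)$. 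Combining the two bounds, taking the supremum over $\varphi$, and invoking outer regularity for Borel $B$ produces \eqref{smhts}.

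To show $F_v$ has finite perimeter in $\R^n$, note that setting $B = (0,\infty)$ in the radial estimate already gives $|D_\perp\chi_{F_v}|(\R^n_0) \leq |r^{n-1}D\xi_v|(0,\infty) < \infty$. For the tangential part, applying the divergence theorem \eqref{div theorem hypersurfaces} on each spherical cap $(F_v)_r = \mathbf{B}_{\alpha_v(r)}(re_1)$ with $|\varphi_\parallel|\leq 1$ shows
$$
|D_{\|}\chi_{F_v}|(\R^n_0) \leq \int_0^\infty \mathcal{H}^{n-2}\bigl(\mathbf{S}_{\alpha_v(r)}(re_1)\bigr)\, dr;
$$
the spherical isoperimetric inequality \eqref{isop ineq} (after rescaling from $\partial B(r)$ to $\mathbb{S}^{n-1}$) bounds the integrand by $\mathcal{H}^{n-2}((\partial^*E)_r)$, and Proposition \ref{coarea} applied to $E$ gives $\int_0^\infty \mathcal{H}^{n-2}((\partial^*E)_r)\, dr = |D_{\|}\chi_E|(\R^n_0) \leq P(E) < \infty$. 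Hence $F_v$ has locally finite perimeter in $\R^n_0$; a standard cutoff near the origin, using $|F_v \cap B(\varepsilon)| \leq \omega_n \varepsilon^n \to 0$, extends finiteness from $\R^n_0$ to $\R^n$.

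\textbf{Main obstacle.} The most delicate step is the identification of
$$
\int_{\mathbb{S}^{n-1}}\int_B r^{n-1}\, d|D\chi_{\{\alpha_v > \beta(\omega)\}}|(r)\, d\mathcal{H}^{n-1}(\omega)
$$
with $|r^{n-1}D\xi_v|(B)$: one must simultaneously track the absolutely continuous, Cantorian, and jump parts of $D\alpha_v$ through the $BV$ chain rule applied to the smooth function $\mathcal{F}$, matching each with the corresponding piece of $D\xi_v$ through the spherical Jacobian $(\sin\beta)^{n-2}$, and verify that the jump contribution $\mathcal{F}(\alpha_v^\vee) - \mathcal{F}(\alpha_v^\wedge)$ agrees exactly with the $(\sin\beta)^{n-2}$-weighted measure on $[\alpha_v^\wedge, \alpha_v^\vee]$ coming from the coarea formula.
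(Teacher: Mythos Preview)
Your approach is correct and genuinely different from the paper's. The paper proceeds by smooth approximation: it picks $v_j\in C^1_c(0,\infty)$ with $v_j\to v$ and $|Dv_j|\overset{*}{\rightharpoonup}|Dv|$, computes $\int_{F_{v_j}}\diver\varphi$ explicitly using that $\alpha_{v_j}$ is $C^1$ on $\{0<\alpha_{v_j}<\pi\}$ (via an auxiliary Lipschitz function $V_j(r)=\int_{(F_{v_j})_r}\varphi\cdot\hat x\,d\mathcal H^{n-1}$), obtains the bound with $|r^{n-1}D\xi_{v_j}|$ for each $j$, and then passes to the limit. Your route bypasses approximation entirely by slicing $F_v$ along rays $r\mapsto r\omega$ and using the one-dimensional coarea formula for $\xi_v$; this is arguably more direct and transparent. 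Two remarks are worth making.

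First, your logical order should be reversed: you need finiteness of $P(F_v)$ \emph{before} you can invoke Corollary~\ref{parallel total variation} to write the tangential piece as $|D_\parallel\chi_{F_v}|$. The fix is straightforward with the ingredients you already have: combine your radial bound with the cruder tangential bound $\int_{F_v}\diver_\parallel\varphi_\parallel\,dx\le \int_0^\infty \mathcal H^{n-2}(\mathbf S_{\alpha_v(r)}(re_1))\,dr$ (via \eqref{div theorem hypersurfaces}), then apply \eqref{isop ineq} and Proposition~\ref{coarea} to dominate this by $P(E)$. This gives $P(F_v;\R^n_0)<\infty$ first; your cutoff argument at the origin then finishes finiteness in $\R^n$, and only afterwards does \eqref{smhts} follow.

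Second, your ``main obstacle'' is simpler than you make it. You do not need the $BV$ chain rule for $\alpha_v$ (indeed $\alpha_v$ need not even lie in $BV_{\textnormal{loc}}$, since $(\mathcal F^{-1})'$ blows up at the endpoints). Instead, apply coarea directly to $\xi_v\in BV_{\textnormal{loc}}$: for every Borel $B$,
\[
\int_B r^{n-1}\,d|D\xi_v| \;=\; \int_0^{n\omega_n}\!\int_B r^{n-1}\,d|D\chi_{\{\xi_v>s\}}|\,ds,
\]
and then substitute $s=\mathcal F(\beta)$, using $\{\xi_v>\mathcal F(\beta)\}=\{\alpha_v>\beta\}$ and $ds=(n-1)\omega_{n-1}(\sin\beta)^{n-2}\,d\beta$. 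Disintegrating the sphere along $\omega\mapsto\beta(\omega)$ recovers exactly your double integral. This single substitution handles the absolutely continuous, Cantor, and jump parts simultaneously, with no need to track them separately.
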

\begin{proof}
The proof is based on the arguments of \cite[Lemma 3.5]{ChlebikCianchiFuscoAnnals05} and \cite[Lemma 3.3]{barchiesicagnettifusco}.
Thanks to Lemma~\ref{lem:4.12Dominik}, $v \in BV(0, \infty)$.
Let $\{ v_j \}_{j \in \mathbb{N}} \subset C^1_c (0, \infty)$ be a sequence of non-negative functions 
such that $v_j \to v$ $\mathcal{H}^1$-a.e. in $(0,\infty)$ 
and $|D v_j| \overset{*}{\rightharpoonup} |D v|$.
For every $j \in \mathbb{N}$, we denote by $F_{v_j} \subset \mathbb{R}^n$
the set defined by \eqref{def F}, with $v_j$ in place of $v$.
Let now $\Omega \subset (0,\infty)$ be open, 
and let $ \varphi \in C^{1}_{c} (\Phi(\Omega\times \mathbb{S}^{n-1});\R^n)$ with 
$\| \varphi \|_{L^{\infty} (\Phi(\Omega\times \mathbb{S}^{n-1});\R^n)} \leq 1$. 
Thanks to Remark~\ref{rem divergence}, we have
\begin{align}
&\int_{\Phi(\Omega\times \mathbb{S}^{n-1})} \chi_{F_{v_j}}(x) \, \diver \varphi (x) dx 
= \int_{\Phi(\Omega\times \mathbb{S}^{n-1})} \chi_{F_{v_j}} (x) 
\, \diver_{\parallel} \varphi_{\parallel}(x) dx \label{cap2: divestimate} \\
&+\int_{\Phi(\Omega\times \mathbb{S}^{n-1})} \chi_{F_{v_j}}(x) \,  
\left( \nabla \varphi (x) \, \hat{x} \right) \cdot \hat{x} \, dx
+ \int_{\Phi(\Omega\times \mathbb{S}^{n-1})} \chi_{F_{v_j}}(x) \,  
\frac{n-1}{|x|} \left( \varphi (x) \cdot \hat{x} \right) \, dx. \nonumber
\end{align} 
In the following, it will be convenient to introduce the function $V_j : (0,\infty) \to \R$
given by
$$
V_j (r):= \int_{\mathbf{B}_{\alpha_{v_j} (r)} (r e_1)} 
\varphi (x) \cdot \hat{x} \, d\mathcal{H}^{n-1}(x)
=  r^{n-1} \int_{\mathbf{B}_{\alpha_{v_j} (r)} (e_1)} 
\varphi (r \omega) \cdot \omega \, d\mathcal{H}^{n-1}(\omega),
$$
where $\alpha_{v_j}: (0, r) \to [0, \pi]$ is defined by \eqref{alpha}, with $v_j$ in place of $v$.
We divide the proof into several steps.

\vspace{.2cm}

\noindent
\textbf{Step 1:} We show that $V_j$ is Lipschitz continuous with compact support.
Indeed, 
$$
\text{supp} \, V_j \subset \Lambda(\supp \, \varphi) 
:= \left\{r \in (0,+\infty): (\supp \, \varphi)\cap \partial B (r) \neq \emptyset   \right\}.
$$
Moreover, for every $r_1, r_2 \in (0,\infty)$,
\begin{align*}
&| V_j (r_1) - V_j (r_2) |
\leq \int_{\mathbf{B}_{\alpha_{v_j} (r_1)} (e_1)} 
| r^{n-1}_1  \varphi (r_1 \omega) \cdot \omega 
- r^{n-1}_2 \varphi (r_2 \omega) \cdot \omega | \, d\mathcal{H}^{n-1}(\omega) \\
&+ r^{n-1}_2 \left| \int_{\mathbf{B}_{\alpha_{v_j} (r_1)} (e_1)} 
 \varphi (r_2 \omega) \cdot \omega \, d\mathcal{H}^{n-1}(\omega)
- \int_{\mathbf{B}_{\alpha_{v_j} (r_2)} (e_1)} 
 \varphi (r_2 \omega) \cdot \omega \, d\mathcal{H}^{n-1}(\omega) \right| \\
&\leq c | r_1-  r_2| + r^{n-1}_2  
\int_{ \mathbf{B}_{ \alpha_{v_j} (\widetilde{r}_1)} (e_1)
\setminus 
\mathbf{B}_{ \alpha_{v_j} (\widetilde{r}_2)} (e_1) }
| \varphi (r_2 \omega) \cdot \omega | \, d\mathcal{H}^{n-1}(\omega) \\
&\leq c | r_1-  r_2| + r^{n-1}_2 | \xi_{v_j} (r_1) - \xi_{v_j} (r_2) | \leq c | r_1-  r_2|,
\end{align*}
where we used the fact that $\xi_{v_j}$
is compactly supported in $(0, \infty)$ (since $v_j$ is), 
and $\widetilde{r}_1$ and $\widetilde{r}_2$ are such that 
$\alpha_{v_j} (\widetilde{r}_1) = \max\{ \alpha_{v_j} (r_1), \alpha_{v_j} (r_2)\}$
and $\alpha_{v_j} (\widetilde{r}_2) := \min\{ \alpha_{v_j} (r_1), \alpha_{v_j} (r_2)\}$.

\vspace{.2cm}

\noindent
\textbf{Step 2:} We show that $\alpha_{v_j}$ is $\mathcal{H}^1$-a.e. differentiable and 
that 
\begin{align}
V'_j (r) &= (n-1)r^{n-2} \int_{\mathbf{B}_{\alpha_{v_j} (r)} (e_1)} 
\varphi (r \omega) \cdot \omega \, d\mathcal{H}^{n-1}(\omega) \nonumber \\
&\hspace{.5cm}+ r^{n-1} \bigg( \alpha_{v_j}' (r) \int_{\mathbf{S}_{\alpha_{v_j} (r)}(e_1)} 
 \varphi (r \omega) \cdot \omega \, d\mathcal{H}^{n-2}(\omega) \bigg) \label{formula V'}\\
 &\hspace{.5cm}+ r^{n-1}\int_{\mathbf{B}_{\alpha_{v_j} (r)}(e_1)}  
 \left( \nabla \varphi (r \omega) \, \omega \right) 
 \cdot \omega \, d\mathcal{H}^{n-1}(\omega), \nonumber 
\end{align}
for $\mathcal{H}^1$-a.e. $r > 0$.
Let us set $A_j:= \{ 0 <  \alpha_{v_j} < \pi \}$.
Since $v_j \in C^1_c (0, \infty)$, from \eqref{def mathcal F}
it follows that $\alpha_{v_j} \in C^1 (A_j)$. Moreover, for every $r \in A_j$
\begin{align*}
&V'_j (r)
= \frac{d}{d r} \bigg( r^{n-1} \int_{0}^{\alpha_{v_j} (r)}d\beta 
 \int_{\mathbf{S}_\beta (e_1)} \varphi (r \omega) \cdot \omega \, d\mathcal{H}^{n-2}(\omega) \bigg) \\
&= (n-1)r^{n-2} \int_{\mathbf{B}_{\alpha_{v_j} (r)} (e_1)} 
\varphi (r \omega) \cdot \omega \, d\mathcal{H}^{n-1}(\omega) 
+ r^{n-1} \bigg( \alpha_{v_j}' (r) \int_{\mathbf{S}_{\alpha_{v_j} (r)}(e_1)} 
 \varphi (r \omega) \cdot \omega \, d\mathcal{H}^{n-2}(\omega) \bigg) \\
 &\hspace{.5cm}+ r^{n-1} \int_{0}^{\alpha_{v_j} (r)}d\beta 
 \int_{\mathbf{S}_\beta (e_1)} \left( \nabla \varphi (r \omega) \, \omega \right) 
 \cdot \omega \, d\mathcal{H}^{n-2}(\omega) \\
 &= (n-1)r^{n-2} \int_{\mathbf{B}_{\alpha_{v_j} (r)} (e_1)} 
\varphi (r \omega) \cdot \omega \, d\mathcal{H}^{n-1}(\omega) 
+ r^{n-1} \bigg( \alpha_{v_j}' (r) \int_{\mathbf{S}_{\alpha_{v_j} (r)} (e_1)} 
 \varphi (r \omega) \cdot \omega \, d\mathcal{H}^{n-2}(\omega) \bigg) \\
 &\hspace{.5cm}+ r^{n-1}\int_{\mathbf{B}_{\alpha_{v_j} (r)} (e_1)}  
 \left( \nabla \varphi (r \omega) \, \omega \right) 
 \cdot \omega \, d\mathcal{H}^{n-1}(\omega).
\end{align*} 
This shows \eqref{formula V'} whenever $r \in A_j$.
Note now that 
\begin{align*}
V_j (r) & = 0 \quad & \text{ for every } r \in \text{Int} ( \{ \alpha_{v_j} = 0 \} ), \\
V_j (r) & =
r^{n-1} \int_{\mathbb{S}^{n-1}} 
\varphi (r \omega) \cdot \omega \, d\mathcal{H}^{n-1}(\omega)
\quad & \text{ for every } r \in \text{Int} ( \{ \alpha_{v_j} = \pi \} ),
\end{align*}
where $\text{Int} ( \cdot )$ stands for the interior of a set.
Since 
$\alpha_{v_j}' (r)= 0$ for every $ r \in~\text{Int} ( \{ \alpha_{v_j} = 0 \} ) \cup \text{Int} ( \{ \alpha_{v_j} = \pi \} )$,
using the identities above one can see that \eqref{formula V'} holds true 
for $\mathcal{H}^1$-a.e. $r > 0$.

\vspace{.2cm}

\noindent
\textbf{Step 3:} We show that
\begin{align*} 
&\int_{\Phi(\Omega\times \mathbb{S}^{n-1})} \chi_{F_{v_j}}(x) \,  
\left( \nabla \varphi (x) \, \hat{x} \right) \cdot \hat{x} \, dx
+ \int_{\Phi(\Omega\times \mathbb{S}^{n-1})} \chi_{F_{v_j}}(x) \,  
\frac{n-1}{|x|} \left( \varphi (x) \cdot \hat{x} \right) \, dx \nonumber \\
&= - \int_{\Omega} \, dr \, r^{n-1} \bigg( \alpha_{v_j}' (r) \int_{\mathbf{S}_{\alpha_{v_j} (r)} (e_1)} 
 \varphi (r \omega) \cdot \omega \, d\mathcal{H}^{n-2}(\omega) \bigg). 
\end{align*}
Integrating \eqref{formula V'}, thanks to the classical divergence theorem applied in $\Omega$, 
and recalling that $V_j$ has compact support, we obtain 
\begin{align*}
0&= (n-1) \int_{\Omega} \, dr \, r^{n-2} \int_{\mathbf{B}_{\alpha_{v_j} (r)} (e_1)} 
\varphi (r \omega) \cdot \omega \, d\mathcal{H}^{n-1}(\omega) \\
&\hspace{.5cm}+ \int_{\Omega} \, dr \, r^{n-1} \bigg( \alpha_{v_j}' (r) 
\int_{\mathbf{S}_{\alpha_{v_j} (r)}(e_1)} 
 \varphi (r \omega) \cdot \omega \, d\mathcal{H}^{n-2}(\omega) \bigg) \\
 &\hspace{.5cm}+ \int_{\Omega} \, dr \, r^{n-1}\int_{\mathbf{B}_{\alpha_{v_j} (r)} (e_1) }  
 \left( \nabla \varphi (r \omega) \, \omega \right) 
 \cdot \omega \, d\mathcal{H}^{n-1}(\omega)\\
 &= \int_{\Phi(\Omega\times \mathbb{S}^{n-1})} \chi_{F_{v_j}}(x) \,  
\frac{n-1}{|x|} \left( \varphi (x) \cdot \hat{x} \right) \, dx \\
&\hspace{.5cm}+ \int_{\Omega} \, dr \, r^{n-1} 
\bigg( \alpha_{v_j}' (r) \int_{\mathbf{S}_{\alpha_{v_j} (r)} (e_1)} 
 \varphi (r \omega) \cdot \omega \, d\mathcal{H}^{n-2}(\omega) \bigg) \\
 &\hspace{.5cm}+ \int_{\Phi(\Omega\times \mathbb{S}^{n-1})} \chi_{F_{v_j}}(x) \,  
\left( \nabla \varphi (x) \, \hat{x} \right) \cdot \hat{x} \, dx,
\end{align*}
which gives the claim.

\vspace{.2cm}

\textbf{Step 4:} we prove that
\begin{align} 
\int_{\Phi(\Omega\times \mathbb{S}^{n-1})}\chi_{F_{v_j}}(x)  \, \diver \varphi(x) dx 
\leq \left|r^{n-1}D \xi_{v_j} \right|(\Lambda(\supp \, \varphi)) \label{qwedfg33} 
+ \int_{\Omega} \, \mathcal{H}^{n-2} (\mathbf{S}_{\alpha_{v_j} (r)})dr,
\end{align}
where $\Lambda(\supp \, \varphi) \subset (0, \infty)$ is the compact set defined in Step 1.
Thanks to \eqref{cap2: divestimate} and Step~3
\begin{align}
&\int_{\Phi(\Omega\times \mathbb{S}^{n-1})}\chi_{F_{v_j}}(x)  \, \diver \varphi (x) \, dx 
=  \int_{\Phi(\Omega\times \mathbb{S}^{n-1})}\chi_{F_{v_j}}(x)  
\, \diver_{\parallel} \varphi_{\parallel} (x) \, dx \nonumber \\
&\hspace{.4cm}- \int_{\Omega} \, dr \, r^{n-1} 
\bigg( \alpha_{v_j}' (r) \int_{\mathbf{S}_{\alpha_{v_j} (r)} (e_1)} 
 \varphi (r \omega) \cdot \omega \, d\mathcal{H}^{n-2}(\omega) \bigg).\label{eq:2.4}
\end{align}
We now estimate the right hand side of the expression above.
Thanks to \eqref{alpha} and arguing as in Step 2 we have that 
$$
\xi_{v_j}' (r) = \alpha_{v_j}' (r) \mathcal{H}^{n-2}( \mathbf{S}_{\alpha_{v_j (r)} } (e_1) )
\qquad \text{ for } \mathcal{H}^1\text{-a.e. } r \in (0, \infty).
$$
Therefore, 
\begin{align}
&- \int_{\Omega} \, dr \, r^{n-1} 
\bigg( \alpha_{v_j}' (r) \int_{\mathbf{S}_{\alpha_{v_j} (r)} (e_1)} 
 \varphi (r \omega) \cdot \omega \, d\mathcal{H}^{n-2}(\omega) \bigg) \nonumber \\
 &\hspace{.5cm}\leq \int_{\Lambda(\supp \,\varphi)}r^{n-1}\left| \alpha_{v_j}' (r) \right| 
\mathcal{H}^{n-2}( \mathbf{S}_{\alpha_{v_j} (r)} (e_1)) dr  \label{first piece} \\
&\hspace{.5cm}= \int_{\Lambda(\supp \,\varphi)}r^{n-1}\left| \xi'_{v_j} (r) \right| dr= 
\left|r^{n-1}D \xi_{v_j} \right|(\Lambda(\supp \, \varphi)). \nonumber
\end{align}
Let us now focus on the second integral in the right hand side of \eqref{eq:2.4}.
Applying the divergence theorem \eqref{div theorem hypersurfaces}
with $A = \mathbf{B}_{\alpha_{v_j} (r)} (r e_1)$, 
and denoting by $\nu_* (x)$ the exterior unit normal to $\mathbf{S}_{\alpha_{v_j} (r)} (r e_1)$, we have 
\begin{align}
&\int_{\Phi(\Omega\times \mathbb{S}^{n-1})} \chi_{F_{v_j}}(x)  
\, \diver_{\parallel} \varphi_{\parallel} (x) \, dx 
= \int_{\Omega}dr \int_{\mathbf{B}_{\alpha_{v_j} (r)}(r e_1)} 
\diver_{\parallel} \varphi_{\parallel} (x)  \, d \mathcal{H}^{n-1} (x) \nonumber \\ 
&= \int_{\Omega}dr \int_{\mathbf{S}_{\alpha_{v_j} (r)} (r e_1)} 
\varphi_{\parallel} (x) \cdot \nu_{*} (x) d\mathcal{H}^{n-2}(x)  
\leq \int_{\Omega}dr \, \mathcal{H}^{n-2} (\mathbf{S}_{\alpha_{v_j} (r)} (r e_1)). \label{second piece}
\end{align}
Combining \eqref{eq:2.4}, \eqref{first piece}, and \eqref{second piece}, we obtain \eqref{qwedfg33}.

\vspace{.2cm}

\textbf{Step 5:} We show that $F_v$ is a set of finite perimeter.
Note that $\chi_{F_{v_j}} \to \chi_{F_{v}}$ $\mathcal{H}^n$-a.e. in $\R^n$, and 
$\alpha_{v_j} \to \alpha$ $\mathcal{H}^1$-a.e. in $(0, \infty)$. 
Note also that, from our choice of the sequence $\{ v_j \}_{j \in \mathbb{N}}$
and thanks to \eqref{Dv}, it follows that 
$$
| r^{n-1} D \xi_{v_j}| 
\overset{*}{\rightharpoonup} | r^{n-1} D \xi_{v}| \quad \quad \text{ as } j \to \infty.
$$
Therefore, taking the limsup as $j \to \infty$ in \eqref{qwedfg33}, and using the fact that 
$\Lambda(\supp \, \varphi)$ is compact,
\begin{align*}
&\int_{\Phi(\Omega\times \mathbb{S}^{n-1})}\chi_{F_v}(x) \, \diver \, \varphi(x) dx
= \limsup_{j \to \infty}  \int_{\Phi(\Omega\times \mathbb{S}^{n-1})}\chi_{F_{v_j}}(x)  \, \diver \, \varphi(x) dx \\
&\leq \limsup_{j \to \infty} \left|r^{n-1}D \xi_{v_j} \right|(\Lambda(\supp \, \varphi))
+ \limsup_{j \to \infty} \int_{\Omega} \mathcal{H}^{n-2} (\mathbf{S}_{\alpha_{v_j} (r)} (r e_1))\, dr\\
&\leq \left|r^{n-1}D \xi_v \right|(\Lambda(\supp \, \varphi))
+ \int_{\Omega}  \mathcal{H}^{n-2} (\mathbf{S}_{\alpha_v (r)} (r e_1))\, dr 
\leq \left|r^{n-1}D \xi_v \right|(\Omega)
+ \int_{\Omega} \mathcal{H}^{n-2} (\partial^* E_r)\, dr \\
&\leq \left|r^{n-1}D \xi_v \right|(\Omega) + P(E;\Phi(\Omega\times \mathbb{S}^{n-1})), 
\end{align*}
where we also used the isoperimetric inequality in the sphere (see \eqref{isop ineq}) and the Coarea formula.
Taking the supremum of the above inequality 
over all functions $ \varphi \in C^{1}_{c} (\Phi(\Omega\times \mathbb{S}^{n-1});\R^n)$ with 
$\| \varphi \|_{L^{\infty} (\Phi(\Omega\times \mathbb{S}^{n-1});\R^n)} \leq 1$, we obtain 
$$
P(F_v;\Phi(\Omega\times \mathbb{S}^{n-1}))\leq \left|r^{n-1}D  \xi_v \right|(\Omega) 
+ P(E;\Phi(\Omega\times \mathbb{S}^{n-1})).
$$
Thanks to \eqref{bound for D xi} we have
$$
P(F_v;\Phi(\Omega\times \mathbb{S}^{n-1})) 
\leq 2 P(E; P(F_v;\Phi(\Omega\times \mathbb{S}^{n-1}))) < \infty,
$$ 
since $E$ is a set of finite perimeter by assummption.
Since $\Omega$ was arbitrary, this shows that $F_v$ is a set of locally finite perimeter.

\vspace{.2cm}

\noindent
\textbf{Step 6:} We conclude. Let $\Omega \subset (0,\infty)$ be open,
and let $\varphi \in C^{1}_{c} (\Phi(\Omega\times \mathbb{S}^{n-1});\R^n)$ with 
$\| \varphi \|_{L^{\infty} (\Phi(\Omega\times \mathbb{S}^{n-1});\R^n)} \leq 1$.
Combining \eqref{cap2: divestimate}, Step 3, and \eqref{first piece}, 
we have that for every $j \in \mathbb{N}$
$$
\int_{\Phi(\Omega\times \mathbb{S}^{n-1})}\chi_{F_{v_j}}(x)  \, \diver \, \varphi (x) dx 
\leq \left|r^{n-1}D \xi_{v_j} \right|(\Lambda(\supp \, \varphi)) \label{qwedfg} 
+ \int_{\Phi(\Omega\times \mathbb{S}^{n-1})}\chi_{F_{v_j}}(x)  
\, \diver_{\parallel} \varphi_{\parallel} (x) \, dx.  
$$
Taking the limsup as $j \to \infty$ and thanks to Corollary~\ref{parallel total variation}, 
\begin{align*}
\int_{\Phi(\Omega\times \mathbb{S}^{n-1})}\chi_{F_v}(x) \, \diver \, \varphi (x) dx 
&\leq \left|r^{n-1}D \xi_v \right|(\Lambda(\supp \, \varphi)) 
+ \int_{\Phi(\Omega\times \mathbb{S}^{n-1})}\chi_{F_v}(x) 
\, \diver_{\parallel} \varphi_{\parallel} (x) \, dx \\
&\leq \left|r^{n-1}D \xi_v \right|(\Lambda(\supp \, \varphi)) 
+ | D_{\parallel} \chi_{F_v} | (\Phi(\Omega\times \mathbb{S}^{n-1})),
\end{align*}
where we also used the fact that $\Lambda(\supp \, \varphi)$ is compact.

Taking the supremum over all $\varphi \in C^{1}_{c} (\Phi(\Omega\times \mathbb{S}^{n-1});\R^n)$ with 
$\| \varphi \|_{L^{\infty} (\Phi(\Omega\times \mathbb{S}^{n-1});\R^n)} \leq 1$, 
\begin{equation} \label{for open sets}
P(F_v;\Phi(\Omega\times \mathbb{S}^{n-1}))\leq \left|r^{n-1}D \xi_v \right|(\Omega) 
+  | D_{\parallel} \chi_{F_v} | (\Phi(\Omega\times \mathbb{S}^{n-1})),
\end{equation}
which shows \eqref{smhts} when $B$ is an open set. 
Let now $B \subset (0, \infty)$ be a Borel set.
From \eqref{for open sets} it follows that 
$$
P(F_v;\Phi(B \times \mathbb{S}^{n-1}))\leq \left|r^{n-1}D  \xi_v \right|(\Omega) + P(E;\Phi(\Omega\times \mathbb{S}^{n-1})),
$$
for any open set $\Omega \subset (0, \infty)$ with $B \subset \Omega$.
Taking the infimum of the above inequality over all
 open sets $\Omega \subset (0, \infty)$ with $B \subset \Omega$,
 we obtain inequality \eqref{smhts} when $B$ is a Borel set.
 \end{proof}

\section{Proof of Theorem~\ref{fv locally finite perimeter}}\label{section spherical proof perimeter inequality}

In this section we prove Theorem~\ref{fv locally finite perimeter}, and state 
some important auxiliary results. 
The proof of Lemma~\ref{lem:5.4dominik} 
is postponed to Section~\ref{cylindrical section}, 
since it requires some results related to the circular symmetrisation. 
We start by proving Theorem~\ref{fv locally finite perimeter}. 
\begin{proof}[Proof of Theorem~\ref{fv locally finite perimeter}]
We will adapt the arguments of the proof of \cite[Theorem 1.1]{barchiesicagnettifusco}.
Let $G_{F_v}$ be the set associated with $F_v$ given by Theorem~\ref{thm:volpert}.
We start by proving \eqref{per ineq}. 
We will first prove the inequality when $B \subset (0, \infty) \setminus G_{F_v}$, and then 
in the case $B \subset G_{F_v}$. 
The case of a general Borel set $B \subset (0, \infty)$  
then follows by decomposing $B$ as $B = (B \setminus G_{F_v}) \cup (B \cap G_{F_v})$.

\vspace{.2cm}

\noindent
\textbf{Step 1:} We prove inequality \eqref{per ineq} when $B \subset (0, \infty) \setminus G_{F_v}$.
First observe that, thanks to Proposition~\ref{coarea} 
and \eqref{de giorgi structure and decomposition}, 
\begin{align}
&\left|D_{\parallel}\chi_{F_v}  \right|(\Phi(B\times \mathbb{S}^{n-1}))
= \int_{\partial^* F_v \cap \Phi(B\times \mathbb{S}^{n-1})}|\nu_{\parallel}^{F_v}(x)|d\mathcal{H}^{n-1}(x)
=\int_{B}\mathcal{H}^{n-2}((\partial^* F_v)_r) dr \nonumber \\
&= \int_{B \cap \{ 0 < \alpha_v \} }\mathcal{H}^{n-2}((\partial^* F_v)_r) dr
= \int_{B \cap ( \{ 0 < \alpha_v \} \setminus G_{F_v})}\mathcal{H}^{n-2}((\partial^* F_v)_r) dr = 0, \label{asd}
\end{align} 
where we used the fact that $B \subset (0, \infty) \setminus G_{F_v}$
and $\mathcal{H}^{1}(\{ 0 < \alpha_v \}\setminus G_{F_v})=0$. 
Therefore, thanks to Proposition~\ref{lem:4.17dominik}
\begin{align}
P(F_v;\Phi(B\times \mathbb{S}^{n-1})) 
&\leq r^{n-1}\left| D \xi_v \right|(B) + \left|D_{\|}\chi_{F_v}  \right|(\Phi(B\times \mathbb{S}^{n-1})) \nonumber \\
&= r^{n-1}\left| D \xi_v \right|(B)
\leq P(E;\Phi(B\times \mathbb{S}^{n-1})),
\label{eq:4.39dominik}
\end{align}
where in the last inequality we used \eqref{bound for D xi}.

\vspace{.2cm}

\noindent
\textbf{Step 2:} We prove inequality \eqref{per ineq} when $B \subset G_{F_v}$. 
We divide this part of the proof into further substeps.

\vspace{.2cm}

\noindent
\textbf{Step 2a:} we prove that 
\begin{equation}\label{eq:cagnetti 3.13}
P(E;\Phi(B\times \mathbb{S}^{n-1}))\geq P(E;\Phi(B\times \mathbb{S}^{n-1})\cap \{\nu_{\parallel}^{E}=0  \})
+\int_{B}\sqrt{p_E^2 (r)+g^{2} (r)}dr,
\end{equation}
where $g: (0,\infty) \to \R$ and $p_E : (0,\infty) \to [0, \infty)$ are defined as
$$ 
g(r) :=\int_{\partial^* E\cap \partial B(r)} \frac{\hat{x} \cdot \nu^{E}(x)}{|\nu_{\parallel}^{E}(x)|}d\mathcal{H}^{n-2}(x)
\quad \text{ and } \quad p_E (r):= \mathcal{H}^{n-2} (\partial^* E \cap \partial B(r)),
$$
for $\mathcal{H}^1$-a.e. $r \in (0, \infty)$, respectively.
We have
\begin{align*}
&P(E;\Phi(B\times \mathbb{S}^{n-1})) \\
&=P(E;\Phi(B\times \mathbb{S}^{n-1})\cap \{\nu_{\parallel}^{E}=0  \})+P(E;\Phi(B\times \mathbb{S}^{n-1})\cap \{\nu_{\parallel}^{E}\neq0  \})\\
&=P(E;\Phi(B\times \mathbb{S}^{n-1})\cap \{\nu_{\parallel}^{E}=0  \})+ \int_{\partial^* E \cap \Phi(B\times \mathbb{S}^{n-1})\cap \{\nu_{\parallel}^{E}\neq0  \}}d\mathcal{H}^{n-1}(x)\\
&=P(E;\Phi(B\times \mathbb{S}^{n-1})\cap \{\nu_{\parallel}^{E}=0  \})+ \int_{B}dr\int_{\partial^* E \cap \partial B(r)}\frac{1}{|\nu_{\parallel}^{E}(x)|}d\mathcal{H}^{n-2}(x)\\
&=P(E;\Phi(B\times \mathbb{S}^{n-1})\cap \{\nu_{\parallel}^{E}=0  \})
+ \int_{B}dr\int_{\partial^* E \cap \partial B(r)}
\sqrt{1+\left( \frac{\hat{x} \cdot \nu^{E}(x)}{| \nu_{\parallel}^{E}(x)|} \right)^2}d\mathcal{H}^{n-2}(x),
\end{align*}
where in the last equality we used the fact that 
$$
1 = | \nu_{\perp}^E |^2 + | \nu_{\parallel}^E |^2 = ( \hat{x} \cdot \nu^E )^2 + | \nu_{\parallel}^E |^2.
$$
Defining the function $f: \mathbb{R} \to [0, \infty)$ as
$$ 
f(t):= \sqrt{1 + t^{2}}, 
$$
we obtain
\begin{align*}
&P(E;\Phi(B\times \mathbb{S}^{n-1})) \\
&= P(E;\Phi(B\times \mathbb{S}^{n-1})\cap \{\nu_{\parallel}^{E}=0  \})
+ \int_{B}  dr \int_{\partial^* E \cap \partial B(r)} f \left( \frac{\hat{x} \cdot \nu^{E}(x)}{|\nu_{\parallel}^{E}(x)|} \right) \, d\mathcal{H}^{n-2}(x).
\end{align*}
%
%
%
%
Observing that $f$ is strictly convex, \eqref{eq:cagnetti 3.13} follows applying Jensen's inequality. 

\vspace{.2cm}

\noindent
\textbf{Step 2b:} We show that
\begin{align}
&\int_{B} \sqrt{p_E^{2} (r)+(r^{n-1}\xi_v'(r))^{2}} \, dr \nonumber \\
&\hspace{.8cm} \leq P(E;\Phi(B\times \mathbb{S}^{n-1})\cap \{\nu^{E}_{\|}=0  \}) 
+ \int_{B}\sqrt{p_E^{2} (r)+g^{2}(r)}\, dr. \label{eq:cagnetti 3.14}  
\end{align}
Let $H\subset \mathbb{N}$ be a finite set, and let $\{A_h \}_{h \in H}$ be a finite partition of Borel sets of $B$.
Note that, for each $h \in H$, we have 
$A_h \subset B \subset G_{F_v}$.
Therefore, thanks to Lemma~\ref{lem:4.14dominik}, 
for every $h \in H$
we have $r^{n-1}D\xi_v \mres A_h = r^{n-1} \xi_v' dr \mres A_h$ and
\begin{align}
& \int_{A_h} w_h r^{n-1}\xi_v'(r) \, dr = \int_{A_h} w_h r^{n-1} d D \xi_v(r) \nonumber \\
&= \int_{\partial^* E \cap \Phi(A_h \times \mathbb{S}^{n-1})\cap\{\nu^{E}_{\|}=0 \}}
w_h \, \hat{x} \cdot \nu^E(x)  \, d\mathcal{H}^{n-1}(x) \nonumber \\ 
&\hspace{.4cm}
+\int_{A_h }dr \int_{(\partial^* E)_r \cap \{\nu^{E}_{\|} \neq 0 \} } w_h
\frac{\hat{x} \cdot \nu^E (x)}{|\nu_{\|}^E(x)|}d\mathcal{H}^{n-2}(x) \nonumber \\
&= \int_{\partial^* E \cap \Phi(A_h \times \mathbb{S}^{n-1})\cap\{\nu^{E}_{\|}=0 \}}
w_h \, \hat{x} \cdot \nu^E(x)  \, d\mathcal{H}^{n-1}(x)  
+\int_{A_h } w_h \, g (r) \, dr. \label{here Dx is abs cont}
\end{align}
We will now use the fact that, by duality, we can write
\begin{equation} \label{duality}
\sqrt{1 + t^{2}}= \sup_{h \in \mathbb{N}}\left\{w_h t +\sqrt{1-w_h^2}  \right\} \quad \text{ for every } t \in\R,  
\end{equation}
where $\{w_h \}_{h \in \mathbb{N}}$ is a countable dense set in $(-1,1)$. 
Then, thanks to \eqref{here Dx is abs cont}
\begin{align*}
&\sum_{h\in H} \int_{A_h}\left(w_h r^{n-1}\xi_v'(r) + p_E (r)\sqrt{1- w_h^2} \right)dr \\
&=\sum_{h\in H}
\int_{\partial^* E\cap \Phi(A_h \times \mathbb{S}^{n-1}) \cap \{\nu^E_{\|}=0 \}}
w_h \, \hat{x} \cdot \nu^E(x) d\mathcal{H}^{n-1}(x) \\
&\hspace{.5cm}
+ \sum_{h\in H} \int_{A_h} \Big( w_h \, g (r) + p_E (r)\sqrt{1-w_h^2} \Big) dr  
\\ 
&\leq \sum_{h\in H} 
\int_{\partial^* E\cap \Phi(A_h\times \mathbb{S}^{n-1})\cap \{\nu^E_{\|}=0 \}} 
| \hat{x} \cdot \nu^E(x) |d\mathcal{H}^{n-1}(x) \\
&\hspace{.5cm} + \sum_{h\in H}  \int_{A_h} p_E (r) \left( w_h  \frac{g(r)}{p_E (r)}+\sqrt{1-w_h^2} \right) dr 
\\
&\leq \sum_{h\in H} \left(P(E;\Phi( A_h\times \mathbb{S}^{n-1}) \cap \{ \nu^E_{\|}=0\} )  \right)
+ \int_{A_h} p_E (r) \sqrt{1+\frac{g^2(r)}{p_E^2 (r)}} dr \\
&=P(E;\Phi(B\times \mathbb{S}^{n-1})\cap \{\nu^E_{\|}=0  \}) + 
\int_B\sqrt{p_E^2 (r)+g^2(r)}dr,
\end{align*}
where we applied identity \eqref{duality} with $t = g(r)/p_E (r)$,
and we also used the fact that $p_E (r) = 0$
for $\mathcal{H}^1$-a.e. $r \notin \{ 0 < \alpha_v < \pi\}$, 
thanks to Volper't theorem.   
Applying Lemma~\ref{lem:cagnetti 2.6} to the functions 
$$ 
\varphi_h(r)= p_E (r) \left( w_h \frac{r^{n-1}\xi'_v(r)}{p_E (r)} +\sqrt{1-w_h^2}\right),
$$  
we obtain \eqref{eq:cagnetti 3.14}.

\vspace{.2cm}

\noindent
\textbf{Step 2c:} We conclude the proof of Step 2.
In the special case $E=F_v$, thanks to Vol'pert Theorem and Lemma~\ref{lem:4.14dominik} we have 
\begin{align}
&P(F_v;\Phi(B\times \mathbb{S}^{n-1})) = \mathcal{H}^{n-1}(\partial^* F_v \cap \Phi(B\times \mathbb{S}^{n-1})) \nonumber \\
&= \int_{B \cap \{ 0 < \alpha_v < \pi\}}\int_{\partial^* (F_v)_r}\frac{1}{|\nu_{\parallel}^{F_v}(x)|}d\mathcal{H}^{n-2}(x) dr \nonumber \\
&=\int_{B \cap \{ 0 < \alpha_v < \pi\}}\int_{\partial^*  (F_v)_r  } 
\sqrt{1+\left( \frac{\nu^{F_v}(x)}{|\nu_{\parallel}^{F_v}(x)|} \right)^2}d\mathcal{H}^{n-2}(x) dr \nonumber \\
&=\int_{B \cap \{ 0 < \alpha_v < \pi\}}\sqrt{p_{F_v}^2(r)+(r^{n-1}\xi'_v(r))^2}dr. \label{useful for Fv} 
\end{align}
Using the isoperimetric inequality \eqref{isop ineq} 
together with \eqref{eq:cagnetti 3.14} and \eqref{eq:cagnetti 3.13} we then have, 
\begin{align*}
&P(F_v;\Phi(B\times \mathbb{S}^{n-1})) 
\leq \int_{B\cap \{ 0 < \alpha_v < \pi\}}\sqrt{p_E^2(r)+(r^{n-1}\xi'_v(r))^2}dr\\
&\leq  P(E;\Phi(B\times \mathbb{S}^{n-1})\cap \{\nu^E_{\|}=0  \}) + \int_B\sqrt{p_E^2(r)+g^2(r)}dr\\
&\leq P(E;\Phi(B\times \mathbb{S}^{n-1})),
\end{align*}
from which we conclude.

\vspace{.2cm}

\noindent
\textbf{Step 3:} We conclude the proof of the theorem.
Suppose $P (E) = P(F_v)$.
Then, in particular, all the inequalities in Step 2 hold true as equalities.
At the end of Step 2c we used the fact that, by the isoperimetric inequality \eqref{isop ineq}, we have 
$$
p_{F_v} (r) \leq p_E (r) \qquad \text{ for $\mathcal{H}^1$-a.e. $r \in \{ 0 < \alpha_v < \pi\}$}.
$$
If the above becomes an equality, this means that for $\mathcal{H}^1$-a.e. $r \in \{ 0 < \alpha_v < \pi\}$
the slice $E_r$ is a spherical cap. Finally, the fact that for $\mathcal{H}^1$-a.e. $r \in \{ 0 < \alpha_v < \pi\}$
we have 
$$
\mathcal{H}^{n-2} (\partial^* (E_r) \Delta (\partial^* E)_r) = 0
$$
follows from Vol'pert Theorem~\ref{thm:volpert}, and this shows (a).

Let us now prove (b). 
If $P (E) = P(F_v)$, the Jensen's inequality at the end of Step 2b, 
for the strictly convex function 
$$ 
f(t):= \sqrt{1 + t^{2}}, 
$$
becomes an equality.
This implies that for $\mathcal{H}^1$-a.e. $r \in \{ 0 < \alpha_v < \pi\}$ the function 
$$
x \longmapsto \frac{\hat{x} \cdot \nu^{E}(x)}{|\nu_{\parallel}^{E}(x)|} 
$$
is $\mathcal{H}^{n-2}$-a.e. constant in $\partial^* E_r$.
Since, for $\mathcal{H}^{n-2}$-a.e.  $x \in \partial^* E_r$, we have 
$$
1 = |\nu_{\parallel}^{E}(x)|^2 + (\hat{x} \cdot \nu^{E}(x))^2,
$$
this implies that 
$$
x \longmapsto \frac{(\hat{x} \cdot \nu^{E}(x))^2}{|\nu_{\parallel}^{E}(x)|^2} 
= 1 - \frac{1}{|\nu_{\parallel}^{E}(x)|^2}
$$
is $\mathcal{H}^{n-2}$-a.e. constant in $\partial^* E_r$.
Therefore, the two functions 
$$
x \longmapsto \nu^E (x) \cdot \hat{x} \qquad \text{ and } \qquad 
x \longmapsto | \nu^E_{\parallel}| (x)
$$
are constant $\mathcal{H}^{n-2}$-a.e. in $(\partial^* E )_r$.

\end{proof}
The previous result allows us to prove a useful proposition
(see also \cite[Proposition~3.4]{barchiesicagnettifusco}).
\begin{proposition}\label{proposition that later will given the perimeter}
Let $v: (0,\infty) \to [0, \infty)$ be a measurable function 
satisfying \eqref{bound on v} such that $F_v$ 
is a set of finite perimeter and finite volume, 
let $E$ be a spherically $v$-distributed set of finite perimeter, and let $f:(0, \infty) \to [0, \infty]$ be a Borel function.
Then, 
\begin{align}
&\int_{\partial^* E} f (|x|) \, d \mathcal{H}^{n-1} (x) \nonumber \\
&\geq \int_0^{\infty} f (r) \sqrt{p_E^2(r)+(r^{n-1}\xi'_v(r))^2} \, dr
+ \int_0^{\infty} f (r) r^{n-1} d | D^s \xi_v | (r). \label{more than perimeter}
\end{align}
Moreover, in the special case $E = F_v$, equality holds true. 
\end{proposition}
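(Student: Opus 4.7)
The plan is to view the right-hand side of~\eqref{more than perimeter} as the total variation of $f$ against a suitable $\mathbb{R}^2$-valued Radon measure on $(0,\infty)$, and then to evaluate its dual representation as a surface integral on $\partial^* E$ via Lemma~\ref{lem:4.14dominik} and Proposition~\ref{coarea}. I would introduce the vector Radon measure
\begin{equation*}
\mu := \bigl(p_E(r)\,dr,\; r^{n-1}\,dD\xi_v(r)\bigr) \qquad \text{on } (0,\infty).
\end{equation*}
Its Lebesgue--Nikodym decomposition is $\mu = (p_E,\,r^{n-1}\xi_v')\,dr \;+\; (0,\,r^{n-1}\,dD^s\xi_v)$, so the total variation reads
\begin{equation*}
|\mu| \;=\; \sqrt{p_E^2 + (r^{n-1}\xi_v')^2}\,dr \;+\; r^{n-1}\,d|D^s\xi_v|.
\end{equation*}
The claimed inequality~\eqref{more than perimeter} becomes $\int_{\partial^* E} f(|x|)\,d\mathcal{H}^{n-1}(x) \ge \int_0^\infty f\,d|\mu|$.

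Next I would invoke the dual characterisation of the total variation: for $f\ge 0$ bounded Borel,
\begin{equation*}
\int_0^\infty f\,d|\mu| \;=\; \sup\bigg\{\int_0^\infty f\beta\,p_E\,dr \;+\; \int_0^\infty f\alpha\,r^{n-1}\,dD\xi_v\bigg\},
\end{equation*}
the supremum ranging over bounded Borel functions $\alpha,\beta$ with $\alpha^2+\beta^2\le 1$. For any admissible pair, Lemma~\ref{lem:4.14dominik} applied with integrand $f\alpha$ rewrites the second integral as the sum of $\int_{\partial^* E\cap\{\nu^{E}_{\|}=0\}} f(|x|)\alpha(|x|)\,\hat{x}\cdot\nu^E\,d\mathcal{H}^{n-1}$ and $\int f\alpha g\,dr$; the coarea formula converts the first integral into $\int_{\partial^* E} f(|x|)\beta(|x|)\,|\nu^{E}_{\|}(x)|\,d\mathcal{H}^{n-1}(x)$, and similarly $\int f\alpha g\,dr = \int_{\partial^* E\cap\{\nu^{E}_{\|}\neq 0\}} f(|x|)\alpha(|x|)\,\hat{x}\cdot\nu^E(x)\,d\mathcal{H}^{n-1}(x)$. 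Summing these three identities collapses everything into
\begin{equation*}
\int_{\partial^* E} f(|x|)\,\bigl[\alpha(|x|)\,\hat{x}\cdot\nu^E(x) + \beta(|x|)\,|\nu^{E}_{\|}(x)|\bigr]\,d\mathcal{H}^{n-1}(x),
\end{equation*}
which is bounded above by $\int_{\partial^* E} f(|x|)\,d\mathcal{H}^{n-1}$ by Cauchy--Schwarz together with $(\hat{x}\cdot\nu^E)^2+|\nu^{E}_{\|}|^2=1$ and $\alpha^2+\beta^2\le 1$. Passing to the supremum proves the inequality; a monotone-convergence argument extends it to general Borel $f:(0,\infty)\to[0,\infty]$.

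For the equality statement with $E=F_v$, I would split $\partial^* F_v$ according to whether $\nu^{F_v}_{\|}$ vanishes. On $\{\nu^{F_v}_{\|}\neq 0\}$, Remark~\ref{rem 2 after volpert} guarantees that for $\mathcal{H}^1$-a.e. $r\in G_{F_v}\cap\{0<\alpha_v<\pi\}$ the ratio $\hat{x}\cdot\nu^{F_v}/|\nu^{F_v}_{\|}|$ is constant on the geodesic sphere $(\partial^* F_v)_r$, with value $r^{n-1}\xi_v'(r)/p_{F_v}(r)$ given by Lemma~\ref{lem:4.14dominik}; combining with $(\hat{x}\cdot\nu^{F_v})^2+|\nu^{F_v}_{\|}|^2=1$ and the coarea formula yields $\int_{\partial^* F_v\cap\{\nu^{F_v}_{\|}\neq 0\}} f(|x|)\,d\mathcal{H}^{n-1} = \int_0^\infty f(r)\sqrt{p_{F_v}^2+(r^{n-1}\xi_v')^2}\,dr$, exactly as in~\eqref{useful for Fv}. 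On $\{\nu^{F_v}_{\|}=0\}$, Remark~\ref{rem 2 after volpert} tells us that the measure $\lambda_{F_v}$ in~\eqref{measure mu abs cont} is purely singular, so Lemma~\ref{lem:4.14dominik} forces $r^{n-1}\,dD^s\xi_v = \lambda_{F_v}$; since $|\hat{x}\cdot\nu^{F_v}|=1$ whenever $\nu^{F_v}_{\|}=0$, the definition of $\lambda_{F_v}$ gives $|\lambda_{F_v}|(A)=\mathcal{H}^{n-1}\bigl(\partial^* F_v\cap\Phi(A\times\mathbb{S}^{n-1})\cap\{\nu^{F_v}_{\|}=0\}\bigr)$, whence $\int_{\partial^* F_v\cap\{\nu^{F_v}_{\|}=0\}} f(|x|)\,d\mathcal{H}^{n-1} = \int_0^\infty f(r)\,r^{n-1}\,d|D^s\xi_v|(r)$. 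Summing the two pieces gives equality.

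The main obstacle lies in the careful bookkeeping between the absolutely continuous and singular components of $r^{n-1}\,D\xi_v$: as Remark~\ref{remark gmt} points out, when $n>2$ the measure $\lambda_E$ may carry a nontrivial absolutely continuous part, so neither $g\,dr$ nor $\lambda_E$ individually matches the Lebesgue/singular splitting of $r^{n-1}\,D\xi_v$. Working directly with the vector measure $\mu$ bypasses this difficulty, as the dual representation treats both components simultaneously; a more elementary route, adapting the partition/duality argument of Steps 2a--2c of Theorem~\ref{fv locally finite perimeter} by means of Lemma~\ref{lem:cagnetti 2.6} and a countable dense sequence $\{w_h\}\subset(-1,1)$, is also available but requires combining two suprema (over the Lebesgue part via $\sqrt{1+t^2}=\sup_h\{w_h t+\sqrt{1-w_h^2}\}$ and over the singular part via $\int f r^{n-1}\,d|D^s\xi_v|=\sup_h w_h\int f r^{n-1}\,dD^s\xi_v$) on a common refinement of partitions.
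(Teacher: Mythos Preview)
Your approach to the inequality via the vector measure $\mu=(p_E\,dr,\,r^{n-1}dD\xi_v)$ and its dual representation is correct and genuinely different from the paper's proof. The paper instead reduces to $f=\chi_B$ and splits according to whether $B\subset G_{F_v}$ or $B\subset(0,\infty)\setminus G_{F_v}$, invoking on each piece the chain of inequalities \eqref{eq:cagnetti 3.13}--\eqref{eq:cagnetti 3.14} already established in the proof of Theorem~\ref{fv locally finite perimeter}, together with \eqref{bound for D xi}. Your route is cleaner: by packaging both components of $\mu$ together and testing against pairs $(\alpha,\beta)$ with $\alpha^2+\beta^2\le 1$, the coarea formula and Lemma~\ref{lem:4.14dominik} collapse everything into a single surface integral, and Cauchy--Schwarz finishes the job without any case analysis. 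This also handles the higher-codimension subtlety (Remark~\ref{remark gmt}) transparently, since you never decompose $\lambda_E$ into absolutely continuous and singular parts.

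There is, however, a gap in the equality argument. From $|\hat{x}\cdot\nu^{F_v}|=1$ on $\{\nu^{F_v}_{\|}=0\}$ you conclude $|\lambda_{F_v}|(A)=\mathcal{H}^{n-1}\bigl(\partial^* F_v\cap\Phi(A\times\mathbb{S}^{n-1})\cap\{\nu^{F_v}_{\|}=0\}\bigr)$, but this does not follow: $\lambda_{F_v}$ is a measure on $(0,\infty)$ obtained by pushing forward $(\hat{x}\cdot\nu^{F_v})\,\mathcal{H}^{n-1}\mres(\partial^* F_v\cap\{\nu^{F_v}_{\|}=0\})$ under $x\mapsto|x|$, and the total variation of a pushforward can be strictly smaller than the pushforward of the total variation if the integrand changes sign within a fibre $\{|x|=r\}$. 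Ruling out such cancellation requires an additional argument (for jump points of $\alpha_v$ one can argue directly from Lemma~\ref{slice of reduced boundary F} that the sign is constant on the annulus $(\partial^*F_v)_r$, but the Cantor part needs more care). The paper sidesteps this entirely: once the inequality is in hand, it obtains the reverse inequality on $(0,\infty)\setminus G_{F_v}$ from \eqref{eq:4.39dominik}, which rests on Proposition~\ref{lem:4.17dominik} and the vanishing of $|D_\|\chi_{F_v}|$ there (identity \eqref{asd}). You could patch your argument the same way: keep your computation on $\{\nu^{F_v}_{\|}\neq 0\}$, and for the remaining piece invoke \eqref{eq:4.39dominik} rather than attempting a direct identification of $|\lambda_{F_v}|$.
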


\begin{proof}
To prove the proposition it is enough to consider the case in which 
$f = \chi_B$, with $B \subset (0,\infty)$ Borel set. 

First, suppose $B \subset (0,\infty) \setminus G_{F_v}$. Thanks to Lemma~\ref{lem:4.14dominik}, 
in this case we have $\xi'_v = 0$ in $B$ and $|r^{n-1} D \xi_v| (B) = |r^{n-1} D^s \xi_v| (B)$.
Then, from \eqref{bound for D xi} it follows that 
\begin{align*}
&\int_{\partial^* E} \chi_B (|x|) \, d \mathcal{H}^{n-1} (x) 
 = P (E ; \Phi (B \times \mathbb{S}^{n-1}))  
\geq | D_{\perp} \chi_E | (\Phi (B \times \mathbb{S}^{n-1})) \\
& \geq |r^{n-1} D \xi_v| (B) = |r^{n-1} D^s \xi_v| (B) 
= \int_0^{\infty} \chi_B (r) r^{n-1} d | D^s \xi_v | (r) \\
&= \int_0^{\infty} \chi_B (r) \sqrt{p_E^2(r)+(r^{n-1}\xi'_v(r))^2} \, dr
+ \int_0^{\infty} \chi_B (r) r^{n-1} d | D^s \xi_v | (r),
\end{align*}
where we also used the fact that
$p_E = 0$ $\mathcal{H}^1$-a.e. in $B$, since
$$
\mathcal{H}^n (E \cap \Phi (B \times \mathbb{S}^{n-1})) \leq 
\int_{ \{ v = 0 \} } \, dr \int_{E_r} \, d \mathcal{H}^{n-1} (x)
= \int_{\{ v = 0 \}} v (r)\, dr = 0.
$$

Let us now assume $B \subset G_{F_v}$. In this case, 
by Lemma~\ref{lem:4.14dominik} we have 
$|r^{n-1} D^s \xi_v| (B) = 0$. Then, thanks to \eqref{eq:cagnetti 3.13}
and \eqref{eq:cagnetti 3.14} we obtain
\begin{align*}
&\int_{\partial^* E} \chi_B (|x|) \, d \mathcal{H}^{n-1} (x) 
 = P (E ; \Phi (B \times \mathbb{S}^{n-1}))  \\
 & \geq P(E;\Phi(B\times \mathbb{S}^{n-1})\cap \{\nu_{\parallel}^{E}=0  \})
+\int_{B}\sqrt{p_E^2 (r)+g^{2} (r)}dr \\
&\geq \int_{B} \sqrt{p_E^{2} (r)+(r^{n-1}\xi_v'(r))^{2}} \, dr \\
&= \int_0^{\infty} \chi_B (r) \sqrt{p_E^2(r)+(r^{n-1}\xi'_v(r))^2} \, dr
+ \int_0^{\infty} \chi_B (r) r^{n-1} d | D^s \xi_v | (r),
\end{align*}
so that \eqref{more than perimeter} follows.

Consider now the case $E = F_v$. If $B \subset G_{F_v}$, 
recalling again that by Lemma~\ref{lem:4.14dominik} we have 
$|r^{n-1} D^s \xi_v| (B) = 0$, thanks to \eqref{useful for Fv} we obtain
\begin{align*}
&\int_{\partial^* F_v} \chi_B (|x|) \, d \mathcal{H}^{n-1} (x)  = P(F_v;\Phi(B\times \mathbb{S}^{n-1})) 
=\int_{B}\sqrt{p_{F_v}^2(r)+(r^{n-1}\xi'_v(r))^2} \, dr \\
&= \int_0^{\infty} \chi_B (r) \sqrt{p_{F_v}^2(r)+(r^{n-1}\xi'_v(r))^2} \, dr
+ \int_0^{\infty} \chi_B (r) r^{n-1} d | D^s \xi_v | (r).
\end{align*}
If, instead, $B \subset (0,\infty) \setminus G_{F_v}$, then 
$\xi'_v = 0$ in $B$ and $|r^{n-1} D \xi_v| (B) = |r^{n-1} D^s \xi_v| (B)$.
Therefore, thanks to \eqref{eq:4.39dominik}, 
\begin{align*}
&\int_{\partial^* F_v} \chi_B (|x|) \, d \mathcal{H}^{n-1} (x)  = P(F_v;\Phi(B\times \mathbb{S}^{n-1})) 
\leq r^{n-1}\left| D \xi_v \right|(B) = |r^{n-1} D^s \xi_v| (B) \\
&= \int_0^{\infty} \chi_B (r) \sqrt{p_{F_v}^2(r)+(r^{n-1}\xi'_v(r))^2} \, dr
+ \int_0^{\infty} \chi_B (r) r^{n-1} d | D^s \xi_v | (r).
\end{align*}
\end{proof}

\noindent
An important consequence of the above proposition is a formula 
for the perimeter of $F_v$.
\begin{corollary} \label{corollary perimeter Fv}
Let $v: (0,\infty) \to [0, \infty)$ be a measurable function 
satisfying \eqref{bound on v} such that $F_v$ 
is a set of finite perimeter and finite volume.
Then
\begin{equation} \label{perimeter Fv}
P (F_v; \Phi (B \times \mathbb{S}^{n-1}))
= \int_B \sqrt{p_{F_v}^2(r)+(r^{n-1}\xi'_v(r))^2} \, dr
+ \int_B r^{n-1} d | D^s \xi_v | (r). 
\end{equation}
\end{corollary}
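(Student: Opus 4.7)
The plan is very short: the corollary is a direct restatement of the equality case of Proposition~\ref{proposition that later will given the perimeter}, which is already asserted in that proposition's last sentence. I would simply apply that proposition to $E = F_v$ with the Borel test function $f = \chi_B$, for an arbitrary Borel set $B \subset (0, \infty)$.

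With this choice, the left-hand side of inequality~\eqref{more than perimeter} becomes
\begin{equation*}
\int_{\partial^* F_v} \chi_B(|x|) \, d\mathcal{H}^{n-1}(x) = \mathcal{H}^{n-1}\bigl(\partial^* F_v \cap \{x \in \R^n : |x| \in B\}\bigr).
\end{equation*}
Since $\Phi(r,\omega) = r\omega$ is a diffeomorphism of $(0,\infty) \times \mathbb{S}^{n-1}$ onto $\R^n_0$, one has the identity $\{x \in \R^n_0 : |x| \in B\} = \Phi(B \times \mathbb{S}^{n-1})$, and the single point $\{0\}$ has zero $\mathcal{H}^{n-1}$-measure and therefore contributes nothing to $\partial^* F_v \cap \{|x| \in B\}$. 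Hence the left-hand side equals $P(F_v; \Phi(B \times \mathbb{S}^{n-1}))$, matching the left-hand side of~\eqref{perimeter Fv}.

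The right-hand side of~\eqref{more than perimeter} with $f = \chi_B$ is, by inspection, exactly the right-hand side of~\eqref{perimeter Fv}. Since the proposition guarantees equality for $E = F_v$, we conclude.

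There is no real obstacle: the substantive work has already been done in Proposition~\ref{proposition that later will given the perimeter}, whose proof split $B$ into its intersections with $G_{F_v}$ and its complement, used Lemma~\ref{lem:4.14dominik} to identify $r^{n-1} D\xi_v \mres G_{F_v} = r^{n-1}\xi_v'\, dr$, used the explicit perimeter formula~\eqref{useful for Fv} on $G_{F_v}$, and on the complement combined Proposition~\ref{lem:4.17dominik} with the bound~\eqref{bound for D xi} to pin down equality via $|r^{n-1} D\xi_v|(B) = |r^{n-1} D^s\xi_v|(B)$. The only thing worth double-checking in writing up the corollary is that the integral representations on the right-hand side make sense as integrals against the Borel measure $\chi_B(r)\bigl(\sqrt{p_{F_v}^2 + (r^{n-1}\xi_v')^2}\, dr + r^{n-1} d|D^s\xi_v|\bigr)$ for every Borel $B$, which is immediate since both measures are Borel.
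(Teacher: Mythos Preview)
Your proposal is correct and follows exactly the approach the paper intends: the corollary is stated immediately after Proposition~\ref{proposition that later will given the perimeter} as a direct consequence, and the paper gives no separate proof. Specialising that proposition to $E=F_v$ with $f=\chi_B$ is precisely how the result is meant to be read off.
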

We conclude this section with  an important result,  that will be used later.
\begin{proposition}\label{prop:5.3dominik}
Let $v: (0,\infty) \to [0, \infty)$ be a measurable function 
satisfying \eqref{bound on v} such that $F_v$ 
is a set of finite perimeter and finite volume, and let $I\subset(0,+\infty)$ be an open set. 
Then the following three statements are equivalent:
\begin{itemize}
\item[(i)] $\mathcal{H}^{n-1}\left(\Big\{x\in \partial^* F_v \cap \Phi(I\times \mathbb{S}^{n-1}): \nu^{F_v}_{\|}(x)=0  \Big\}  \right)=0$;

\vspace{.2cm}

\item[(ii)] $\xi_v \in W^{1,1}_{\textnormal{loc}} (I)$;

\vspace{.2cm}

\item[(iii)] $P(F_v;\Phi(B\times \mathbb{S}^{n-1}))=0$ for every Borel set $B\subset I$, such that $\mathcal{H}^1(B)=0$.

\end{itemize}
\end{proposition}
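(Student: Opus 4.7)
The plan is to reduce all three conditions to the vanishing of $|D^s\xi_v|$ on $I$, by first establishing the master identity
\begin{equation*}
\mathcal{H}^{n-1}\!\left(\partial^* F_v\cap\Phi(B\times\mathbb{S}^{n-1})\cap\{\nu^{F_v}_{\|}=0\}\right)=\int_B r^{n-1}\,d|D^s\xi_v|(r)
\end{equation*}
for every Borel set $B\subset(0,\infty)$. Once this identity is in hand, all equivalences follow quickly, as we now explain.

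To establish the master identity I would apply Lemma~\ref{lem:4.14dominik} with $E=F_v$, which decomposes $r^{n-1}D\xi_v(B)$ as the sum of a ``radial'' boundary term $\mu(B) := \int_{\partial^* F_v\cap\Phi(B\times\mathbb{S}^{n-1})\cap\{\nu^{F_v}_{\|}=0\}}\hat{x}\cdot\nu^{F_v}\,d\mathcal{H}^{n-1}$ plus the coarea term $\int_B g_{F_v}(r)\,dr$. The key point is that \eqref{measure of B0 is 0} in Remark~\ref{rem 2 after volpert} ensures that the projection of $\partial^* F_v\cap\{\nu^{F_v}_{\|}=0\}$ onto $(0,\infty)$ lies inside the $\mathcal{H}^1$-null set $B_0$, so $\mu$ is supported on $B_0$ and is therefore singular with respect to Lebesgue measure, whereas $g_{F_v}(r)\,dr$ is by definition absolutely continuous. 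By uniqueness of the Radon--Nikodym decomposition of $r^{n-1}D\xi_v$, we conclude $\mu=r^{n-1}D^s\xi_v$. Passing to total variations and observing that on $\{\nu^{F_v}_{\|}=0\}$ one has $|\hat{x}\cdot\nu^{F_v}|=1$ (since there $\nu^{F_v}$ reduces to its unit radial part), the master identity follows.

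With the master identity at our disposal, (i)$\Leftrightarrow$(ii) is immediate: condition (i) is the statement that the left-hand side vanishes at $B=I$, equivalently $\int_I r^{n-1}\,d|D^s\xi_v|=0$, which (because $r^{n-1}>0$ on $I$) is the same as $|D^s\xi_v|(I)=0$, i.e.\ $\xi_v\in W^{1,1}_{\textrm{loc}}(I)$. For (ii)$\Leftrightarrow$(iii) I would invoke Corollary~\ref{corollary perimeter Fv}: for any Borel $B\subset I$ with $\mathcal{H}^1(B)=0$ the absolutely continuous integral in that formula vanishes automatically, leaving
\begin{equation*}
P(F_v;\Phi(B\times\mathbb{S}^{n-1}))=\int_B r^{n-1}\,d|D^s\xi_v|(r).
\end{equation*}
If (ii) holds, the right-hand side is trivially zero, giving (iii); conversely, if (iii) holds, choose $B_*\subset I$ an $\mathcal{H}^1$-null set carrying the singular measure $|D^s\xi_v|\mres I$, apply (iii) to $B_*$ to get $\int_{B_*}r^{n-1}\,d|D^s\xi_v|=0$, and use $r^{n-1}>0$ to conclude $|D^s\xi_v|(I)=0$, that is (ii).

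The main obstacle is the master identity, which is special to $F_v$. For a generic spherically distributed set $E$ in dimension $n>2$, the analogous measure $\lambda_E$ from \eqref{measure mu abs cont} may have a non-trivial absolutely continuous component (cf.\ Remark~\ref{remark gmt}), so the singular/absolutely-continuous split underlying the Radon--Nikodym argument simply does not persist. Here it is the full rotational symmetry of $F_v$ about the $e_1$-axis, exploited through \eqref{measure of B0 is 0}, that forces the radial-normal portion of the boundary to sit over a null set of radii, and thus makes the uniqueness argument go through.
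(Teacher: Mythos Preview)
Your proposal is correct and uses the same ingredients as the paper---the decomposition \eqref{D xi} from Lemma~\ref{lem:4.14dominik}, the fact \eqref{measure of B0 is 0} that $B_0$ is $\mathcal{H}^1$-null, and the perimeter formula \eqref{perimeter Fv}---but you organise them differently. The paper argues in a cycle (i)$\Rightarrow$(ii)$\Rightarrow$(iii)$\Rightarrow$(i): for (i)$\Rightarrow$(ii) it simply notes that if the first integral in \eqref{D xi} vanishes then $r^{n-1}D\xi_v\mres I$ is absolutely continuous; for (iii)$\Rightarrow$(i) it applies (iii) directly to the null set $B_0\cap I$ and bounds the left-hand side of (i) by $P(F_v;\Phi((B_0\cap I)\times\mathbb{S}^{n-1}))$. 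Your route instead isolates the master identity $\mathcal{H}^{n-1}(\partial^*F_v\cap\Phi(B\times\mathbb{S}^{n-1})\cap\{\nu^{F_v}_\|=0\})=\int_B r^{n-1}\,d|D^s\xi_v|$ as a standalone fact (via Radon--Nikodym uniqueness) and then reads off both equivalences. This is a cleaner packaging---everything reduces transparently to whether $|D^s\xi_v|(I)=0$---at the cost of one extra line justifying the passage from the signed measure equality $\mu=r^{n-1}D^s\xi_v$ to the total-variation identity. The paper's (iii)$\Rightarrow$(i) step is marginally more direct since it bypasses (ii) entirely, but the difference is cosmetic.
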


\begin{remark} \label{implication iii to i if I Borel}
Note that the equivalence  \textnormal{(iii)} $\Longleftrightarrow$ \textnormal{(i)}
holds true also if $I$ is a Borel set.
To show this, we only need to prove that 
 \textnormal{(i)} $\Longrightarrow$ \textnormal{(iii)}, 
 since the opposite implication is given by repeating  
 Step 3 of the proof of Proposition~\ref{prop:5.3dominik}. 
 Suppose \textnormal{(i)} is satisfied. 
 Then from \eqref{D xi} we have $r^{n-1} D \xi_v \mres I = r^{n-1} \xi'_v \mres I$.
 Therefore, thanks to \eqref{perimeter Fv}
 $$
P (F_v; \Phi (B \times \mathbb{S}^{n-1}))
= \int_B \sqrt{p_{F_v}^2(r)+(r^{n-1}\xi'_v(r))^2} \, dr \quad \text{ for every Borel set }
B \subset I,
$$
which implies (iii).
\end{remark}
\begin{proof}
We divide the proof into three steps.

\vspace{.2cm}

\noindent
\textbf{Step 1:} (i) $\Longrightarrow$ (ii). 
Recall that, by Lemma \ref{lem:4.12Dominik}, $\xi_v \in BV_{\textnormal{loc}}(I)$.
If (i) is satisfied, from \eqref{D xi} we have 
$r^{n-1} D \xi_v \mres I = r^{n-1} \xi'_v \mres I$, 
which implies (ii). 

\vspace{.2cm}

\noindent
\textbf{Step 2:} (ii) $\Longrightarrow$ (iii). 
This implication follows from formula \eqref{perimeter Fv}.

\vspace{.2cm}

\noindent
\textbf{Step 3:} (iii) $\Longrightarrow$ (i) (note that we will not use the fact that $I$
is open). 
Assume (iii) holds true. Then,
$$
\mathcal{H}^{n-1} 
\left(\Big\{x\in \partial^* F_v \cap \Phi(I\times \mathbb{S}^{n-1}): 
\nu^{\partial^* F_v}_{\|}(x)=0  \Big\}  \right)
\leq P(\partial^* F_v;\Phi((B_0\cap I) \times \mathbb{S}^{n-1}))=0,
$$
where we used the fact that $\mathcal{H}^1 (B_0) = 0$, 
thanks to \eqref{measure of B0 is 0}.  
\end{proof}

\section{Circular symmetrisation  and proof of Lemma~\ref{lem:5.4dominik}}\label{cylindrical section} 
In this section we  show  Theorem~\ref{theorem ineq cylindrical},
Lemma~\ref{lem:5.4dominik cyl}, and finally Lemma~\ref{lem:5.4dominik}. 
We will only  sketch the proofs,  since in most cases 
 the arguments  follow the lines of the proofs  in   Section~\ref{preliminary spherical},
Section~\ref{section properties v and xi}, 
and Section~\ref{section spherical proof perimeter inequality}.

We start with some notation which, together with 
that one already given in the Introduction, 
will be extensively used in this section.
Let $(r, x') \in (0, \infty) \times \R^{n-2}$, $\beta \in [0, \pi]$, and let $p \in \mathbb{S}^1$.
The circular arc of centre $(r p, x' )$ and radius $\beta$ is the set
$$
\mathcal{B}_{\beta} (r p, x' ) := 
\{ x \in \partial  B ((0,x'), r) \cap \Pi_{x'} : 
\text{dist}_{\mathbb{S}^{1}} (\hat{x}_{12} , r p) < \beta \},
$$
If $\ell  : (0, \infty) \times \R^{n-2} \to [0, \infty)$ is a measurable function
satisfying \eqref{bound on ell}, we define $\alpha^{\ell} : (0, \infty) \times \R^{n-2} \to [0, \pi]$
and $\xi^{\ell} : (0, \infty) \times \R^{n-2} \to [0, 2 \pi]$ as 
$$
\alpha^{\ell} := \frac{1}{2 r} \ell (r, x') \qquad \text{ and  } \qquad
\xi^{\ell} (r, x') = \frac{1}{r} \ell (r, x') = 2 \alpha^{\ell} (r, x').
$$
Note that in this case the relation between $\alpha^{\ell}$ and $\xi^{\ell}$ is linear.
%
If $\mu$ is an $\R^n$-valued Radon measure on $\R^n \setminus \{ x_{12} = 0 \}$, 
we will write $\mu = \mu_{12 \perp} + \mu_{12 \parallel}$, 
where $\mu_{12 \perp}$ and $\mu_{12 \parallel}$
are the $\R^n$-valued Radon measures on $\R^n \setminus \{ x_{12} = 0 \}$ such that
\begin{align*}
\int_{\R^n \setminus \{ x_{12} = 0 \}} \varphi  \cdot  d \mu_{12 \perp} 
= \int_{\R^n \setminus \{ x_{12} = 0 \}} \varphi_{12 \perp}  \cdot d \mu, 
\end{align*}
and
\begin{align*}
\int_{\R^n \setminus \{ x_{12} = 0 \}} \varphi  \cdot d \mu_{12 \parallel} 
= \int_{\R^n \setminus \{ x_{12} = 0 \}} \varphi_{12 \parallel}  \cdot d \mu,
\end{align*}
for every $\varphi \in C_c (\R^n \setminus \{ x_{12} = 0 \}; \R^n)$.
The next two results  play the role  of Proposition~\ref{coarea}
and Vol'pert Theorem~\ref{thm:volpert},  in the context of circular symmetrisation. 
\begin{proposition} 
Let E be a set of finite perimeter in $\R^n$ 
and let $g:\R^n \rightarrow [0,\infty]$ be a Borel function. Then, 
$$ 
\int_{\partial^* E} g(x) |\nu^{E}_{1 2 \parallel}(x)| d\mathcal{H}^{n-1}(x) 
= \int_{(0, \infty) \times \R^{n-2}} dr \, d x' \int_{(\partial^* E)_{(r, x')} } g(x) \, d\mathcal{H}^{0}(x).
$$
\end{proposition}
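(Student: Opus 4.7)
\medskip

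The plan is to deduce the identity from the general coarea formula for Lipschitz maps between rectifiable sets (as in \cite[Theorem~2.93]{AFP}, or the variant stated in \cite[Remark~2.94]{AFP}), applied to the map
$$
f : \R^n \setminus \{ x_{12} = 0 \} \longrightarrow (0, \infty) \times \R^{n-2},
\qquad f (x) := \big( |x_{12}|, \, x' \big),
$$
restricted to the $(n-1)$-rectifiable set $\partial^* E$. Since $\{ x_{12} = 0 \}$ is an $(n-2)$-dimensional affine subspace of $\R^n$, its intersection with $\partial^* E$ is $\mathcal{H}^{n-1}$-negligible, so removing it does not affect either integral.

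The key point is the identification of the tangential coarea factor $J_f^{\partial^* E}(x)$ with $|\nu^E_{12 \parallel}(x)|$. First I would compute $\nabla f$ and note that its $n-1$ rows, namely $(\hat{x}_{12}, 0, \dots, 0)$, $e_3, \dots, e_n$, are orthonormal; hence $\nabla f \, (\nabla f)^T = I_{n-1}$. For a Lipschitz map $\R^n \to \R^{n-1}$ and a hypersurface $M$ with unit normal $\nu$, the tangential Jacobian equals
$$
J_f^M (x)^2 = \det\big( \nabla f \, P_{T_x M} \, (\nabla f)^T \big) = \det\big( I_{n-1} - (\nabla f \, \nu)(\nabla f \, \nu)^T \big) = 1 - |\nabla f \, \nu|^2,
$$
where the last equality follows from the Sylvester determinant identity. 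A direct calculation yields
$$
|\nabla f \, \nu^E|^2 = (\hat{x}_{12} \cdot \nu^E_{12})^2 + |\nu^E_{x'}|^2,
$$
so, exploiting $|\nu^E|^2 = 1$,
$$
1 - |\nabla f \, \nu^E|^2 = |\nu^E_{12}|^2 - (\hat{x}_{12} \cdot \nu^E_{12})^2 = |\nu^E_{12 \parallel}(x)|^2,
$$
which gives $J_f^{\partial^* E}(x) = |\nu^E_{12 \parallel}(x)|$ for $\mathcal{H}^{n-1}$-a.e. $x \in \partial^* E$.

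To conclude, I would observe that by construction $f^{-1}(r, x') = \partial B((0, x'), r) \cap \Pi_{x'}$, so
$$
f^{-1}(r, x') \cap \partial^* E = (\partial^* E)_{(r, x')},
$$
a set that is $\mathcal{H}^0$-countable for $\mathcal{H}^{n-1}$-a.e.\ $(r, x')$ by the coarea formula itself. Plugging these ingredients into \cite[Remark~2.94]{AFP} and inserting the Borel weight $g$ yields the claimed identity. The only delicate point is the Jacobian computation above; once that is carried out the rest is a direct application of the general coarea theorem, and no further structural difficulty arises.
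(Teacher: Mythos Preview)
Your proposal is correct and follows exactly the same approach as the paper: both apply \cite[Remark~2.94]{AFP} (the general coarea formula) with the Lipschitz map $f(x)=(|x_{12}|,x')$, i.e.\ with $N=n-1$, $M=n$, $k=n-1$. You have in fact supplied more detail than the paper's proof, which merely cites the reference and the choice of $f$; your explicit computation of the tangential coarea factor $J_f^{\partial^* E}(x)=|\nu^E_{12\parallel}(x)|$ is correct and fills in the step the paper leaves to the reader.
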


\begin{proof}
In this case, the result follows applying \cite[Remark~2.94]{AFP} 
with $N = n-1$, 
$M = n$, $k = n-1$, and $f (x) = (|x_{12}|, x')$. 
\end{proof}

\begin{theorem} \label{thm:volpert circular}
Let $\ell  : (0, \infty) \times \R^{n-2} \to [0, \infty)$ be a measurable function
satisfying \eqref{bound on ell}, and let $E \subset \R^n$ be an 
circularly $\ell$-distributed set of finite perimeter and finite volume.
Then, there exists a Borel set $G^{\ell}_E \subset \{ \alpha^{\ell} > 0 \}$
with $\mathcal{H}^{n-1} (\{ \alpha^{\ell} > 0 \} \setminus G^\ell_E) = 0$,
such that 

\begin{itemize}

\item[(i)] for every $(r, x') \in G^\ell_E$:

\vspace{.1cm}

\begin{itemize}

\item[(ia)] $E_{(r, x')}$ is a set of finite perimeter in $\partial B_r (0,x') \cap \Pi_{x'}$;

\vspace{.1cm}

\item [(ib)] $\partial^{*} (E_{(r, x')})     =   (\partial^* E)_{(r, x')} $;

\end{itemize}

\vspace{.1cm}

\item[(ii)] for every $(r, x') \in G^\ell_E \cap \{ 0 < \alpha^{\ell}  < \pi \}$:
 
 \vspace{.1cm}

 \begin{itemize}
\item[(iia)] $| \nu^E_{12 \parallel} (r \omega, x') |  > 0$;

\vspace{.1cm}

\item[(iib)]$\nu^E_{12 \parallel} (r \omega, x') = \nu^{E_{(r, x')}} (r \omega, x')
| \nu^E_{12 \parallel} (r \omega, x') |$,   

\vspace{.1cm}

\end{itemize}
for \textbf{every} $\omega \in \mathbb{S}^{1}$ 
such that $( r \omega , x') \in \partial^{*} (E_{(r, x')})     =   (\partial^* E)_{(r, x')}$.
\vspace{.1cm}

\end{itemize}
%
\end{theorem}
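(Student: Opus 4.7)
The plan is to mirror the proof of Theorem~\ref{thm:volpert}, replacing the Lipschitz slicing function $x \mapsto |x|$ with the $\R^{n-1}$-valued Lipschitz map
$$
f(x) := (|x_{12}|, x').
$$
I would apply \cite[Theorem~28.5]{Simon83} to the integer multiplicity rectifiable $n$-current $[\![ E ]\!]$ using $f$, and recall the definition of slicing from \cite[Definition~28.4]{Simon83}. This produces a Borel set of parameters $(r, x') \in (0,\infty) \times \R^{n-2}$ of full $\mathcal{H}^{n-1}$ measure on which the slice $\langle [\![ E ]\!], f, (r,x')\rangle$ is a well-defined integer multiplicity rectifiable $1$-current supported on the circle $f^{-1}(r, x') = \partial B((0, x'), r) \cap \Pi_{x'}$. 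I would then define $G_E^{\ell}$ as the intersection of this set with $\{\alpha^{\ell} > 0\}$, giving $\mathcal{H}^{n-1}(\{\alpha^{\ell} > 0\} \setminus G_E^{\ell}) = 0$.

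\textbf{Derivation of (i) and (ii).} By the general theory of slicing, for every $(r, x') \in G_E^{\ell}$ the $1$-current above agrees with $[\![ E_{(r,x')} ]\!]$ on the circle, and its boundary coincides with the corresponding slice of $\partial [\![ E ]\!]$; this immediately yields (ia) and (ib). For (ii), fix $(r,x') \in G_E^{\ell}$ with $0 < \alpha^{\ell}(r,x') < \pi$, so that $(\partial^* E)_{(r,x')} = \partial^*(E_{(r,x')})$ is a finite (0-rectifiable) subset of the circle. At each point $y = (r\omega, x')$ in this set, decompose $\nu^E(y) = \nu^E_{12\perp}(y) + \nu^E_{12\parallel}(y) + \nu^E_{x'}(y)$; the only component tangent to the circle at $y$ is $\nu^E_{12\parallel}(y)$. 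The slicing identity \cite[Theorem~28.5]{Simon83} then forces the measure-theoretic unit normal $\nu^{E_{(r,x')}}(y)$ of the slice inside the circle to be the normalisation of the projection of $\nu^E(y)$ onto the tangent line to the circle at $y$, namely $\nu^E_{12\parallel}(y)/|\nu^E_{12\parallel}(y)|$. Since $y \in \partial^*(E_{(r,x')})$ forces this projection to be nonzero, both (iia) and (iib) follow at once.

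\textbf{Main obstacle.} The principal subtlety is that $f$ is only Lipschitz: it fails to be differentiable on $\{x_{12} = 0\}$. This is handled by restricting to $r > 0$, since on the preimage $f^{-1}(r, x')$ the map $f$ is smooth and Simon's slicing theorem applies without modification. A secondary technical point is that in this setting (iib) can be asserted \emph{for every} admissible $\omega$, rather than for $\mathcal{H}^{n-2}$-a.e.\ $\omega$ as in Theorem~\ref{thm:volpert}. This is because the $0$-dimensional rectifiable slices $(\partial^* E)_{(r,x')}$ are at most countable on a full-measure set of parameters $(r,x')$, so the pointwise identification of $\nu^{E_{(r,x')}}(y)$ with $\nu^E_{12\parallel}(y)/|\nu^E_{12\parallel}(y)|$ can be carried out at each of the finitely many boundary points on the circle, a feature unavailable in the higher-dimensional spherical slicing.
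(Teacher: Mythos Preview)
Your approach is the same as the paper's in spirit: both deduce the result from the slicing theory for integer multiplicity rectifiable currents, and your derivation of (i)--(ii) and your explanation of why (iia)--(iib) hold \emph{for every} admissible $\omega$ (because the boundary slices are $0$-dimensional) match the paper's remarks exactly.

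There is one technical mismatch worth flagging. In the spherical case (Theorem~\ref{thm:volpert}) the slicing map $x \mapsto |x|$ is real-valued, so \cite[Theorem~28.5]{Simon83}, which is stated for codimension-$1$ slicing by a Lipschitz $f \colon U \to \R$, applies directly. Here your map $f(x) = (|x_{12}|, x')$ takes values in $\R^{n-1}$, so you are slicing in codimension $n-1$, and a direct appeal to the same theorem is not quite licit. This is precisely why the paper switches references and invokes \cite[Section~2.5]{GMSbook1}, where slicing of currents in codimension greater than $1$ is developed. Your argument is easily repaired either by citing that source (or the higher-codimension statements in Simon's \S 28) or by iterating codimension-$1$ slices, first in the flat variables $x_3, \ldots, x_n$ and then in $|x_{12}|$; once that is done, everything else you wrote goes through.
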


\begin{proof}
The statement follows applying the results of \cite[Section~2.5]{GMSbook1}, 
where  the slicing of codimension higher than $1$  for currents is  defined.
\end{proof}
\begin{remark}
Note that, if $(r, x') \in G^{\ell}_E$, 
conditions (iia) and (iib)
are satisfied for \textbf{every} $\omega \in \mathbb{S}^{1}$ 
such that $( r \omega , x') \in \partial^{*} (E_{(r, x')})     =   (\partial^* E)_{(r, x')}$.
This is due to the fact that the circular symmetrisation has codimension $1$.
Such property fails, in general, for the spherical symmetrisation 
(see Remark~\ref{remark gmt}).
\end{remark}

%
%


\begin{remark} \label{remark gmt circular}
An argument similar to that one used in Remark~\ref{remark gmt}
shows that 
$$
\mathcal{H}^{n-1}
( \partial^* E \cap \Phi_{12} (G^{\ell}_E \times \mathbb{S}^{1})
\cap
\{ \nu^E_{12 \parallel}   =0 \}  ) = 0.
$$
As a consequence, the measure
$\lambda^{\ell}_E$ defined as:
$$
\lambda^{\ell}_E (B) :=\int_{\partial^* E \cap \Phi_{12} (B \times \mathbb{S}^{1})
\cap
\{ \nu^E_{12 \parallel}   =0 \} }
\hat{x}_{12} \cdot \nu^E(x)  \, d\mathcal{H}^{1}(x), 
$$
for every Borel set $B \subset (0, \infty) \times \R^{n-2}$,
is singular with respect to the Lebesgue measure in $(0, \infty) \times \R^{n-2}$.
\end{remark}
 The following result plays the role of Lemma~\ref{lem:4.12Dominik}
in the context of circular symmetrisation. 
\begin{lemma}  
Let $\ell  : (0, \infty) \times \R^{n-2} \to [0, \infty)$ be a measurable function
satisfying \eqref{bound on ell}, and let $E \subset \R^n$ be an 
circularly $\ell$-distributed set of finite perimeter and finite volume.
Then, $\ell \in BV_{\textnormal{loc}} ((0, \infty)\times \R^{n-2})$.
Moreover,  $\xi^{\ell} \in BV_{\textnormal{loc}} ((0, \infty)\times \R^{n-2})$ and 
\begin{equation*} 
\int_{(0, \infty)\times \R^{n-2}}  \psi (r, x') \, r \, d D_r \xi^{\ell} (r, x')
= \int_{\R^n \setminus \{ x_{12} = 0 \}} \psi (|x_{12}|, x') \, \hat{x}_{12} 
\cdot d D_{12 \perp} \chi_E (x), 
\end{equation*}
for every bounded Borel function $\psi: (0, \infty)\times \R^{n-2} \to \R$, 
where $D_r \xi^{\ell}$ denotes the $r$-component of the $\R^{n-1}$-valued 
Radon measure $D \xi^{\ell}$.
As a consequence, 
\begin{equation*} 
| r D_r \xi^{\ell} | (B) \leq | D_{12 \perp} \chi_E | (\Phi_{12} (B \times \mathbb{S}^{1})), 
\end{equation*}
for every Borel set $B \subset (0, \infty)\times \R^{n-2}$. 
In particular, $r D_r \xi^{\ell}$ is a bounded Radon measure on $(0, \infty)\times \R^{n-2}$.
Finally, 
$$
D_{x'} \ell (B) = \int_{\partial^* E \cap \Phi_{12} (B \times \mathbb{S}^1)} \nu^E_{x'} (x) \, d \mathcal{H}^{n-1}  (x), 
$$
for every Borel set $B \subset (0, \infty)\times \R^{n-2}$. 
\end{lemma}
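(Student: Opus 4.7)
The plan is to adapt the argument used for Lemma~\ref{lem:4.12Dominik}, taking advantage of the fact that circular symmetrisation has codimension $1$ rather than $n-1$. The main simplification is that the derivative $D \ell$ splits naturally into a component $D_r \ell$ that plays the role of $D v$ in the spherical setting, and components $D_{x'} \ell$ corresponding to the flat directions, each handled by testing against an appropriate vector field. First, by Fubini, for any compact $K \subset (0,\infty) \times \R^{n-2}$ one has
\[
\int_K \ell(r,x')\, dr\, dx' = \mathcal{H}^n(E \cap \Phi_{12}(K \times \mathbb{S}^1)) \leq \mathcal{H}^n(E) < \infty,
\]
so $\ell \in L^1_{\text{loc}}((0,\infty) \times \R^{n-2})$.

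Next, fix $\psi \in C^1_c((0,\infty) \times \R^{n-2})$ with $|\psi| \leq 1$. Test the distributional derivative of $\chi_E$ against two kinds of vector fields. For the radial part, take $\varphi_1(x) := \psi(|x_{12}|,x')\, \hat{x}_{12}$. A direct computation analogous to \eqref{formula div}, but performed in the $2$-dimensional $x_{12}$-plane (so with $n-1$ replaced by $1$), yields
\[
\diver \varphi_1(x) = \partial_r \psi(|x_{12}|,x') + \frac{\psi(|x_{12}|,x')}{|x_{12}|}.
\]
Integrating against $\chi_E$ and applying Fubini in combination with integration by parts in the $r$ variable gives
\[
\int \psi(r,x')\, dD_r \ell(r,x') = \int \frac{\psi(r,x')}{r}\, \ell(r,x')\, dr\, dx' + \int \psi(|x_{12}|,x')\, \hat{x}_{12} \cdot dD_{12\perp}\chi_E(x).
\]
For each $i \in \{3,\dots,n\}$, take $\varphi_2(x) := \psi(|x_{12}|,x')\, e_i$; then $\diver \varphi_2 = \partial_{x_i}\psi$, and the same integration by parts (now in the flat direction $x_i$) yields
\[
\int \psi(r,x')\, dD_{x_{i-2}'}\ell(r,x') = \int_{\partial^* E} \psi(|x_{12}|,x')\, \nu^E_i(x)\, d\mathcal{H}^{n-1}(x),
\]
which is exactly the claimed formula for $D_{x'}\ell$.

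To upgrade to $\ell \in BV_{\text{loc}}$, fix a compact set $K \subset (0,\infty) \times \R^{n-2}$ and restrict $\psi$ to have support in $K$. The $x'$-components are immediately bounded by $P(E; \Phi_{12}(K \times \mathbb{S}^1)) < \infty$. For the $r$-component, the extra term contributes at most
\[
\int_K \frac{\ell(r,x')}{r}\, dr\, dx' \leq 2\pi\, \mathcal{H}^{n-1}(K),
\]
using \eqref{bound on ell}, so taking the supremum over admissible $\psi$ proves $\ell \in BV_{\text{loc}}$. Since $\xi^{\ell}(r,x') = \ell(r,x')/r$ and $1/r$ is smooth and locally bounded on $(0,\infty) \times \R^{n-2}$, $\xi^{\ell} \in BV_{\text{loc}}$. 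From $\ell(r,x') = r\, \xi^{\ell}(r,x')$ and the chain rule in $BV$ (cf.\ \cite[Example~3.97]{AFP}),
\[
D_r \ell = \xi^{\ell}\, dr\, dx' + r\, D_r \xi^{\ell} = \frac{\ell}{r}\, dr\, dx' + r\, D_r \xi^{\ell}.
\]
Comparing this with the formula derived for $D_r \ell$ in the previous paragraph, the $\ell/r$ terms cancel and one obtains the desired identity for $r D_r \xi^{\ell}$ on test functions $\psi \in C^1_c$; a standard approximation extends it to bounded Borel $\psi$. Finally, the estimate $|rD_r\xi^{\ell}|(B) \leq |D_{12\perp}\chi_E|(\Phi_{12}(B \times \mathbb{S}^1))$ follows by choosing $\psi = \chi_B$ (approximated by $C_c$ functions on open $B$, then extended to Borel $B$ by outer regularity), exactly as in the last step of the proof of Lemma~\ref{lem:4.12Dominik}.

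No step is genuinely hard: the main technical point is verifying the two-dimensional divergence identity for $\varphi_1$ and ensuring that the codimension-$1$ structure of circular symmetrisation gives the cleaner formulas quoted (in particular the absence of any Jacobian in the $x'$-directions). The only place requiring a touch of care is confirming that $\xi^{\ell} \in BV_{\text{loc}}$ on the whole half-space rather than only away from $\{r = 0\}$, which is guaranteed by the uniform bound $\xi^{\ell} \leq 2\pi$.
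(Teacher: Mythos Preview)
Your proof is correct and follows essentially the same approach the paper indicates: the paper does not give a separate proof of this lemma but simply says it plays the role of Lemma~\ref{lem:4.12Dominik} in the circular setting, and your argument is precisely the natural adaptation of that proof (test against $\psi(|x_{12}|,x')\hat{x}_{12}$ for the radial part and $\psi(|x_{12}|,x')e_i$ for the flat directions, then use the product rule $\ell = r\,\xi^{\ell}$ to pass from $D_r\ell$ to $rD_r\xi^{\ell}$). The only minor remark is that your closing comment about $\xi^{\ell}$ near $\{r=0\}$ is unnecessary, since the domain $(0,\infty)\times\R^{n-2}$ already excludes $r=0$ and ``locally'' refers to compact subsets bounded away from it.
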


\begin{remark}
Unlike what happened when we were considering the spherical symmetrisation, 
now the function $\ell$ might fail to be in $BV ((0, \infty)\times \R^{n-2})$.
Indeed, in Step 1 of the proof of Lemma~\ref{lem:4.12Dominik}
we used the fact that for $r$ bounded we are in a bounded set.
This is not true in the context of circular symmetrisation. 
\end{remark}
The next lemma, which is related to Lemma~\ref{lem:4.14dominik}, 
will show the advantage of considering a symmetrisation of codimension $1$.

\begin{lemma}
Let $\ell  : (0, \infty) \times \R^{n-2} \to [0, \infty)$ be a measurable function
satisfying \eqref{bound on ell}, and let $E \subset \R^n$ be an 
circularly $\ell$-distributed set of finite perimeter and finite volume.
Then
\begin{align*}
( r \, d D_r \xi^{\ell} )(B) 
&=\int_{\partial^* E \cap \Phi_{12} (B \times \mathbb{S}^{1})\cap\{\nu^{E}_{12 \|}=0 \}}
\hat{x}_{12} \cdot \nu^E(x)  \, d\mathcal{H}^{n-1}(x)  \\
&\hspace{.4cm}+\int_{B}dr \, d x' \int_{(\partial^* E)_{(r, x')} \cap \{\nu^{E}_{12 \|} \neq 0 \} } 
\frac{\hat{x}_{12} \cdot \nu^E (x)}{| \nu_{12  \|}^E(x)|}d\mathcal{H}^{0}(x). 
\end{align*}
for every Borel set $B \subset (0, \infty)\times \R^{n-2}$. 
Moreover, 
$$
r (\xi^{\ell})' (r, x') = \int_{(\partial^* E)_{(r, x')} \cap \{\nu^{E}_{12 \|} \neq 0 \} } 
\frac{\hat{x}_{12} \cdot \nu^E (x)}{| \nu_{12  \|}^E(x)|}d\mathcal{H}^{0}(x),
$$
for $\mathcal{H}^{n-1}$-a.e. $(r, x') \in (0, \infty)\times \R^{n-2}$, 
where $(\xi^{\ell})'$ denotes the approximate differential 
of $\xi^{\ell}$ with respect to $r$. Similarly, 
\begin{align*}
D_{x'} \ell (B) 
&= \int_{\partial^* E \cap \Phi_{12} (B \times \mathbb{S}^{1})\cap\{\nu^{E}_{12 \|}=0 \}} \nu^E_{x'} (x) \, d \mathcal{H}^{n-1}  (x)
  \\
&\hspace{.4cm}+\int_{B}dr \, d x' \int_{(\partial^* E)_{(r, x')} \cap \{\nu^{E}_{12 \|} \neq 0 \} } 
\frac{\nu^E_{x'} (x)}{| \nu_{12  \|}^E(x)|} d\mathcal{H}^{0}(x). 
\end{align*}
for every Borel set $B \subset (0, \infty)\times \R^{n-2}$, and 
$$
\nabla_{x'} \ell (r, x') = \int_{(\partial^* E)_{(r, x')} \cap \{\nu^{E}_{12 \|} \neq 0 \} } 
\frac{\nu^E_{x'} (x)}{| \nu_{12  \|}^E(x)|} d\mathcal{H}^{0}(x),
$$
for $\mathcal{H}^{n-1}$-a.e. $(r, x') \in (0, \infty)\times \R^{n-2}$, 
where $\nabla_{x'} \ell$ denotes the approximate gradient 
of $\ell$ with respect to $x'$.
\end{lemma}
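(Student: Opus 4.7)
The plan is to mimic the strategy used in the proof of Lemma~\ref{lem:4.14dominik} for the spherical case, adapted to the codimension-one circular setting, which in fact gives strictly more information.

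First I would establish the two integral identities. Starting from the identity
$$\int_{(0,\infty)\times\R^{n-2}}\psi(r,x')\,r\,dD_r\xi^\ell = \int_{\R^n\setminus\{x_{12}=0\}}\psi(|x_{12}|,x')\,\hat{x}_{12}\cdot dD_{12\perp}\chi_E$$
proved in the previous lemma, I choose $\psi=\chi_B$ for an arbitrary Borel set $B\subset(0,\infty)\times\R^{n-2}$. By De Giorgi's structure theorem and the decomposition $D_{12\perp}\chi_E=\nu^E_{12\perp}d\mathcal{H}^{n-1}\mres\partial^* E$, the right-hand side equals $\int_{\partial^* E\cap\Phi_{12}(B\times\mathbb{S}^1)}\hat{x}_{12}\cdot\nu^E(x)\,d\mathcal{H}^{n-1}(x)$. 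Splitting $\partial^*E$ into the two pieces where $\nu^E_{12\|}$ vanishes or not, and applying the Coarea formula (the first proposition of this section) with $g(x)=\chi_B(|x_{12}|,x')(\hat{x}_{12}\cdot\nu^E(x))/|\nu^E_{12\|}(x)|$ on the non-vanishing part, yields the first displayed decomposition. An entirely analogous computation, using the identity $D_{x'}\ell(B)=\int_{\partial^* E\cap\Phi_{12}(B\times\mathbb{S}^1)}\nu^E_{x'}(x)\,d\mathcal{H}^{n-1}(x)$ from the preceding lemma in place of $rdD_r\xi^\ell$, and applying the same Coarea splitting with $g(x)=\chi_B(|x_{12}|,x')\nu^E_{x'}(x)/|\nu^E_{12\|}(x)|$ on the non-vanishing part, gives the third displayed identity.

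The pointwise formulas for $r(\xi^\ell)'$ and $\nabla_{x'}\ell$ will then follow by uniqueness of the Radon--Nikodym decomposition. Indeed, the first summand in each of the two integral identities is supported on $\partial^*E\cap\{\nu^E_{12\|}=0\}$; by Remark~\ref{remark gmt circular} its intersection with $\Phi_{12}(G^\ell_E\times\mathbb{S}^1)$ is $\mathcal{H}^{n-1}$-null, so (writing the Borel set $B$ as $B\cap G^\ell_E$ plus a set whose projection is $\mathcal{L}^{n-1}$-null, which is where the singular part lives) the first summand defines a Radon measure singular with respect to $\mathcal{L}^{n-1}$ on $(0,\infty)\times\R^{n-2}$. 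The second summand is manifestly absolutely continuous, with density equal to the $\mathcal{H}^0$-integral appearing inside. Identifying this density with the approximate partial derivatives $r(\xi^\ell)'(r,x')$ and $\nabla_{x'}\ell(r,x')$ of the $BV_{\textnormal{loc}}$ functions $\xi^\ell$ and $\ell$ completes the proof.

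The main obstacle I expect, compared to the spherical codimension-$n-1$ case of Lemma~\ref{lem:4.14dominik}, is essentially removed here: in the spherical setting we could only identify $r^{n-1}\xi_v'$ with the coarea density under the restriction to $G_{F_v}$, and only for $E=F_v$, precisely because for a general $E$ the measure $\lambda_E$ could contain a nontrivial absolutely continuous part (Remark~\ref{remark gmt}). The analogous statement here holds for \emph{every} circularly $\ell$-distributed $E$ because the circular symmetrisation has codimension one, so the full-strength Vol'pert-type conclusion (Theorem~\ref{thm:volpert circular} together with Remark~\ref{remark gmt circular}) guarantees that the putative singular part is genuinely $\mathcal{L}^{n-1}$-singular. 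The only mild care required is in verifying this $\mathcal{L}^{n-1}$-singularity on all of $(0,\infty)\times\R^{n-2}$ and not merely on $G^\ell_E$, which is done exactly as in Remark~\ref{remark gmt circular} by noting that $\mathcal{H}^{n-1}(\{\alpha^\ell>0\}\setminus G^\ell_E)=0$ and that outside $\{\alpha^\ell>0\}$ the relevant boundary pieces are trivial.
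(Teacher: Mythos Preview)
Your proposal is correct and follows precisely the approach the paper intends: the paper does not write out a proof for this lemma but explicitly states that the arguments in this section ``follow the lines of the proofs'' of the spherical case, and your plan is exactly the adaptation of the proof of Lemma~\ref{lem:4.14dominik} to the circular setting. You have also correctly identified the key gain in codimension one---namely that Remark~\ref{remark gmt circular} makes the $\{\nu^E_{12\|}=0\}$ piece $\mathcal{L}^{n-1}$-singular for \emph{every} circularly $\ell$-distributed $E$, which is what allows the pointwise formulas for $r(\xi^\ell)'$ and $\nabla_{x'}\ell$ to hold for general $E$ rather than only for $F^\ell$.
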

The  next result should be compared to Proposition~\ref{lem:4.17dominik}. 
\begin{proposition} 
Let $\ell  : (0, \infty) \times \R^{n-2} \to [0, \infty)$ be a measurable function
satisfying \eqref{bound on ell}, and suppose that there exists
an circularly $\ell$-distributed set $E \subset \R^n$ be  of finite perimeter and finite volume.
Then, $F^{\ell}$ is a set of finite perimeter in $\R^n$. 
Moreover, for every Borel set $B\subset (0,+\infty) \times \R^{n-2}$ 
\begin{equation*} 
P(F^{\ell};\Phi_{12} (B\times \mathbb{S}^{1})) 
\leq | D_{x'} \ell | (B) +  \big| r  D_r \xi^{\ell} \big| (B) 
+ \left|D_{12 \parallel} \chi_{F_v}  \right|(\Phi_{12} (B\times \mathbb{S}^{1})).
\end{equation*}
\end{proposition}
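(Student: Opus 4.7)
The plan is to mirror the argument used for Proposition~\ref{lem:4.17dominik}, adapting it to the circular setting. The two essential modifications are: (a) the extra $x'$-variables produce a new term $|D_{x'}\ell|$ in the estimate, replacing the role played in the spherical case by higher-dimensional isoperimetric contributions; and (b) the relation $\xi^\ell = 2\alpha^\ell$ is now affine, which simplifies some of the computations. I first approximate $\ell$ by smooth compactly-supported $\ell_j$ with $\ell_j \to \ell$ almost everywhere and $|D\ell_j| \overset{*}{\rightharpoonup} |D\ell|$, so that the level sets $F^{\ell_j}$ have smooth boundaries and $\alpha^{\ell_j} \in C^1$ on $\{0<\alpha^{\ell_j}<\pi\}$.

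Fix an open set $\Omega\subset(0,\infty)\times\R^{n-2}$ and a test vector field $\varphi\in C^1_c(\Phi_{12}(\Omega\times\mathbb{S}^1);\R^n)$ with $\|\varphi\|_\infty\le 1$. Split $\varphi = \varphi_{12\perp}+\varphi_{12\parallel}+\varphi_{x'}$; the analogue of Remark~\ref{rem divergence} on the $x_{12}$-plane (which is $2$-dimensional, so the curvature factor is $1/|x_{12}|$) gives
\begin{equation*}
\diver\varphi(x) = (\nabla\varphi(x)\hat x_{12})\cdot\hat x_{12} + \frac{\varphi(x)\cdot\hat x_{12}}{|x_{12}|} + \diver_{12\parallel}\varphi_{12\parallel}(x) + \diver_{x'}\varphi_{x'}(x).
\end{equation*}
Following Step~1 of the proof of Proposition~\ref{lem:4.17dominik}, introduce
\begin{equation*}
V_j(r,x') := r\int_{\mathcal{B}_{\alpha^{\ell_j}(r,x')}(e_1,0)} \varphi(r\omega,x')\cdot\omega\, d\mathcal{H}^1(\omega),
\end{equation*}
which is Lipschitz with compact support in $\Omega$. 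Compute $\partial_r V_j$ and $\nabla_{x'}V_j$: differentiating the inner integral brings out the bulk contributions $\int \chi_{F^{\ell_j}}(\nabla\varphi\hat x_{12})\cdot\hat x_{12}$ and $\int \chi_{F^{\ell_j}}\frac{\varphi\cdot\hat x_{12}}{|x_{12}|}$, while differentiating through the endpoints of the arc via $\partial_r\alpha^{\ell_j}$ and $\nabla_{x'}\alpha^{\ell_j}$ produces boundary terms supported on $\mathbf{S}_{\alpha^{\ell_j}}(re_1,x')$. Integrating both identities over $\Omega$ and using that $V_j$ has compact support causes the left-hand sides to vanish, which lets me eliminate the radial-bulk terms in favour of $\partial_r\alpha^{\ell_j}$, and likewise for the $\diver_{x'}\varphi_{x'}$ term in favour of $\nabla_{x'}\alpha^{\ell_j}$.

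Combining these identities and estimating brutally $|\varphi\cdot\omega|\le 1$, I obtain
\begin{equation*}
\int \chi_{F^{\ell_j}}\diver\varphi\,dx \le \bigl|rD_r\xi^{\ell_j}\bigr|(\Omega) + |D_{x'}\ell_j|(\Omega) + \int \chi_{F^{\ell_j}}\diver_{12\parallel}\varphi_{12\parallel}\,dx,
\end{equation*}
the second term arising precisely because the $x'$-boundary integral $\int_{\mathbf{S}_{\alpha^{\ell_j}}}\nabla_{x'}\alpha^{\ell_j}\,d\mathcal{H}^0$ is controlled, after multiplication by $r$, by $|D_{x'}\ell_j|$ through the relation $\ell_j = 2r\alpha^{\ell_j}$. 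Passing to the limit $j\to\infty$ using weak-$\ast$ convergence $|D\ell_j| \overset{*}{\rightharpoonup} |D\ell|$, $\chi_{F^{\ell_j}}\to\chi_{F^\ell}$ a.e., and Corollary~\ref{parallel total variation} (adapted to the $x_{12}$-plane) yields
\begin{equation*}
P(F^\ell;\Phi_{12}(\Omega\times\mathbb{S}^1)) \le |rD_r\xi^\ell|(\Omega) + |D_{x'}\ell|(\Omega) + |D_{12\parallel}\chi_{F^\ell}|(\Phi_{12}(\Omega\times\mathbb{S}^1))
\end{equation*}
for every open $\Omega$. Outer regularity of the Radon measures involved extends the inequality to every Borel set $B$. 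Finite perimeter of $F^\ell$ follows by taking $\Omega = (0,\infty)\times\R^{n-2}$ and using the companion circular versions of \eqref{bound for D xi} and the isoperimetric inequality in the circle.

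The main obstacle I expect is a careful bookkeeping of the $x'$-derivative contribution: one must verify that the boundary term produced by differentiating $V_j$ in $x'$ matches, modulo the factor $2r$, the measure $D_{x'}\ell_j$ in the limit, and that this identification survives the weak-$\ast$ passage. A related subtlety is showing Lipschitz continuity of $V_j$ jointly in $(r,x')$ when $\alpha^{\ell_j}$ touches $0$ or $\pi$, which requires writing $V_j$ as an integral over $\mathbb{S}^1$ with a truncated indicator and estimating via the $BV_{\mathrm{loc}}$ bound on $\ell_j$.
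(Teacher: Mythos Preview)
Your approach is essentially the one the paper has in mind: the paper does not spell out this proof but merely says the argument follows the lines of Proposition~\ref{lem:4.17dominik}, and your adaptation---smooth approximation $\ell_j$, auxiliary function on $(r,x')$, splitting the divergence into $12\perp$, $12\parallel$ and $x'$ parts, then passing to the limit and invoking outer regularity---is precisely that.

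One point needs to be made precise. The single auxiliary function
\[
V_j(r,x') = r\int_{\mathcal{B}_{\alpha^{\ell_j}(r,x')}(e_1,0)} \varphi(r\omega,x')\cdot\omega\, d\mathcal{H}^1(\omega)
\]
only sees the component $\varphi\cdot\hat x_{12}$; differentiating it in $x'$ gives a bulk term $r\int\nabla_{x'}(\varphi\cdot\omega)$, which is \emph{not} $\int\chi_{F^{\ell_j}}\diver_{x'}\varphi_{x'}$. To eliminate the latter you need a second, $\R^{n-2}$-valued auxiliary function
\[
W_j(r,x') := r\int_{\mathcal{B}_{\alpha^{\ell_j}(r,x')}(e_1,0)} \varphi_{x'}(r\omega,x')\, d\mathcal{H}^1(\omega),
\]
and then compute $\diver_{x'}W_j$: its bulk part is exactly $\int\chi_{F^{\ell_j}}\diver_{x'}\varphi_{x'}$ (after integrating in $r$), and its endpoint contribution is bounded by $2r|\nabla_{x'}\alpha^{\ell_j}| = |\nabla_{x'}\ell_j|$, yielding the $|D_{x'}\ell_j|$ term you want. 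Your phrase ``likewise for the $\diver_{x'}\varphi_{x'}$ term'' may already intend this, but as written it reads as though $\nabla_{x'}V_j$ alone does the job, which it does not. Once $W_j$ is introduced, the rest of your sketch goes through exactly as you describe.
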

We  are now ready to prove Theorem~\ref{theorem ineq cylindrical}. 
\begin{proof}[Proof of Theorem~\ref{theorem ineq cylindrical}]
Using the results shown above, Theorem~\ref{theorem ineq cylindrical} can be proved 
by following the lines of the proof of Theorem~\ref{fv locally finite perimeter}.
\end{proof}
We  will now state the results that are need to prove Lemma~\ref{lem:5.4dominik cyl}.
The next proposition should be compared to Proposition~\ref{proposition that later will given the perimeter}. 
\begin{proposition} 
Let $\ell  : (0, \infty) \times \R^{n-2} \to [0, \infty)$ be a measurable function
satisfying \eqref{bound on ell} such that $F^{\ell}$ 
is a set of finite perimeter and finite volume, 
let $E \subset \R^n$ be an circularly $\ell$-distributed set of finite perimeter, 
and let $f:(0, \infty) \times \R^{n-2} \to [0, \infty]$ be a Borel function.
Then, 
\begin{align*}
&\int_{\partial^* E} f (|x_{12}|, x') \, d \mathcal{H}^{n-1} (x) \\
&\geq \int_{(0, \infty) \times \R^{n-2}} f (r, x') \sqrt{p_E^2(r, x') +(r (\xi^{\ell})' (r, x'))^2
+ | \nabla_{x'} \ell (r, x')|^2} \, dr \, d x' \\
&+ \int_{(0, \infty) \times \R^{n-2}} f (r, x') \, r \, d | D_r^s \xi^{\ell} | (r, x')
+ \int_{(0, \infty) \times \R^{n-2}} f (r, x')  d | D_{x'}^s \ell | (r, x'). 
\end{align*}
Moreover, in the special case $E = F^{\ell}$, equality holds true. 
\end{proposition}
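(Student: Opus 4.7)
The plan is to follow the blueprint of Proposition~\ref{proposition that later will given the perimeter}, replacing the scalar Jensen step with a two-dimensional one that simultaneously handles the $r$-derivative and the $x'$-gradient of the distribution. By a density/approximation argument (monotone convergence plus the Borel-regularity of both sides) it suffices to prove the inequality when $f = \chi_B$ for a Borel set $B \subset (0,\infty)\times\mathbb{R}^{n-2}$. I would then split $B = (B \cap G^{\ell}_{F^{\ell}}) \cup (B \setminus G^{\ell}_{F^{\ell}})$ and treat the two pieces independently; on the second piece both absolutely continuous densities $(\xi^{\ell})'$ and $\nabla_{x'}\ell$ vanish, while on the first piece both singular measures $D_r^s \xi^{\ell}$ and $D_{x'}^s\ell$ vanish, by the preceding lemma.

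For $B \subset (0,\infty)\times\mathbb{R}^{n-2} \setminus G^{\ell}_{F^{\ell}}$ the right-hand side reduces to the singular contributions, which are dominated by $|rD_r \xi^{\ell}|(B) + |D_{x'}\ell|(B)$. These two quantities are in turn bounded by $|D_{12\perp}\chi_E|(\Phi_{12}(B\times\mathbb{S}^1))$ and by $|D_{12\parallel}\chi_E|(\Phi_{12}(B\times\mathbb{S}^1))$ respectively, thanks to the two formulas provided by the circular analogue of Lemma~\ref{lem:4.12Dominik}. Both sum to at most $P(E;\Phi_{12}(B\times\mathbb{S}^1))$, which matches the claim (the $p_E$ term vanishes, essentially because $\ell=0$ on $\{\alpha^{\ell}=0\}$, so the slice $E_{(r,x')}$ is $\mathcal H^{n-1}$-a.e. empty there).

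For $B \subset G^{\ell}_{F^{\ell}}$ I would mimic Steps 2a--2b of the proof of Theorem~\ref{fv locally finite perimeter}. Writing $1 = |\nu^E_{12\parallel}|^2 + (\hat{x}_{12}\cdot\nu^E)^2 + |\nu^E_{x'}|^2$ on $\partial^* E$ and using the circular Coarea formula gives
\[
P(E;\Phi_{12}(B\times\mathbb{S}^{1})) \geq \int_B dr\,dx' \int_{(\partial^* E)_{(r,x')}} \sqrt{1 + \Bigl(\tfrac{\hat{x}_{12}\cdot\nu^E}{|\nu^E_{12\parallel}|}\Bigr)^2 + \Bigl(\tfrac{|\nu^E_{x'}|}{|\nu^E_{12\parallel}|}\Bigr)^2 }\, d\mathcal{H}^0(x).
\]
I then apply the duality formula
\[
\sqrt{1+s^2+t^2} = \sup_{(w_1,w_2)\in\Delta} \bigl\{ w_1 s + w_2 t + \sqrt{1 - w_1^2 - w_2^2}\,\bigr\},
\]
where $\Delta = \{(w_1,w_2):w_1^2+w_2^2<1\}$, together with a countable dense sequence $\{(w_1^h,w_2^h)\}_{h\in\mathbb{N}} \subset \Delta$ and Lemma~\ref{lem:cagnetti 2.6} with vector-valued weights. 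Combining the Coarea expressions for $r(\xi^\ell)'$ and $\nabla_{x'}\ell$ from the circular version of Lemma~\ref{lem:4.14dominik} with Jensen's inequality applied slicewise yields the desired lower bound $\int_B \sqrt{p_E^2 + (r(\xi^\ell)')^2 + |\nabla_{x'}\ell|^2}\, dr\,dx'$.

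The equality case for $E = F^{\ell}$ is then a direct computation: by Theorem~\ref{thm:volpert circular} the slices $(F^\ell)_{(r,x')}$ are circular arcs with $|\nu^{F^\ell}_{12\parallel}|>0$ at every point of $(\partial^* F^\ell)_{(r,x')}$ for $(r,x')\in G^\ell_{F^\ell}$, so the Coarea step above holds as an equality and Jensen becomes trivial (the integrand is constant on each slice). On $(0,\infty)\times\mathbb{R}^{n-2} \setminus G^\ell_{F^\ell}$ one uses Remark~\ref{remark gmt circular} to show that the perimeter of $F^\ell$ is carried entirely by the singular parts $r\,d|D_r^s\xi^\ell|$ and $d|D_{x'}^s\ell|$. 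The main obstacle I expect is the two-parameter Jensen/duality step: unlike the spherical case one must handle a strictly convex function of two variables and pair it simultaneously with the two measures $rD_r\xi^\ell$ and $D_{x'}\ell$, which forces careful bookkeeping when one converts the supremum over $\Delta$ into a supremum over Borel partitions via Lemma~\ref{lem:cagnetti 2.6}.
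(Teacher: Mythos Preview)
Your overall strategy coincides with the paper's intended approach: the paper gives no independent argument and simply refers back to Proposition~\ref{proposition that later will given the perimeter}, and you have correctly identified the one non-trivial modification, namely upgrading the scalar duality $\sqrt{1+t^2}=\sup_{|w|<1}\{wt+\sqrt{1-w^2}\}$ to its two-variable version so that the $r$-derivative and the $x'$-gradient of $\ell$ are handled simultaneously via Lemma~\ref{lem:cagnetti 2.6}.

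There is, however, a genuine gap in your treatment of the singular piece $B\subset(0,\infty)\times\R^{n-2}\setminus G^{\ell}_{F^{\ell}}$. The circular analogue of Lemma~\ref{lem:4.12Dominik} relates $D_{x'}\ell$ to the component $\nu^E_{x'}$ of the outer normal, \emph{not} to $\nu^E_{12\parallel}$; hence $|D_{x'}\ell|(B)$ is controlled by $\int_{\partial^*E\cap\Phi_{12}(B\times\mathbb{S}^1)}|\nu^E_{x'}|\,d\mathcal H^{n-1}$, not by $|D_{12\parallel}\chi_E|(\Phi_{12}(B\times\mathbb{S}^1))$. More seriously, even after this correction the two bounds do \emph{not} sum to at most $P(E;\Phi_{12}(B\times\mathbb{S}^1))$: from $|\hat x_{12}\cdot\nu^E|^2+|\nu^E_{12\parallel}|^2+|\nu^E_{x'}|^2=1$ one only obtains $|\hat x_{12}\cdot\nu^E|+|\nu^E_{x'}|\le\sqrt{2}$, so adding the two component total variations overshoots by a factor $\sqrt{2}$ in general.

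The fix is to treat $\bigl(rD_r\xi^{\ell},\,D_{x'}\ell\bigr)$ as a single $\R^{n-1}$-valued Radon measure on $(0,\infty)\times\R^{n-2}$. Testing it against $\psi=(\psi_1,\psi')$ with $|\psi|\le1$ and using both identities from the circular version of Lemma~\ref{lem:4.12Dominik} at once gives
\[
\big|\bigl(rD_r\xi^{\ell},\,D_{x'}\ell\bigr)\big|(B)\;\le\;\int_{\partial^*E\cap\Phi_{12}(B\times\mathbb{S}^1)}\bigl|(\hat x_{12}\cdot\nu^E,\nu^E_{x'})\bigr|\,d\mathcal H^{n-1}\;\le\;P\bigl(E;\Phi_{12}(B\times\mathbb{S}^1)\bigr),
\]
which is the correct replacement for the one-line estimate \eqref{bound for D xi} used in the spherical argument. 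With this vector-valued bound in hand your Step for the singular region closes, and the rest of your outline (Coarea, two-parameter Jensen, and the equality discussion for $F^{\ell}$ via Theorem~\ref{thm:volpert circular} and Remark~\ref{remark gmt circular}) goes through as written.
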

A straightforward consequence of the previous result is the following formula
for the perimeter of $F^{\ell}$.

\begin{corollary} 
Let $\ell  : (0, \infty) \times \R^{n-2} \to [0, \infty)$ be a measurable function
satisfying \eqref{bound on ell} such that $F^{\ell}$ 
is a set of finite perimeter and finite volume.
Then
\begin{align*} 
&P (F^{\ell}; \Phi_{12} (B \times \mathbb{S}^{1})) \\
&= \int_{B}  \sqrt{p_E^2(r, x') +(r (\xi^{\ell})' (r, x'))^2
+ | \nabla_{x'} \ell (r, x')|^2} \, dr \, d x' 
+  | r D_r^s \xi^{\ell} | (B)
+  | D_{x'}^s \ell | (B). 
\end{align*}
\end{corollary}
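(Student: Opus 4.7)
The plan is to deduce the corollary as a direct specialisation of the preceding Proposition applied to $E = F^{\ell}$. First, I would invoke the last sentence of the Proposition's statement: when $E = F^{\ell}$, the general inequality becomes an identity. Next, for an arbitrary Borel set $B \subset (0, \infty) \times \R^{n-2}$, I would take $f = \chi_B$ as the test function in that equality; this is admissible since $\chi_B$ is a Borel function with values in $[0, \infty]$.

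With these choices, the left-hand side of the Proposition reduces to
$$
\int_{\partial^* F^{\ell}} \chi_B(|x_{12}|, x') \, d\mathcal{H}^{n-1}(x) = \mathcal{H}^{n-1}\bigl(\partial^* F^{\ell} \cap \Phi_{12}(B \times \mathbb{S}^1)\bigr) = P\bigl(F^{\ell}; \Phi_{12}(B \times \mathbb{S}^1)\bigr),
$$
while the right-hand side becomes
$$
\int_B \sqrt{p_{F^{\ell}}^2(r,x') + (r (\xi^{\ell})'(r,x'))^2 + |\nabla_{x'} \ell(r,x')|^2}\, dr\, dx' + | r D_r^s \xi^{\ell}|(B) + |D_{x'}^s \ell|(B),
$$
which is precisely the claimed formula (the symbol $p_E$ appearing in the statement of the corollary being read as $p_{F^{\ell}}$ after the specialisation $E = F^{\ell}$).

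The argument is entirely analogous to how Corollary~\ref{corollary perimeter Fv} was deduced from Proposition~\ref{proposition that later will given the perimeter} in the spherical setting. Consequently there is no genuine obstacle at the level of the corollary itself; all the substantive work sits in the Proposition, which in turn rests on the circular analogues of Vol'pert's theorem, the $BV$-estimates for $\ell$ and $\xi^{\ell}$, and the appropriate coarea formula, all already recorded earlier in this section. If any difficulty is to be flagged, it would be purely notational: one must verify that plugging $\chi_B$ into the identity is legitimate for \emph{every} Borel $B$ (not merely, say, open sets), but this is an immediate consequence of the Proposition being stated for arbitrary nonnegative Borel $f$.
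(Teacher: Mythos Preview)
Your proposal is correct and matches the paper's approach: the corollary is stated there as a ``straightforward consequence'' of the preceding proposition, and the only thing to do is exactly what you describe---apply the equality case $E=F^{\ell}$ with $f=\chi_B$, just as Corollary~\ref{corollary perimeter Fv} follows from Proposition~\ref{proposition that later will given the perimeter} in the spherical setting.
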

Next lemma relies on the fact that the circular symmetrisation has codimension $1$.
The proof can be obtained by repeating the  arguments  
used in the proof of \cite[Lemma~4.1]{ChlebikCianchiFuscoAnnals05}.
\begin{lemma} 
Let $\ell  : (0, \infty) \times \R^{n-2} \to [0, \infty)$ be a measurable function
satisfying \eqref{bound on ell}, let $E \subset \R^n$ be
an circularly $\ell$-distributed set of finite perimeter and finite volume, 
and let $A \subset (0,+\infty) \times \R^{n-2}$ be a Borel set. 
Then, 
\begin{equation*} 
\mathcal{H}^{n-1} \Big( \{x\in \partial^* E : \nu^E_{12 \|}(x)=0 \} \cap \Phi_{12} ( A \times \mathbb{S}^{1}) \Big)=0.
\end{equation*}
if and only if 
\begin{equation*} \label{no lateral surface for E}
P(E;\Phi_{12} (B \times \mathbb{S}^{1}))=0 
\quad \text{ for every Borel set } B \subset A \text{ with } \mathcal{H}^{n-1} (B) = 0.
\end{equation*}
\end{lemma}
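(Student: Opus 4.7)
My plan would follow the idea of \cite[Lemma~4.1]{ChlebikCianchiFuscoAnnals05}, using in an essential way the codimension-$1$ coarea formula stated at the beginning of this section. The key identification is that $|\nu^E_{12 \|}|$ plays the role of the tangential Jacobian of the projection $\pi(x) := (|x_{12}|, x')$ restricted to $\partial^* E$.

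For the forward implication, I would fix a Borel set $B \subset A$ with $\mathcal{H}^{n-1}(B) = 0$ and split
\[
P(E; \Phi_{12}(B \times \mathbb{S}^1)) = \mathcal{H}^{n-1}\bigl(\partial^* E \cap \Phi_{12}(B \times \mathbb{S}^1) \cap \{\nu^E_{12 \|} = 0\}\bigr) + \mathcal{H}^{n-1}\bigl(\partial^* E \cap \Phi_{12}(B \times \mathbb{S}^1) \cap \{\nu^E_{12 \|} \neq 0\}\bigr).
\]
The first summand is controlled by the corresponding quantity with $B$ replaced by $A$, hence vanishes by the standing hypothesis. For the second summand, I would apply the coarea formula with test function $g := \chi_{\Phi_{12}(B \times \mathbb{S}^1)}\,\chi_{\{\nu^E_{12 \|} \neq 0\}}/|\nu^E_{12 \|}|$ (set to $0$ on $\{\nu^E_{12 \|} = 0\}$), rewriting it as an integral in $(r,x')$ over $B$; this is zero because $\mathcal{H}^{n-1}(B) = 0$.

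For the converse, I would set $N := \partial^* E \cap \Phi_{12}(A \times \mathbb{S}^1) \cap \{\nu^E_{12 \|} = 0\}$ and apply the coarea formula with $g = \chi_N$. Since $|\nu^E_{12 \|}|$ vanishes identically on $N$, the left-hand side is zero, so $\mathcal{H}^0(N \cap \pi^{-1}(r,x')) = 0$ for $\mathcal{H}^{n-1}$-a.e.\ $(r,x') \in (0,\infty) \times \R^{n-2}$; equivalently, $(r,x') \notin \pi(N)$ for $\mathcal{H}^{n-1}$-a.e.\ $(r,x')$. Hence $\pi(N) \subset A$ has $\mathcal{H}^{n-1}$-outer measure zero, and by outer regularity it can be enclosed in a Borel $\mathcal{H}^{n-1}$-null set $B_0$; setting $B := B_0 \cap A$ gives a Borel subset of $A$ with $\mathcal{H}^{n-1}(B)=0$ and $\pi(N) \subset B$, whence $N \subset \Phi_{12}(B \times \mathbb{S}^1)$ and the hypothesis yields
\[
\mathcal{H}^{n-1}(N) \leq \mathcal{H}^{n-1}(\partial^* E \cap \Phi_{12}(B \times \mathbb{S}^1)) = P(E; \Phi_{12}(B \times \mathbb{S}^1)) = 0.
\]

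The main obstacle will be the Sard-type step in the converse direction: to conclude $\mathcal{H}^{n-1}(\pi(N)) = 0$ from the vanishing of $|\nu^E_{12 \|}|$ on $N$, one has to exploit that the coarea factor on $\partial^* E$ is \emph{exactly} $|\nu^E_{12 \|}|$, so that degeneracy of $\nu^E_{12 \|}$ corresponds to degeneracy of the projection $\pi$ along $\partial^* E$. This is precisely what makes the codimension-$1$ setting tractable, in contrast to the spherical symmetrisation where, as observed in Remark~\ref{remark gmt}, the corresponding measure may possess a nontrivial absolutely continuous part when $n > 2$, and this is the reason why Lemma~\ref{lem:5.4dominik} must be obtained from its circular counterpart rather than proven directly.
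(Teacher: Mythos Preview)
Your proposal is correct and follows essentially the same approach as the paper, which simply refers the reader to the argument of \cite[Lemma~4.1]{ChlebikCianchiFuscoAnnals05}: split off the set $\{\nu^E_{12\|}=0\}$, apply the codimension-$1$ coarea formula to handle the complementary piece in the forward direction, and for the converse use coarea to show that $\pi(N)$ is $\mathcal{H}^{n-1}$-null and then invoke the hypothesis on the Borel cover. Your handling of the measurability of $\pi(N)$ via an outer Borel null cover is the right level of care.
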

The  next proposition  can be proved with the same arguments used to show Proposition~\ref{prop:5.3dominik}. 
\begin{proposition}
Let $\ell  : (0, \infty) \times \R^{n-2} \to [0, \infty)$ be a measurable function
satisfying \eqref{bound on ell} such that $F^{\ell}$ 
is a set of finite perimeter and finite volume,
and let $\Omega \subset (0,+\infty) \times \R^{n-2}$ be an open set. 
Then the following three statements are equivalent:
\begin{itemize}
\item[(i)] $\mathcal{H}^{n-1}\left(\Big\{x\in \partial^* F^{\ell} 
\cap \Phi_{12}(\Omega\times \mathbb{S}^{1}): \nu^{F^{\ell} }_{1 2 \|}(x)=0  \Big\}  \right)=0$;

\vspace{.2cm}

\item[(ii)] $\xi^{\ell} \in W^{1,1}_{\textnormal{loc}} (\Omega)$ and $\ell \in W^{1,1}_{\textnormal{loc}} (\Omega)$;

\vspace{.2cm}

\item[(iii)] $P(F^{\ell} ;\Phi_{12}(B \times \mathbb{S}^{1}))=0$ for every Borel set $B \subset \Omega$, 
such that $\mathcal{H}^{n-1}(B)=0$.

\end{itemize}
\end{proposition}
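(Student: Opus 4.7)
The proof will mirror the three-step scheme used for Proposition~\ref{prop:5.3dominik}, but now two functions are at play: $\xi^{\ell}$ (whose gradient captures the radial component) and $\ell$ (whose $x'$-derivative captures the transverse component). The key tool is the pair of representation formulas for $r\,D_r \xi^{\ell}$ and $D_{x'} \ell$ applied to $E = F^{\ell}$, together with the formula of the Corollary giving $P(F^{\ell};\Phi_{12}(B\times \mathbb{S}^{1}))$, and the circular Vol'pert theorem (Theorem~\ref{thm:volpert circular}).

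\textbf{Step 1 ((i) $\Rightarrow$ (ii)).} I would apply the decomposition lemma for $r\,D_r \xi^{\ell}$ and $D_{x'} \ell$ to $E = F^{\ell}$, restricted to Borel sets $B \subset \Omega$. In each formula, the first summand lives on $\partial^* F^{\ell} \cap \Phi_{12}(B\times \mathbb{S}^{1}) \cap \{\nu^{F^\ell}_{12\|} = 0\}$, whose $\mathcal{H}^{n-1}$ measure vanishes by (i). The second summand is an integral with respect to Lebesgue measure $dr\,dx'$. Hence both $r\,D_r \xi^{\ell}\mres\Omega$ and $D_{x'}\ell\mres\Omega$ are absolutely continuous with respect to Lebesgue measure on $\Omega$, giving $\xi^{\ell},\ell\in W^{1,1}_{\textnormal{loc}}(\Omega)$.

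\textbf{Step 2 ((ii) $\Rightarrow$ (iii)).} This is immediate from the Corollary giving the perimeter of $F^{\ell}$: if $\xi^{\ell},\ell \in W^{1,1}_{\textnormal{loc}}(\Omega)$, the singular parts $|r\,D_r^s\xi^{\ell}|$ and $|D_{x'}^s\ell|$ vanish on $\Omega$, so for every Borel $B\subset \Omega$
\[
P(F^{\ell}; \Phi_{12}(B\times \mathbb{S}^{1})) = \int_B \sqrt{p_{F^{\ell}}^2(r,x') + (r(\xi^\ell)'(r,x'))^2 + |\nabla_{x'}\ell(r,x')|^2}\, dr\, dx',
\]
which is zero whenever $\mathcal{H}^{n-1}(B) = 0$.

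\textbf{Step 3 ((iii) $\Rightarrow$ (i)).} Define
\[
B_0^{\ell} := \{(r,x') \in (0,\infty)\times \mathbb{R}^{n-2} : \exists\, \omega \in \mathbb{S}^{1} \text{ with } (r\omega,x')\in \partial^* F^{\ell} \text{ and } \nu^{F^{\ell}}_{12\|}(r\omega,x')=0\}.
\]
The circular analogue of Remark~\ref{rem 2 after volpert} (exploiting the rotational invariance of $F^{\ell}$ about the $(e_1, x')$ half-plane together with Theorem~\ref{thm:volpert circular}(iia)) gives $\mathcal{H}^{n-1}(B_0^{\ell})=0$. Then, since the $\|$-part of $\nu^{F^\ell}$ can vanish only above $B_0^{\ell}$,
\[
\mathcal{H}^{n-1}\!\left(\{x \in \partial^* F^{\ell} \cap \Phi_{12}(\Omega\times \mathbb{S}^{1}) : \nu^{F^{\ell}}_{12\|}(x)=0\}\right)
\leq P(F^{\ell}; \Phi_{12}((B_0^{\ell}\cap \Omega)\times \mathbb{S}^{1})) = 0,
\]
by (iii). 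The main delicate point is justifying $\mathcal{H}^{n-1}(B_0^{\ell})=0$; once that circular-symmetrisation counterpart of \eqref{measure of B0 is 0} is in place, the equivalence is a direct reading of the formulas derived earlier in the section.
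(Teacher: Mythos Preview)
Your proposal is correct and follows exactly the three-step scheme the paper intends (the paper itself merely states that the proposition ``can be proved with the same arguments used to show Proposition~\ref{prop:5.3dominik}''). One small remark on Step~3: $F^{\ell}$ is only reflection-symmetric in the $(x_1,x_2)$ plane, not rotationally invariant, but you do not actually need any symmetry here---since the circular symmetrisation has codimension~$1$, Theorem~\ref{thm:volpert circular}(iia) already gives $|\nu^{F^{\ell}}_{12\parallel}|>0$ for \emph{every} $\omega$ on the slice when $(r,x')\in G^{\ell}_{F^{\ell}}\cap\{0<\alpha^{\ell}<\pi\}$, which is precisely what yields $\mathcal{H}^{n-1}(B_0^{\ell})=0$.
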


\begin{proof}[Proof of Lemma~\ref{lem:5.4dominik cyl}]
Once all the results above are established, Lemma~\ref{lem:5.4dominik cyl}
can be shown by adapting the arguments used in the proof of \cite[Proposition~4.2]{ChlebikCianchiFuscoAnnals05}.
\end{proof}

We can now prove Lemma~\ref{lem:5.4dominik}.
 As already mentioned in the Introduction, the proof relies  on Theorem~\ref{theorem ineq cylindrical} 
and Lemma~\ref{lem:5.4dominik cyl}. 
\begin{proof}[Proof of Lemma~\ref{lem:5.4dominik}]
We divide the proof into steps.

\vspace{.2cm}

\noindent
\textbf{Step 1:} We show that \eqref{eq:5.6dominik} $\Longrightarrow$ \eqref{eq:5.7dominik}.
Suppose (\ref{eq:5.6dominik}) is satisfied. 
Then, from \eqref{D xi} we have $r^{n-1} D \xi_v \mres I = r^{n-1} \xi'_v \mres I$.
Thanks to \eqref{perimeter Fv}, this implies that
\begin{equation*} 
P (F_v; \Phi (B \times \mathbb{S}^{n-1}))
= \int_B \sqrt{p_{F_v}^2(r)+(r^{n-1}\xi'_v(r))^2} \, dr.  
\qquad \text{ for every Borel set } B \subset I. 
\end{equation*}
In particular, condition (iii) of Proposition~\ref{prop:5.3dominik}
is satisfied. Then, \eqref{eq:5.7dominik} follows from Remark~\ref{implication iii to i if I Borel}.

\vspace{.2cm}

\noindent
\textbf{Step 2:} We show that if $P(E;\Phi(I\times \mathbb{S}^{n-1}))=P(F_v;\Phi(I\times \mathbb{S}^{n-1}))$, then \eqref{eq:5.7dominik} implies \eqref{eq:5.6dominik}.
To this aim, we first prove an auxiliary result.

\vspace{.2cm}

\noindent
\textbf{Step 2a:} We show that if $\overline{F} \subset \R^n$
is a set of finite perimeter such that $(\overline{F})_r$
is a spherical cap for $\mathcal{H}^1$-a.e. $r > 0$, and
\begin{equation} \label{assumption on Fbar}
\mathcal{H}^{n-1}\left(\left\{x\in \partial^* \overline{F} \cap \Phi(I\times \mathbb{S}^{n-1}): 
\nu^{\overline{F}}_{\parallel} (x)=0  \right\}  \right)=0,
\end{equation}
then $\mathcal{H}^{n-1} (B^j) = 0$ for every $j = 2, \ldots, n$, where
$$
B^j:= \left\{x\in \partial^* \overline{F} \cap \Phi(I\times \mathbb{S}^{n-1}): 
\nu^{\overline{F}}_{1j \parallel} (x)=0  \right\}.
$$
Here, the vector $\nu^{\overline{F}}_{1j \parallel}$ is defined in the following way. 
Let $j \in \{ 2, \ldots, n \}$, and let $\nu^{\overline{F}}_{1j}$
be the orthogonal projection of $\nu^{\overline{F}}$ on the 
bi-dimensional plane generated by $e_1$ and $e_j$.
In this plane, we consider the following orthonormal basis $\{  \widehat{x}_{1j},  \widetilde{x}_{1j}\}$:
$$
 \widehat{x}_{1j} = \frac{1}{\sqrt{x_1^2 + x_j^2}}
 (x_1,  \overbrace{ 0, \ldots, 0}^{j-2 \textnormal{ times}} , 
  x_j, 
 \overbrace{ 0, \ldots, 0}^{n - j \textnormal{ times}} ), 
 $$
 and 
 $$
 \widetilde{x}_{1j} = \frac{1}{\sqrt{x_1^2 + x_j^2}}
 (- x_j,  \overbrace{ 0, \ldots, 0}^{j-2 \textnormal{ times}} , 
  x_1, 
 \overbrace{ 0, \ldots, 0}^{n - j \textnormal{ times}} ),
  $$ 
where $\widehat{x}_{1j}$ is directed along the radial direction, and 
$\widetilde{x}_{1j}$ is parallel to the tangential direction.
To show the claim, first of all note that, by Vol'pert Theorem~\ref{thm:volpert}, 
for $\mathcal{H}^1$-a.e. $r > 0$ we have
$$
(B^j)_r = \left\{x\in \partial^* \overline{F}_r \cap \Phi(I\times \mathbb{S}^{n-1}): 
\nu^{\overline{F}_r}_{\parallel} (x) \cdot \widetilde{x}_{1j} =0  \right\}.
$$
up to an $\mathcal{H}^{n-2}$-negligible set.
Since $(B^j)_r$ is a spherical cap, we have $\mathcal{H}^{n-2} ((B^j)_r) = 0$.
Then, thanks to \eqref{assumption on Fbar}, 
\begin{align*}
\mathcal{H}^{n-1} (B^j) 
&= \mathcal{H}^{n-1} \left( B^j \cap \left\{x\in \partial^* \overline{F} \cap \Phi(I\times \mathbb{S}^{n-1}): 
\nu^{\overline{F}}_{\parallel} (x) \neq 0  \right\} \right)  \\
&= \int_{I} dr \int_{\partial^* \overline{F}_r \cap (B^j)_r} 
\chi_{ \{ \nu^{\overline{F}}_{\parallel} \neq 0 \} } (x) \frac{1}{|\nu^{\overline{F}}_{\parallel} (x)|} 
\, d \mathcal{H}^{n-2} (x) = 0.
\end{align*}

\vspace{.2cm}

\noindent
\textbf{Step 2b:} We conclude. 
Let $E^1:= E$, and let $E^2$ be set obtained by applying to $E$ 
the circular symmetrisation with respect to $(e_1, e_2)$.
Then, for $j = 3, \ldots, n$, we define iteratively the set $E^j$ 
as the circular symmetral of $E^{j-1}$ with respect to $(e_1, e_j)$.
Note that, since $\mathcal{H}^1$-a.e. spherical section of $E$
is a spherical cap, we have $E^n= F_v$.
Therefore, thanks to the perimeter inequality \eqref{per ineq cyl}
under circular symmetrisation  (see Theorem~\ref{theorem ineq cylindrical}),  
we have 
$$
P(F_v;\Phi(I\times \mathbb{S}^{n-1}))
= P(E^{n-1};\Phi(I\times \mathbb{S}^{n-1}))
= \ldots
= P(E;\Phi(I\times \mathbb{S}^{n-1})).
$$
Moreover, for $j = 3, \ldots, n$, we define $
I_j:=\Phi(I\times \mathbb{S}^{n-1}) \cap \{x_j=0\} \cap \{x_1>0\}$. It is not difficult to check that $$\Phi(I\times \mathbb{S}^{n-1})=\Phi_{1j}(I_j\times \mathbb{S}^{1})\quad \textit{for $j = 3, \ldots, n$}.$$ 
Then, applying Lemma~\ref{lem:5.4dominik cyl} to $F_v$ and $E^{n-1}$, we obtain that
$$
\mathcal{H}^{n-1} \left(\left\{x\in \partial^* E^{n-1} \cap \Phi_{1\,n-1}(I_{n-1}\times \mathbb{S}^{1}): 
\nu^{E^{n-1}}_{1 (n-1) \parallel} (x)=0  \right\} \right) = 0,
$$
which, in turns, implies
$$
\mathcal{H}^{n-1} \left(\left\{x\in \partial^* E^{n-1} \cap \Phi_{1\,n-1}(I_{n-1}\times \mathbb{S}^{1}): 
\nu^{E^{n-1}}_{\parallel} (x)=0  \right\} \right) = 0.
$$
Applying iteratively this argument to $E^{n-2}, \ldots, E$, we conclude.
\end{proof}

\section{Proof of Theorem~\ref{rigidity theorem}: (ii) $\Longrightarrow$ (i)} \label{section ii implies i}

Before giving the proof of the implication (ii) $\Longrightarrow$ (i)
of Theorem~\ref{rigidity theorem}, it will be convenient to introduce
some useful notation. Let $v$ and $\mathcal{I} = \{ 0 < \alpha^{\wedge}_v \leq \alpha^{\vee}_v < \pi \}$
be as in the statement of Theorem~\ref{rigidity theorem}. 
By assumption, $\mathcal{I}$ is an interval and 
$\alpha_v \in W^{1, 1}_{\textnormal{loc}} (I)$ where, to ease the notation, 
we set $I:= \mathcal{{\mathring I}}$.
Let now $E$ be a spherically $v$-distributed set of finite perimeter.
We define the \textit{average direction of $E$} as the map
$d_E: I \to \mathbb{S}^{n-1}$ given by 
\begin{equation} \label{def dE}
d_E (r)
:= 
\begin{cases}
\displaystyle \frac{1}{ \omega_{n-1} (\sin \alpha_v (r))^{n-1}r^{n-1}}
\int_{E_r} \hat{x} \, d \mathcal{H}^{n-1} (x), & \text{ if } r \in I \cap G_E, \\
e_1 & \text{ otherwise in }I,
\end{cases}
\end{equation}
where $G_E \subset (0, \infty)$ is the set given by Theorem~\ref{thm:volpert}.
To ease our calculations, it will also be convenient to introduce the 
\textit{barycentre function $b_E: I \to \mathbb{R}^{n}$ of $E$} as 
$$
b_E (r)
:= 
\begin{cases}
\displaystyle \frac{1}{r^{n-1}}
\int_{E_r} \hat{x} \, d \mathcal{H}^{n-1} (x), & \text{ if } r \in I \cap G_E, \\
e_1 & \text{ otherwise in }I.
\end{cases}
$$ 
The importance of the functions $d_E$ and $b_E$ is given by the following lemma.
\begin{lemma} \label{lemma about dE}
Let $v$ be as in Theorem~\ref{rigidity theorem},
let $I \subset (0, \infty)$ be an open interval, 
and let $E$ be a spherically $v$-distributed set of finite perimeter
such that $E_r$ is $\mathcal{H}^{n-1}$-equivalent to a spherical cap
for $\mathcal{H}^1$-a.e. $r \in I$. Then, 
$$
E \cap \Phi (I \times \mathbb{S}^{n-1}) =_{\mathcal{H}^n} \{ x \in \Phi (I \times \mathbb{S}^{n-1}) :  
\textnormal{dist}_{\mathbb{S}^{n-1}} (\hat{x}, d_E (|x|)) < \alpha_v (|x|)  \}.
$$
Moreover, 
\begin{equation} \label{identity with d_E and b_E}
b_E (r) = \omega_{n-1} (\sin \alpha_v (r))^{n-1} d_E (r) 
\qquad \textnormal{ for $\mathcal{H}^1$-a.e. } r \in I.
\end{equation}
\end{lemma}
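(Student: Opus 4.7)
The plan is to first compute the vector-valued integral of $\hat{x}$ over a spherical cap, and then use this to identify $d_E(r)$ with the centre of the cap $E_r$ for $\mathcal{H}^1$-a.e.\ $r \in I$; both conclusions of the lemma will then follow at once. The only nontrivial ingredient is a short symmetry calculation, together with Vol'pert's Theorem~\ref{thm:volpert} to ensure $\mathcal{H}^1(I \setminus G_E)=0$, which uses that $\alpha_v^{\wedge} > 0$ on $I$ and hence $\alpha_v > 0$ $\mathcal{H}^1$-a.e.\ on $I$.

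First, I would show that for every $p \in \mathbb{S}^{n-1}$, $r > 0$ and $\beta \in (0,\pi)$,
\[
\int_{\mathbf{B}_{\beta}(rp)} \hat{x}\, d\mathcal{H}^{n-1}(x) = r^{n-1}\,\omega_{n-1}\,(\sin\beta)^{n-1}\, p.
\]
By rotational symmetry around the axis $\R p$, the integral must be parallel to $p$, so one may assume $p = e_1$. Parametrising points of $\partial B(r)$ by a polar angle $\theta \in [0,\pi]$ from $e_1$ and an azimuthal variable $u \in \mathbb{S}^{n-2} \subset e_1^{\perp}$, one has $\hat{x} = \cos\theta\, e_1 + \sin\theta\, u$ and $d\mathcal{H}^{n-1} = r^{n-1}(\sin\theta)^{n-2}\, d\theta\, d\mathcal{H}^{n-2}(u)$; the integral of $u$ over $\mathbb{S}^{n-2}$ vanishes by symmetry, while the $e_1$-component reduces to $r^{n-1}(n-1)\omega_{n-1}\int_0^{\beta} \cos\theta(\sin\theta)^{n-2}\,d\theta = r^{n-1}\omega_{n-1}(\sin\beta)^{n-1}$.

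Next, for $\mathcal{H}^1$-a.e.\ $r \in I\cap G_E$, the hypothesis together with Vol'pert's theorem gives $E_r =_{\mathcal{H}^{n-1}} \mathbf{B}_{\beta(r)}(r\,p(r))$ for some $p(r)\in\mathbb{S}^{n-1}$, and matching $(n-1)$-dimensional measures forces $\beta(r)=\alpha_v(r)\in(0,\pi)$. Substituting into the definition of $b_E$ and applying the formula above,
\[
b_E(r) = \omega_{n-1}(\sin\alpha_v(r))^{n-1}\, p(r),
\]
so, dividing by the strictly positive factor $\omega_{n-1}(\sin\alpha_v(r))^{n-1}$, one obtains $d_E(r) = p(r)\in\mathbb{S}^{n-1}$. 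The identity \eqref{identity with d_E and b_E} is then immediate from the definitions of $b_E$ and $d_E$ on $I\cap G_E$, while $I\setminus G_E$ is $\mathcal{H}^1$-negligible.

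For the first claim, the preceding step yields $E_r =_{\mathcal{H}^{n-1}} \mathbf{B}_{\alpha_v(r)}(r\,d_E(r))$ for $\mathcal{H}^1$-a.e.\ $r\in I$. Applying the coarea formula to $f(x)=|x|$ (equivalently, Fubini in polar coordinates) to the symmetric difference between $E\cap\Phi(I\times\mathbb{S}^{n-1})$ and $\{x\in\Phi(I\times\mathbb{S}^{n-1}):\text{dist}_{\mathbb{S}^{n-1}}(\hat{x},d_E(|x|))<\alpha_v(|x|)\}$ shows that it is $\mathcal{H}^n$-null, because every one of its spherical slices is $\mathcal{H}^{n-1}$-null for $\mathcal{H}^1$-a.e.\ $r\in I$. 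No substantial obstacle is foreseen; the only care required lies in the null-set bookkeeping that guarantees $\mathcal{H}^1(I\setminus G_E)=0$.
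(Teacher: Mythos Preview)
Your proposal is correct and follows essentially the same approach as the paper's proof: both compute $\int_{\mathbf{B}_\beta(rp)}\hat{x}\,d\mathcal{H}^{n-1}=\omega_{n-1}r^{n-1}(\sin\beta)^{n-1}p$ (the paper does this by slicing into geodesic spheres $\mathbf{S}_\beta$ and using \eqref{measure geodesic sphere}, you via a direct polar-coordinate parametrisation), then use this to identify the cap centre with $d_E(r)$. The paper notes that \eqref{identity with d_E and b_E} is literally immediate from the definitions of $d_E$ and $b_E$ without passing through $p(r)$, and does not spell out the coarea argument for the first conclusion, but these are only differences in presentation.
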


\begin{proof}
Let us immediately observe that (\ref{identity with d_E and b_E}) follows by the definitions of $d_E$ and $b_E$. By assumption, for $\mathcal{H}^1$-a.e. $r \in I$, there exists $\omega (r) \in \mathbb{S}^{n-1}$
such that $E_r = \mathbf{B}_{\alpha_v (r)} (r \omega (r))$.
We are left to show that 
\begin{equation} \label{identity with d_E}
\omega (r) = d_E (r) \qquad \text{ for $\mathcal{H}^1$-a.e. } r \in I.
\end{equation}
Note that for $\mathcal{H}^1$-a.e. $r \in I$ we have $ E_r = \mathbf{B}_{\alpha_v (r)} (r \omega (r))$ and 
$\partial^* E_r = \mathbf{S}_{\alpha_v (r)} (r \omega (r))$. Therefore, for $\mathcal{H}^1$-a.e. $r \in I$
\begin{align}\label{a useful calculation 0}
\int_{E_r}\hat{x} \, d\mathcal{H}^{n-1}(x)
= \int_0^{\alpha_v(r)}\,d\beta\int_{\mathbf{S}_{\beta} (r \omega (r))}x\, d \mathcal{H}^{n-2} (x).
\end{align}
Observe now that, thanks to the symmetry of the geodesic sphere
and recalling \eqref{measure geodesic sphere}, for every $\beta \in (0, \alpha_v(r))$ we have
\begin{align}\label{a useful calculation}
&\int_{\mathbf{S}_{\beta} (r \omega (r))} x \, d \mathcal{H}^{n-2} (x)
=  \left( \int_{\mathbf{S}_{\beta} (r \omega (r))} (x \cdot \omega (r)) \, d \mathcal{H}^{n-2} (x) \right) \omega (r)  \\
&=  r \cos \beta \, \mathcal{H}^{n-2} (\mathbf{S}_{\beta} (r \omega (r))) \, \omega (r)
= (n-1) \, \omega_{n-1} r^{n-1} \cos \beta \,  
(\sin \beta)^{n-2}  \, \omega (r) . \nonumber
\end{align}
Combining \eqref{a useful calculation 0} and \eqref{a useful calculation}
we obtain that for $\mathcal{H}^1$-a.e. $r \in I$
\begin{align*}
\int_{E_r}\hat{x} \, d\mathcal{H}^{n-1}(x)
&=  (n-1) \, \omega_{n-1} r^{n-1} \left( \int_0^{\alpha_v(r)} \cos \beta \,  
(\sin \beta)^{n-2} \,d\beta \right) \omega (r) \\
&= \omega_{n-1} r^{n-1} (\sin \alpha_v (r))^{n-1} \omega (r).
\end{align*}

Recalling the definition of $d_E$, identity \eqref{identity with d_E} follows.

%
\end{proof}

\begin{remark}
Let us point out that here we are using the term barycentre
in a slightly imprecise way.
Indeed, for a given $r \in I \cap G_E$, the geometric barycentre 
of $E_r$ is given by 
\begin{align*}
&\frac{1}{\mathcal{H}^{n-1} (E_r)}
\int_{E_r} x \, d \mathcal{H}^{n-1} (x)
= \frac{1}{\xi_v (r) r^{n-1}}
\int_{E_r} x \, d \mathcal{H}^{n-1} (x) \\
&= \frac{r}{\xi_v (r)} \frac{1}{r^{n-1} }
\int_{E_r} \hat{x} \, d \mathcal{H}^{n-1} (x) 
= \frac{r}{\xi_v (r)} b_E (r).
\end{align*} 
Nevertheless, we will still keep this terminology, since 
$b_E$ turns out to be very useful for our analysis.  
\end{remark}

We are now ready to prove the implication (ii) $\Longrightarrow$ (i) 
of Theorem~\ref{rigidity theorem}.
%
%
%
%
%
%
%

\begin{proof}[Proof of Theorem~\ref{rigidity theorem}: (ii) $\Longrightarrow$ (i)]
Suppose (ii) is satisfied, and let $E \in \mathcal{N} (v)$.
We are going to show that there exists an orthogonal transformation $R \in SO (n)$
such that $\mathcal{H}^n(E \Delta (R F_v)) = 0$.
We now divide the proof into steps.

\vspace{.2cm}

\noindent
\textbf{Step 1:} First of all, we observe that
$$
\mathcal{H}^{n-1}\left(\Big\{x\in \partial^* E 
\cap \Phi(I\times \mathbb{S}^{n-1}): \nu^{E}_{\|}(x)=0  \Big\}  \right)=0.
$$
Indeed, since $\alpha_v \in W^{1,1}_{\textnormal{loc}} (I)$, thanks to 
Proposition~\ref{prop:5.3dominik} we have 
$$
\mathcal{H}^{n-1}\left(\Big\{x\in \partial^* F_v 
\cap \Phi(I\times \mathbb{S}^{n-1}): \nu^{F_v}_{\|}(x)=0  \Big\}  \right)=0.
$$
Since $E \in \mathcal{N} (v)$, applying Lemma~\ref{lem:5.4dominik}
the claim follows.

\vspace{.2cm}

\noindent
\textbf{Step 2:} We show that $b_E \in W^{1, 1}_{\textnormal{loc}} (I;\R^n)$
and 
\begin{equation} \label{first version of b'}
b'_E (r) = \frac{1}{r^n} 
 \int_{(\partial^* E)_r \cap \{ \nu^E_{\parallel } \neq 0\}} x
\frac{\hat{x} \cdot \nu^E (x)}{|\nu^E_{\parallel} (x)|} \,  
d \mathcal{H}^{n-2} (x).
\end{equation}
Indeed, let $\psi\in C^1_c(I)$ be arbitrary, and let $i \in \{ 1, \ldots, n\}$. 
By definition of $b_E$
\begin{align*}
\int_{I}(b_E)_i (r)\psi'(r)dr&= 
\int_{I}\int_{E\cap\partial B(r)}\frac{1}{r^{n-1}}\frac{x_i}{|x|}d\mathcal{H}^{n-1}(x)\psi'(r)dr \\
&=\int_{\Phi(I\times \mathbb{S}^{n-1})}\frac{x_i}{|x|^{n}} \psi'(| x |) \chi_{E}(x) \, dx.
\end{align*}
Note now that 
$$
\textnormal{div} \left( \frac{x_i}{|x|^n}  \psi (|x|) \hat{x} \right)
= \frac{x_i}{|x|^n} \psi'(| x |).
$$
Indeed, recalling \eqref{formula div},
\begin{align*}
&\textnormal{div} \left( \frac{x_i}{|x|^n}  \psi (|x|) \hat{x} \right)
= \psi (|x|) \nabla \left( \frac{x_i}{|x|^n} \right) \cdot \hat{x}
+ \frac{x_i}{|x|^n}  \, \textnormal{div} (  \psi (|x|) \hat{x} ) \\
&= \psi (|x|)  \left( \frac{e_i}{|x|^n} - \frac{n \, x_i}{|x|^{n+1}} \hat{x}  \right) \cdot \hat{x}
+ \frac{x_i}{|x|^n}  \,  \left(  \psi' (|x|) + \psi (|x|) \frac{n-1}{|x|} \right)
= \frac{x_i}{|x|^n} \psi'(| x |).
\end{align*}
Therefore,
\begin{align*}
&\int_{I}(b_E)_i (r)\psi'(r)dr
= \int_{\Phi(I\times \mathbb{S}^{n-1})} 
\textnormal{div} \left( \frac{x_i}{|x|^n}  \psi (|x|) \hat{x} \right) \chi_{E}(x) \, dx \\
&= - \int_{\Phi(I\times \mathbb{S}^{n-1})} 
 \frac{x_i}{|x|^n}  \psi (|x|) \hat{x} \cdot  d D \chi_{E}(x) \\
& =  \int_{\partial^* E \cap \Phi(I\times \mathbb{S}^{n-1})}  
 \frac{x_i}{|x|^n}  \psi (|x|) \, \hat{x} \cdot \nu^E (x) d \mathcal{H}^{n-1} (x).
 \end{align*}
Thanks to Step 1 we then obtain 
 \begin{align*}
 &\int_{I}(b_E)_i (r)\psi'(r)dr
 =  \int_{\partial^* E \cap \{ \nu^E_{\parallel } \neq 0\} \cap \Phi(I\times \mathbb{S}^{n-1})}  
 \frac{x_i}{|x|^n}  \psi (|x|) \, \hat{x} \cdot \nu^E (x) d \mathcal{H}^{n-1} (x) \\
 &= 
\int_{I} \psi (r) \frac{1}{r^n} 
\left[  \int_{(\partial^* E)_r \cap \{ \nu^E_{\parallel } \neq 0\}} x_i 
\frac{\hat{x} \cdot \nu^E (x)}{|\nu^E_{\parallel} (x)|} \,  
d \mathcal{H}^{n-2} (x) \right] \, dr,
\end{align*}
so that \eqref{first version of b'} follows.

\vspace{.2cm}

\noindent
\textbf{Step 3:} We show that 
\begin{equation} \label{second version of b'}
b'_E (r) = (n-1) \alpha'_v (r) \,\frac{\cos \alpha_v (r)}{\sin \alpha_v (r)} \, b_E (r)
\qquad \text{ for $\mathcal{H}^1$-a.e. } r \in I.
\end{equation}
Since $E \in \mathcal{N} (v)$, from  Theorem~\ref{fv locally finite perimeter}  
we know that for $\mathcal{H}^1$-a.e. $r \in I$
the spherical slice $E_r$ is a spherical cap.
Then, thanks to Lemma~\ref{lemma about dE}
$$
E_r = \mathbf{B}_{\alpha_v (r)} (r d_E (r)) 
\quad \text{ and }\quad 
(\partial^* E )_r = \mathbf{S}_{\alpha_v (r)} (r d_E (r))
\qquad \text{ for $\mathcal{H}^1$-a.e. } r \in I.
$$
Still thanks to  Theorem~\ref{fv locally finite perimeter},  we know that 
for $\mathcal{H}^1$-a.e. $r \in I$
the functions $x \mapsto \nu^E (x) \cdot \hat{x}$ and $x \mapsto | \nu^E_{\parallel}| (x)$
are constant $\mathcal{H}^{n-2}$-a.e. in $(\partial^* E )_r$, say
$$
\nu^E (x) \cdot \hat{x} = a (r) \quad \text{ and } \quad 
| \nu^E_{\parallel}| (x) = c (r), \qquad \text{ for $\mathcal{H}^1$-a.e. }r \in I,
$$
for some measurable functions $a: I \to (-1, 1)$ and $c: I \to (0, 1]$.
 Therefore, recalling the definition of $d_E$ together with (\ref{a useful calculation 0})-(\ref{a useful calculation}) we obtain
\begin{align}
b'_E (r) &= \frac{1}{r^n} 
 \int_{(\partial^* E)_r \cap \{ \nu^E_{\parallel } \neq 0\}} x
\frac{\hat{x} \cdot \nu^E (x)}{|\nu^E_{\parallel} (x)|} \,  
d \mathcal{H}^{n-2} (x) \nonumber \\
&= \frac{1}{r^n} \frac{a (r)}{c (r)} 
 \int_{\mathbf{S}_{\alpha_v (r)} (r d_E (r))} x \,  
d \mathcal{H}^{n-2} (x) \nonumber \\
&= \frac{1}{r^n} \frac{a (r)}{c (r)} 
r \cos ( \alpha_v (r))  \mathcal{H}^{n-2} (\mathbf{S}_{\alpha_v (r)} (r d_E (r))) d_E (r) \nonumber \\
&= \frac{1}{r^{n-1}} \frac{a (r)}{c (r)} \mathcal{H}^{n-2} (\mathbf{S}_{\alpha_v (r)} (r d_E (r)))
\cos ( \alpha_v (r)) d_E (r).
\label{intermediate we are almost there}
\end{align}
Note now that from Step 1 and \eqref{D xi} it follows that 
for $\mathcal{H}^1$-a.e. $r \in I$
\begin{align*}
r^{n-1} \xi'_v (r) &= \int_{(\partial^* E)_r \cap \{\nu^{E}_{\|} \neq 0 \} } 
\frac{\hat{x} \cdot \nu^E (x)}{|\nu_{\|}^E(x)|}d\mathcal{H}^{n-2}(x) \\
&= \frac{a (r)}{c (r)}  \mathcal{H}^{n-2} (\mathbf{S}_{\alpha_v (r)} (r d_E (r))).
\end{align*}
Plugging last identity into \eqref{intermediate we are almost there}
and using \eqref{identity with d_E and b_E}, we obtain 
\begin{align*}
b'_E (r) &= \xi'_v (r) \cos ( \alpha_v (r)) d_E (r)
= \xi'_v (r) \cos ( \alpha_v (r)) \frac{b_E (r)}{\omega_{n-1} (\sin \alpha_v (r))^{n-1}} \\
&= (n-1) \alpha'_v (r) \,\frac{\cos \alpha_v (r)}{\sin \alpha_v (r)} \, b_E (r),
\end{align*}
where we used the fact that, thanks to \eqref{measure geodesic ball} and \eqref{xi def},
\begin{equation*} 
\xi'_v (r) = (n-1) \omega_{n-1} (\sin \alpha_v (r))^{n-2} \alpha'_v (r)
\qquad \text{ for $\mathcal{H}^1$-a.e. } r \in I.
\end{equation*}

\vspace{.2cm}

\noindent
\textbf{Step 4:} We conclude.
First of all, note that from \eqref{identity with d_E and b_E} and Step 2
it follows that $d_E \in W^{1, 1}_{\text{loc}} (I;\mathbb{S}^{n-1})$.
Then, thanks to Step 3, for $\mathcal{H}^1$-a.e. $r \in I$
\begin{align*}
&\omega_{n-1} d_E' (r) = \frac{d}{d r} \left[ \frac{b_E (r)}{ (\sin \alpha_v (r))^{n-1}} \right]
=   \frac{b'_E (r)}{ (\sin \alpha_v (r))^{n-1}} 
+ b_E (r) \frac{d}{d r} \left[ \frac{1}{(\sin \alpha_v (r))^{n-1}} \right] \\
&= (n-1) \alpha'_v (r) \,  \frac{\cos \alpha_v (r)}{ (\sin \alpha_v (r))^{n}} \, b_E (r)
+ b_E (r)  \left[ - \frac{n-1}{(\sin \alpha_v (r))^{n}} (\cos \alpha_v (r)) \alpha'_v (r)  \right]  = 0,
\end{align*}
for $\mathcal{H}^1$-a.e. $r \in I$.
This shows that $d_E$ is $\mathcal{H}^1$-a.e. constant in $I$.
Therefore, $E \cap \Phi (I \times \mathbb{S}^{n-1})$
can be obtained by applying an orthogonal transformation to $F_v \cap \Phi (I \times \mathbb{S}^{n-1})$.
\end{proof}

\section{Proof of Theorem~\ref{rigidity theorem}: (i) $\Longrightarrow$ (ii)} \label{section i implies ii}

We start by showing that the fact that $\{ 0 < \alpha^{\wedge} \leq \alpha^{\vee} < \pi \}$
is an interval is a necessary condition for rigidity.
\begin{proposition} \label{no interval}
Let $v: (0,\infty) \to [0, \infty)$ be a measurable function 
satisfying \eqref{bound on v}, such that $F_v$ 
is a set of finite perimeter and finite volume, 
and let $\alpha_v$ be defined by \eqref{this is the def of alphav}.
Suppose that the set $\{ 0 < \alpha^{\wedge} \leq \alpha^{\vee} < \pi \}$
is not an interval. 
That is, suppose that there exists $\overline{r} \in \{ \alpha^{\wedge} = 0 \} \cup \{ \alpha^{\vee} = \pi\}$
such that
$$
(0, \overline{r}) \cap \{ 0 < \alpha^{\wedge} \leq \alpha^{\vee} < \pi\} \neq \emptyset
\qquad \text{ and } \qquad 
(\overline{r}, \infty) \cap \{ 0 < \alpha^{\wedge} \leq \alpha^{\vee} < \pi\} \neq \emptyset.
$$
Then, rigidity fails. More precisely, setting $E_1 := F_v \cap B (\overline{r}) $
and $E_2 :=  F_v \setminus B (\overline{r})$, we have 
$$
E_1 \cup  (R E_2) \in \mathcal{N} (v) \qquad \text{ for every } R \in O(n).
$$ 
\end{proposition}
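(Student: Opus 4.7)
The plan is to show that the set $F_R := E_1 \cup (RE_2)$ lies in $\mathcal{N}(v)$ for every $R\in O(n)$, which amounts to checking that $F_R$ is spherically $v$-distributed and that $P(F_R)=P(F_v)$. First I would verify the distribution condition: since $E_1\subset B(\overline{r})$ and $RE_2\subset \R^n\setminus B(\overline{r})$ are disjoint, and $R$ maps every $\partial B(r)$ to itself, each slice $(F_R)_r$ has the same $\mathcal{H}^{n-1}$-measure as $(F_v)_r$. The perimeter inequality of Theorem~\ref{fv locally finite perimeter} then already gives $P(F_v)\leq P(F_R)$, so it remains to prove the reverse inequality. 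For this, I would split
\begin{equation*}
P(F_R) = P(F_R; B(\overline{r})) + P(F_R; \partial B(\overline{r})) + P(F_R; \R^n \setminus \overline{B}(\overline{r})).
\end{equation*}
On $B(\overline{r})$ one has $F_R=F_v$, while on $\R^n\setminus\overline{B}(\overline{r})$ one has $F_R=RF_v$; by the orthogonal invariance of the perimeter, the first and third terms coincide with those for $F_v$. The whole question therefore reduces to showing
\begin{equation*}
P(F_R;\partial B(\overline{r})) \leq P(F_v;\partial B(\overline{r}))=\overline{r}^{\,n-1}\,|D^s\xi_v|(\{\overline{r}\}),
\end{equation*}
where the last equality follows from Corollary~\ref{corollary perimeter Fv} with $B=\{\overline{r}\}$ (the $p_{F_v}$ and $r^{n-1}\xi_v'$ contributions vanish on a single point).

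I expect this boundary term to be the main obstacle. Up to $\mathcal{H}^{n-1}$-negligible sets, $\partial^* F_v\cap\partial B(\overline{r})$ coincides with the symmetric difference between the ``inner trace'' of $F_v$ from $r<\overline{r}$ (a spherical cap centred at $e_1$) and the ``outer trace'' from $r>\overline{r}$ (also centred at $e_1$ for $F_v$, but at $Re_1$ for $F_R$). The sizes of these traces are controlled by the one-sided approximate liminf and limsup of $\alpha_v$ at $\overline{r}$, say $\alpha^{\wedge}_{-},\alpha^{\vee}_{-}$ and $\alpha^{\wedge}_{+},\alpha^{\vee}_{+}$, which satisfy $\alpha_v^{\wedge}(\overline{r})=\min(\alpha^{\wedge}_{-},\alpha^{\wedge}_{+})$ and $\alpha_v^{\vee}(\overline{r})=\max(\alpha^{\vee}_{-},\alpha^{\vee}_{+})$. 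In general, rotating the outer cap by $R$ strictly enlarges the symmetric difference; however, the hypothesis $\overline{r}\in\{\alpha_v^{\wedge}=0\}\cup\{\alpha_v^{\vee}=\pi\}$ forces either the inner or the outer trace to degenerate to $\emptyset$ or to the full sphere. In that degenerate case, the symmetric difference collapses to a single non-trivial cap whose $\mathcal{H}^{n-1}$-measure is $\overline{r}^{\,n-1}[\xi_v](\overline{r})$, independent of $R$.

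To make this rigorous I would compute the one-sided densities $\theta^{\mathrm{in}}(F_v, x)$ and $\theta^{\mathrm{out}}(RF_v, x)$ for $x\in\partial B(\overline{r})$ by slicing $B(x,\rho)$ along the foliation by spheres $\partial B(r)$, thereby reducing to one-dimensional questions about the densities at $\overline{r}$ of the level sets $\{\alpha_v>\text{dist}_{\mathbb{S}^{n-1}}(\hat{x}, e_1)\}$ and $\{\alpha_v>\text{dist}_{\mathbb{S}^{n-1}}(\hat{x}, Re_1)\}$. Combining this with the definitions of $\alpha_v^{\wedge}$ and $\alpha_v^{\vee}$ will yield $P(F_R;\partial B(\overline{r}))\leq\overline{r}^{\,n-1}[\xi_v](\overline{r})$, and hence $P(F_R)\leq P(F_v)$, giving equality and $F_R\in \mathcal{N}(v)$. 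Since $F_R$ cannot coincide with any rotated copy of $F_v$ for a generic $R$ (the rotation acts non-trivially only on the outer piece), this shows that rigidity $(\mathcal{R})$ fails.
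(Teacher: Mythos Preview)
Your proposal is correct and follows essentially the same route as the paper: split $P(F_R)$ into the contributions from $B(\overline{r})$, $\partial B(\overline{r})$, and $\R^n\setminus\overline{B(\overline{r})}$, match the first and third with $F_v$ by orthogonal invariance, and then control $P(F_R;\partial B(\overline{r}))$ via density computations obtained by slicing $B(x,\rho)$ with the spheres $\partial B(r)$. The paper carries this out explicitly by first proving the auxiliary identity $(\partial^* F_v)_{\overline{r}}=_{\mathcal{H}^{n-1}}\mathbf{B}_{\alpha_v^{\vee}(\overline{r})}(\overline{r}e_1)\setminus\mathbf{B}_{\alpha_v^{\wedge}(\overline{r})}(\overline{r}e_1)$ (your ``symmetric difference of traces''), then making the WLOG choice $\alpha_v^{\wedge}(\overline{r})=0$ with the right one-sided approximate limit vanishing, and showing directly that points outside $\overline{\mathbf{B}_{\alpha_v^{\vee}(\overline{r})}(\overline{r}e_1)}\cup\{R\overline{r}e_1\}$ have density zero for both $E_1$ and $RE_2$; this is exactly your ``degenerate trace'' observation made concrete.
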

Before giving the proof of Proposition~\ref{no interval}
we need the following lemma.
\begin{lemma}  \label{slice of reduced boundary F}
Let $v: (0,\infty) \to [0, \infty)$ be a measurable function 
satisfying \eqref{bound on v}, such that $F_v$ 
is a set of finite perimeter and finite volume. 
Let $\alpha_v$ be defined by \eqref{this is the def of alphav}, 
and let $\overline{r} > 0$. 
Then, 
$$
(\partial^* F_v)_{\overline{r}} 
=_{\mathcal{H}^{n-1}} 
\mathbf{B}_{\alpha^{\vee}_v (\overline{r})} (\overline{r} e_1)
\setminus  \mathbf{B}_{\alpha^{\wedge}_v (\overline{r})} (\overline{r} e_1).
$$
\end{lemma}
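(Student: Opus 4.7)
The plan is to compute, for each $x \in \partial B(\overline{r})$, the $n$-dimensional density $\theta(F_v, x)$, classify points according to how $\beta := \text{dist}_{\mathbb{S}^{n-1}}(\hat x, e_1)$ compares with $\alpha_v^{\wedge}(\overline{r})$ and $\alpha_v^{\vee}(\overline{r})$, and conclude using Federer's theorem together with the fact that geodesic spheres on $\partial B(\overline{r})$ are $\mathcal{H}^{n-1}$-negligible. As recalled in the excerpt, $\alpha_v \in BV_{\textnormal{loc}}(0,\infty)$, so in one dimension it admits classical one-sided limits $\alpha_v(\overline{r}^\pm)$ at every point, and
$$\alpha_v^{\wedge}(\overline{r}) = \min\{\alpha_v(\overline{r}^-), \alpha_v(\overline{r}^+)\},\qquad \alpha_v^{\vee}(\overline{r}) = \max\{\alpha_v(\overline{r}^-), \alpha_v(\overline{r}^+)\}.$$
The starting point is the Cavalieri-type identity
$$\mathcal{H}^n(F_v \cap B(x,\rho)) = \int_{\overline{r}-\rho}^{\overline{r}+\rho} \mathcal{H}^{n-1}\big(\mathbf{B}_{\alpha_v(r)}(re_1) \cap B(x,\rho)\big)\, dr.$$

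If $\beta < \alpha_v^{\wedge}(\overline{r})$, I fix $\delta \in (0, \alpha_v^{\wedge}(\overline{r})-\beta)$. By definition of $\alpha_v^{\wedge}$, the set $A_\rho := \{r \in (\overline{r}-\rho, \overline{r}+\rho) : \alpha_v(r) \leq \beta + \delta\}$ satisfies $|A_\rho| = o(\rho)$ as $\rho \to 0$. For $r \notin A_\rho$ and $\rho$ small enough that $\rho/(\overline{r}-\rho) < \delta$, the geodesic $(\rho/\overline{r})$-disk on $\mathbb{S}^{n-1}$ around $\hat x$ sits inside $\mathbf{B}_{\alpha_v(r)}(e_1)$, so $\partial B(r) \cap B(x,\rho) \subset \mathbf{B}_{\alpha_v(r)}(re_1)$. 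Integrating then gives $\theta(F_v,x) = 1$. A symmetric argument based on the definition of $\alpha_v^{\vee}$ yields $\theta(F_v,x) = 0$ when $\beta > \alpha_v^{\vee}(\overline{r})$. In both regimes, $x \notin \partial^{\textnormal{e}} F_v$.

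The interesting case $\alpha_v^{\wedge}(\overline{r}) < \beta < \alpha_v^{\vee}(\overline{r})$ can only occur at a jump point of $\alpha_v$. Up to swapping sides, suppose $\alpha_v(\overline{r}^-) = \alpha_v^{\wedge}(\overline{r}) < \beta < \alpha_v^{\vee}(\overline{r}) = \alpha_v(\overline{r}^+)$. Applying the previous localisation separately to $(\overline{r}-\rho,\overline{r})$ (where the inner integrand is $o(\rho^{n-1})$) and to $(\overline{r},\overline{r}+\rho)$ (where it equals $\mathcal{H}^{n-1}(\partial B(r)\cap B(x,\rho))$ up to $o(\rho^{n-1})$) gives $\theta(F_v,x) = 1/2$, so $x \in F_v^{(1/2)} \subset \partial^{\textnormal{e}} F_v$. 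Combining the three cases, noting that the two geodesic spheres $\{\beta = \alpha_v^{\wedge}(\overline{r})\}$ and $\{\beta = \alpha_v^{\vee}(\overline{r})\}$ are $\mathcal{H}^{n-1}$-null in $\partial B(\overline{r})$, and invoking Federer's theorem $\partial^* F_v =_{\mathcal{H}^{n-1}} \partial^{\textnormal{e}} F_v$, yields the claimed identity.

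The main technical challenge will be making the density computation precise: one needs uniform control of the exceptional sets $A_\rho$ as $\rho \to 0$, together with quantitative estimates on the geodesic geometry of the thin slab $\partial B(r) \cap B(x,\rho)$ as $r$ varies in $(\overline{r}-\rho, \overline{r}+\rho)$, so that the "good" values of $r$ contribute either the full slab volume or essentially zero up to an $o(\rho^n)$ error. Once these quantitative controls are in place, the three density cases collapse into the announced decomposition.
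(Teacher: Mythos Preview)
Your approach is correct and your Cases~1 and~2 coincide with Step~1 of the paper's proof: both show that $\beta < \alpha_v^{\wedge}(\overline{r})$ forces $x \in F_v^{(1)}$ and $\beta > \alpha_v^{\vee}(\overline{r})$ forces $x \in F_v^{(0)}$, via the same Cavalieri slicing and density-of-exceptional-radii argument. The real divergence is in how the reverse inclusion (points of the open annulus lie in $(\partial^* F_v)_{\overline{r}}$) is obtained. You compute the density directly and show it equals $1/2$, using the one-sided approximate limits of $\alpha_v$ at the jump point to separate the contributions from $(\overline{r}-\rho,\overline{r})$ and $(\overline{r},\overline{r}+\rho)$; this is entirely elementary and self-contained, at the cost of the careful slab estimates you flag at the end. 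The paper instead bypasses the pointwise density computation: having the inclusion $(\partial^* F_v)_{\overline{r}} \subset \overline{\mathbf{B}_{\alpha_v^{\vee}(\overline{r})}(\overline{r}e_1)} \setminus \mathbf{B}_{\alpha_v^{\wedge}(\overline{r})}(\overline{r}e_1)$ from Step~1, it invokes the perimeter formula for $F_v$ (Corollary~\ref{corollary perimeter Fv}) to get
\[
\mathcal{H}^{n-1}((\partial^* F_v)_{\overline{r}}) = P(F_v;\partial B(\overline{r})) = \overline{r}^{\,n-1}\big(\xi_v^{\vee}(\overline{r}) - \xi_v^{\wedge}(\overline{r})\big) = v^{\vee}(\overline{r}) - v^{\wedge}(\overline{r}),
\]
which equals the $\mathcal{H}^{n-1}$-measure of the annulus; since one set is contained in the other and their measures agree, equality up to $\mathcal{H}^{n-1}$-null sets follows. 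The paper's route is shorter and avoids the delicate density-$1/2$ computation, but it presupposes the full perimeter formula developed in Sections~\ref{section properties v and xi}--\ref{section spherical proof perimeter inequality}; your route is more hands-on and independent of that machinery.
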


\begin{proof}
We divide the proof in two steps.

\vspace{.2cm}

\noindent
\textbf{Step 1:} We show that 
$$
(\partial^* F_v)_{\overline{r}} 
\subset \overline{\mathbf{B}_{\alpha^{\vee}_v (\overline{r})} (\overline{r} e_1)}
\setminus  \mathbf{B}_{\alpha^{\wedge}_v (\overline{r})} (\overline{r} e_1).
$$ 
To this aim, it will be enough to show that 
\begin{equation} \label{reduced boundary at r bar}
\alpha^{\wedge}_v (\overline{r}) 
\leq \text{dist}_{\mathbb{S}^{n-1}} (\hat{x}, e_1) \leq \alpha^{\vee}_v (\overline{r})
\qquad \text{ for every } x \in (\partial^* F_v)_{\overline{r}}.
\end{equation}
Let us first prove that
\begin{equation} \label{reduced boundary at r bar 2}
\text{dist}_{\mathbb{S}^{n-1}} (\hat{x}, e_1) \leq \alpha^{\vee}_v (\overline{r})
\qquad \text{ for every } x \in (\partial^* F_v)_{\overline{r}}
\end{equation}
Note that \eqref{reduced boundary at r bar 2} is trivial if 
$\alpha^{\vee}_v (\overline{r}) = \pi$.
For this reason, we will assume $\alpha^{\vee}_v (\overline{r}) < \pi$.
Note now that \eqref{reduced boundary at r bar 2} follows if we prove that
\begin{equation} \label{claim 1}
x \in \partial B (\overline{r}) \quad \text{ and } \quad 
\text{dist}_{\mathbb{S}^{n-1}} (\hat{x}, e_1) > \alpha^{\vee}_v (\overline{r}) 
\quad \Longrightarrow \quad x \in F_v^{(0)}.
\end{equation}
Let now $x \in \partial B (\overline{r})$, and suppose that there exists $\delta > 0$ such that
$$
 \text{dist}_{\mathbb{S}^{n-1}} (\hat{x}, e_1) 
 =  \alpha^{\vee}_v (\overline{r}) + \delta.
$$
Let now $\overline{\rho} > 0$ be so small that 
$$
 \text{dist}_{\mathbb{S}^{n-1}} (\hat{y}, \hat{x}) < \frac{\delta}{2} \qquad \text{ for every }
 y \in  B (x, \overline{\rho}).
$$
By triangle inequality for the geodesic distance we have, in particular, that 
$$
\alpha^{\vee}_v (\overline{r}) + \delta
= \text{dist}_{\mathbb{S}^{n-1}} (\hat{x}, e_1) 
\leq \text{dist}_{\mathbb{S}^{n-1}} (\hat{x}, \hat{y})
+  \text{dist}_{\mathbb{S}^{n-1}} (\hat{y}, e_1)
< \frac{\delta}{2} +  \text{dist}_{\mathbb{S}^{n-1}} (\hat{y}, e_1),  
$$ 
so that 
\begin{equation} \label{very small}
\text{dist}_{\mathbb{S}^{n-1}} (\hat{y}, e_1) > \alpha^{\vee}_v (\overline{r}) + \frac{\delta}{2}
\qquad \text{ for every }
 y \in  B (x, \overline{\rho}).
\end{equation}
Thanks to the inequality above, by definition of $F_v$ we have
$$
F_v \cap B (x, \overline{\rho}) 
\subset \left\{ y \in \R^n :  
\alpha^{\vee}_v (\overline{r}) + \frac{\delta}{2} <
\text{dist}_{\mathbb{S}^{n-1}} (\hat{y}, e_1)
< \alpha_v (|y|)  \right\} \cap B (x, \overline{\rho}).
$$  
Therefore, for every $\rho \in (0, \overline{\rho})$
\begin{align*}
&\mathcal{H}^{n} (F_v \cap B (x, \rho))
= \int_{\overline{r}- \rho}^{\overline{r} + \rho} \mathcal{H}^{n-1}
(F_v \cap B (x, \rho) \cap \partial B (r)) \, dr \\
&\leq \int_{\overline{r}- \rho}^{\overline{r} + \rho} 
\chi_{\{ \alpha_v > \alpha^{\vee}_v (\overline{r}) + \delta/2 \}} (r) 
\mathcal{H}^{n-1}
(F_v \cap B (x, \rho) \cap \partial B (r)) \, dr \\
&= \int_{(\overline{r}- \rho, \overline{r} + \rho) 
\cap \{ \alpha_v > \alpha^{\vee}_v (\overline{r}) + \delta/2 \} } 
\mathcal{H}^{n-1}
(F_v \cap B (x, \rho) \cap \partial B (r)) \, dr.
\end{align*}
Note now that, for $\rho$ small enough, there exists $C = C (\overline{r}) > 0$
such that 
\begin{equation*} 
B (x, \rho) \cap \partial B (r)
\subset \mathbf{B}_{C \rho} (r \hat{x}) \qquad \text{ for every } r \in (\overline{r} - \rho, \overline{r} + \rho).
\end{equation*} 
Therefore, 
\begin{align*}
&\mathcal{H}^{n} (F_v \cap B (x, \rho))
\leq \int_{(\overline{r}- \rho, \overline{r} + \rho) 
\cap \{ \alpha_v > \alpha^{\vee}_v (\overline{r}) + \delta/2 \} }
\mathcal{H}^{n-1} (\mathbf{B}_{C \rho} (r \hat{x})) \, dr \\
&= (n-1) \omega_{n-1}  \int_{(\overline{r}- \rho, \overline{r} + \rho) 
\cap \{ \alpha_v > \alpha^{\vee}_v (\overline{r}) + \delta/2 \} }
r^{n-1}
\int_0^{C \rho} (\sin \tau)^{n-2} \, d \tau \, dr \\
&\leq (n-1) \omega_{n-1}  \int_{(\overline{r}- \rho, \overline{r} + \rho) 
\cap \{ \alpha_v > \alpha^{\vee}_v (\overline{r}) + \delta/2 \} }
r^{n-1} \int_0^{C \rho} \tau^{n-2} \, d \tau \, dr \\
&=  \omega_{n-1} C^{n-1} (\overline{r} + \overline{\rho})^{n-1} \rho^{n-1}
\mathcal{H}^1 ((\overline{r}- \rho, \overline{r} + \rho) 
\cap \{ \alpha_v > \alpha^{\vee}_v (\overline{r}) + \delta/2 \}).
\end{align*}
Thus, recalling the definition of $\alpha^{\vee}_v (\overline{r})$, 
\begin{align*}
&\lim_{\rho \to 0^+} \frac{\mathcal{H}^{n} (F_v \cap B (x, \rho))}{\omega_n \rho^n} \\
&\leq \frac{\omega_{n-1} C^{n-1}}{\omega_n} (\overline{r} + \overline{\rho})^{n-1} 
\lim_{\rho \to 0^+} \frac{\mathcal{H}^1 ((\overline{r}- \rho, \overline{r} + \rho) 
\cap \{ \alpha_v > \alpha^{\vee}_v (\overline{r}) + \delta/2 \})}{ \rho} = 0, 
\end{align*}
which gives \eqref{claim 1} and, in turn, \eqref{reduced boundary at r bar 2}.
By similar arguments, one can prove that 
$$
x \in \partial B (\overline{r}) \quad \text{ and } \quad 
\text{dist}_{\mathbb{S}^{n-1}} (\hat{x}, e_1) < \alpha^{\wedge}_v (\overline{r}) 
\quad \Longrightarrow \quad x \in F_v^{(1)},
$$
which implies that 
\begin{equation*} 
\alpha^{\wedge}_v (\overline{r}) 
\leq \text{dist}_{\mathbb{S}^{n-1}} (\hat{x}, e_1) 
\qquad \text{ for every } x \in (\partial^* F_v)_{\overline{r}}.
\end{equation*}
The above inequality, together with \eqref{reduced boundary at r bar 2}, 
shows \eqref{reduced boundary at r bar}.

\vspace{.2cm}

\noindent
\textbf{Step 2:} We conclude.
Thanks to Corollary \ref{corollary perimeter Fv}, 
\begin{align*}
&\mathcal{H}^{n-1} ((\partial^* F_v)_{\overline{r}})
= \mathcal{H}^{n-1} (\partial^* F_v \cap \partial B (\overline{r}) ) 
=  P(F_v ; \partial B (\overline{r})) =  \overline{r}^{n-1} ( \xi_v^{\vee} (\overline{r}) - \xi_v^{\wedge} (\overline{r})  ) \\
&= v^{\vee} (\overline{r}) - v^{\wedge} (\overline{r})
= \mathcal{H}^{n-1} (\overline{\mathbf{B}_{\alpha^{\vee}_v (\overline{r})} (\overline{r} e_1)} )
- \mathcal{H}^{n-1} (  \mathbf{B}_{\alpha^{\wedge}_v (\overline{r})} (\overline{r} e_1) ) \\
&= \mathcal{H}^{n-1} \left(
 \overline{\mathbf{B}_{\alpha^{\vee}_v (\overline{r})} (\overline{r} e_1)}
\setminus  \mathbf{B}_{\alpha^{\wedge}_v (\overline{r})} (\overline{r} e_1) \right)
\end{align*}
Since, by Step 1, 
$$
(\partial^* F_v)_{\overline{r}} 
\subset \overline{\mathbf{B}_{\alpha^{\vee}_v (\overline{r})} (\overline{r} e_1)}
\setminus  \mathbf{B}_{\alpha^{\wedge}_v (\overline{r})} (\overline{r} e_1), 
$$ 
we have 
$$
(\partial^* F_v)_{\overline{r}} =_{\mathcal{H}^{n-1}} 
\overline{\mathbf{B}_{\alpha^{\vee}_v (\overline{r})} (\overline{r} e_1)}
\setminus  \mathbf{B}_{\alpha^{\wedge}_v (\overline{r})} (\overline{r} e_1)
=_{\mathcal{H}^{n-1}} 
\mathbf{B}_{\alpha^{\vee}_v (\overline{r})} (\overline{r} e_1)
\setminus  \mathbf{B}_{\alpha^{\wedge}_v (\overline{r})} (\overline{r} e_1).
$$
\end{proof}
We can now give the proof of Proposition~\ref{no interval}.
\begin{proof}[Proof of Proposition~\ref{no interval}]
Note that, since $B (\overline{r})$ 
is open and $E \cap B (\overline{r}) = F_v \cap B (\overline{r})$, 
we have 
$$
E^{(t)} \cap B (\overline{r}) 
= ( E \cap B (\overline{r}) )^{(t)} 
= ( F_v \cap B (\overline{r}) )^{(t)}
= F_v^{(t)} \cap B (\overline{r}) \quad \text{ for every } t \in [0, 1].
$$
From this, it follows that 
\begin{equation} \label{first term}
\partial^* E \cap B (\overline{r})  = \partial^* F_v \cap B (\overline{r}).
\end{equation}
Similarly, we obtain 
\begin{equation} \label{second term}
\partial^* E \setminus \overline{B (\overline{r})}
= \partial^* (R F_v)   \setminus \overline{B (\overline{r})}
= ( R \, \partial^* F_v)   \setminus 
(R \overline{B (\overline{r})})
= R ( \partial^* F_v  \setminus \overline{B (\overline{r})} ).
\end{equation}
Thus, thanks to \eqref{first term} and \eqref{second term}
\begin{align*}
P (E) &= \mathcal{H}^{n-1} (\partial^* E \cap B (\overline{r}))
+ \mathcal{H}^{n-1} (\partial^* E \cap \partial B (\overline{r}))
+ \mathcal{H}^{n-1} (\partial^* E \setminus \overline{B (\overline{r})}) \\
&= \mathcal{H}^{n-1} (\partial^* F_v \cap B (\overline{r}))
+ \mathcal{H}^{n-1} (\partial^* E \cap \partial B (\overline{r}))
+ \mathcal{H}^{n-1} \left( R ( \partial^* F_v  \setminus \overline{B (\overline{r})} ) \right) \\
&= \mathcal{H}^{n-1} (\partial^* F_v \cap B (\overline{r}))
+ \mathcal{H}^{n-1} (\partial^* E \cap \partial B (\overline{r}))
+ \mathcal{H}^{n-1} ( \partial^* F_v  \setminus \overline{B (\overline{r})} ). 
\end{align*}
Therefore, in order to conclude the proof we only need to show that
\begin{equation} \label{last part now}
\mathcal{H}^{n-1} (\partial^* E \cap B (\overline{r}) )
= \mathcal{H}^{n-1} (\partial^* F_v \cap B (\overline{r}) ).
\end{equation}
Without any loss of generality, we will assume that 
\begin{equation} \label{half lines new}
\alpha_v^\vee( \overline{r})
=\aplim(f,(0, \overline{r}),\overline{r})\,,\qquad 
0 = \alpha_v^\wedge(\overline{r})=\aplim(f,(\overline{r}, \infty),\overline{r})\,.
\end{equation}
Let now $E_1, E_2$, and $R$ be as in the statement.
We divide the proof of \eqref{last part now} into steps. 

\vspace{.2cm}

\noindent
\textbf{Step 1:} We show that 
$$
(\partial^* E)_{\overline{r}} 
\subset \overline{\mathbf{B}_{\alpha^{\vee}_v (\overline{r})} (\overline{r} e_1)}
\cup \{ R (\overline{r} e_1) \}.
$$ 
To this aim, it will be enough to prove that
\begin{equation} \label{inequality 1 for E}
\text{dist}_{\mathbb{S}^{n-1}} (\hat{x}, e_1) \leq \alpha^{\vee}_v (\overline{r})
\qquad \text{ for every } x \in (\partial^* E)_{\overline{r}}.
\end{equation}
If $\alpha^{\vee}_v (\overline{r}) = \pi$ inequality \eqref{inequality 1 for E} is obvious, 
so we will assume that $\alpha^{\vee}_v (\overline{r}) < \pi$.

\vspace{.2cm}

\noindent
\textbf{Step 1a:} We show that 
$$
x \in \partial B (\overline{r}) \quad \text{ and } \quad 
\text{dist}_{\mathbb{S}^{n-1}} (\hat{x}, e_1) > \alpha^{\vee}_v (\overline{r}) 
\quad \Longrightarrow \quad x \in E_1^{(0)}.
$$
Indeed, let $x \in \partial B (\overline{r})$, 
and suppose that there exists $\delta > 0$ such that
$$
 \text{dist}_{\mathbb{S}^{n-1}} (\hat{x}, e_1) 
 =  \alpha^{\vee}_v (\overline{r}) + \delta.
$$
By repeating the argument used to show \eqref{very small}, 
we can choose $\overline{\rho} > 0$ so small that 
\begin{equation*} 
\text{dist}_{\mathbb{S}^{n-1}} (\hat{y}, e_1) > \alpha^{\vee}_v (\overline{r}) + \frac{\delta}{2}
\qquad \text{ for every }
 y \in  B (x, \overline{\rho}).
\end{equation*}
By definition of $E_1$, we then have
\begin{align*}
&E_1 \cap B (x, \overline{\rho}) = F_v \cap B (\overline{r}) \cap B (x, \overline{\rho}) \\
&\subset \left\{ y \in \R^n :  |y| < \overline{r} \text{ and }
\alpha^{\vee}_v (\overline{r}) + \frac{\delta}{2} <
\text{dist}_{\mathbb{S}^{n-1}} (\hat{y}, e_1)
< \alpha_v (|y|)  \right\} \cap B (x, \overline{\rho}).
\end{align*}
Therefore, for every $\rho \in (0, \overline{\rho})$, 
by repeating the calculations done in Step 1 of Lemma~\ref{slice of reduced boundary F}, 
we obtain 
\begin{align*}
&\lim_{\rho \to 0^+} \frac{1}{\omega_n \rho^n} \mathcal{H}^{n} (E_1 \cap B (x, \rho)) \\
&= \lim_{\rho \to 0^+} \frac{1}{\omega_n \rho^n} \int_{\overline{r}- \rho}^{\overline{r}} \mathcal{H}^{n-1}
(F_v \cap B (x, \rho) \cap \partial B (r)) \, dr \\
& \leq \frac{\omega_{n-1} C^{n-1}}{\omega_n} (\overline{r} + \overline{\rho})^{n-1} 
\lim_{\rho \to 0^+} \frac{\mathcal{H}^1 ((\overline{r}- \rho, \overline{r}) 
\cap \{ \alpha_v > \alpha^{\vee}_v (\overline{r}) + \delta/2 \})}{ \rho} = 0,
\end{align*}
where we used \eqref{half lines new}.

\vspace{.2cm}

\noindent
\textbf{Step 1b:} We show that 
$$
\partial B (\overline{r}) \setminus 
\{ R (\overline{r} e_1) \} \subset (R E_2)^{(0)}.
$$
Indeed, let $x \in \partial B (\overline{r})$, and suppose that 
$\eta := \text{dist}_{\mathbb{S}^{n-1}} (\hat{x}, R e_1) > 0$.
We are going to prove that $x \in (R E_2)^{(0)}$.
By repeating the argument used to show \eqref{very small}, 
we can choose $\overline{\rho} > 0$ so small that 
$$
\text{dist}_{\mathbb{S}^{n-1}} (\hat{y}, R e_1) > \frac{\eta}{2}
\qquad \text{ for every } y \in B (x, \overline{\rho}).
$$
Then, 
\begin{align*}
&(R E_2) \cap B (x, \overline{\rho}) 
= \left( R (F_v \setminus \overline{B (\overline{r})}) \right) \cap B (x, \overline{\rho}) \\
&\subset_{\mathcal{H}^{n}} \left\{ y \in \R^n :  |y| > \overline{r} \text{ and }
\frac{\eta}{2} < \text{dist}_{\mathbb{S}^{n-1}} (\hat{y}, R e_1)
< \alpha_v (|y|)  \right\} \cap B (x, \overline{\rho}).
\end{align*}
For $\rho$ small enough, there exists $C = C (\overline{r}) > 0$
such that 
\begin{equation*} 
B (x, \rho) \cap \partial B (r)
\subset \mathbf{B}_{C \rho} (r \hat{x}) \qquad \text{ for every } r \in (\overline{r} - \rho, \overline{r} + \rho).
\end{equation*} 
Therefore, for every $\rho \in (0, \overline{\rho})$,
\begin{align*}
&\mathcal{H}^{n} ((RE_2) \cap B (x, \rho))
\leq \int_{(\overline{r}, \overline{r} + \rho) 
\cap \{ \alpha_v >  \eta/2 \} }
\mathcal{H}^{n-1} (\mathbf{B}_{C \rho} (r \hat{x})) \, dr \\
&= (n-1) \omega_{n-1}  \int_{(\overline{r}, \overline{r} + \rho) 
\cap \{ \alpha_v > \eta/2 \} }
r^{n-1}
\int_0^{C \rho} (\sin \tau)^{n-2} \, d \tau \, dr \\
&=  \omega_{n-1} C^{n-1} (\overline{r} + \overline{\rho})^{n-1} \rho^{n-1}
\mathcal{H}^1 ((\overline{r}, \overline{r} + \rho) 
\cap \{ \alpha_v >  \eta/2 \}).
\end{align*}
From this, thanks to \eqref{half lines new}, we obtain 
\begin{align*}
&\lim_{\rho \to 0^+} \frac{ \mathcal{H}^{n} ((RE_2) \cap B (x, \rho)) }{\omega_n \rho^n} \\
&\leq \frac{\omega_{n-1} C^{n-1}}{\omega_n} (\overline{r} + \overline{\rho})^{n-1} 
\lim_{\rho \to 0^+} \frac{\mathcal{H}^1 ((\overline{r}, \overline{r} + \rho) 
\cap \{ \alpha_v > \eta/2 \})}{ \rho} = 0.
\end{align*}

\vspace{.2cm}

\noindent
\textbf{Step 1c:} We conclude the proof of Step 1.
By definition of $E$, from Step 1a and Step 1b it follows that 
$$
\{ x \in \partial B (\overline{r}) :  
\text{dist}_{\mathbb{S}^{n-1}} (\hat{x}, e_1) > \alpha^{\vee}_v (\overline{r}) \}
\setminus \{ R e_1 \} \subset  E_1^{(0)} \cap (R E_2)^{(0)} = E^{(0)}.
$$
Therefore, 
\begin{align*}
&( \partial^* E )_r 
\subset \partial B (\overline{r}) \setminus 
\left( \{ x \in \partial B (\overline{r}) :  
\text{dist}_{\mathbb{S}^{n-1}} (\hat{x}, e_1) > \alpha^{\vee}_v (\overline{r}) \}
\setminus \{ R e_1 \} \right) \\
& = \overline{\mathbf{B}_{\alpha^{\vee}_v (\overline{r})} (\overline{r} e_1)}
\cup \{ R e_1 \}.
\end{align*}


\vspace{.2cm}

\noindent
\textbf{Step 2:} We show \eqref{last part now}, concluding the proof.
Thanks to Step 1 and Lemma~\ref{slice of reduced boundary F} we have 
 \begin{align*}
&P(E ; \partial B (\overline{r}))
= \mathcal{H}^{n-1} (\partial^* E \cap \partial B (\overline{r})) 
= \mathcal{H}^{n-1} ((\partial^* E)_{\overline{r}}) 
\leq \mathcal{H}^{n-1} \left(
\mathbf{B}_{\alpha^{\vee}_v (\overline{r})} (\overline{r} e_1) \right) \\
&= \mathcal{H}^{n-1} (\partial^* F_v \cap \partial B (\overline{r})) 
= P(F_v ; \partial B (\overline{r})) \leq P(E ; \partial B (\overline{r})), 
\end{align*}
where we also used \eqref{per ineq} with $B = \{ \overline{r} \}$. 
\end{proof}
We now show that, if the jump part $D^j \alpha_v$ of $D \alpha_v$ is non zero,  rigidity fails.
\begin{proposition} \label{jump set implies no rigidity}
Let $v: (0,\infty) \to [0, \infty)$ be a measurable function 
satisfying \eqref{bound on v} such that $F_v$ 
is a set of finite perimeter and finite volume, 
and let $\alpha_v$ be defined by \eqref{this is the def of alphav}.
Suppose that $\alpha_v$ has a jump at some point $\overline{r} > 0$.
Then, rigidity fails. More precisely, setting $E_1 := F_v \cap B (\overline{r}) $
and $E_2 :=  F_v \setminus B (\overline{r})$, we have 
$$
E_1 \cup  (R E_2) \in \mathcal{N} (v),
$$ 
for every $R \in O(n)$ such that 
\begin{equation} \label{distance e1 omega}
0 < \textnormal{dist}_{\mathbb{S}^{n-1}} (R e_1, e_1) 
< \lambda ( \alpha^{\vee}_v  (\overline{r}) - \alpha^{\wedge}_v  (\overline{r}) ) \quad 
\text{ for some } \lambda \in (0,1).
\end{equation}
\end{proposition}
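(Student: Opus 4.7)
The plan is to adapt the density-based argument used in the proof of Proposition~\ref{no interval}. Writing $\omega:=Re_1$, $\alpha^{\vee}:=\alpha_v^{\vee}(\overline{r})$ and $\alpha^{\wedge}:=\alpha_v^{\wedge}(\overline{r})$, I would first assume without loss of generality (as around~\eqref{half lines new}) that
\begin{equation*}
\alpha^{\vee}=\aplim(\alpha_v,(0,\overline{r}),\overline{r})\quad\text{and}\quad\alpha^{\wedge}=\aplim(\alpha_v,(\overline{r},\infty),\overline{r}).
\end{equation*}
The triangle inequality on $\mathbb{S}^{n-1}$ and assumption~\eqref{distance e1 omega} yield the crucial strict containment $\overline{\mathbf{B}_{\alpha^{\wedge}}(\overline{r}\omega)}\subset\mathbf{B}_{\alpha^{\vee}}(\overline{r}e_1)$: this is the geometric reason the construction works, and the role of the factor $\lambda<1$ is precisely to guarantee strict inclusion.

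Next I would proceed in three steps. First, the set $E:=E_1\cup(RE_2)$ is trivially spherically $v$-distributed, since $E_r=(F_v)_r$ for $r<\overline{r}$, $E_r=R(F_v)_r$ for $r>\overline{r}$, and rotations preserve $\mathcal{H}^{n-1}$. Second, I would compute the density of $E$ at a point $x=\overline{r}\sigma\in\partial B(\overline{r})$, mimicking Steps~1a--1b in the proof of Proposition~\ref{no interval}: if $\sigma\in\mathbf{B}_{\alpha^{\wedge}}(\omega)$, the containment above places $\sigma$ strictly inside both the "inner" cap of radius $\alpha^{\vee}$ and the "outer" cap of radius $\alpha^{\wedge}$, so the one-sided approximate-limit argument gives density $1$ on both sides and thus $x\in E^{(1)}$; if on the contrary $\textnormal{dist}_{\mathbb{S}^{n-1}}(\sigma,e_1)>\alpha^{\vee}$, the triangle inequality yields $\textnormal{dist}_{\mathbb{S}^{n-1}}(\sigma,\omega)>\alpha^{\wedge}$, and the density-$0$ estimates for $E_1$ and for $RE_2$ in Proposition~\ref{no interval} give $x\in E^{(0)}$. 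Consequently
\begin{equation*}
(\partial^*E)_{\overline{r}}\subset\mathbf{B}_{\alpha^{\vee}}(\overline{r}e_1)\setminus\overline{\mathbf{B}_{\alpha^{\wedge}}(\overline{r}\omega)}\qquad\text{up to $\mathcal{H}^{n-1}$-null sets,}
\end{equation*}
and by rotational invariance of the surface measure combined with Lemma~\ref{slice of reduced boundary F} the $\mathcal{H}^{n-1}$-measure of the right-hand side equals $v^{\vee}(\overline{r})-v^{\wedge}(\overline{r})=\mathcal{H}^{n-1}((\partial^*F_v)_{\overline{r}})$. Third, decomposing $\partial^*E$ into its pieces in $B(\overline{r})$, on $\partial B(\overline{r})$, and outside $\overline{B(\overline{r})}$, and using that the inner piece coincides with $\partial^*F_v\cap B(\overline{r})$ (because $E\cap B(\overline{r})=F_v\cap B(\overline{r})$) while the outer piece coincides in measure with $\partial^*F_v\setminus\overline{B(\overline{r})}$ (by rotation equivariance of the reduced boundary), the estimate above yields $P(E)\leq P(F_v)$. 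Combined with~\eqref{per ineq} this forces $P(E)=P(F_v)$, so $E\in\mathcal{N}(v)$.

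Finally, to conclude that rigidity actually fails I would show that $E$ is not a rotated copy of $F_v$. If $E=SF_v$ modulo $\mathcal{H}^n$-null sets for some $S\in O(n)$, then on every slice $r$ with $0<\alpha_v(r)<\pi$ the unique center of the cap $E_r$ reads off the image $Se_1$; the slices at $r<\overline{r}$ would then force $Se_1=e_1$, while the slices at $r>\overline{r}$ would force $Se_1=\omega$, contradicting $\textnormal{dist}_{\mathbb{S}^{n-1}}(\omega,e_1)>0$. That such $r$ exist on both sides of $\overline{r}$ with positive measure can be assumed, since otherwise the slice $(F_v)_r$ is $\mathcal{H}^{n-1}$-a.e. empty or the full sphere on a neighbourhood of $\overline{r}$ on one side, making $\overline{r}$ a disconnection point of $\{0<\alpha_v^{\wedge}\leq\alpha_v^{\vee}<\pi\}$, and Proposition~\ref{no interval} already disproves rigidity. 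The main obstacle is the density computation in the middle step: one has to keep track simultaneously of the two different one-sided approximate limits of $\alpha_v$ at $\overline{r}$ and analyse the contributions of $E_1$ and of $RE_2$ independently, using the strict geometric containment established at the start to guarantee that no "extra" mass contributes to the reduced boundary on $\partial B(\overline{r})$.
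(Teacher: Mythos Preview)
Your proposal is correct and follows essentially the same strategy as the paper's own proof: reduce to checking the slice $\partial B(\overline{r})$, use one-sided approximate-limit density estimates (mirroring those of Proposition~\ref{no interval}) to trap $(\partial^*E)_{\overline{r}}$ between the caps $\mathbf{B}_{\alpha^{\vee}}(\overline{r}e_1)$ and $\mathbf{B}_{\alpha^{\wedge}}(\overline{r}\omega)$, and invoke Lemma~\ref{slice of reduced boundary F} together with the localised perimeter inequality~\eqref{per ineq} to close the equality. The only cosmetic differences are that the paper establishes the containment $\mathbf{B}_{\alpha^{\wedge}}(\overline{r}\omega)\subset\mathbf{B}_{\alpha^{\vee}}(\overline{r}e_1)$ at the end rather than the start, and that you add an explicit argument (absent in the paper) explaining why $E$ is not $\mathcal{H}^n$-equivalent to a single rotation of $F_v$.
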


\begin{proof}
Let $R \in O(n)$, $\lambda \in (0,1)$, and $E \in \R^n$ 
be as in the statement, and set $\omega:= R e_1$.
Arguing as in the proof of Proposition~\ref{no interval} 
we have: 
\begin{align*}
P (E) = \mathcal{H}^{n-1} (\partial^* F_v \cap B (\overline{r}))
+ \mathcal{H}^{n-1} (\partial^* E \cap \partial B (\overline{r}))
+ \mathcal{H}^{n-1} ( \partial^* F_v  \setminus \overline{B (\overline{r})} ). 
\end{align*}
%
%
%
Therefore, in order to conclude the proof we only need to show that
\begin{equation} \label{last part}
\mathcal{H}^{n-1} (\partial^* E \cap \partial B (\overline{r}))
= \mathcal{H}^{n-1} (\partial^* F_v \cap \partial B (\overline{r})).
\end{equation}
Without any loss of generality, we will assume that 
\begin{equation} \label{half lines}
\alpha_v^\vee( \overline{r})
=\aplim(f,(0, \overline{r}),\overline{r})\,,\qquad 
\alpha_v^\wedge(\overline{r})=\aplim(f,(\overline{r}, \infty),\overline{r})\,.
\end{equation}
We now proceed by steps.

\vspace{.2cm}

\noindent
\textbf{Step 1:} We show that
\begin{equation} \label{let us show this}
(\partial^* E)_{\overline{r}} 
\subset \overline{\mathbf{B}_{\alpha^{\vee}_v (\overline{r})} (\overline{r} e_1)}
\setminus  \mathbf{B}_{\alpha^{\wedge}_v (\overline{r})} (\overline{r} \omega).
\end{equation}
To show \eqref{let us show this}, it is enough to prove that
for every $x \in (\partial^* E)_{\overline{r}}$ we have
\begin{equation} \label{inequality 1 for E new}
\text{dist}_{\mathbb{S}^{n-1}} (\hat{x}, e_1) \leq \alpha^{\vee}_v (\overline{r})
\qquad \text{ for every } x \in (\partial^* E)_{\overline{r}}, 
\end{equation}
and 
\begin{equation} \label{inequality 2 for E}
 \text{dist}_{\mathbb{S}^{n-1}} (\hat{x}, \omega)
\geq \alpha^{\wedge}_v (\overline{r}) \qquad \text{ for every } x \in (\partial^* E)_{\overline{r}}.
\end{equation}
We will only show \eqref{inequality 1 for E new}, since \eqref{inequality 2 for E}
can be obtained in a similar way.
Note that \eqref{inequality 1 for E new} is automatically satisfied 
if $\alpha^{\vee}_v (\overline{r}) = \pi$, so we will assume 
$\alpha^{\vee}_v (\overline{r}) < \pi$.

By arguing as in Step 1a of the proof of Proposition~\ref{no interval}
we obtain 
\begin{equation} \label{first useful implication}
x \in \partial B (\overline{r}) \quad \text{ and } \quad 
\text{dist}_{\mathbb{S}^{n-1}} (\hat{x}, e_1) > \alpha^{\vee}_v (\overline{r}) 
\quad \Longrightarrow \quad x \in E_1^{(0)}.
\end{equation}
Let us now prove that 
\begin{equation} \label{second useful implication}
x \in \partial B (\overline{r}) \quad \text{ and } \quad 
\text{dist}_{\mathbb{S}^{n-1}} (\hat{x}, e_1) > \alpha^{\vee}_v (\overline{r}) 
\quad \Longrightarrow \quad x \in (R \, E_2)^{(0)}.
\end{equation}
Let $x \in \partial B (\overline{r})$, and suppose that there exists $\delta > 0$
such that
$$
\text{dist}_{\mathbb{S}^{n-1}} (\hat{x}, e_1) = \alpha^{\vee}_v (\overline{r}) + \delta.
$$
Thanks to the argument we used to show \eqref{very small}, 
we can choose $\overline{\rho} > 0$ so small that 
\begin{equation*} 
\text{dist}_{\mathbb{S}^{n-1}} (\hat{y}, e_1) > \alpha^{\vee}_v (\overline{r}) + \frac{\delta}{2}
\qquad \text{ for every }
 y \in  B (x, \overline{\rho}).
\end{equation*}
Therefore, for every $y \in B(x , \overline{\rho})$ we have 
\begin{align*}
&\alpha^{\vee}_v (\overline{r}) + \frac{\delta}{2}
< \text{dist}_{\mathbb{S}^{n-1}} (\hat{y}, e_1) 
\leq \text{dist}_{\mathbb{S}^{n-1}} (\hat{y}, \omega)
+  \text{dist}_{\mathbb{S}^{n-1}} (\omega, e_1) \\
&< \text{dist}_{\mathbb{S}^{n-1}} (\hat{y}, \omega) 
+  \lambda ( \alpha^{\vee}_v  (\overline{r}) - \alpha^{\wedge}_v  (\overline{r}) ).  
\end{align*} 
Since $\overline{r}$ is a jump point for $\alpha_v$, 
we have $\alpha^{\vee}_v  (\overline{r}) > \alpha^{\wedge}_v  (\overline{r})$, 
and the above inequality implies that 
\begin{align*}
&\text{dist}_{\mathbb{S}^{n-1}} (\hat{y}, \omega)
> (1 - \lambda ) \alpha^{\vee}_v (\overline{r})
+ \lambda \alpha^{\wedge}_v (\overline{r}) + \frac{\delta}{2} 
> (1 - \lambda ) \alpha^{\wedge}_v (\overline{r})
+ \lambda \alpha^{\wedge}_v (\overline{r}) + \frac{\delta}{2}
= \alpha^{\wedge}_v (\overline{r}) + \frac{\delta}{2},
\end{align*}
for every $y \in B(x , \overline{\rho})$.
Then, by definition of $E_2$,
\begin{align*}
&( R E_2 ) \cap B (x, \overline{\rho}) 
= \left( R (F_v \setminus \overline{B( \overline{r} )}) \right) \cap B (x, \overline{\rho}) \\
&\subset_{\mathcal{H}^{n}} \left\{ y \in \R^n :  |y| > \overline{r} \text{ and }
\alpha^{\wedge}_v (\overline{r}) + \frac{\delta}{2} <
\text{dist}_{\mathbb{S}^{n-1}} (\hat{y}, \omega)
< \alpha_v (|y|)  \right\} \cap B (x, \overline{\rho}).
\end{align*}
As already observed in the previous proofs, 
for $\rho$ small enough there exists $C = C (\overline{r}) > 0$
such that 
\begin{equation*} 
B (x, \rho) \cap \partial B (r)
\subset \mathbf{B}_{C \rho} (r \hat{x}) \qquad \text{ for every } r \in (\overline{r} - \rho, \overline{r} + \rho).
\end{equation*} 
Therefore, for every $\rho \in (0, \overline{\rho})$ sufficiently small
\begin{align*}
&\mathcal{H}^{n} ( ( R E_2 ) \cap B (x, \rho))
\leq \int_{(\overline{r}, \overline{r} + \rho) 
\cap \{ \alpha_v > \alpha^{\wedge}_v (\overline{r}) + \delta/2 \} }
\mathcal{H}^{n-1} (\mathbf{B}_{C \rho} (r \hat{x})) \, dr \\
&= (n-1) \omega_{n-1}  \int_{(\overline{r}, \overline{r} + \rho) 
\cap \{ \alpha_v > \alpha^{\wedge}_v (\overline{r}) + \delta/2 \} }
r^{n-1}
\int_0^{C \rho} (\sin \tau)^{n-2} \, d \tau \, dr \\
&=  \omega_{n-1} C^{n-1} (\overline{r} + \overline{\rho})^{n-1} \rho^{n-1}
\mathcal{H}^1 ((\overline{r}, \overline{r} + \rho) 
\cap \{ \alpha_v > \alpha^{\wedge}_v (\overline{r}) + \delta/2 \}).
\end{align*}
From this, thanks to \eqref{half lines}, we obtain 
\begin{align*}
&\lim_{\rho \to 0^+} \frac{\mathcal{H}^{n} ( (R E_2) \cap B (x, \rho))}{\omega_n \rho^n} \\
&\leq \frac{\omega_{n-1} C^{n-1}}{\omega_n} (\overline{r} + \overline{\rho})^{n-1} 
\lim_{\rho \to 0^+} \frac{\mathcal{H}^1 ((\overline{r}, \overline{r} + \rho) 
\cap \{ \alpha_v > \alpha^{\wedge}_v (\overline{r}) + \delta/2 \})}{ \rho} = 0,
\end{align*}
which shows \eqref{second useful implication}.
This, together with \eqref{first useful implication}, implies 
\eqref{inequality 1 for E new}. 
As already mentioned, \eqref{inequality 2 for E} can be proved in a similar way, 
and therefore \eqref{let us show this} follows.
 
\vspace{.2cm}

\noindent
\textbf{Step 2:} We conclude.
From \eqref{distance e1 omega} it follows that 
$$
\mathbf{B}_{\alpha^{\wedge}_v (\overline{r})} (\overline{r} \omega)
\subset \mathbf{B}_{\alpha^{\vee}_v (\overline{r})} (\overline{r} e_1).
$$
Therefore, thanks to \eqref{let us show this} and Lemma~\ref{slice of reduced boundary F}
 \begin{align*}
&P(E ; \partial B (\overline{r}))
= \mathcal{H}^{n-1} (\partial^* E \cap \partial B (\overline{r})) 
= \mathcal{H}^{n-1} ((\partial^* E)_{\overline{r}}) 
\leq \mathcal{H}^{n-1} \left(
\mathbf{B}_{\alpha^{\vee}_v (\overline{r})} (\overline{r} e_1)
\setminus  \mathbf{B}_{\alpha^{\wedge}_v (\overline{r})} (\overline{r} \omega) \right) \\
&=  v^{\vee} (\overline{r}) - v^{\wedge} (\overline{r}) 
 = P(F_v ; \partial B (\overline{r})) \leq P(E ; \partial B (\overline{r})), 
\end{align*}
where we also used \eqref{per ineq} with $B = \{ \overline{r} \}$. 
Then, \eqref{last part} follows from the last chain of inequalities.
\end{proof}
We conclude this section showing that, if $D^c \alpha_v \neq 0$, rigidity fails.
\begin{proposition} \label{no Cantor}
Let $v: (0,\infty) \to [0, \infty)$ be a measurable function 
satisfying \eqref{bound on v} such that $F_v$ 
is a set of finite perimeter and finite volume, 
and let $\alpha_v$ be defined by \eqref{this is the def of alphav}.
Suppose that $D^c \alpha_v \neq 0$.
Then, rigidity fails. 
\end{proposition}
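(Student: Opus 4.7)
My plan is to construct a competitor $E \in \mathcal{N}(v)$ that is not $\mathcal{H}^n$-equivalent to any rotated copy of $F_v$, built by twisting the spherical slices of $F_v$ in sync with the Cantor part of $\alpha_v$. By Propositions~\ref{no interval} and~\ref{jump set implies no rigidity} I may assume $\{0 < \alpha_v^\wedge \leq \alpha_v^\vee < \pi\}$ is a connected interval $\mathcal{I}$ and $D^j \alpha_v = 0$, so $\alpha_v$ is continuous on $\mathcal{\mathring I}$. The hypothesis $D^c \alpha_v \neq 0$ then lets me pick a bounded closed interval $[a, b] \subset \mathcal{\mathring I}$ with $|D^c \alpha_v|(a, b) > 0$ and $\alpha_v([a, b]) \subset (0, \pi)$. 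The one-dimensional BV decomposition~\eqref{1D decomposition BV} (with no jump part on $[a, b]$) writes $\alpha_v|_{[a, b]} = \alpha_v^{\textnormal{a}} + \alpha_v^{\textnormal{c}}$ with $\alpha_v^{\textnormal{a}} \in W^{1, 1}([a, b])$ continuous and $\alpha_v^{\textnormal{c}}$ a continuous Cantor function. Fix $\lambda \in (0, 1)$, set $\theta(r) := \lambda(\alpha_v^{\textnormal{c}}(r) - \alpha_v^{\textnormal{c}}(a))$ on $[a, b]$ (extended continuously to $[0, \infty)$ by $0$ on $[0, a]$ and $\theta(b)$ on $[b, \infty)$), and $\omega(r) := (\cos \theta(r))\,e_1 + (\sin \theta(r))\,e_2 \in \mathbb{S}^{n-1}$. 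Define
$$
E := \{x \in \R^n \setminus \{0\} : \textnormal{dist}_{\mathbb{S}^{n-1}}(\hat x, \omega(|x|)) < \alpha_v(|x|)\}.
$$
Then $E$ is spherically $v$-distributed, agrees with $F_v$ on $B(a)$ and with a single rotation $R_b F_v$ outside $\overline{B(b)}$ (where $R_b$ rotates by $\theta(b)$ in the $e_1 e_2$-plane), and has non-constant center on $[a, b]$; arguing as in Step~4 of the proof of (ii) $\Longrightarrow$ (i), $E$ is not $\mathcal{H}^n$-equivalent to $R F_v$ for any single $R \in O(n)$.

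Since $P(F_v) \leq P(E)$ follows from~\eqref{per ineq}, I only need the reverse inequality, which I obtain by approximating the Cantor part alone. Take partitions $a = r_0^{(N)} < \cdots < r_N^{(N)} = b$ with mesh tending to $0$, let $(\alpha_v^{\textnormal{c}})^{(N)}$ be the step function with value $\alpha_v^{\textnormal{c}}(r_k^{(N)})$ on $[r_k^{(N)}, r_{k+1}^{(N)})$, and set $\alpha^{(N)} := \alpha_v^{\textnormal{a}} + (\alpha_v^{\textnormal{c}})^{(N)}$ together with $\omega^{(N)}(r) := (\cos \lambda (\alpha_v^{\textnormal{c}})^{(N)}(r))\,e_1 + (\sin \lambda (\alpha_v^{\textnormal{c}})^{(N)}(r))\,e_2$. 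Let $F^{(N)}$ (centered along $e_1$) and $E^{(N)}$ (with centers $\omega^{(N)}$) be the corresponding spherically distributed sets on $[a, b]$, matched to $F_v$ and $E$ outside. The only jumps of $\alpha^{(N)}$ and $\omega^{(N)}$ sit at the partition points, of sizes $\Delta_k := \alpha_v^{\textnormal{c}}(r_k^{(N)}) - \alpha_v^{\textnormal{c}}(r_{k-1}^{(N)})$ and $\lambda|\Delta_k|$ respectively; $\lambda < 1$ forces the two adjacent caps on $\partial B(r_k^{(N)})$ to be strictly nested, so their annular symmetric difference has $\mathcal{H}^{n-1}$-area $|v^{(N)}(r_k^{(N)+}) - v^{(N)}(r_k^{(N)-})|$, identical to the $F^{(N)}$ contribution at $\partial B(r_k^{(N)})$. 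Inside each strip $(r_{k-1}^{(N)}, r_k^{(N)})$, $E^{(N)}$ and $F^{(N)}$ differ by a fixed rotation in the $e_1 e_2$-plane, so strip perimeters agree. Adding all contributions, $P(E^{(N)}) = P(F^{(N)})$.

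As $N \to \infty$, $\alpha^{(N)} \to \alpha_v$ and $\omega^{(N)} \to \omega$ uniformly on $[a, b]$, so $F^{(N)} \to F_v$ and $E^{(N)} \to E$ in $L^1(\R^n)$. Using Corollary~\ref{corollary perimeter Fv}, the absolutely continuous derivative $(\xi^{(N)})' = \mathcal{F}'(\alpha^{(N)})(\alpha_v^{\textnormal{a}})'$ converges a.e. to $\xi_v' = \mathcal{F}'(\alpha_v)(\alpha_v^{\textnormal{a}})'$ (with $\mathcal{F}$ as in~\eqref{def mathcal F}), while the weighted pure-jump variation $\int r^{n-1}\,d|D^s \xi^{(N)}|$ converges, by a Riemann--Stieltjes argument for continuous BV functions, to $\int r^{n-1}\,d|D^c \xi_v|$; hence $P(F^{(N)}) \to P(F_v)$. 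Lower semicontinuity of the perimeter under $L^1$ convergence then yields
$$
P(E) \leq \liminf_{N \to \infty} P(E^{(N)}) = \liminf_{N \to \infty} P(F^{(N)}) = P(F_v),
$$
completing the proof. The main obstacle is this very convergence: approximating the absolutely continuous part of $\alpha_v$ by step functions would destroy the $\sqrt{\,\cdot\,}$ structure of Corollary~\ref{corollary perimeter Fv} and push $P(F^{(N)})$ to a strictly larger limit, so the crucial trick is to approximate only the Cantor mass while leaving $\alpha_v^{\textnormal{a}}$ intact.
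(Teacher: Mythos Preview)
Your approach coincides with the paper's: twist the slices of $F_v$ by an angle proportional to the Cantor part of $\alpha_v$, approximate that Cantor part by step functions so that Proposition~\ref{jump set implies no rigidity} applies at each resulting jump, and pass to the limit via lower semicontinuity together with $P(F^{(N)}) \to P(F_v)$. The paper simplifies the presentation by first reducing to the purely Cantorian case $D\alpha_v = D^c\alpha_v$ (merely remarking that the general case follows by applying the argument to $\alpha_v^{\textnormal c}$ alone); you carry out that general case directly, and your observation that only the Cantor part should be discretised---otherwise the square-root term in Corollary~\ref{corollary perimeter Fv} inflates the limit---is exactly what makes that reduction work.

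Two points deserve tightening. First, the convergence $\int r^{n-1}\,d|D^s\xi^{(N)}| \to \int r^{n-1}\,d|D^c\xi_v|$ needs more than the label ``Riemann--Stieltjes'': in the purely Cantorian setting the paper chooses partitions approximating $|D\xi_v|(I)$ within $1/k$ and then invokes \cite[Proposition~1.80]{AFP} to upgrade weak$^*$ convergence of $D\xi_v^k$ to weak$^*$ convergence of the total variations, whence the weighted integrals converge; in your setting you should argue similarly after first separating off $D^a\xi^{(N)}$, which converges in $L^1$ since $(\xi^{(N)})' = \mathcal F'(\alpha^{(N)})(\alpha_v^{\textnormal a})'$. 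Second, your $\omega^{(N)}$ should carry the same normalisation as $\theta$ (subtract $\alpha_v^{\textnormal c}(a)$ inside the cosine and sine), so that $\omega^{(N)}(a) = e_1$, $\omega^{(N)} \to \omega$, and hence $E^{(N)} \to E$ in $L^1$.
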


\begin{proof}
We are going to construct a spherically $v$-distributed set $E \in \mathcal{N} (v)$
that cannot be obtained by applying a single orthogonal transformation to $F_v$ 
(see \eqref{set E counterexample} below).

First of all, let us note that it is not restrictive to assume that 
$\alpha_v$ is purely Cantorian.
Indeed, by \eqref{1D decomposition BV} one can decompose 
$\alpha_v$ into 
\begin{equation} \label{alpha is decomposed}
\alpha_v = \alpha^a_v + \alpha^j_v + \alpha^c_v,
\end{equation}
where $\alpha^a_v \in W^{1,1}_{\textnormal{loc}} (0, \infty)$, 
$\alpha^j_v$ is a purely jump function, and $\alpha^c_v$ is purely Cantorian. 
Thanks to \eqref{alpha is decomposed}, in the general case 
when $\alpha_v \neq \alpha_v^c$, the proof can be repeated by applying 
our argument just to the Cantorian part $\alpha^c_v$ of $\alpha_v$.
Therefore, from now on we will assume that
$$
D \alpha_v = D^c \alpha_v.
$$
Thanks to Proposition~\ref{no interval}, we can also assume that 
$\{ 0 < \alpha^{\wedge}_v \leq \alpha^{\vee}_v < \pi \}$ is an interval 
(otherwise there is nothing to prove, since rigidity fails).
Moreover, since $\alpha_v$ is continuous, there exist
 $a, b >0$, with $a < b$, such that $I:= (a, b) \subset \subset \{ 0 < \alpha^{\wedge}_v \leq \alpha^{\vee}_v < \pi \}$ and
\begin{equation} \label{alpha positive}
0 <  \alpha_v (r) < \pi \qquad \text{ for every } r \in I. 
\end{equation}
Since $D^c \alpha_v \neq 0$, 
it is not restrictive to assume $|D^c \alpha_v| (I) > 0$.
For each $\gamma \in (-\pi, \pi)$, we define $R_{\gamma} \in O(n)$ 
in the following way:
$$
R_{\gamma}
\begin{pmatrix} 
x_1  \\
x_2 \\
x_3 \\
\vdots \\
x_n
\end{pmatrix}
= 
\begin{pmatrix} 
 x_1 \cos \gamma -  x_2 \sin \gamma \\
 x_1 \sin \gamma +  x_2 \cos \gamma \\
x_3 \\
\vdots \\
x_n
\end{pmatrix} .
$$
That is, $R_{\gamma}$ is a counterclockwise rotation 
of the angle $\gamma$ in the plane $(x_1, x_2)$.
Let now fix $\lambda \in (0,1)$, and define $\beta : (0, \infty) \to (- \pi, \pi)$ as
$$
\beta (r):=
\begin{cases}
0  & \text{ if } r \in (0, a), \\
\lambda (\alpha_v (r) - \alpha_v (a)) & \text{ if } r \in [a, b], \\
\lambda (\alpha_v (b) - \alpha_v (a)) & \text{ if } r \in (b, \infty).
\end{cases}
$$
We set 
\begin{equation} \label{set E counterexample}
E:= \{ x \in \R^n : \text{dist}_{\mathbb{S}^{n-1}} (\hat{x}, R_{\beta (| x|)} e_1 ) < \alpha^{\vee}_v ( | x |) \}.
\end{equation}
Clearly, $E$ cannot be obtained by applying a single orthogonal transformation to $F_v$.
Let us show that $E \in \mathcal{N} (v)$, so that rigidity fails.
We proceed by steps.

\vspace{.2cm}

\noindent
\textbf{Step 1:} We construct a sequence of functions 
$v^k: I \to [0, \infty)$ satisfying the following properties:

\begin{itemize}

\item[(a)] $\displaystyle \lim_{k \to \infty} \alpha_{v^k} (r) = \alpha_{v} (r)$ for $\mathcal{H}^1$-a.e. $r \in I$;

\vspace{.1cm}

\item[(b)] $D \xi_{v^k} = D^j \xi_{v^k}$ for every $k \in \mathbb{N}$;

\vspace{.2cm}

\item[(c)] $\displaystyle \lim_{k \to \infty} P (F_{v^k}; \Phi (I \times \mathbb{S}^{n-1}))
= P (F_v; \Phi (I \times \mathbb{S}^{n-1}))$.
 
\end{itemize}
First of all note that, by \eqref{def mathcal F} 
and by the chain rule in $BV$ (see, \cite[Theorem~3.96]{AFP}), 
it follows that $\xi_v$ is purely Cantorian, 
where $\xi_v$ is given by \eqref{xi def}. 
Moreover, from \eqref{formula total variations in 1D} 
and from the fact that $\xi_v$ is continuous, we have 
$$
| D \xi_v | (I) = 
\sup \left\{ \sum_{i=1}^{N-1} | \xi_v (r_{i+1})  -   \xi_v (r_i) | :
a < r_1 < r_2 < \ldots < r_N < b  \right\},
$$ 
where the supremum runs over $N \in \mathbb{N}$ and over 
all $r_1, \ldots, r_N$ with $a < r_1 < r_2 < \ldots < r_N < b$.
Therefore, for every $k \in \mathbb{N}$ there exist
$N_k \in \mathbb{N}$ and $r^k_1, \ldots, r^k_N$ 
with $a < r^k_1 < r^k_2 < \ldots < r^k_N < b$ such that 
$$
| D \xi_v | (I) \leq \sum_{i=1}^{N_k-1} | \xi_v (r^k_{i+1})  -   \xi_v (r^k_i) | +\frac{1}{k}
$$
and 
$$
| r^k_{i+1} - r^k_{i}|< \frac{1}{k} \qquad \text{ for every } i = 1, \ldots, N_k -1.
$$
Without any loss of generality, we can assume that the partitions
are increasing in $k$. That is, we will assume that 
$$
\{ r^k_1, \ldots, r^k_{N_k} \} \subset \{ r^{k+1}_1, \ldots, r^{k+1}_{N_{k+1}} \} \quad \text{ for every }k \in \mathbb{N}.
$$
Define now, for every $k \in \mathbb{N}$, 
\begin{equation} \label{def sequence xik}
\xi^k_v (r) :=  \sum_{i=0}^{N_k} \xi_v (r^k_i) \chi_{\big[ r_i^k,r_{i+1}^k \big)}( r ),
\end{equation}
where we set $r^k_0 := a$ and $r^k_{N_k +1} := b$.
Let us now set 
$$
v^k (r) := \xi^k_v (r)/ r^{n-1} \qquad \text{ for every } r \in I \text{ and for every } k \in \mathbb{N}, 
$$
and note that, by definition, $\xi^k_v = \xi_{v^k}$.
Since $\xi_v$ is continuous, we have that 
\begin{equation} \label{a e convergence}
\lim_{k \to \infty}  \xi^k_v (r) =  \xi_v (r) \qquad \text{ for $\mathcal{H}^1$-a.e. } r \in I.
\end{equation}
Recalling \eqref{def mathcal F} and \eqref{alpha}, last relation implies property (a).
Moreover, from \eqref{def sequence xik} we have (b).

Let us now show (c).
Thanks to \eqref{alpha positive} and \eqref{a e convergence}, we have 
\begin{equation} \label{a e convergence p_F}
\lim_{k \to \infty} p_{F^k_v}(r) = p_{F_v}(r) \qquad \text{ for $\mathcal{H}^1$-a.e. } r \in I.
\end{equation}
Moreover, 
\begin{align}
& | D \xi^k_v | (I) 
= \sum_{i=0}^{N_k} | \xi_v (r^k_{i+1}) - \xi_v (r^k_i) | \label{total variation xik} \\
&= | \xi_v (r^k_{1}) - \xi_v (a) | + | \xi_v (b) - \xi_v (r^k_{N_k}) | 
+ \sum_{i=1}^{N_k - 1} | \xi_v (r^k_{i+1}) - \xi_v (r^k_i) |. \nonumber
\end{align}
Since 
\begin{align*}
| D \xi_v | (I) - \frac{1}{k} \leq \sum_{i=1}^{N_k - 1} | \xi_v (r^k_{i+1}) - \xi_v (r^k_i) | 
\leq | D \xi_v | (I),
\end{align*}
using \eqref{total variation xik} and the fact that $\xi_v$ is continuous we obtain 
\begin{equation} \label{convergence total variation}
| D \xi_v | (I) 
= \lim_{k \to \infty} \sum_{i=1}^{N_k - 1} | \xi_v (r^k_{i+1}) - \xi_v (r^k_i) |
= \lim_{k \to \infty} | D \xi^k_v | (I) .
\end{equation}
Thanks to \cite[Theorem~3.23]{AFP}, up to subsequences 
$\xi^k_v$ weakly* converges in $BV(I)$ to $\xi_v$. 
Since, in addition, \eqref{convergence total variation} holds true, 
 we can apply \cite[Proposition~1.80]{AFP}
to the sequence of measures $\{ | D \xi^k_v| \}_{k \in \mathbb{N}}$. 
Therefore, recalling that
$D \xi^k_v =   D^s \xi^k_v $ and $D \xi_v =   D^s \xi_v $, 
we have 
$$
\lim_{k \to \infty} \int_I \, r^n d | D^s \xi^k_v | (r) 
= \lim_{k \to \infty} \int_I \, r^n d | D \xi^k_v | (r) 
=  \int_I \, r^n d | D \xi_v | (r) = \int_I \, r^n d | D^s \xi_v | (r). 
$$
Then, from Corollary~\ref{corollary perimeter Fv}
\begin{align*}
&\lim_{k \to \infty} P (F_{v^k}; \Phi (I \times \mathbb{S}^{n-1}))
=\lim_{k \to \infty}  \left( \int_I p_{F_{v^k}}(r) \, dr
+ \int_I r^{n-1} d | D^s \xi^k_v | (r) \right) \\
&= \left( \int_I p_{F_v}(r) \, dr
+ \int_I r^{n-1} d | D^s \xi_v | (r) \right)
= P (F_v; \Phi (I \times \mathbb{S}^{n-1})),
\end{align*}
where we also used \eqref{a e convergence p_F}.

\vspace{.2cm}

\noindent
\textbf{Step 2:} For each $k \in \mathbb{N}$, 
we construct a spherically $v^k$-distributed set $E^ k$ such that
\begin{equation*} 
P (E^k ; \Phi (I \times \mathbb{S}^{n-1})) =  P (F_{v^k}; \Phi (I \times \mathbb{S}^{n-1})).
\end{equation*}
From \eqref{def mathcal F} and \eqref{alpha} it follows that 
$\alpha_{v^k}  = \mathcal{F}^{-1} (\xi^k_v) \in BV (I)$, and 
\begin{equation} \label{alphak is a sum}
\alpha_{v^k} (r) =   \sum_{i=0}^{N_k} \alpha_v (r^k_i) \chi_{\big[ r_i^k,r_{i+1}^k \big)}( r ).
\end{equation}
Therefore, for each $k \in \mathbb{N}$ we have that 
$D \alpha_{v^k} = D^j \alpha_{v^k}$, 
and the jump set of $\alpha_{v^k}$ is a finite set.
More precisely, 
$$
D \alpha_{v^k} =  
\sum_{i=1}^{N_k} ( \alpha_v (r^k_{i}) - \alpha_v (r^k_{i-1}) ) \delta_{r^k_i},
$$
where $\delta_r$ denotes the Dirac delta measure concentrated at $r$. 
Let $\lambda \in (0, 1)$ be fixed, and define the set $E^k_1 \subset \Phi (I \times \mathbb{S}^{n-1})$ as
$$
E^k_1 := \left[ F_{v^k} \cap ( B (r^k_1) \setminus \overline{B (a)}) \right] \cup 
\left[ R_{\lambda (\alpha_v (r^k_{1}) - \alpha_v (a)) } ( F_{v^k} \cap ( B (b) \setminus B (r_1^k)) ) \right].
$$
Thanks to Proposition~\ref{jump set implies no rigidity}, we have that 
$$
P (E^k_1 ; \Phi (I \times \mathbb{S}^{n-1})) =  P (F_{v^k}; \Phi (I \times \mathbb{S}^{n-1})).
$$
Define now $E^k_2 \subset \Phi (I \times \mathbb{S}^{n-1})$ as
$$
E^k_2 := ( E^k_1 \cap B (r^k_2) ) \cup 
\left[ R_{\lambda (\alpha_v (r^k_{2}) - \alpha_v (r^k_{1})) } ( E^k_1 \setminus B (r^k_2) ) \right].
$$
Applying again Proposition~\ref{jump set implies no rigidity}, we have 
$$
P (E^k_2 ; \Phi (I \times \mathbb{S}^{n-1}))
= P (E^k_1 ; \Phi (I \times \mathbb{S}^{n-1})) =  P (F_{v^k}; \Phi (I \times \mathbb{S}^{n-1})).
$$
Note that, since $R_{\gamma}$ is associative with respect to $\gamma$ 
(that is, we have $R_{\gamma_1} R_{\gamma_2} = R_{\gamma_1 + \gamma_1}$), 
we can write $E^k_2$ as
\begin{align*}
E^k_2 &= \left[ F_{v^k} \cap ( B (r^k_1) \setminus \overline{B (a)}) \right] \cup 
\left[ R_{\lambda (\alpha_v (r^k_{1}) - \alpha_v (a)) } ( F_{v^k} \cap ( B (r^k_2) \setminus B (r_1^k)) ) \right] \\
&\hspace{.2cm} 
\cup \left[ R_{\lambda (\alpha_v (r^k_{2}) - \alpha_v (a)) } ( F_{v^k} \cap ( B (b) \setminus B (r^k_2)) ) \right].
\end{align*}
Iterating this procedure $N_k$ times, we obtain that 
$$
P (E^k ; \Phi (I \times \mathbb{S}^{n-1}))
 =  P (F_{v^k}; \Phi (I \times \mathbb{S}^{n-1})),
$$
where
\begin{equation} \label{def ek new}
E_k:= E^k_{N_k} 
= \{ x \in \Phi (I \times \mathbb{S}^{n-1}) : 
\textnormal{dist}_{\mathbb{S}^{n-1}} (\hat{x}, R_{\lambda ( \alpha_{v^k}(|x|) - \alpha_{v^k}(a)} ) e_1) 
<  \alpha_{v^k} (|x|) \}. 
\end{equation}

\vspace{.2cm}

\noindent
\textbf{Step 3:} We show that 
$E^k  \longrightarrow \widehat{E}$  in  $\Phi(I \times \mathbb{S}^{n-1})$,
for some spherically $v$-distributed set $\widehat{E}$ such that
$$
P (\widehat{E} ; \Phi (I \times \mathbb{S}^{n-1}))
= P (F_v; \Phi (I \times \mathbb{S}^{n-1})).
$$
From \eqref{alphak is a sum} and \eqref{a e convergence}
it follows that
$$
\lim_{k \to \infty}  \alpha_{v^k} (r) =  \alpha_v (r) \qquad \text{ for $\mathcal{H}^1$-a.e. } r \in I.
$$
Therefore, from \eqref{def ek new} we have 
$E^k  \longrightarrow \widehat{E}$ ( in $(\Phi(I \times \mathbb{S}^{n-1}))$),
where $\widehat{E}$ is the spherically $v$-distributed set in $\Phi(I \times \mathbb{S}^{n-1})$
given by 
\begin{equation} \label{definition Ebar}
\widehat{E}  := \{ x \in \Phi (I \times \mathbb{S}^{n-1}) : 
\textnormal{dist}_{\mathbb{S}^{n-1}} (\hat{x}, R_{\lambda (\alpha_{v}(|x|) - \alpha_{v}(a))} e_1) 
<  \alpha_{v} (|x|) \}.
\end{equation}
Then, by the lower semicontinuity of the perimeter 
with respect to the $L^1$ convergence
(see, for instance, \cite[Proposition~12.15]{maggiBOOK}):
\begin{align*}
&P (\widehat{E} ; \Phi (I \times \mathbb{S}^{n-1}))
\leq \lim_{k \to \infty} P (E^k; \Phi (I \times \mathbb{S}^{n-1})) \\
&\lim_{k \to \infty} P (F_{v^k}; \Phi (I \times \mathbb{S}^{n-1}))
= P (F_v; \Phi (I \times \mathbb{S}^{n-1})) \\
&\leq P (\widehat{E} ; \Phi (I \times \mathbb{S}^{n-1})),
\end{align*}
where we also used \eqref{per ineq}.

\vspace{.2cm}

\noindent
\textbf{Step 4:} We conclude. 
Let $E$ be given by \eqref{set E counterexample}.
Then, $E$ is spherically $v$-distributed and satisfies
$$
E =_{\mathcal{H}^n} ( F_v \cap (B (a)) ) 
\cup \left[  \widehat{E} \cap (B(b) \setminus B (a)) \right]
\cup \left[ R_{\lambda (\alpha_v (b) - \alpha_v (a))} (F_v \setminus (B (b)) ) \right], 
$$
where $\widehat{E}$ is defined in \eqref{definition Ebar}.
By repeating the arguments used in the proof
of Proposition~\ref{no interval}, and using the fact that 
$\Phi (I \times \mathbb{S}^{n-1}) = B (b) \setminus \overline{B (a)}$, 
one can see that 
\begin{align*}
P (E) &= P(E; B (a)) + P(E;  \partial B (a))
+ P(E; B (b) \setminus \overline{B (a)}) \\
&+ P(E;  \partial B (b)) +P(E; \R^n \setminus \overline{B (b)}) \\
&= P(F_v; B (a)) + P(E;  \partial B (a))
+ P(\widehat{E} ; B (b) \setminus \overline{B (a)}) \\
&+ P(E;  \partial B (b)) +P(F_v; \R^n \setminus \overline{B (b)}) \\
&= P(F_v; B (a)) + P(E;  \partial B (a))
+ P( F_v; B (b) \setminus \overline{B (a)}) \\
&+ P(E;  \partial B (b)) +P(F_v; \R^n \setminus \overline{B (b)}),
\end{align*}
where we also used Step 3 and the invariance of the perimeter under orthogonal transformations. 
Since $\alpha_v$ is continuous, an argument similar to the one used to prove \eqref{let us show this} 
shows that 
$$
P(E;  \partial B (a)) = P(E;  \partial B (b)) = 0.
$$ 
Therefore, 
$$
P (E) = P(F_v; B (a)) 
+ P( F_v; B (b) \setminus \overline{B (a)}) +P(F_v; \R^n \setminus \overline{B (b)})
= P(F_v).
$$
\end{proof}
We can now give the proof of the implication (i) $\Longrightarrow$ (ii)
of Theorem~\ref{rigidity theorem}.
\begin{proof}[Proof of Theorem~\ref{rigidity theorem}: (i) $\Longrightarrow$ (ii)]
To show the implication, it suffices to combine 
Proposition~\ref{no interval}, Proposition~\ref{jump set implies no rigidity}, 
and Proposition~\ref{no Cantor}.
\end{proof}

\section{Acknowledgements}

The authors would like to thank Marco Cicalese, Nicola Fusco,
and Emanuele Spadaro for inspiring discussions on the subject.
They would also like to thank Frank Morgan for useful comments on a preliminary version
of the paper.
F. Cagnetti was supported by the EPSRC under the Grant EP/P007287/1 ``Symmetry of Minimisers in Calculus of Variations''.


\def\cprime{$'$}

\end{document}